\let\yhmath@wideparen\wideparen
\DeclareRobustCommand{\wideparen}[1]{{\mathpalette\inner@wideparen{#1}}}
\newcommand\inner@wideparen[2]{%
  \sbox\z@{$#1\m@th#2$}%
  \yhmath@wideparen{\box\z@}%
}
\newtheorem{theorem}{Theorem}[section]
\newtheorem{lemma}[theorem]{Lemma}
\newtheorem{definition}[theorem]{Definition}
\newtheorem{proposition}[theorem]{Proposition}
\newtheorem{corollary}[theorem]{Corollary}
\DeclareMathOperator{\Sp}{Sp}
\DeclareMathOperator{\im}{Im}
\DeclareMathOperator{\coim}{Coim}
\DeclareMathOperator{\Hom}{Hom}
\DeclareMathOperator{\Tor}{Tor}
\DeclareMathOperator{\Sym}{Sym}
\DeclareMathOperator{\an}{an}
\DeclareMathOperator{\rig}{rig}
\DeclareMathOperator{\Ban}{Ban}
\DeclareMathOperator{\Der}{Der}
\DeclareMathOperator{\ac}{ac}
\DeclareMathOperator{\Loc}{Loc}
\DeclareMathOperator{\res}{res}
\begin{document}
\title{A Proper Mapping Theorem for coadmissible $\wideparen{\mathcal{D}}$-modules}
\author{Andreas Bode}
\address{Andrew Wiles Building, Mathematical Institute, Radcliffe Observatory Quarter\\
Woodstock Road, Oxford, OX2 6GG}
\email{andreas.bode@maths.ox.ac.uk}
\maketitle

\begin{abstract}
We study the behaviour of $\wideparen{\mathcal{D}}$-modules on rigid analytic varieties under pushforward along a proper morphism.\\
We prove a $\wideparen{\mathcal{D}}$-module analogue of Kiehl's Proper Mapping Theorem, considering the derived sheaf-theoretic pushforward from $\wideparen{\mathcal{D}}_X$-modules to $f_*\wideparen{\mathcal{D}}_X$-modules for proper morphisms $f: X\to Y$. Under assumptions which can be naturally interpreted as a certain properness condition on the cotangent bundle, we show that any coadmissible $\wideparen{\mathcal{D}}_X$-module has coadmissible higher direct images. This implies among other things a purely geometric justification of the fact that the global sections functor in the rigid analytic Beilinson--Bernstein correspondence preserves coadmissibility, and we are able to extend this result to twisted $\wideparen{\mathcal{D}}$-modules on analytified partial flag varieties.\\
\end{abstract}
\section{Introduction}
Let $K$ be a complete discretely valued field of mixed characteristic $(0,\ p)$, with discrete valuation ring $R$ and uniformizer $\pi$, and let $f: X\to Y$ be a proper morphism of rigid analytic $K$-varieties. \\
Recall Kiehl's Proper Mapping Theorem for coherent $\mathcal{O}$-modules.
\begin{theorem}[{see \cite{Kiehl}}]
\label{Kiehlthm}
If $\mathcal{M}$ is a coherent $\mathcal{O}_X$-module, then $\mathrm{R}^jf_*\mathcal{M}$ is a coherent $\mathcal{O}_Y$-module for each $j\geq 0$.
\end{theorem}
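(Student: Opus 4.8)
The plan is to follow Kiehl's original strategy, reducing the statement to a functional-analytic finiteness criterion for complexes of Banach modules. Since coherence of $\mathrm{R}^jf_*\mathcal{M}$ may be checked locally on $Y$, I would first pass to a member of a suitable admissible affinoid covering of $Y$ and thereby assume $Y=\Sp A$ with $A$ a Noetherian affinoid $K$-algebra. By the definition of properness one may simultaneously arrange that there exist two finite admissible coverings $\{U_i\}_{i\in I}$ and $\{V_i\}_{i\in I}$ of $X$ by affinoid subdomains with $U_i\Subset_Y V_i$ relatively compact over $Y$ for each $i$; refining if necessary one also arranges $U_i\subseteq V_i$.

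The second step is to identify $\mathrm{R}^jf_*\mathcal{M}$ over the affinoid base with (the coherent sheaf associated to) the $A$-module $H^j(X,\mathcal{M})$, and to compute the latter by \v{C}ech cohomology. Here I would use that finite intersections of affinoid subdomains are again affinoid together with Tate's acyclicity theorem (the vanishing of $H^{>0}$ of a coherent sheaf on an affinoid), so that the \v{C}ech-to-derived spectral sequence of either covering degenerates and both complexes $\check{C}^\bullet(\{U_i\},\mathcal{M})$ and $\check{C}^\bullet(\{V_i\},\mathcal{M})$ compute $\mathrm{R}\Gamma(X,\mathcal{M})$. Each term of these complexes is a finitely generated module over an affinoid algebra, hence carries a canonical Banach $A$-module structure, and there are only finitely many terms.

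Third, the restriction morphism $\rho^\bullet\colon \check{C}^\bullet(\{V_i\},\mathcal{M})\to\check{C}^\bullet(\{U_i\},\mathcal{M})$ is then a quasi-isomorphism of bounded complexes of Banach $A$-modules, and the role of the relative compactness hypothesis is precisely that each component of $\rho^\bullet$ is a completely continuous (compact) morphism of Banach $A$-modules: a relatively compact inclusion of affinoids induces a completely continuous restriction map on functions, and (since $U_{i_0}\cap\cdots\cap U_{i_p}\Subset_Y V_{i_0}\cap\cdots\cap V_{i_p}$) this is inherited by the finitely generated modules and their base changes appearing in the \v{C}ech complex. At this point I would invoke Kiehl's finiteness lemma: a quasi-isomorphism of bounded complexes of Banach $A$-modules, all of whose components are completely continuous, has finitely generated cohomology over $A$. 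In the one-term case this is the Riesz fact that a locally compact Banach space is finite-dimensional; in general it is an inductive, Fredholm-type argument over the Noetherian ring $A$, and may require interleaving a third covering in order to produce comparison maps in both directions. Applying this yields that $H^j(X,\mathcal{M})$ is a finitely generated $A$-module for every $j\geq 0$, so $\mathrm{R}^jf_*\mathcal{M}$ is coherent, and reassembling over the affinoid covering of $Y$ completes the proof.

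The main obstacle is the functional-analytic input: Kiehl's finiteness lemma itself, and the careful verification that the \v{C}ech restriction maps are genuinely completely continuous. The reduction steps --- localizing on $Y$ and replacing the derived pushforward by \v{C}ech cohomology --- are essentially formal once Tate's acyclicity theorem is available; the entire weight of the argument rests on exploiting the ``$\Subset_Y$'' in the definition of properness to convert the comparison of two finite coverings into a compact operator, so that Riesz--Fredholm theory over the affinoid base can be brought to bear.
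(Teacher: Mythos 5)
Your proposal reproduces Kiehl's original strategy, which is exactly what the paper cites for Theorem \ref{Kiehlthm} and then distils and generalizes in its section 2. The three-step structure --- localize on $Y$ to an affinoid with two finite coverings that are pairwise relatively compact over $Y$, compute $\mathrm{R}^jf_*\mathcal{M}$ by \v{C}ech complexes using Tate acyclicity and the fact that finite intersections of affinoids are affinoid, and then feed the restriction quasi-isomorphism into a Cartan--Serre/Fredholm-type finiteness criterion --- matches Kiehl's proof and the machinery in section 2 (Schwartz' theorem, Theorem \ref{Schwartz}; the key finiteness criterion, Proposition \ref{cficomplex}). You also correctly locate where the weight of the argument lies.

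Two small points of friction worth flagging, neither a real gap. First, the assertion that the restriction map $\mathcal{O}(V_{i_0\cdots i_p})\to\mathcal{O}(U_{i_0\cdots i_p})$ and its base changes are themselves completely continuous is true, but proving it is not quite a one-liner: the honest route (and the one the paper packages into Theorem \ref{freeprojscc} and the hypothesis of Proposition \ref{cficomplex}) is to precompose with a surjection from a topologically free module $A\langle x_1,\dots,x_l\rangle^{\oplus r}$, use that the images of the $x_i$ are topologically nilpotent on the smaller affinoid, and apply the power-series criterion (Corollary \ref{sccpowerseries}); one then works with the composed map rather than trying to show the restriction itself approximates well as an $A$-module morphism. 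This is exactly how the paper sets up Lemma \ref{fgcohomDn} in the noncommutative analogue. Second, your hedge that a third covering may be needed to produce comparison maps in both directions is unnecessary: the finiteness criterion, as formulated in Proposition \ref{cficomplex}, only requires a quasi-isomorphism in one direction together with the complete-continuity-after-factorization hypothesis, so two coverings suffice, as in Kiehl.
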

The main goal of this paper is to prove a noncommutative analogue of Theorem \ref{Kiehlthm}, considering coadmissible $\wideparen{\mathcal{D}}$-modules.\\
The sheaf $\wideparen{\mathcal{D}}_X$ of analytic differential operators on $X$ was introduced by Ardakov--Wadsley in \cite{Ardakov1}. This sheaf consists of differential operators, possibly of infinite order, with rapidly decreasing coefficients. It was shown in \cite{Bodecompl} that for any smooth rigid analytic $K$-variety $U$, $\wideparen{\mathcal{D}}_U$ is a full Fr\'echet--Stein sheaf (see section 3 for precise definitions), which allows us to consider the category of coadmissible modules, a natural analogue of coherent modules in this setting.\\
We present in this paper several conditions one can impose to guarantee that the derived pushforward functors $\mathrm{R}^jf_*$ preserve coadmissibility.\\
In this introduction, we say that a Lie algebroid $\mathscr{L}$ on $X$ is locally free relative to $Y$ if there exists an admissible covering $(Y_i)$ of $Y$ such that the restriction $\mathscr{L}|_{X_i}$ to each $X_i=f^{-1}Y_i$ is a free $\mathcal{O}_{X_i}$-module for each $i$.
\begin{theorem}
\label{intromainthm}
Let $f: X\to Y$ be a proper morphism of rigid analytic $K$-varieties, where $X$ is smooth. 
\begin{enumerate}[(i)]
\item Suppose the tangent sheaf $\mathcal{T}_X$ is locally free relative to $Y$. Then $f_*\wideparen{\mathcal{D}}_X$ is a full Fr\'echet--Stein sheaf on $Y$, and if $\mathcal{M}$ is a coadmissible $\wideparen{\mathcal{D}}_X$-module then $\mathrm{R}^jf_*\mathcal{M}$ is a coadmissible $f_*\wideparen{\mathcal{D}}_X$-module for each $j\geq 0$.
\item Suppose $\mathcal{T}_X$ is the quotient of a Lie algebroid $\mathscr{L}$ on $X$ which is locally free relative to $Y$. Then $f_*\wideparen{\mathcal{D}}_X$ is a Fr\'echet--Stein sheaf on $Y$, and if $\mathcal{M}$ is a coadmissible $\wideparen{\mathcal{D}}_X$-module then $\mathrm{R}^jf_*\mathcal{M}$ is a coadmissible $f_*\wideparen{\mathcal{D}}_X$-module for each $j\geq 0$.
\end{enumerate}
\end{theorem}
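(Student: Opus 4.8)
The plan is to build a Fréchet--Stein presentation of $f_*\wideparen{\mathcal{D}}_X$ out of the pushforwards of the levels of $\wideparen{\mathcal{D}}_X$, and to establish the coherence of these levels by an associated graded reduction to Kiehl's Theorem~\ref{Kiehlthm} for coherent $\mathcal{O}$-modules. Both conclusions being local on $Y$, I would first shrink $Y$ to an affinoid subdomain over which $\mathcal{T}_X$ is a free $\mathcal{O}_X$-module (case (i)), respectively a quotient of a free $\mathcal{O}_X$-Lie algebroid $\mathscr{L}$ (case (ii)); after a further refinement one may assume $Y=\Sp B$ and that the relevant sheaf $\mathscr{L}$ (with $\mathscr{L}=\mathcal{T}_X$ in case (i)) is globally free of rank $d$. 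Choosing a free $R$-Lie lattice $\mathcal{L}$ of $\mathscr{L}$ on an admissible formal model gives for each $n$ a sheaf $\widehat{\mathcal{D}}_{X,n}$ of Noetherian Banach algebras on $X$ --- equal to $\widehat{U(\pi^n\mathcal{L})}_K$ in case (i) and to the corresponding quotient reflecting $\mathcal{T}_X=\mathscr{L}/\mathcal{I}$ in case (ii) --- with $\wideparen{\mathcal{D}}_X=\varprojlim_n\widehat{\mathcal{D}}_{X,n}$, where, by \cite{Bodecompl}, the transition maps are flat on both sides in case (i) and only on the right in case (ii).

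The technical heart is a level-wise proper mapping theorem: for each $n$, the sheaf $f_*\widehat{\mathcal{D}}_{X,n}$ is a sheaf of Noetherian Banach algebras on $Y$, and $\mathrm{R}^jf_*$ carries coherent $\widehat{\mathcal{D}}_{X,n}$-modules to coherent $f_*\widehat{\mathcal{D}}_{X,n}$-modules. The point is that $\widehat{\mathcal{D}}_{X,n}$ carries the order filtration $F_\bullet$ whose graded pieces are coherent $\mathcal{O}_X$-modules: here the freeness of $\mathscr{L}$ enters, via PBW, to ensure that $\gr^F\widehat{\mathcal{D}}_{X,n}$ is a \emph{coherent} sheaf of $\mathcal{O}_X$-algebras, namely a quotient of $\Sym_{\mathcal{O}_X}\mathscr{L}\cong\mathcal{O}_X[\xi_1,\dots,\xi_d]$. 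A coherent $\widehat{\mathcal{D}}_{X,n}$-module admits, at least locally and hence by a \v{C}ech computation, a good filtration by coherent $\mathcal{O}_X$-modules, and its associated graded is a coherent sheaf on the rigid-analytic vector bundle $\mathbb{V}\to X$ with relative coordinate ring $\Sym_{\mathcal{O}_X}\mathscr{L}$; since $\mathscr{L}$ is free we have $\mathbb{V}\cong X\times_K\mathbb{A}^{d,\mathrm{an}}$, so $\mathbb{V}\to Y\times_K\mathbb{A}^{d,\mathrm{an}}$ is a base change of $f$ and therefore proper --- this is precisely where the hypothesis is used, and may be read as a properness condition on (a free cover of) the cotangent bundle. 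Applying Kiehl's theorem to this proper morphism over the affinoid polydiscs exhausting $Y\times_K\mathbb{A}^{d,\mathrm{an}}$, and then running the finite good filtration, shows that the resulting cohomology is finitely generated over the symbol algebra $\mathcal{O}(Y)\langle\pi^n\xi_1,\dots,\pi^n\xi_d\rangle$; completing along the order filtration --- which is compatible with $\mathrm{R}^jf_*$ because the cohomology of coherent sheaves on proper rigid spaces commutes with the relevant $\pi$-adically completed colimits (equivalently, by passing to a formal model) --- yields the claim, and feeding in the module $\widehat{\mathcal{D}}_{X,n}$ itself shows that $\Gamma(f^{-1}Y',\widehat{\mathcal{D}}_{X,n})$ is module-finite, hence Noetherian, over $\mathcal{O}(Y')\langle\pi^n\xi_1,\dots,\pi^n\xi_d\rangle$ for every affinoid $Y'\subseteq Y$.

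To assemble the global statement, take a coadmissible $\wideparen{\mathcal{D}}_X$-module $\mathcal{M}$ with its canonical presentation $\mathcal{M}=\varprojlim_n\mathcal{M}_n$, where $\mathcal{M}_n$ is coherent over $\widehat{\mathcal{D}}_{X,n}$ and $\widehat{\mathcal{D}}_{X,n}\otimes_{\widehat{\mathcal{D}}_{X,n+1}}\mathcal{M}_{n+1}\xrightarrow{\sim}\mathcal{M}_n$. By the level-wise theorem each $\mathrm{R}^jf_*\mathcal{M}_n$ is coherent over $f_*\widehat{\mathcal{D}}_{X,n}$, and a flat-base-change (projection-formula) argument --- using properness of $f$ and the right flatness of $\widehat{\mathcal{D}}_{X,n}$ over $\widehat{\mathcal{D}}_{X,n+1}$ together with the coherence just obtained --- identifies $f_*\widehat{\mathcal{D}}_{X,n}\otimes_{f_*\widehat{\mathcal{D}}_{X,n+1}}\mathrm{R}^jf_*\mathcal{M}_{n+1}\xrightarrow{\sim}\mathrm{R}^jf_*\mathcal{M}_n$; thus $(\mathrm{R}^jf_*\mathcal{M}_n)_n$ is a coadmissible system for $f_*\wideparen{\mathcal{D}}_X=\varprojlim_n f_*\widehat{\mathcal{D}}_{X,n}$, and in case (i) the same argument propagates two-sided flatness, so that $f_*\wideparen{\mathcal{D}}_X$ is \emph{full}. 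Finally, applying $\mathrm{R}f_*$ to $\mathcal{M}=\mathrm{R}\varprojlim_n\mathcal{M}_n$ and exchanging the two homotopy limits produces short exact sequences $0\to\mathrm{R}^1{\varprojlim}_n\mathrm{R}^{j-1}f_*\mathcal{M}_n\to\mathrm{R}^jf_*\mathcal{M}\to{\varprojlim}_n\mathrm{R}^jf_*\mathcal{M}_n\to 0$ in which the $\mathrm{R}^1\varprojlim$-term vanishes, since coadmissible systems satisfy the Mittag--Leffler property (Schneider--Teitelbaum); hence $\mathrm{R}^jf_*\mathcal{M}\cong\varprojlim_n\mathrm{R}^jf_*\mathcal{M}_n$ is the sought-for coadmissible $f_*\wideparen{\mathcal{D}}_X$-module.

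I expect the main obstacle to be the level-wise proper mapping theorem of the second step: one must make precise the existence of good filtrations by coherent $\mathcal{O}_X$-modules on the quasi-compact $X$ (or bypass this by a \v{C}ech argument or a formal model), and control the interaction of $\mathrm{R}^jf_*$ with the Banach completion that defines $\widehat{\mathcal{D}}_{X,n}$ from its order filtration --- neither being a black-box invocation of Theorem~\ref{Kiehlthm} but rather a completed-coefficient refinement of it. The projection-formula identification in the third step, and checking that it is compatible with the Banach topologies, is a secondary but genuine point; the distinction between cases (i) and (ii) reduces exactly to two-sided versus one-sided flatness of the transition maps, which transfers verbatim from $\wideparen{\mathcal{D}}_X$ to $f_*\wideparen{\mathcal{D}}_X$.
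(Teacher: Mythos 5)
Your overall architecture — pass to the Banach levels $\widehat{\mathcal{D}}_{X,n}$, prove a level-wise proper mapping theorem, verify that the resulting system $(\mathrm{R}^jf_*\mathcal{M}_n)_n$ is coadmissible, and recover $\mathrm{R}^jf_*\mathcal{M}$ via Mittag--Leffler — matches the paper's, and the final $\varprojlim$-exchange and the flatness dichotomy between cases (i) and (ii) are handled correctly. But the ``technical heart'' is where your route genuinely breaks.

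The reduction to commutative Kiehl via the order filtration and good filtrations fails as stated, because the order filtration is not exhaustive on the completion. After passing from $U(\pi^n\mathcal{L})_K$ to $\widehat{U(\pi^n\mathcal{L})}_K$, one admits convergent series $\sum a_\alpha \partial^\alpha$ with $|a_\alpha|\to 0$ of unbounded order; such elements lie in no $F_k$, so $\gr^F$ no longer presents the algebra, and $\widehat{\mathcal{D}}_{X,n}$ is not the Rees-completion of the order-filtered ring. The natural exhaustive filtration is the norm (gauge) filtration, but its associated graded lives over the residue field $k=R/\pi$, not over $K$, so one cannot invoke Kiehl's Theorem~\ref{Kiehlthm} on a rigid-analytic vector bundle over $K$. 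You flag this as an ``obstacle'' to be controlled; it is actually the central difficulty and the approach as sketched does not get around it. The paper sidesteps the associated graded entirely: the level-wise finiteness (Lemma~\ref{fgcohomDn}) is obtained by directly re-proving Kiehl's Cartan--Serre argument — Schwartz' Theorem~\ref{Schwartz}, strictly completely continuous morphisms, Proposition~\ref{cficomplex} — for strictly Noetherian Banach $K$-algebras. The input from properness is that $h'(x^r)\to 0$ produces a strictly completely continuous $\mathscr{U}_n(X)$-linear map, which is exactly the noncommutative analogue of Kiehl's use of relative compactness. This generalization of the Cartan--Serre machinery to noncommutative Banach coefficients is the decisive technical step, and your proposal does not supply an alternative route to it.

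A second, smaller gap: you argue the coadmissibility of the system of \emph{global} sections, but not the localization property, i.e.\ that $\wideparen{\mathscr{U}}(f^{-1}U)\wideparen{\otimes}_{\wideparen{\mathscr{U}}(X)}\mathrm{H}^j(X,\mathcal{M})\cong\mathrm{H}^j(f^{-1}U,\mathcal{M})$ for affinoid subdomains $U\subseteq Y$. Without this, $\mathrm{R}^jf_*\mathcal{M}$ is not shown to be a coadmissible \emph{sheaf} (Definition~\ref{defcoadforglFS}). This occupies all of Section~5 of the paper and requires nontrivial work with completed tensor products (Theorems~\ref{stepalocalize}--\ref{stepclocalize}); a remark about ``$\Gamma(f^{-1}Y',\widehat{\mathcal{D}}_{X,n})$ module-finite over $\mathcal{O}(Y')\langle\pi^n\xi\rangle$'' does not substitute for it. Likewise the compatibility $\mathscr{U}_n(X)\otimes_{\mathscr{U}_{n+1}(X)}\check{\mathrm{H}}^j(\mathfrak{V},\mathcal{M}_{n+1})\cong\check{\mathrm{H}}^j(\mathfrak{V},\mathcal{M}_n)$ that you attribute to ``flat base change / projection formula'' is in the paper a genuine theorem (Theorem~\ref{matchupn}) requiring the bounded-$\pi$-torsion analysis of Lemma~\ref{torgpsvanishn} and the Noetherianity of $U(\mathcal{L})\cdot\widehat{U(\pi\mathcal{L})}$, because flatness of $\mathscr{U}_n(X)$ over $\mathscr{U}_{n+1}(X)$ alone does not commute $\widehat{\otimes}$ past cohomology.
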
 
We obtain as a consequence the following corollary.
\begin{corollary}
\label{intropropervar}
Let $X$ be a smooth proper rigid analytic variety over $K$. If $\mathcal{T}_X$ is generated by global sections then $\wideparen{\mathcal{D}}_X(X)$ is a Fr\'echet--Stein algebra, and if $\mathcal{M}$ is a coadmissible $\wideparen{\mathcal{D}}_X$-module then $\mathrm{H}^j(X, \mathcal{M})$ is a coadmissible $\wideparen{\mathcal{D}}_X(X)$-module for each $j\geq 0$.
\end{corollary}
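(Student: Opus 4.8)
The plan is to apply Theorem~\ref{intromainthm}(ii) to the structure morphism $f\colon X\to Y:=\Sp K$, which is proper by hypothesis and has smooth source. Since the rigid analytic point $\Sp K$ admits no nontrivial admissible covering, a Lie algebroid on $X$ is locally free relative to $\Sp K$ exactly when it is free as an $\mathcal{O}_X$-module, a Fr\'echet--Stein sheaf on $\Sp K$ is the same datum as a Fr\'echet--Stein algebra (namely its ring of global sections), and a coadmissible module over such a sheaf is the same as a coadmissible module over that algebra. Moreover $f_*\wideparen{\mathcal{D}}_X$ has global sections $\wideparen{\mathcal{D}}_X(X)$, and, as $Y$ is a point, $(\mathrm{R}^jf_*\mathcal{M})(\Sp K)=\mathrm{H}^j(X,\mathcal{M})$. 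So everything reduces to exhibiting a Lie algebroid $\mathscr{L}$ on $X$ which is free over $\mathcal{O}_X$ and has $\mathcal{T}_X$ as a quotient.

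To construct $\mathscr{L}$, I would take $\mathfrak{g}:=\mathrm{H}^0(X,\mathcal{T}_X)$. As $X$ is proper and $\mathcal{T}_X$ is coherent (here we use smoothness), Kiehl's theorem (Theorem~\ref{Kiehlthm}) applied to $f$ shows $\mathfrak{g}$ is a finite-dimensional $K$-vector space; it is moreover a $K$-Lie algebra, because the commutator of two global vector fields is again a global vector field and the Jacobi identity is inherited from $\mathcal{T}_X$. The hypothesis that $\mathcal{T}_X$ is generated by global sections says precisely that the anchor map $\mathcal{O}_X\otimes_K\mathfrak{g}\to\mathcal{T}_X$, $a\otimes\xi\mapsto a\xi$, is surjective. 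One then verifies that the transformation Lie algebroid $\mathscr{L}:=\mathcal{O}_X\otimes_K\mathfrak{g}$, equipped with this anchor and the bracket $[a\otimes\xi,\,b\otimes\eta]=ab\otimes[\xi,\eta]+a\,\xi(b)\otimes\eta-b\,\eta(a)\otimes\xi$, is a Lie algebroid in the sense of the paper; it is a free $\mathcal{O}_X$-module of rank $\dim_K\mathfrak{g}$, hence locally free relative to $\Sp K$, and $\mathcal{T}_X$ is a quotient of it via the anchor.

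Feeding $\mathscr{L}$ into Theorem~\ref{intromainthm}(ii) then yields that $f_*\wideparen{\mathcal{D}}_X$ is a Fr\'echet--Stein sheaf on $\Sp K$, i.e.\ that $\wideparen{\mathcal{D}}_X(X)$ is a Fr\'echet--Stein algebra, and that for coadmissible $\mathcal{M}$ every $\mathrm{R}^jf_*\mathcal{M}=\mathrm{H}^j(X,\mathcal{M})$ is a coadmissible $f_*\wideparen{\mathcal{D}}_X$-module, that is, a coadmissible $\wideparen{\mathcal{D}}_X(X)$-module, as claimed.

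The only genuinely non-formal points are the verification that the action Lie algebroid $\mathcal{O}_X\otimes_K\mathfrak{g}$ really satisfies all the axioms (including any smoothness hypotheses) imposed on Lie algebroids in the body of the paper, together with the small amount of bookkeeping needed to identify Fr\'echet--Stein sheaves and coadmissible sheaves over the one-point space $\Sp K$ with Fr\'echet--Stein algebras and coadmissible modules. I expect both to be routine, the first being a standard manipulation of the Leibniz and Jacobi identities and the second a direct unwinding of the definitions recalled in section~3.
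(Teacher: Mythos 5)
Your proposal is correct and follows essentially the same route as the paper's proof of Corollary~\ref{KiehlDXproperglobal} (of which Corollary~\ref{intropropervar} is the special case $\mathscr{L}=\mathcal{T}_X$): the paper likewise uses Kiehl's theorem to see that $\mathcal{T}_X(X)$ is a finite-dimensional $K$-Lie algebra, forms the free ``action'' Lie algebroid $\mathcal{O}_X\otimes_K\mathcal{T}_X(X)$ with exactly the bracket you write down, notes that global generation makes the anchor an epimorphism onto $\mathcal{T}_X$, and then invokes Theorem~\ref{intromainthm}(ii) (via Corollary~\ref{KiehlDXalgversion}).
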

We make a couple of remarks.
\begin{enumerate}[(i)]
\item Note that in Theorem \ref{intromainthm}.(ii), the sheaf $f_*\wideparen{\mathcal{D}}_X$ is not claimed to be a \emph{full} Fr\'echet--Stein sheaf, but only a Fr\'echet--Stein sheaf, meaning that the desired properties need not hold on any admissible open affinoid subspace, but only on a certain base of the topology, see section 3. This is a familiar feature already occuring in results in \cite{Ardakov1}. While we were able to overcome these issues pertaining to the results in \cite{Ardakov1} with our paper \cite{Bodecompl}, our situation here seems to be more complicated. In fact, the difficulties do not just arise from possible $\pi$-torsion in certain formal models, but rather from the limited scope of Lemma \ref{passtoFS}. 
\item The condition of being locally free relative to $Y$ imposed in Theorem \ref{intromainthm} ensures that we can lift $f$ to a proper morphism between certain vector bundles. As we can think intuitively of $\wideparen{\mathcal{D}}_X$ as a noncommutative analogue of functions on the cotangent bundle $T^*X$, our result is strictly speaking not in parallel with Kiehl's Theorem applied to $f$, but rather (in the case of Theorem \ref{intromainthm}.(i)) to the induced map $T^*X\to g_*T^*X$, where $g: X\to Z$ is the first map in the Stein factorization of $f$, and $g_*T^*X$ is the vector bundle (dually) associated to $g_*\mathcal{T}_X$, which is locally free by assumption. This map is proper as it is locally of the form $X\times \mathbb{A}^{n, \an}\to Z\times \mathbb{A}^{n, \an}$ by assumption. A similar description works for (ii), see Proposition \ref{vbinterpretation}.
\item The results given in this paper are actually more general than Theorem \ref{intromainthm}. We consider full Fr\'echet--Stein sheaves which are in a `natural' way coadmissible over $\wideparen{\mathscr{U}(\mathscr{L})}$, where $\mathscr{L}$ is some Lie algebroid on $X$ which is locally free relative to $Y$. This setup allows us to extend results to twisted $\wideparen{\mathcal{D}}$-modules, see below.
\item It is obviously necessary to work with $f_*\wideparen{\mathcal{D}}_X$ rather than the sheaf $\wideparen{\mathcal{D}}_Y$, as can be seen by just considering the point $Y=\Sp K$. This is due to the fact that we are considering the sheaf-theoretic pushforward $f_*$ rather than a $\wideparen{\mathcal{D}}$-module pushforward via transfer bimodules, which hasn't been developed yet beyond the case of a closed embedding (see \cite{Ardakov2}). We hope that our result can actually be used as a stepping stone to establishing a theory of $\wideparen{\mathcal{D}}$-module pushforwards more generally, with an analogue to the Proper Mapping Theorem (i.e.\ a $p$-adic analytic version of \cite[Theorem 2.5.1]{Hotta}) as a natural consequence.
\item The main motivation for this work (and a justification for using $f_*$) comes from the discussion of a rigid analytic Beilinson--Bernstein correspondence in \cite{Ardakovequiv}. If $X=(\mathbf{G}/\mathbf{B})^{\an}$ is the analytified flag variety of some split reductive algebraic group $\mathbf{G}$ over $K$, Corollary \ref{intropropervar} states that $\wideparen{\mathcal{D}}_X(X)$ is a Fr\'echet--Stein algebra, and that the global sections of any coadmissible $\wideparen{\mathcal{D}}_X$-module are coadmissible. Also note that in this case, the geometric picture given in remark (ii) translates to the properness of the moment map. We thus recover part of the statement of \cite[Theorem 6.4.7]{Ardakovequiv} (without equivariance) by purely geometric means, and are able to extend this straightforwardly to arbitrary coadmissible \emph{twisted} $\wideparen{\mathcal{D}}$-modules on \emph{partial} flag varieties. 
\end{enumerate}

\begin{corollary}
\label{intropartialfv}
Let $\mathbf{G}$ be a split reductive affine algebraic group scheme over $K$, $\mathbf{P}$ a parabolic subgroup scheme and $X=(\mathbf{G}/\mathbf{P})^{\an}$ the analytification of the partial flag variety. Let $\mathfrak{g}$ be the Lie algebra of $G=\mathbf{G}(K)$ and $\mathfrak{h}$ a Cartan subalgebra. If $\mathcal{M}$ is a coadmissible $\wideparen{\mathcal{D}}_X^{\lambda}$-module for some $\lambda\in \mathfrak{h}^*$, then $\mathrm{R}^j\Gamma(X, \mathcal{M})$ is a coadmissible $\wideparen{U(\mathfrak{g})}_{\lambda}$-module for each $j\geq 0$.
\end{corollary}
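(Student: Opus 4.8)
The plan is to deduce the statement from the general form of Theorem~\ref{intromainthm}.(i) indicated in remark~(iii), applied to the structure morphism $f\colon X\to \Sp K$. The geometric input is easy to record: a partial flag variety $\mathbf{G}/\mathbf{P}$ is projective, so $X=(\mathbf{G}/\mathbf{P})^{\an}$ is a smooth proper rigid analytic $K$-variety and $f$ is a proper morphism, with target $Y=\Sp K$ a point, so that ``locally free relative to $Y$'' simply means ``free as an $\mathcal{O}_X$-module''. Because $\mathbf{G}$ acts transitively on $\mathbf{G}/\mathbf{P}$, the infinitesimal action gives a surjection of $\mathcal{O}_X$-modules $\mathfrak{g}^\circ:=\mathcal{O}_X\otimes_K\mathfrak{g}\twoheadrightarrow \mathcal{T}_X$, and $\mathfrak{g}^\circ$, equipped with this anchor and the bracket induced by that of $\mathfrak{g}$, is a Lie algebroid which is a \emph{free} $\mathcal{O}_X$-module, hence trivially locally free relative to $\Sp K$.

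Next I would check that $\wideparen{\mathcal{D}}_X^\lambda$ fits the framework of remark~(iii): it is a full Fr\'echet--Stein sheaf on $X$ (the full Fr\'echet--Stein property being the content of \cite{Bodecompl} in the untwisted case, the twisted case going through along the same lines, cf.\ \cite{Ardakovequiv}), and it is coadmissible as a sheaf of modules over $\wideparen{\mathscr{U}(\mathfrak{g}^\circ)}$, since locally $\wideparen{\mathcal{D}}_X^\lambda$ is the quotient of $\wideparen{\mathscr{U}(\mathfrak{g}^\circ)}$ by the two-sided ideal generated by the isotropy sub-bundle $\mathfrak{p}_X=\ker(\mathfrak{g}^\circ\to\mathcal{T}_X)$, suitably twisted by $\lambda$ along the torus direction. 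Granting this, the general version of Theorem~\ref{intromainthm}.(i) applies with $\mathscr{L}=\mathfrak{g}^\circ$: it yields that $f_*\wideparen{\mathcal{D}}_X^\lambda=\wideparen{\mathcal{D}}_X^\lambda(X)$ is a Fr\'echet--Stein algebra, and that for any coadmissible $\wideparen{\mathcal{D}}_X^\lambda$-module $\mathcal{M}$ the modules $\mathrm{R}^j\Gamma(X,\mathcal{M})=\mathrm{R}^jf_*\mathcal{M}$ are coadmissible over $\wideparen{\mathcal{D}}_X^\lambda(X)$ for all $j\geq 0$.

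It then remains to compare $\wideparen{\mathcal{D}}_X^\lambda(X)$ with $\wideparen{U(\mathfrak{g})}_\lambda$. The composite $\mathfrak{g}\to \Gamma(X,\mathfrak{g}^\circ)\to \Gamma(X,\wideparen{\mathcal{D}}_X^\lambda)$ extends to a continuous algebra homomorphism $\wideparen{U(\mathfrak{g})}\to \wideparen{\mathcal{D}}_X^\lambda(X)$ which annihilates the appropriate central ideal and hence factors through $\wideparen{U(\mathfrak{g})}_\lambda$. For $\mathbf{P}=\mathbf{B}$ this factorisation is an isomorphism of Fr\'echet--Stein algebras by \cite[Theorem 6.4.7]{Ardakovequiv}; for a general parabolic one obtains a surjection compatible with the Fr\'echet--Stein presentations on both sides, and restriction of scalars along such a surjection carries coadmissible modules to coadmissible modules. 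Either way this converts the conclusion of the previous step into coadmissibility of $\mathrm{R}^j\Gamma(X,\mathcal{M})$ over $\wideparen{U(\mathfrak{g})}_\lambda$.

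I expect the genuine content to be concentrated in two places: verifying that $\wideparen{\mathcal{D}}_X^\lambda$ really satisfies the hypotheses of the general theorem (full Fr\'echet--Stein sheaf, naturally coadmissible over $\wideparen{\mathscr{U}(\mathfrak{g}^\circ)}$, with the requisite compatibilities of integral models), and carrying the global-sections computation of \cite{Ardakovequiv} from the full flag variety over to an arbitrary partial flag variety \emph{compatibly with the Fr\'echet--Stein structures}. Once these are in place the corollary is a formal consequence of Theorem~\ref{intromainthm} together with its announced generalisation.
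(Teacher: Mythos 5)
Your overall strategy — apply the Proper Mapping Theorem to $\wideparen{\mathcal{D}}_X^\lambda$ viewed as a coadmissible enlargement of $\wideparen{\mathscr{U}(\mathcal{O}_X\otimes\mathfrak{g})}$, then deduce coadmissibility over $\wideparen{U(\mathfrak{g})}_\lambda$ — is the same as the paper's, and your geometric setup (projectivity of $\mathbf{G}/\mathbf{P}$, surjectivity of $\mathcal{O}_X\otimes\mathfrak{g}\to\mathcal{T}_X$) is correct. You also rightly identify that the two real burdens are (a) verifying that $\wideparen{\mathcal{D}}_X^\lambda$ fits the coadmissible-enlargement framework and (b) passing to $\wideparen{U(\mathfrak{g})}_\lambda$.

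Two comments. First, your description of $\wideparen{\mathcal{D}}_X^\lambda$ as ``locally the quotient of $\wideparen{\mathscr{U}(\mathfrak{g}^\circ)}$ by the ideal generated by $\mathfrak{p}_X$, suitably twisted'' is too vague for a partial flag variety: to make sense of the twist by $\lambda\in\mathfrak{h}^*$ one needs the $\mathbf{L}$-torsor $\mathbf{G}/\mathbf{R}\to\mathbf{G}/\mathbf{P}$ and the enhanced tangent sheaf $\widetilde{\mathcal{T}}_X=(\xi_*\mathcal{T}_{\mathbf{G}/\mathbf{R}})^{\mathbf{L}}$, which carries a central copy of $Z(\mathfrak{l})$ against which one specializes. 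The paper sets this up explicitly, and the verification that $\wideparen{\mathcal{D}}_X^\lambda$ is a coadmissible enlargement uses Lemma \ref{fgfrcompletion} twice (once for $\widetilde{\mathcal{T}}_X$ as a quotient of the free algebroid, once for the $Z(\mathfrak{l})$-specialization).

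Second, and this is a genuine gap: in your last step you invoke a surjection $\wideparen{U(\mathfrak{g})}_\lambda\to\wideparen{\mathcal{D}}_X^\lambda(X)$ ``compatible with the Fr\'echet--Stein presentations'' for a general parabolic, and you offer only \cite[Theorem 6.4.7]{Ardakovequiv} (which is the full-flag case) as evidence. Such surjectivity is not established here and is in general delicate (it is sensitive to regularity of $\lambda$, even classically). The paper avoids needing it: since $\wideparen{\mathcal{D}}_X^\lambda$ is a coadmissible enlargement of $\wideparen{\mathscr{U}(\mathcal{O}_X\otimes\mathfrak{g})}$, the Proper Mapping Theorem gives coadmissibility of $\mathrm{R}^j\Gamma(X,\mathcal{M})$ directly over $\wideparen{U(\mathfrak{g})}=f_*\wideparen{\mathscr{U}(\mathcal{O}_X\otimes\mathfrak{g})}(\Sp K)$; one then simply observes that this module structure factors through the quotient $\wideparen{U(\mathfrak{g})}_\lambda=\wideparen{U(\mathfrak{g})}/\overline{\mathfrak{m}_\lambda}$ and invokes \cite[Lemma 3.8]{Schneider03} (Lemma \ref{quotiscoadenl}). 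Replacing your surjectivity claim by this argument closes the gap without any appeal to localization theory.
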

All relevant definitions will be given in section 6.\\
\\
We now give a brief overview of the structure of the paper.\\
In section 2, we distill those parts of the original proof of Theorem \ref{Kiehlthm} which can naturally be adapted to our situation: Schwartz' Theorem for strictly completely continuous morphisms and a finiteness result for cohomology groups due to Cartan--Serre. We will be working with a certain class of Noetherian Banach $K$-algebras which we call strictly NB, which includes both affinoid $K$-algebras and the algebras $\widehat{U(\pi^n\mathcal{L})}_K$ involved in the definition of $\wideparen{\mathcal{D}}_X$.\\
We also recall some results from \cite{Bodecompl} concerning completed tensor products.\\
\\
In section 3, we recall the basic theory of $\wideparen{\mathcal{D}}_X$-modules, using the more general language of Fr\'echet completed enveloping algebras for Lie algebroids, as in \cite{Ardakov1}. In order to capture sheaves like $\wideparen{\mathcal{D}}^{\lambda}$, we introduce the even more general notion of a Fr\'echet--Stein sheaf. We then establish enough terminology to state the main result (for a free Lie algebroid, Theorem \ref{KiehlDXfree}) and reduce all claims to statements about strictly NB $K$-algebras and finitely generated modules over them.\\
\\
Sections 4 and 5 then deal with the proof of Theorem \ref{KiehlDXfree}.\\
Taking $Y$ to be a suitable affinoid, the proof can be split into two parts: one statement about global sections (section 4) and one about localization (section 5). Having reduced to strictly NB $K$-algebras, the global sections part becomes quite straightforward, using section 2. This means that the proof of this part is almost entirely analogous to the original proof in \cite{Kiehl}. The only subtlety lies in ensuring that the finitely generated pieces we exhibit match up in the right way to produce a coadmissible module. This can be dealt with using the results on completed tensor products developed in \cite{Bodecompl}, which also provide the main tools for the arguments in section 5.\\
\\
In section 6, we formulate more general versions and variants of our theorem. We then go on to discuss a geometric interpretation of our results to provide more intuition for the conditions we impose, and describe several examples. We conclude with our main application, twisted $\wideparen{\mathcal{D}}$-modules on partial flag varieties, Corollary \ref{intropartialfv} and some generalizations of it.\\
\\
As already mentioned, we hope that it will be possible in the future to employ our results in order to study a $\wideparen{\mathcal{D}}$-module pushforward using transfer bimodules (see \cite[1.5]{Hotta}) -- for this, it seems necessary to work in a larger category than that of coadmissible modules in order to allow for a `derived' picture. We suspect that the quasi-abelian category of $\wideparen{\mathcal{D}}$-modules whose sections are complete bornological of convex type might be a suitable framework, as was indicated in \cite{ArdakovBen} and \cite{Bambozzi}. Once a $\wideparen{\mathcal{D}}$-module pushforward is in place, a corresponding Proper Mapping Theorem (at least for projective morphisms) should be a fairly straightforward consequence of our results, as we can consider in turn the cases of closed embeddings and projections, both of which are dealt with in this paper -- see Theorem \ref{derhamcoad} and the remarks following it.\\
\\
On the representation theoretic side, our discussion of pushforwards between partial flag varieties strongly suggests a theory of intertwining operators analogous to \cite{BBCass}.\\
\\
The results in this paper form part of the author's PhD thesis, which was produced under the supervision of Simon Wadsley. We would like to thank him for his encouragement and patience. 
\subsection*{Notation}
Throughout, $K$ is a complete nonarchimedean, discretely valued field of mixed characteristic $(0, \ p)$, with discrete valuation ring $R$ and uniformizer $\pi$. \\
Given a semi-normed $K$-vector space $V$, we denote by $V^\circ$ the unit ball of all elements in $V$ with semi-norm $\leq 1$. We define the value set of $V$ to be the set $|V|\setminus\{0\}$. For instance, the value set of $K$ is $|K^*|=|\pi|^{\mathbb{Z}}$.\\
\\
A normed $K$-algebra $A$ is always required to have a submultiplicative norm, so that $A^\circ$ is always a subring. Similarly, a normed $A$-module is a normed $K$-vector space $M$ with an $A$-module structure satisfying $|am|\leq |a|\cdot |m|$ for all $a\in A$, $m\in M$. In particular, $M^\circ$ is an $A^\circ$-module.\\
We denote the completion of a semi-normed $K$-vector space $V$ by $\widehat{V}$. We also write $\widehat{M}$ for the $\pi$-adic completion of an $R$-module $M$, but it should always be clear from context which completion we are using. We sometimes shorten $M\otimes_R K$ to $M_K$.\\
If $i=(i_1, \dots, i_m)\in \mathbb{N}^m$ is a multi-index, we write $|i|=i_1+i_2+\dots + i_m$, and abbreviate the expression
\begin{equation*}
X_1^{i_1}X_2^{i_2} \dots X_m^{i_m}
\end{equation*}
to $X^i$.\\
We denote by $T_m=K\langle X_1, \dots, X_m\rangle$ the $m$th Tate algebra over $K$, given by converging power series
\begin{equation*}
K\langle X\rangle=\left\{\sum_{i\in \mathbb{N}^m} a_iX^i: \ a_i\in K, |a_i|\to 0 \ \text{as} \ |i|\to \infty\right\}
\end{equation*}
Given an affinoid $K$-variety $X=\Sp A$, we let $X_w$ denote the weak Grothendieck topology (consisting of affinoid subdomains, with finite coverings by affinoid subdomains as coverings) and $X_{\rig}$ the strong Grothendieck topology (admissible open subspaces and admissible coverings, see \cite[Definition 5.1/4]{Bosch}).
  
\section{Background}
\subsection{Schwartz' Theorem and the Cartan--Serre argument}
We begin by discussing those parts of the argument in \cite{Kiehl} (see \cite[sections 6.3, 6.4]{Bosch} for an account in English) which lend themselves to generalization to the noncommutative setting. \\
\\
Throughout, $A$ will be a (not necessarily commutative) unital left Noetherian Banach $K$-algebra, whose norm is determined by an $R$-algebra $A^\circ$ as its unit ball, which we assume to be also left Noetherian. We summarize this by saying that $A$ is a \textbf{strictly Noetherian Banach (NB) $K$-algebra}.\\
By considering the gauge norm associated with $A^\circ$ (see \cite[Lemma 2.2]{SchneiderNFA}), we can assume that $|A|\setminus \{0\}=|K^*|$.\\
Note affinoid $K$-algebras equipped with a residue norm are obvious examples of strictly NB algebras: they are Noetherian Banach by \cite[Propositions 3.1/3.(i), 3.1/5.(ii)]{Bosch}, and the unit ball of any residue norm is Noetherian as long as $K$ is discretely valued by \cite[Remark 7.3/1]{Bosch}.
\begin{lemma}
\label{complsNB}
Let $\mathcal{A}$ be a left Noetherian $R$-algebra containing $R$. \\
Then $\widehat{\mathcal{A}}_K=\widehat{\mathcal{A}}\otimes_R K$ carries a natural structure of a strictly NB $K$-algebra.
\end{lemma}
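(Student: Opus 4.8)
The plan is to take $B := \widehat{\mathcal{A}}$ to be the $\pi$-adic completion of $\mathcal{A}$, so that $\widehat{\mathcal{A}}_K = B\otimes_R K = B[1/\pi]$, and to equip $\widehat{\mathcal{A}}_K$ with the gauge norm attached to the image of $B$ as its unit ball. Two substantive points need checking: that $B$ is left Noetherian (together with the good behaviour of the $\pi$-adic completion), and that this gauge norm really produces a Banach $K$-algebra with left Noetherian unit ball. The degenerate case $\widehat{\mathcal{A}}_K = 0$, i.e.\ where $\pi$ is nilpotent in $\widehat{\mathcal{A}}$, is trivial and may be excluded from the start.

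First I would establish that $B$ is left Noetherian. Since $\mathcal{A}$ is left Noetherian and $\pi$ is central, the ideal $\pi\mathcal{A}$ has the Artin--Rees property, so the usual theory of $\pi$-adic completions applies: $B$ is $\pi$-adically complete and separated, $B/\pi^nB\cong\mathcal{A}/\pi^n\mathcal{A}$ for all $n$, and these are left Noetherian. Hence $\gr_\pi B=\bigoplus_{n\geq 0}\pi^nB/\pi^{n+1}B$ is a quotient of $(\mathcal{A}/\pi\mathcal{A})[t]$, which is left Noetherian by the noncommutative Hilbert basis theorem; since the $\pi$-adic filtration on $B$ is complete and separated, $B$ is itself left Noetherian. (Alternatively one may invoke directly the standard fact that the $\pi$-adic completion of a left Noetherian ring, with $\pi$ central, is left Noetherian.) Then $\widehat{\mathcal{A}}_K=B[1/\pi]$ is the localization of $B$ at the central Ore set $\{1,\pi,\pi^2,\dots\}$, and is therefore left Noetherian as well.

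Next, write $\bar B$ for the image of $B$ in $\widehat{\mathcal{A}}_K$, i.e.\ $B$ modulo its $\pi$-power torsion. Since $B$ is left Noetherian this torsion ideal is finitely generated, hence annihilated by some $\pi^N$ and closed in the $\pi$-adic topology by Artin--Rees (note $\pi B$ lies in the Jacobson radical of $B$ by completeness); so $\bar B$ is again $\pi$-adically complete and separated, is $\pi$-torsion-free, and is left Noetherian as a quotient of $B$. Using that $|K^\times|=|\pi|^{\mathbb{Z}}$ is discrete, the gauge norm $\|x\|=\inf\{\,|\lambda|:\lambda\in K^\times,\ x\in\lambda\bar B\,\}$ is then a genuine submultiplicative norm on $\widehat{\mathcal{A}}_K$ with value set $|K^\times|$ and unit ball exactly $\bar B$, and completeness of $\widehat{\mathcal{A}}_K$ for this norm is equivalent to the $\pi$-adic completeness and separatedness of $\bar B$, which we have verified. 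This exhibits $\widehat{\mathcal{A}}_K$ as a strictly NB $K$-algebra.

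The main obstacle is the Noetherianity of $B$ and the accompanying control of the $\pi$-adic completion: one must know that passing to $\widehat{\mathcal{A}}$ does not destroy the chain condition and that $B/\pi^nB$ recovers $\mathcal{A}/\pi^n\mathcal{A}$, both of which rely essentially on $\mathcal{A}$ being left Noetherian and $\pi$ central. The remaining verifications — submultiplicativity, identification of the unit ball, completeness, and passage to the localization — are routine given these inputs and the discreteness of the value group of $K$.
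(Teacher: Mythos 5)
Your proof is correct and lands in the same place as the paper's, but by a mirror-image route. The paper first passes to $\overline{\mathcal{A}}=\mathcal{A}/\pi\mathrm{-tor}(\mathcal{A})$, which is the unit ball of the gauge semi-norm on $\mathcal{A}_K$, then takes the Banach completion $\widehat{\overline{\mathcal{A}}}_K$, quotes Berthelot's 3.2.3.(iv) and (vi) for the facts that $\widehat{\overline{\mathcal{A}}}$ injects into $\widehat{\overline{\mathcal{A}}}_K$ and is left Noetherian, and finally identifies $\widehat{\mathcal{A}}_K\cong\widehat{\overline{\mathcal{A}}}_K$ by the argument of \cite[Lemma 2.5]{Ardakov1}. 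You reverse the order: complete first, getting $B=\widehat{\mathcal{A}}$, and kill $\pi$-power torsion afterwards to obtain the unit ball $\bar{B}$. This sidesteps the paper's concluding identification, since you are manipulating $\widehat{\mathcal{A}}_K=B[1/\pi]$ directly from the outset. You also reprove the crucial Noetherianity of the $\pi$-adic completion from scratch via the associated graded ring $(\mathcal{A}/\pi\mathcal{A})[t]\twoheadrightarrow\gr_\pi B$ and the complete-filtration criterion, whereas the paper simply cites Berthelot. Your version is slightly more self-contained and avoids one comparison lemma, at the cost of carrying out the Artin--Rees/Krull intersection bookkeeping explicitly to see that the torsion ideal of $B$ is closed and that $\bar{B}$ remains $\pi$-adically complete and separated; both approaches hinge on the same underlying facts.
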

\begin{proof}
Note that the $K$-algebra $\mathcal{A}_K$ is naturally equipped with a gauge semi-norm (see \cite[Lemma 2.2]{SchneiderNFA}) with unit ball $\overline{\mathcal{A}}=\mathcal{A}/\pi\mathrm{-tor}(\mathcal{A})$, and its completion is isomorphic to $\widehat{\overline{\mathcal{A}}}_K=\widehat{\overline{\mathcal{A}}}\otimes K$, a Banach $K$-algebra. By \cite[3.2.3.(iv)]{Berthelot}, $\widehat{\overline{\mathcal{A}}}$ injects into $\widehat{\overline{\mathcal{A}}}_K$ and can thus be naturally identified with the unit ball. But by \cite[3.2.3.(vi)]{Berthelot}, $\widehat{\overline{\mathcal{A}}}$ is left Noetherian, so $\widehat{\overline{\mathcal{A}}}_K$ is a left Noetherian Banach $K$-algebra with left Noetherian unit ball, and hence a strictly NB $K$-algebra.\\
Finally, the natural morphism $\widehat{\mathcal{A}}_K\to \widehat{\overline{\mathcal{A}}}_K$ is an isometric isomorphism of Banach $K$-algebras, by the same argument as in \cite[Lemma 2.5]{Ardakov1}.
\end{proof}
In this section, we will verify that Schwartz' theorem as given in \cite[Satz 1.2]{Kiehl} as well as the Cartan--Serre argument about finite cohomology groups (\cite[Proof of Satz 2.5]{Kiehl}, see \cite[Lemma 1.10]{Kedlaya} for another generalization) hold in the more general context of strictly NB $K$-algebras. All proofs will be essentially as in \cite{Kiehl}, except that some of our arguments become easier to formulate due to our assumptions on the field $K$ (note in particular that $R$ is always Noetherian in our setting). \\
\\
The module category we will be working with consists of all (left) Banach $A$-modules, together with continuous $A$-module morphisms. We call this category $\Ban_A$. \\
We recall the following facts.
\begin{enumerate}[(i)]
\item Since every $A$-module is also a $K$-vector space, an $A$-module morphism between normed $A$-modules is continuous if and only if it is bounded (see \cite[Proposition 3.1]{SchneiderNFA}).
\item Any surjection in $\Ban_A$ is open (Open Mapping Theorem, \cite[Proposition 8.6]{SchneiderNFA}).
\item Any finitely generated $A$-module is in $\Ban_A$, equipped with a canonical topology (see \cite[Proposition 3.7.3/3]{BGR}). Any $A$-module morphism between finitely generated $A$-modules is continuous with respect to the canonical topologies (see \cite[Proposition 3.7.3/2]{BGR}).
\item Given two objects $M$, $N$ of $\Ban_A$, their direct sum $M\oplus N$ carries the structure of a Banach $A$-module with respect to the max norm (see \cite[Definition 2.1.5/1, Proposition 2.5.1/6]{BGR}). 
\end{enumerate}
Note that for any $M, N\in \Ban_A$, the space of morphisms 
\begin{equation*}
\Ban_A(M, N)=\Hom_A^{\text{cts}}(M, N)
\end{equation*}
may be equipped with the supremum norm
\begin{equation*}
|f|_{\sup}:=\sup_{x\neq 0} \frac{|f(x)|}{|x|}.
\end{equation*}
This turns $\Hom_A^{\text{cts}}(M, N)$ into a Banach $K$-vector space by the same argument as in \cite[Proposition 3.3]{SchneiderNFA}.\\
When we speak of a sequence of morphisms $f_i$ converging to some $f \in \Ban_A(M, N)$, we mean uniform convergence, i.e. convergence with respect to the supremum norm. \\
\\
We also need to define topologically free modules. Given an indexing set $S$, consider the $A$-module
\begin{equation*}
\oplus_{s\in S} Ae_s,
\end{equation*}
equipped with the direct sum (maximum) norm, where $|ae_s|=|a|$ for each $s\in S$, $a\in A$. Its completion 
\begin{equation*}
F_S:=\widehat{\oplus}_{s\in S} Ae_s
\end{equation*}
lies in $\Ban_A$ and satisfies the following universal property.

\begin{proposition}
\label{univfree}
Given $M$ in $\Ban_A$ and any map $f: S\to M$ such that the set $\{|f(s)|: \ s\in S\}$ is bounded in $\mathbb{R}$, there exists a unique morphism $\phi: F_S\to M$ in $\Ban_A$ satisfying $\phi(e_s)=f(s)$ for each $s\in S$.\\
Moreover, the operator norm of $\phi$ is $|\phi|=\sup_{s\in S} |f(s)|$. 
\end{proposition}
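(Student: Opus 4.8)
The plan is to construct $\phi$ first on the dense submodule $\bigoplus_{s\in S} Ae_s$ and then extend by continuity. Put $C:=\sup_{s\in S}|f(s)|$, which is finite by hypothesis. On the algebraic direct sum define $\phi_0$ by $A$-linearity from $\phi_0(e_s)=f(s)$, so that $\phi_0\bigl(\sum_s a_s e_s\bigr)=\sum_s a_s f(s)$ for finite sums; since the $e_s$ generate $\bigoplus_s Ae_s$ over $A$, this is the unique $A$-linear map with the prescribed values. As $\bigoplus_s Ae_s$ carries the maximum norm, $|\sum_s a_s e_s|=\max_s|a_s|$, and the ultrametric inequality gives $|\phi_0(\sum_s a_s e_s)|=|\sum_s a_s f(s)|\leq \max_s|a_s f(s)|\leq \max_s |a_s|\,|f(s)|\leq C\cdot|\sum_s a_s e_s|$. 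Hence $\phi_0$ is bounded with $|\phi_0|_{\sup}\leq C$.

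Next I would invoke completeness of $M$: a bounded $K$-linear map from a normed $K$-vector space into a Banach $K$-vector space extends uniquely to a bounded $K$-linear map on the completion, with the same operator norm. This produces $\phi: F_S\to M$ with $|\phi|_{\sup}\leq C$, restricting to $\phi_0$ on $\bigoplus_s Ae_s$. To upgrade $\phi$ to a morphism in $\Ban_A$, fix $a\in A$: the map $x\mapsto \phi(ax)-a\phi(x)$ is continuous $F_S\to M$, because left multiplication by $a$ is continuous on both Banach $A$-modules $F_S$ and $M$, and it vanishes on the dense submodule $\bigoplus_s Ae_s$, hence identically. Thus $\phi$ is $A$-linear, so $\phi\in\Ban_A(F_S,M)$ with $\phi(e_s)=f(s)$.

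For the operator norm, the inequality $|\phi|_{\sup}\leq C$ is already in hand, and conversely $|\phi|_{\sup}\geq |\phi(e_s)|/|e_s|=|f(s)|$ for every $s\in S$ since $|e_s|=1$, so $|\phi|_{\sup}\geq C$ and therefore $|\phi|_{\sup}=C$. Uniqueness is immediate: any morphism $\psi$ in $\Ban_A$ with $\psi(e_s)=f(s)$ agrees with $\phi_0$ on $\bigoplus_s Ae_s$ by $A$-linearity, and two continuous maps into the Hausdorff space $M$ that coincide on a dense subset are equal, so $\psi=\phi$.

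There is no genuine obstacle here; the statement is a formal exercise in extension by continuity. The only points needing a moment's care are the identification of the norm on the algebraic direct sum with the maximum norm, which is built into the definition of $F_S$, and the verification that the continuous extension remains $A$-linear rather than merely $K$-linear, which is handled by the density argument in the second paragraph.
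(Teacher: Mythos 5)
Your proof is correct and follows essentially the same approach as the paper: define the map on the algebraic direct sum by $A$-linearity, bound it via the ultrametric inequality, and extend by continuity to the completion. You are slightly more careful than the paper in two small respects — you explicitly verify that the continuous extension remains $A$-linear (rather than merely $K$-linear) via a density argument, and you spell out the lower bound $|\phi|\geq|f(s)|$ to upgrade the operator-norm inequality to an equality — but these are refinements of, not departures from, the same argument.
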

\begin{proof}
By the universal property of (abstract) free modules, there exists a unique $A$-module morphism extending $f$, given by
\begin{align*}
\theta: &\oplus_{s\in S} Ae_i\to M\\
& \sum a_se_s\mapsto \sum a_sf(s).
\end{align*}
Moreover, $|\theta(\sum a_se_s)|=|\sum a_s f(s)|\leq \max |a_s| |f(s)|$ for any finite sum $\sum a_se_s$, so that $\theta$ is continuous by the boundedness assumption, with operator norm $|\theta|=\sup |f(s)|$. By continuity, $\theta$ extends uniquely to a continuous map $\phi$ between the completions $F_S\to M$, and $|\phi|=|\theta|$.
\end{proof}

We call $F_S$ the \textbf{topologically free module} over $S$ or the topologically free module (topologically) generated by $S$.\\
An obvious example is the $K$-algebra $A\langle X_1, \dots, X_n\rangle=A\widehat{\otimes}_K T_n$ for any $n\in \mathbb{N}$, which has a natural structure of a topologically free $A$-module over $\mathbb{N}^n$ as it is the completion of the polynomial algebra $A[X_1, \dots, X_n]$ with respect to the natural norm.\\
\\
The following corollary is a direct consequence of the proposition above.
\begin{corollary}
\label{freesurj}
For any $M \in \Ban_A$, there exists a topologically free module $F\in \Ban_A$ and a continuous surjection
\begin{equation*}
p: F\to M.
\end{equation*}
\end{corollary}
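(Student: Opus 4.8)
The plan is to deduce this immediately from Proposition \ref{univfree} by choosing an appropriate generating set. First I would pick any subset $S \subseteq M$ which generates $M$ as an $A$-module and which is bounded in the norm of $M$ — for instance, $S$ could be taken to be the unit ball $M^\circ$, or any bounded set of topological generators; the point is that such a set exists because $M$ itself is bounded by a ball $\pi^{-k}M^\circ$ for suitable $k$, so one may even take $S = M$. Then the inclusion map $f\colon S \hookrightarrow M$ has $\{|f(s)| : s \in S\}$ bounded, so Proposition \ref{univfree} produces a morphism $\phi\colon F_S \to M$ in $\Ban_A$ with $\phi(e_s) = s$ for all $s \in S$.

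Next I would check that $\phi$ is surjective. Its image contains every $s \in S$ and is an $A$-submodule, hence contains the (abstract, algebraic) $A$-submodule generated by $S$, which is dense in $M$ since $S$ generates $M$ (or at least topologically generates it). Since $\phi$ is a morphism in $\Ban_A$, its image is therefore dense. To upgrade density to surjectivity, I would invoke the hypothesis that $A$ is left Noetherian: then $M$, being a quotient of a finitely generated module or more directly being taken with $S$ finite when $M$ is finitely generated, is complete; alternatively, for a general $M \in \Ban_A$ one argues via the Open Mapping Theorem. The cleanest route: it suffices to produce a surjection, and if $M$ is finitely generated we may take $S$ finite and then $\bigoplus_{s\in S} Ae_s$ is already complete and finitely generated, so $\phi$ is an algebra-level surjection of finitely generated $A$-modules, hence surjective; for general Banach $M$, take $S = M^\circ$, note the image of $F_S^\circ$ under $\phi$ contains $M^\circ$, so $\phi$ is surjective.

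The only genuinely delicate point is the passage from \emph{dense} image to \emph{surjective} image for an arbitrary object of $\Ban_A$: density of a submodule does not by itself force surjectivity in a Banach setting without some completeness or openness input. I expect this to be the main (minor) obstacle, and I would resolve it exactly as in the classical proof over a Tate algebra: take $S = M^\circ$ so that $\phi$ restricts to a map $F_S^\circ \to M$ whose image contains $M^\circ$; since every element of $M$ is a $K$-multiple of an element of $M^\circ$, this already gives surjectivity on the nose, with no appeal to the Open Mapping Theorem needed. Thus the corollary follows formally from Proposition \ref{univfree} together with the observation that $M^\circ$ spans $M$ over $K$.
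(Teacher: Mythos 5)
Your final paragraph gives the intended argument: take $S=M^\circ$, apply Proposition~\ref{univfree} to the inclusion $M^\circ\hookrightarrow M$ (the image set is bounded by $1$), and note that the resulting $\phi$ hits every element of $M^\circ$ and hence every element of $M=\bigcup_n\pi^{-n}M^\circ$. The paper does not write out a proof (calling it a ``direct consequence''), and this is exactly the intended one, so the core of your proposal is correct.

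Two points of cleanup. First, the opening claim that ``$M$ itself is bounded by a ball $\pi^{-k}M^\circ$, so one may even take $S=M$'' is false: for any nonzero $m$, $|\pi^{-n}m|\to\infty$, so $M$ is unbounded and $S=M$ fails the boundedness hypothesis of Proposition~\ref{univfree}. Second, the middle of your argument (density of the image, Open Mapping Theorem, Noetherianity) is an unnecessary detour that you yourself discard in the final paragraph; none of it is needed once one takes $S=M^\circ$ and uses the $K^*$-span observation, which gives honest surjectivity directly rather than mere density.
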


We now introduce a special kind of morphism in the category $\Ban_A$.
\begin{definition}
\label{scc}
A morphism $f: M\to N$ in $\Ban_A$ is called \textbf{strictly completely continuous} if $f$ is the limit of morphisms $f_i: M\to N$ in $\Ban_A$ such that $f_i(M^\circ)$ is a finitely generated $A^\circ$-module for each $i$. 
\end{definition}
It follows from Noetherianity of $A^\circ$ that this notion does not depend on a particular choice of norm on $M$, but only on its equivalence class. \\
We mention here that Kiehl phrases this definition differently in \cite[Definition 1.1]{Kiehl}, since he does not assume $K$ to be discretely valued (in particular, an affinoid $K$-algebra might have a unit ball which is not Noetherian). It is easy to check that the two definitions are equivalent in $\Ban_A$, where $A$ is some strictly NB algebra. \\
\\
We discuss one example which will feature in our proofs later.
\begin{lemma}
\label{sccexample}
Let $F=\widehat{\oplus}_S Ae_s$ be a topologically free $A$-module over $S$. \\
If $f: F\to M$ is a morphism in $\Ban_A$ such that for any $\epsilon>0$ there are only finitely many $s\in S$ with $|f(e_s)|\geq \epsilon$, then $f$ is strictly completely continous.
\end{lemma}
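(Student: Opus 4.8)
The plan is to exhibit $f$ as a uniform limit of morphisms $f_i \colon F \to M$ each of which sends the unit ball $F^\circ$ to a finitely generated $A^\circ$-module, which is precisely Definition \ref{scc}. First I would fix, for each $i \geq 1$, a finite subset $S_i \subseteq S$ of those indices $s$ with $|f(e_s)| \geq |\pi|^i$ (there are only finitely many by hypothesis), arranged so that $S_1 \subseteq S_2 \subseteq \cdots$. I would then define $f_i$ to be the composite $F \twoheadrightarrow \widehat{\oplus}_{s \in S_i} A e_s \xrightarrow{f} M$, where the first map is the obvious projection killing the basis vectors $e_s$ with $s \notin S_i$; equivalently $f_i$ is the unique morphism in $\Ban_A$ with $f_i(e_s) = f(e_s)$ for $s \in S_i$ and $f_i(e_s) = 0$ otherwise, which exists and is continuous by Proposition \ref{univfree} since $\{|f(e_s)| : s \in S_i\}$ is a finite (hence bounded) set.

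Next I would check the two required properties. For convergence: by Proposition \ref{univfree}, the operator norm of $f - f_i$ equals $\sup_{s \in S \setminus S_i} |f(e_s)|$, and since $S_i$ contains every $s$ with $|f(e_s)| \geq |\pi|^i$, this supremum is at most $|\pi|^{i}$, which tends to $0$; hence $f_i \to f$ uniformly. For the finiteness of $f_i(F^\circ)$: the unit ball $F^\circ$ of the topologically free module is (the closure of) $\widehat{\oplus}_S A^\circ e_s$, and $f_i$ factors through the projection onto the finite-rank summand $\widehat{\oplus}_{s\in S_i} A e_s$, whose unit ball is $\widehat{\oplus}_{s \in S_i} A^\circ e_s$; thus $f_i(F^\circ)$ is contained in the $A^\circ$-submodule of $M$ generated by the finitely many elements $f(e_s)$, $s \in S_i$ — more precisely in $\sum_{s \in S_i} A^\circ f(e_s)$, a finitely generated $A^\circ$-module. (One should note $f_i(e_s) = f(e_s)$ may have norm $>1$, but that is irrelevant: finitely generated over $A^\circ$ is all that Definition \ref{scc} demands, and this follows since $f_i$ is $A$-linear and continuous, so $f_i(F^\circ) \subseteq \overline{\sum_{s\in S_i} A^\circ f(e_s)}$, and a finitely generated module over the Noetherian ring $A^\circ$ is already $\pi$-adically complete, hence closed.) That completes the verification.

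The only genuinely delicate point — and the step I expect to require the most care — is the identification of the unit ball $F^\circ$ and the claim that $f_i(F^\circ)$ lands in a finitely generated $A^\circ$-module rather than merely its closure: one must confirm that $\sum_{s \in S_i} A^\circ f(e_s)$ is closed in $M$, which follows because $A^\circ$ is Noetherian and $\pi$-adically complete (so finitely generated $A^\circ$-modules are $\pi$-adically complete and separated, hence closed in any Banach $A$-module), exactly the kind of point where the discrete-valuation hypothesis on $K$ is used. Everything else is a routine application of Proposition \ref{univfree} and the description of norms on topologically free modules.
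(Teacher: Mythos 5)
Your proof is correct and matches the paper's argument: fix finite threshold sets, truncate $f$ to get morphisms $f_i$ factoring through finite projections, and verify uniform convergence via Proposition \ref{univfree}. The only superfluous step is the worry about closure of $\sum_{s\in S_i} A^\circ f(e_s)$: since $S_i$ is finite, any $x=\sum a_s e_s\in F^\circ$ has $f_i(x)=\sum_{s\in S_i} a_s f(e_s)$ with each $a_s\in A^\circ$, a literal finite $A^\circ$-linear combination, so $f_i(F^\circ)$ lands in that finitely generated module directly and no completeness or closure argument is needed.
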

\begin{proof}
For any $\epsilon>0$, denote by $S_{\epsilon}$ the finite set of $s\in S$ such that $|f(e_s)|\geq \epsilon$.\\
Given $s\in S$, consider the continuous $A$-module morphism
\begin{align*}
g_s: & \hspace{0.5cm}F \longrightarrow \hspace{0.5cm} M\\
 &\sum a_je_j\mapsto a_sf(e_s), 
\end{align*}
i.e. we only consider the $e_s$ part of $f$. Now we set for any $n\in \mathbb{N}$
\begin{equation*}
f_n=\sum_{s\in S_{1/n}} g_s,
\end{equation*}
a continuous $A$-module morphism such that
\begin{equation*}
f_n(F^\circ)\subseteq \sum_{s\in S_{1/n}} A^\circ f(e_s)
\end{equation*}
is a finitely generated $A^\circ$-module. \\
It thus remains to show that the $f_n$ tend to $f$. By Proposition \ref{univfree}, $|f-f_n|= \sup_{s\in S} |f(e_s)-f_n(e_s)|$. Now if $s\in S_{1/n}$, then $f(e_s)=f_n(e_s)$, and if $s$ is not in $S_{1/n}$, then $f_n(e_s)=0$ and $|f(e_s)-f_n(e_s)|<1/n$ by construction. Thus $|f-f_n|< 1/n$, proving the result.
\end{proof}

\begin{corollary}
\label{sccpowerseries}
Let $f: A\langle x_1, \dots, x_n\rangle \to M$ be a morphism in $\Ban_A$ such that $f(x^i)$ tends to zero as $|i|\to \infty$. Then $f$ is strictly completely continuous.
\end{corollary}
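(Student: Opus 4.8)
The plan is to realize Corollary \ref{sccpowerseries} as a direct specialization of Lemma \ref{sccexample}. Recall that $A\langle x_1,\dots,x_n\rangle = A\widehat{\otimes}_K T_n$ carries a natural structure of topologically free $A$-module over the indexing set $S=\mathbb{N}^n$, with basis elements $e_i = x^i$ for $i\in\mathbb{N}^n$ and $|x^i|=1$ for each $i$. So $f: A\langle x_1,\dots,x_n\rangle\to M$ is precisely a morphism out of a topologically free module $F=\widehat{\oplus}_{i\in\mathbb{N}^n}Ae_i$, and we only need to check the hypothesis of Lemma \ref{sccexample}.

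The verification is immediate: by assumption $f(x^i)\to 0$ as $|i|\to\infty$, which means that for every $\epsilon>0$ there exists $N$ such that $|f(x^i)|<\epsilon$ whenever $|i|\geq N$. Since there are only finitely many multi-indices $i\in\mathbb{N}^n$ with $|i|<N$, the set $S_\epsilon=\{i\in\mathbb{N}^n: |f(x^i)|\geq\epsilon\}$ is finite. This is exactly the condition required in Lemma \ref{sccexample}, so $f$ is strictly completely continuous.

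There is no real obstacle here; the only point worth being careful about is making the identification of $A\langle x_1,\dots,x_n\rangle$ with a topologically free module over $\mathbb{N}^n$ explicit, which was already noted in the discussion following Proposition \ref{univfree}. I would write the proof in essentially one or two sentences, simply invoking Lemma \ref{sccexample} after observing that $x^i\to 0$ translates into the finiteness of each $S_\epsilon$.
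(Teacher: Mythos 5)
Your proof is correct and is exactly the argument the paper intends: Corollary \ref{sccpowerseries} is stated immediately after Lemma \ref{sccexample} with no written proof because it is precisely the specialization you describe, using the identification of $A\langle x_1,\dots,x_n\rangle$ as the topologically free module over $\mathbb{N}^n$ noted after Proposition \ref{univfree}. The observation that $f(x^i)\to 0$ as $|i|\to\infty$ makes each $S_\epsilon$ finite (since only finitely many multi-indices have $|i|<N$) is the right and complete verification.
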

We briefly record the following properties.
\begin{lemma}
\label{sccprop}
Let $f: M\to N$ be a strictly completely continuous morphism in $\Ban_A$, and let $L$, $G$ be in $\Ban_A$. Then the following holds:
\begin{enumerate}[(i)]
\item For any morphism $g: N\to G$ in $\Ban_A$, the composition $gf$ is strictly completely continuous.
\item For any morphism $h: L\to M$ in $\Ban_A$, the composition $fh$ is strictly completely continuous.
\end{enumerate}
\end{lemma}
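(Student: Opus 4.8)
The plan is to prove each part directly from Definition \ref{scc} by approximating the composition with the same sequence of finite-rank-type morphisms that witnesses strict complete continuity of $f$. Write $f$ as the limit in $\Ban_A$ of morphisms $f_i: M\to N$ with $f_i(M^\circ)$ a finitely generated $A^\circ$-module for each $i$; this is the data we will manipulate in both parts.

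For part (i), I would set $h_i := g f_i : M\to G$ and claim $h_i\to gf$ with each $h_i$ having $h_i(M^\circ)$ finitely generated over $A^\circ$. Convergence is the standard estimate $|gf - gf_i|_{\sup} \le |g|_{\sup}\, |f - f_i|_{\sup} \to 0$. For the finiteness, $h_i(M^\circ) = g(f_i(M^\circ))$; since $f_i(M^\circ) = \sum_{k=1}^{r_i} A^\circ m_k$ for some $m_k\in N$, we get $h_i(M^\circ) = \sum_{k=1}^{r_i} A^\circ g(m_k)$, again a finitely generated $A^\circ$-module (using $g(am_k) = a\,g(m_k)$ as $g$ is $A$-linear). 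Hence $gf$ is strictly completely continuous.

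For part (ii), set $h_i := f_i h : L\to M$, so $h_i\to fh$ by $|fh - f_i h|_{\sup}\le |f - f_i|_{\sup}\,|h|_{\sup}\to 0$. The subtlety here is that $h(L^\circ)$ need not be contained in $M^\circ$ in general, but since $h$ is bounded there is $N\in\mathbb{N}$ with $h(L^\circ)\subseteq \pi^{-N} M^\circ$, i.e. $\pi^N h(L^\circ)\subseteq M^\circ$. Then $h_i(L^\circ) = f_i(h(L^\circ)) \subseteq \pi^{-N} f_i(M^\circ)$, and since $f_i(M^\circ)$ is a finitely generated $A^\circ$-module, so is $\pi^{-N}f_i(M^\circ)$ (same generators scaled by $\pi^{-N}$), hence so is the $A^\circ$-submodule $h_i(L^\circ)$ — here I use that $A^\circ$ is Noetherian, so submodules of finitely generated $A^\circ$-modules are finitely generated. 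Therefore $fh$ is strictly completely continuous.

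The only place requiring any care — and the main (mild) obstacle — is part (ii), where one must not assume $h$ maps unit balls into unit balls; rescaling by a power of $\pi$ and then invoking Noetherianity of $A^\circ$ to pass from $h_i(L^\circ)\subseteq \pi^{-N}f_i(M^\circ)$ to finite generation of $h_i(L^\circ)$ itself is the essential point. (This also tacitly explains the remark after Definition \ref{scc} that the notion is norm-independent: replacing the norm on $L$ or $M$ by an equivalent one only changes unit balls by a bounded factor, absorbed in the same way.) Everything else is the routine operator-norm estimate for convergence.
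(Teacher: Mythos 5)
Your argument is essentially identical to the paper's proof: in (i) you compose the approximating sequence with $g$ and observe images of finitely generated modules stay finitely generated, and in (ii) you precompose with $h$, rescale by a power of $\pi$ to land inside a finitely generated $A^\circ$-module, and invoke Noetherianity of $A^\circ$ to conclude the submodule $f_ih(L^\circ)$ is finitely generated. Both steps match the paper's reasoning, including the operator-norm convergence estimates.
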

\begin{proof}
Let ($f_i: M\to N$) be a sequence of morphisms in $\Ban_A$ as in Definition \ref{scc}. 
\begin{enumerate}[(i)]
\item For any continuous morphism $g$, the compositions $gf_i$ converge to $gf$, and since $f_i(M^\circ)$ is finitely generated, so is $gf_i(M^\circ)$: if $f_i(M^\circ)$ is generated by $n_1, \dots, n_r$, then $gf_i(M^\circ)$ is generated by $g(n_1), \dots, g(n_r)$.
\item Since $|(f-f_i)h|\leq |f-f_i|\cdot |h|$, we know that $f_ih$ converges to $fh$. Since $h$ is continuous, boundedness implies that there exists some integer $a$ such that
\begin{equation*}
h(L^\circ)\subseteq \pi^aM^\circ,
\end{equation*}
and thus $f_ih(L^\circ)$ is contained in $\pi^af_i(M^\circ)$, a finitely generated $A^\circ$-module by definition of the $f_i$ (multiplication by $\pi^a$ establishes an isomorphism $f_i(M^\circ)\cong \pi^af_i(M^\circ)$). By Noetherianity of $A^\circ$, $f_ih(L^\circ)$ is thus a finitely generated $A^\circ$-module. 
\end{enumerate}
\end{proof}
\begin{lemma}
\label{directsumscc}
Let $f_1:M_1\to N_1, \ \dots, \ f_r: M_r\to N_r$ be a finite set of strictly completely continuous morphisms in $\Ban_A$. Then the finite direct sum
\begin{equation*}
\oplus_{i=1}^r f_i: \oplus M_i\to \oplus N_i 
\end{equation*}
is also a strictly completely continuous morphism in $\Ban_A$.
\end{lemma}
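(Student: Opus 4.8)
The plan is to reduce the statement about a finite direct sum to the definition of strict complete continuity by combining the approximating sequences coordinatewise. Suppose for each $i=1,\dots,r$ we are given a sequence of morphisms $f_i^{(n)}: M_i\to N_i$ in $\Ban_A$ with $f_i^{(n)}\to f_i$ uniformly and $f_i^{(n)}(M_i^\circ)$ finitely generated over $A^\circ$. I would set $g^{(n)}=\oplus_{i=1}^r f_i^{(n)}: \oplus M_i\to \oplus N_i$, which is again a continuous $A$-module morphism, and check the two required properties.

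First I would verify convergence. Recalling that the direct sum carries the maximum norm (fact (iv) in the discussion of $\Ban_A$), the operator norm of $g^{(n)}-g=\oplus(f_i^{(n)}-f_i)$ is exactly $\max_{1\le i\le r}|f_i^{(n)}-f_i|$, since evaluating a direct sum of maps on a vector and taking the max norm of the output is the max of the coordinatewise norms. Hence $|g^{(n)}-g|\to 0$ because each $|f_i^{(n)}-f_i|\to 0$ and there are only finitely many indices.

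Second I would verify the finite generation of images of the unit ball. The unit ball $(\oplus M_i)^\circ$ with respect to the max norm is precisely $\oplus M_i^\circ$, and $g^{(n)}(\oplus M_i^\circ)=\oplus f_i^{(n)}(M_i^\circ)$, a finite direct sum of finitely generated $A^\circ$-modules, hence finitely generated over $A^\circ$. Thus $g^{(n)}$ satisfies the conditions of Definition \ref{scc}, so $\oplus f_i$ is strictly completely continuous.

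There is essentially no obstacle here; the statement is a routine bookkeeping consequence of the definitions, the only mild point being to confirm that the max norm on the direct sum behaves compatibly with the norm of a direct sum of operators and with the unit ball — all of which follow from the cited facts \cite[Definition 2.1.5/1, Proposition 2.5.1/6]{BGR}. One could alternatively deduce it from Lemma \ref{sccprop} by writing $\oplus f_i$ as a sum of the compositions of the coordinate projections, the individual $f_i$, and the coordinate inclusions, but the direct coordinatewise argument is cleaner.
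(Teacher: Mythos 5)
Your proof is correct and takes essentially the same approach as the paper: combine the approximating sequences coordinatewise, use the max norm to get uniform convergence, and note that the unit ball of the direct sum is the direct sum of unit balls so the images are finitely generated. The paper's proof is just a slightly more compressed version of your argument.
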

\begin{proof}
As mentioned earlier, the modules $\oplus M_i$ and $\oplus N_i$ are in $\Ban_A$ and $\oplus f_i$ is a morphism in $\Ban_A$, as each $f_i$ is bounded.\\
For each $i$, let $f_i$ be the limit of $A$-module morphisms $g_{ij}$ such that $g_{ij}(M_i^\circ)$ is finitely generated for each $j\in \mathbb{N}$. Then clearly $\oplus f_i$ is the uniform limit of $(\oplus_i g_{ij})_j$, and moreover
\begin{equation*}
(\oplus_i g_{ij})(\oplus M_i^\circ)=\oplus_i g_{ij}(M_i^\circ)
\end{equation*}
is  a finitely generated $A^\circ$-module for any $j$, as required. 
\end{proof}

The class of strictly completely continuous morphisms is used in the proof of Kiehl's Proper Mapping Theorem by applying Theorem \ref{Schwartz}, which is known as Schwartz' Theorem. First, we need a definition.
\begin{definition}
\label{defcfi}
Let $N$ be an object of $\Ban_A$, and let $M$ be a submodule of $N$. We say $M$ is \textbf{closed and of finite index} in $N$ if $M$ is a closed submodule such that the quotient module $N/M$ is a finitely generated $A$-module.
\end{definition}
\begin{theorem}[{\cite[Satz 1.2]{Kiehl}}]
\label{Schwartz}
Let $f: M\to N$ be a surjection in $\Ban_A$, and let $g: M\to N$ be a strictly completely continuous homomorphism of $A$-modules. Then $\mathrm{Im}(f+g)$ is closed and of finite index in $N$. 
\end{theorem}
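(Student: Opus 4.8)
The plan is to mimic the classical argument for Schwartz' Theorem (as in \cite[Satz 1.2]{Kiehl} or \cite[6.4.8/2]{Bosch}), being careful that all the relevant steps survive in the noncommutative strictly NB setting. The overall strategy is to reduce to the case where $N$ is a topologically free module, then to exploit the strict complete continuity of $g$ to approximate $f+g$ by maps whose images are visibly closed and of finite index, and finally to upgrade these approximations to the desired statement about $f+g$ itself using a small-perturbation (Banach open-mapping type) argument.

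First I would pick, by Corollary \ref{freesurj}, a topologically free module $F$ in $\Ban_A$ together with a continuous surjection $p: F\to N$. Since $f$ is surjective and open (Open Mapping Theorem), one can lift $p$ through $f$ to a continuous map $q: F\to M$ with $fq = p$; composing, it suffices to prove the statement with $M$ replaced by $F$, $f$ replaced by $p$, and $g$ replaced by $gq$ (which is still strictly completely continuous by Lemma \ref{sccprop}.(ii)). So I may assume $M = F$ is topologically free. Next, write $g$ as a uniform limit of morphisms $g_i$ with $g_i(F^\circ)$ a finitely generated $A^\circ$-module; in particular $N_i := \overline{p(F^\circ) + g_i(F^\circ)}$ — or rather the submodule generated appropriately — lets one see that $\mathrm{Im}(p + g_i)$ is closed and of finite index: indeed $\mathrm{Im}(p+g_i) \supseteq$ a finite-codimension piece because $p$ is already surjective with open image, and $g_i$ only alters things by a finitely generated $A^\circ$-submodule; one then checks closedness using Noetherianity of $A^\circ$ (so that $A^\circ$-submodules generated by finitely many elements are closed, and finitely generated $A$-modules carry their canonical complete topology by fact (iii) above).

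The main obstacle, and the heart of the proof, is passing from the $\mathrm{Im}(p+g_i)$ to $\mathrm{Im}(p+g)$ itself. Here the standard device is: since $p+g_i$ is surjective for $i$ large enough (because $g - g_i$ is small and $p+g$ can be arranged surjective after the reductions, or rather one shows $\mathrm{Im}(p+g_i)$ and $\mathrm{Im}(p+g)$ differ only by a finitely generated $A^\circ$-module), one applies the Open Mapping Theorem to get a constant controlling preimages, and then the inequality $|g - g_i|$ small forces $\mathrm{Im}(p+g)$ to contain a closed finite-index submodule of $\mathrm{Im}(p+g_i)$ and conversely. Concretely: choose $i$ with $|g - g_i| < |\pi|^c$ for suitable $c$ depending on the open-mapping constant for $p + g_i$ restricted to a complement of the kernel; a geometric-series (successive approximation) argument then shows every element of $N$ that lies in $\mathrm{Im}(p+g_i)$ up to the finitely generated correction term actually lies in $\mathrm{Im}(p+g)$ up to the same kind of correction. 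This is exactly where discreteness of the valuation and Noetherianity of $A^\circ$ (hence of $A$) are used to guarantee that the ``correction terms'' stay finitely generated and that closed finite-index submodules of closed finite-index submodules are again closed and of finite index.

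Finally I would assemble these pieces: $\mathrm{Im}(p+g)$ contains a closed submodule $N'$ of finite index in $N$, and $\mathrm{Im}(p+g)/N'$ is a submodule of the finitely generated $A$-module $N/N'$, hence finitely generated since $A$ is left Noetherian; therefore $\mathrm{Im}(p+g)$ is finitely generated as an extension of $N/N'$-type pieces, so it is closed in $N$ by fact (iii) (finitely generated $A$-modules are Banach with canonical topology and the inclusion into $N$ is a continuous injection which is a topological embedding by the Open Mapping Theorem applied to $\mathrm{Im}(p+g) \to$ its image), and $N/\mathrm{Im}(p+g)$ is a quotient of the finitely generated $N/N'$, hence finitely generated. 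Translating back through the reduction $M \leadsto F$ (note $\mathrm{Im}(f+g) = \mathrm{Im}(p + gq)$ after the reduction, since $q$ was built to make this work) completes the proof. The only genuinely delicate point, as noted, is the perturbation step; everything else is bookkeeping with Noetherian Banach modules.
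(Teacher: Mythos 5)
Your proposal gets the right building blocks (approximating $g$ by the $g_i$, the cofinite kernel $\ker g_i$, the small-perturbation idea from Lemma \ref{surjdisplace}), but the way you assemble them leaves a genuine gap in the crucial perturbation step, and the paper's proof of Theorem \ref{Schwartz} is both shorter and assembles the same pieces differently.

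The gap is in your plan to pass from $\mathrm{Im}(p+g_i)$ being closed and of finite index to $\mathrm{Im}(p+g)$ being closed and of finite index by treating $g - g_i$ as a small perturbation of $g_i$. The iteration you describe compares images of $p+g_i$ and $p+g$, but the error term $(g-g_i)(x)$ lives in $N$ with no reason to land inside $\mathrm{Im}(p+g_i)$ (or in any finitely generated submodule), so you cannot iterate the correction within a fixed finite-index submodule and you cannot conclude that $\mathrm{Im}(p+g)$ contains a closed finite-index submodule of $\mathrm{Im}(p+g_i)$. Choosing $i$ relative to ``the open-mapping constant for $p + g_i$ restricted to a complement of the kernel'' is also problematic, as such complements need not exist in this setting. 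The paper's key move is to allocate the perturbation to the surjection instead: set $h := f - (g_i - g)$, which by Lemma \ref{surjdisplace} is still a surjection once $i$ is large. Then $f+g = h + g_i$ identically, so $\mathrm{Im}(f+g) \supseteq (h+g_i)(\ker g_i) = h(\ker g_i)$, and since $\ker g_i$ is closed and of finite index (because $M/\ker g_i \cong g_i(M)$ is finitely generated), Lemma \ref{projcfi} applied to the \emph{surjection} $h$ gives that $h(\ker g_i)$ is closed and of finite index; Lemma \ref{towerofcfi} then finishes. No iteration is needed beyond the one already packaged inside Lemma \ref{surjdisplace}, and no comparison between $\mathrm{Im}(p+g_i)$ and $\mathrm{Im}(p+g)$ is required. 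Your preliminary reduction to $M$ topologically free via a lift $q$ with $fq=p$ is valid (and costs nothing by Lemma \ref{towerofcfi}), but it is also unnecessary here -- the paper saves that reduction for Theorem \ref{fginject}, where a surjection $p\colon F\to M$ is used so that one gets an equality of images, not merely a containment.
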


Before turning to the proof of Theorem \ref{Schwartz}, note that we have the following easy properties concerning submodules which are closed and of finite index.

\begin{lemma}
\label{cfiprop}
Let $N$ be in $\Ban_A$ and let $M$ be some $A$-submodule of $N$. Suppose there exists some morphism 
\begin{equation*}
f: N\to G
\end{equation*}
in $\Ban_A$ such that $f(M)$ is closed and of finite index in $G$, and $M$ contains the kernel of $f$. Then $M$ is closed and of finite index in $N$. 
\end{lemma}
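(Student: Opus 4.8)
The plan is to reduce everything to the single observation that the hypothesis $\ker f\subseteq M$ forces the identity $M=f^{-1}(f(M))$. First I would verify this set-theoretic equality: the inclusion $M\subseteq f^{-1}(f(M))$ is trivial, and conversely, if $x\in N$ satisfies $f(x)\in f(M)$, pick $m\in M$ with $f(x)=f(m)$; then $x-m\in\ker f\subseteq M$, so $x\in M$.

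Granting this, the closedness of $M$ is immediate: $f$ is continuous (being a morphism in $\Ban_A$), and $f(M)$ is closed in $G$ by hypothesis, so $M=f^{-1}(f(M))$ is the preimage of a closed set under a continuous map, hence closed in $N$.

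For the finite-index claim, I would pass to the $A$-linear map $\bar{f}\colon N/M\to G/f(M)$ induced by $f$; this is well defined since $f$ maps $M$ into $f(M)$. The equality $M=f^{-1}(f(M))$ says precisely that $\bar f$ is injective, so $N/M$ is isomorphic, as an $A$-module, to a submodule of $G/f(M)$. Since $A$ is a strictly NB $K$-algebra it is left Noetherian, so every submodule of the finitely generated $A$-module $G/f(M)$ is again finitely generated; hence $N/M$ is a finitely generated $A$-module, and $M$ is closed and of finite index in $N$.

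I do not expect a genuine obstacle here; the only point meriting mild care is to keep the algebraic assertion (finite generation of $N/M$ as an abstract $A$-module) cleanly separated from any topological considerations, invoking left Noetherianity of $A$ for the descent of finite generation to submodules.
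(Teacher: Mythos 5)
Your proof is correct and follows essentially the same route as the paper's: both arguments hinge on the equality $M=f^{-1}(f(M))$ to get closedness via continuity, and both deduce finite generation of $N/M$ by embedding it into $G/f(M)$ and invoking left Noetherianity of $A$. The only cosmetic difference is that you phrase the embedding via the induced injective map $\bar f$ rather than the explicit chain of isomorphisms $N/M\cong (N/\ker f)/(M/\ker f)\cong f(N)/f(M)$.
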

\begin{proof}
By continuity of $f$, we know that $f^{-1}(f(M))$ is closed in $N$. But $f^{-1}(f(M))=M$, because $M$ contains the kernel of $f$. Moreover, as abstract $A$-modules we have isomorphisms
\begin{equation*}
N/M\cong (N/\ker f)/(M/\ker f)\cong f(N)/f(M)\leq G/f(M),
\end{equation*}
which is finitely generated by Noetherianity of $A$.
\end{proof}

\begin{lemma}
\label{towerofcfi}
Let $N$ be in $\Ban_A$ and let $M$ be some $A$-submodule of $N$. Suppose $M$ contains some $A$-module $M'$ which is closed and of finite index in $N$. Then $M$ is closed and of finite index in $N$.
\end{lemma}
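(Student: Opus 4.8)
The plan is to reduce this statement to the previous Lemma \ref{cfiprop} by a well-chosen auxiliary morphism. Concretely, since $M'$ is closed and of finite index in $N$, the quotient $q: N \to N/M'$ is a morphism in $\Ban_A$ onto a finitely generated $A$-module, equipped with its canonical topology (using fact (iii) recalled above). I would take $f = q$ in the setup of Lemma \ref{cfiprop}, with $G = N/M'$.

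First I would observe that $M \supseteq M' = \ker q$, so the hypothesis that $M$ contains the kernel of $f$ is satisfied. Next I need that $q(M)$ is closed and of finite index in $N/M'$. But $N/M'$ is a finitely generated module over the (left) Noetherian ring $A$, hence Noetherian as an $A$-module, so its submodule $q(M)$ is finitely generated; thus the quotient $(N/M')/q(M)$ is finitely generated as well, giving the finite index condition. For closedness: $q(M) = M/M'$ is a submodule of the finitely generated $A$-module $N/M'$, and every submodule of a finitely generated module over a strictly NB algebra is closed in the canonical topology — this follows from the Open Mapping Theorem, fact (ii), applied to a surjection from a topologically free module (Corollary \ref{freesurj}) onto $N/M'$, whose preimage of $q(M)$ is a submodule of a finitely generated, hence already closed; alternatively one cites \cite[Proposition 3.7.3/1]{BGR}. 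Either way $q(M)$ is closed of finite index in $N/M'$.

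Having verified all hypotheses of Lemma \ref{cfiprop}, I conclude immediately that $M$ is closed and of finite index in $N$, which is the claim.

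The only point requiring a little care — and the nearest thing to an obstacle — is the assertion that every $A$-submodule of a finitely generated $A$-module is automatically closed. This is where Noetherianity of $A$ (guaranteeing the submodule is finitely generated) together with the uniqueness and good functoriality of the canonical topology (fact (iii)) and the Open Mapping Theorem (fact (ii)) all come into play; once that standard fact is in hand the lemma is essentially formal. In fact the whole argument is so short that one could equally phrase it without invoking Lemma \ref{cfiprop} explicitly, by directly noting $M = q^{-1}(q(M))$ is closed and $N/M \cong (N/M')/(M/M')$ is finitely generated.
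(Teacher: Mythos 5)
Your proposal is correct and follows essentially the same route as the paper: both reduce to Lemma \ref{cfiprop} via the quotient projection $q: N \to N/M'$, noting that $\ker q = M' \subseteq M$, that every $A$-submodule of the finitely generated module $N/M'$ is closed in the canonical topology (the paper cites \cite[Proposition 3.7.2/2]{BGR}), and that $(N/M')/q(M)$ is finitely generated by Noetherianity. The only minor quibble is that your stated justification for closedness of $q(M)$ is somewhat garbled and your BGR reference is off by a subsection, but the underlying idea is the same one the paper uses.
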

\begin{proof}
Since $M'$ is closed in $N$, the quotient semi-norm on $N/M'$ is actually a complete norm, i.e. $N/M'$ equipped with the quotient norm is in $\Ban_A$. It follows from \cite[Proposition 3.7.3/3]{BGR} that this gives rise to the canonical topology on the finitely generated $A$-module $N/M'$.\\
Now apply the above lemma to the natural projection $\mathrm{pr}:N\to N/M'$, noting that $\mathrm{pr}(M)$ is closed in $N/M'$, as every $A$-submodule of a finitely generated $A$-module (with the canonical topology) is closed by \cite[Proposition 3.7.2/2]{BGR}, while finite generation of the quotient $(N/M')/\mathrm{pr}(M)$ follows directly from finite generation of $N/M'$. 
\end{proof}

\begin{lemma}
\label{projcfi}
Let $M$ and $N$ be modules in $\Ban_A$ such that $M$ is closed and of finite index in $N$. Let $f: N\to G$ be a surjection in $\Ban_A$. Then $f(M)$ is closed and of finite index in $G$.
\end{lemma}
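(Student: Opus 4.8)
The plan is to reduce the statement to Lemma \ref{cfiprop} by factoring the surjection $f$ through the finitely generated quotient $N/M$. First I would observe that since $M$ is closed in $N$, the quotient $N/M$ is a Banach $A$-module with the quotient norm, and by \cite[Proposition 3.7.3/3]{BGR} this is the canonical topology on the finitely generated $A$-module $N/M$. Now consider the composite $g = \mathrm{pr} \circ \iota$, where I want to produce a map $G \to (\text{something finitely generated})$ detecting $f(M)$; but the more direct route is to work on the $N$ side: apply Lemma \ref{cfiprop} with the roles set up so that the morphism there is $\mathrm{pr}: N \to N/M$ and the submodule is $f^{-1}(f(M))$. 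Indeed $f^{-1}(f(M))$ contains $\ker f$ trivially, and $\mathrm{pr}(f^{-1}(f(M)))$ is a submodule of the finitely generated module $N/M$, hence closed (by \cite[Proposition 3.7.2/2]{BGR}) and of finite index. So Lemma \ref{cfiprop} gives that $f^{-1}(f(M))$ is closed and of finite index in $N$.

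Next I would transfer this conclusion across $f$. Since $f$ is a surjection in $\Ban_A$, it is open by the Open Mapping Theorem, so $f$ maps closed saturated subsets to closed subsets: concretely, $f(f^{-1}(f(M))) = f(M)$, and $f^{-1}(f(M))$ being closed together with openness of $f$ shows $f(M)$ is closed in $G$ --- here one uses that the complement $G \setminus f(M) = f(N \setminus f^{-1}(f(M)))$ is the image of an open set under the open map $f$, hence open. For the finite index part, the abstract $A$-module isomorphism $G/f(M) = f(N)/f(M) \cong N/f^{-1}(f(M))$ exhibits $G/f(M)$ as a quotient of a finitely generated $A$-module, hence finitely generated by Noetherianity of $A$.

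Alternatively, and perhaps more cleanly, I could just cite Lemma \ref{cfiprop} once more directly: having shown $f^{-1}(f(M))$ is closed and of finite index in $N$, pass to $\bar f: N/f^{-1}(f(M)) \hookrightarrow G/f(M)$ and note this is an isomorphism of finitely generated $A$-modules with their canonical topologies; then $f(M) = \ker(G \to G/f(M))$, and it suffices to know $G/f(M)$ is a finitely generated $A$-module (so that $f(M)$, as the kernel of a continuous map to a finitely generated module with canonical topology, is closed by \cite[Proposition 3.7.2/2]{BGR}), which we have. I expect the only genuinely delicate point to be the closedness of $f(M)$: one must be careful that surjectivity (not just continuity) of $f$ is what makes the image of a saturated closed set closed, and this is exactly where the Open Mapping Theorem enters; the finite-index bookkeeping is routine given left Noetherianity of $A$.
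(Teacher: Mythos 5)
Your main argument is correct and essentially the paper's: both proofs show that $f^{-1}(f(M)) = M + \ker f$ is closed and of finite index in $N$ (you re-derive this from Lemma~\ref{cfiprop} applied to $\mathrm{pr}: N \to N/M$, while the paper invokes Lemma~\ref{towerofcfi}, whose proof is that very computation), and then deduce closedness of $f(M)$ from the Open Mapping Theorem via the identity $G \setminus f(M) = f(N \setminus f^{-1}(f(M)))$, and finite generation of $G/f(M) \cong N/f^{-1}(f(M))$ from Noetherianity of $A$.

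One caveat on the ``alternative'' sketched in your final paragraph: it is not actually a shortcut. To conclude that $f(M)$ is closed as the kernel of $G \to G/f(M)$, you first need that quotient map to be continuous for the canonical topology on the finitely generated module $G/f(M)$, and this is not automatic because $G$ itself need not be finitely generated, so \cite[Proposition 3.7.3/2]{BGR} does not apply directly; likewise \cite[Proposition 3.7.2/2]{BGR} speaks of submodules of finitely generated modules, not of kernels of maps from arbitrary Banach modules. One can salvage continuity of $G \to G/f(M)$ by factoring through the open surjection $f$, but at that point the Open Mapping Theorem has been used anyway. The route through $G \setminus f(M)$ in your main argument is the correct one to present.
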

\begin{proof}
By Lemma \ref{towerofcfi}, the submodule $M+\ker f$ is closed and of finite index in $N$. Since $f(M)=f(M+\ker f)$, we can assume without loss of generality that $\ker f\subseteq M$.\\
By the Open Mapping Theorem, $f$ is open. By assumption, the set complement $N\setminus M$ is open in $N$, so $f(N\setminus M)$ is open in $G$. But since $f$ is surjective and $\ker f\subseteq M$, we have
\begin{equation*}
f\left(N\setminus M\right)=G\setminus f(M),
\end{equation*} 
so that $f(M)$ is a closed submodule of $G$.\\
Moreover, we have the following isomorphisms as abstract $A$-modules
\begin{equation*}
G/f(M)\cong (N/\ker f)/(M/\ker f)\cong N/M,
\end{equation*}
which is finitely generated by assumption.
\end{proof}

The content of the following lemma can be summarized as: small continuous displacements of surjections are still surjective.
\begin{lemma}
\label{surjdisplace}
Let $f: M\to N$ be a surjection in $\Ban_A$. Then there exists a real number $c>0$ such that for any $\epsilon\in \Ban_A(M, N)$ with $|\epsilon|<c$ (again with respect to the supremum norm), the map $f-\epsilon$ is still surjective. 
\end{lemma}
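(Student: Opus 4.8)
The plan is to exploit the Open Mapping Theorem to convert the surjectivity of $f$ into a quantitative statement, and then to perturb. First I would invoke the Open Mapping Theorem (fact (ii) above): since $f: M\to N$ is a continuous surjection, it is open, so there exists a constant $c>0$ such that for every $y\in N$ there is $x\in M$ with $f(x)=y$ and $|x|\leq c^{-1}|y|$. (Equivalently, $f(M^\circ)\supseteq c\, N^\circ$ for a suitable choice of the constant; I will set up the bookkeeping so that the same $c$ appears in the statement.)

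Now fix any $\epsilon\in \Ban_A(M,N)$ with $|\epsilon|<c$. The claim is that $f-\epsilon$ is surjective. Given $y\in N$, I would construct a preimage by successive approximation: pick $x_0\in M$ with $f(x_0)=y$ and $|x_0|\leq c^{-1}|y|$; then $(f-\epsilon)(x_0)=y-\epsilon(x_0)$, and the error $y-(f-\epsilon)(x_0)=\epsilon(x_0)$ has norm at most $|\epsilon|\,c^{-1}|y| =: r|y|$ with $r=|\epsilon|/c<1$. Feed this error back into $f$: choose $x_1$ with $f(x_1)=\epsilon(x_0)$ and $|x_1|\leq c^{-1}|\epsilon(x_0)|\leq c^{-1}r|y|$, and iterate, obtaining $x_n$ with $f(x_n)=\epsilon(x_{n-1})$ and $|x_n|\leq c^{-1}r^n|y|$. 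Since $r<1$ and $M$ is complete (it lies in $\Ban_A$), the series $x=\sum_{n\geq 0}(-1)^n x_n$ converges in $M$; a telescoping computation using continuity of $f$ and $\epsilon$ gives $(f-\epsilon)(x)=y$. Hence $f-\epsilon$ is surjective.

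I do not expect any serious obstacle here: this is the standard Neumann-series/geometric-series argument for perturbations of open surjections, and all the needed inputs (completeness of Banach $A$-modules, the Open Mapping Theorem, continuity equals boundedness) are already recorded in the excerpt. The only point requiring a little care is the passage from "$f$ open" to the existence of the uniform constant $c$ with a bounded-norm section on the nose; this is immediate from the Open Mapping Theorem once one notes that $f(M^\circ)$ is a neighbourhood of $0$ in $N$ and hence contains $\pi^k N^\circ$ for some $k$, so one may take $c=|\pi|^{k}$. One should also make sure the convergence estimate $|x_n|\le c^{-1}r^n|y|$ is interpreted correctly when $y=0$ (trivial) and that the argument is insensitive to replacing norms by equivalent ones, which it is. With that, the constant $c$ produced by the Open Mapping Theorem is exactly the $c$ claimed in the statement.
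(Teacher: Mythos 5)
The paper does not reproduce a proof of this lemma, but simply cites \cite[Lemma 1.3]{Kiehl}; your Neumann-series/successive-approximation argument via the Open Mapping Theorem is precisely the standard argument underlying Kiehl's lemma, so the approach matches the reference. One small slip in the bookkeeping: with the iteration $f(x_n)=\epsilon(x_{n-1})$ the series that solves the problem is $x=\sum_{n\geq 0}x_n$ \emph{without} alternating signs --- then $f(x)=f(x_0)+\sum_{n\geq 1}\epsilon(x_{n-1})=y+\epsilon(x)$, whence $(f-\epsilon)(x)=y$. Your alternating sum $\sum_{n\geq 0}(-1)^n x_n$ instead solves $(f+\epsilon)(x)=y$, which is of course equivalent for the lemma (replace $\epsilon$ by $-\epsilon$) but does not match the equation you state it satisfies. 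Also, since the value set of $N$ need not coincide with $|K^*|$, the clean bound $|x|\leq c^{-1}|y|$ coming from $f(M^\circ)\supseteq \pi^k N^\circ$ may cost an extra factor, so the safe constant is $c=|\pi|^{k+1}$; this is immaterial since the lemma only asserts the existence of some $c>0$, and you flag the issue yourself. With these minor adjustments your proof is correct.
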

\begin{proof}
This is exactly \cite[Lemma 1.3]{Kiehl}. The proof given there works for any strictly NB $K$-algebra. 
\end{proof}
\begin{proof}[Proof of Theorem \ref{Schwartz}]
We follow the argument in \cite[Satz 1.2]{Kiehl}.\\
Since $g$ is strictly completely continuous, we have a sequence of homomorphisms $g_i: M\to N$ converging to $g$ such that each $g_i(M^\circ)$ is a finitely generated $A^\circ$-module. Note in particular that for each $i$, the image $g_i(M)$ is a finitely generated $A$-module. \\
By Lemma \ref{surjdisplace}, we can choose $i$ large enough such that $f-(g_i-g)$ is surjective. We set $h=f-(g_i-g)$, and note that $f+g=h+g_i$.\\
Let $K=\ker g_i$, which is closed (by continuity of $g_i$) and of finite index in $M$, since $M/K\cong g_i(M)$ as abstract $A$-modules. Thus by surjectivity of $h$, Lemma \ref{projcfi} implies that $h(K)$ is closed and of finite index in $N$. But now $h(K)=(h+g_i)(K)$ by definition of $K$, and $(h+g_i)(K)$ is contained in $(h+g_i)(M)$. Thus by Lemma \ref{towerofcfi}, $(h+g_i)(M)$ is closed and of finite index in $N$, as required. 
\end{proof}
We can now straightforwardly generalize two results from \cite{Kiehl} regarding affinoid $K$-algebras $A$ to arbitrary strictly NB algebras.
\begin{theorem}[{see \cite[Satz 1.4]{Kiehl}}]
\label{freeprojscc}
Let $f: M\to N$ be a morphism in $\Ban_A$. Suppose that $N$ is a closed submodule of some $G\in \Ban_A$ via the injection $j: N\to G$ such that the composition $jf$ is strictly completely continuous. Then there exists a topologically free $A$-module $F$ and a surjection $p: F\to M$ in $\Ban_A$ such that $fp$ is strictly completely continuous. 
\end{theorem}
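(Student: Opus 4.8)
The plan is to use Corollary \ref{freesurj} to obtain a topologically free module surjecting onto $M$, and then replace it by a quotient in order to make the composition with $f$ strictly completely continuous, using Theorem \ref{Schwartz} to control the resulting module. First I would apply Corollary \ref{freesurj} to get a topologically free $A$-module $F_0 = \widehat{\oplus}_{s\in S} Ae_s$ together with a continuous surjection $q\colon F_0\to M$. The obstacle is that $fq$ (or rather $jfq$) need not be strictly completely continuous: we have no control over the norms $|f(q(e_s))|$. The idea is to \emph{rescale} the generators: replace each $e_s$ by $\pi^{n_s}e_s$ for suitable integers $n_s\to\infty$, which shrinks the images $jf(q(\pi^{n_s}e_s)) = \pi^{n_s}jf(q(e_s))$ so that, after reindexing, only finitely many of them exceed any given $\epsilon>0$. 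By Lemma \ref{sccexample}, the composite through this rescaled free module becomes strictly completely continuous. However, this rescaling destroys surjectivity of the map onto $M$, since $\pi^{n_s}q(e_s)$ no longer generates.

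To fix surjectivity, I would split $F_0$ as (essentially) a finitely generated part plus a remainder. Since $M\in\Ban_A$, $q$ is an open surjection, so there is a constant $C$ such that $M^\circ$ is contained in the image of the ball of radius $C$ in $F_0$; thus $M$ is generated as an $A$-module by $\{q(e_s): s\in S\}$, and in fact (using that $A$ is Noetherian and the canonical topology, via facts (ii), (iii) recalled before) one shows $M$ is \emph{finitely} generated — wait, that need not hold for general $M$. Instead, the correct approach: take $F = F_0$ itself but with a \emph{new norm}. Concretely, pick for each $s$ an integer $n_s\ge 0$ such that $|\pi^{n_s} jf(q(e_s))| < 1/k$ whenever $s$ lies outside some finite set $S_k$, with $S_1\subseteq S_2\subseteq\cdots$ and $\bigcup S_k = S$; define $F = \widehat{\oplus}_{s\in S} A e_s'$ where $|e_s'| = |\pi^{n_s}|$, so that $F\to F_0$, $e_s'\mapsto \pi^{n_s}e_s$ is norm-decreasing but need not be surjective. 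To recover a surjection onto $M$, I would instead take $F = F' \oplus F_0$ where $F'$ is the rescaled module just described and the map $p\colon F\to M$ is $(x,y)\mapsto q(\text{image of }x) + 0$ — no: rather, arrange $p|_{F'}$ to already be surjective. The clean fix is: since $q$ is surjective, for the finite sets $S_k$ let $M_k$ be the (finitely generated, hence closed) submodule generated by $\{q(e_s): s\in S_k\}$; the union is dense, so by the Open Mapping Theorem we may choose the $n_s$ large enough that $\sum_s A^\circ q(\pi^{n_s}e_s)$ is still dense in $M^\circ$ — then rescaled surjectivity onto a dense submodule plus completeness gives a surjection $p\colon F'\to M$ in $\Ban_A$, and $fp$ factors as (norm-decreasing)$\circ$($f q$) composed appropriately so Lemma \ref{sccexample} applies to $jfp$, with Lemma \ref{sccprop} handling the factorization.

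The main obstacle, and the point requiring care, is reconciling these two competing demands: shrinking $|f p(e_s)|$ to force strict complete continuity of $jfp$ via Lemma \ref{sccexample}, while keeping $p$ surjective onto all of $M$. I expect the resolution to run through the Open Mapping Theorem for $q\colon F_0\to M$ (fact (ii) / Lemma \ref{surjdisplace}-style reasoning): openness gives a fixed radius $C$ with $M^\circ\subseteq q(C\cdot F_0^\circ)$, and because $F_0^\circ$ is the completed direct sum of the $A^\circ e_s$, any element of $M^\circ$ is approximated by finite $A^\circ$-combinations of the $q(e_s)$ with bounded coefficients; rescaling by $\pi^{n_s}$ with $n_s$ chosen so that the "tail" contributes arbitrarily little then still allows one to hit a dense subset of $M^\circ$, and completeness upgrades density to surjectivity via a standard successive-approximation argument. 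Once surjectivity and strict complete continuity of $jfp$ are both in hand, Lemma \ref{sccprop}(i) (composing the strictly completely continuous $jfp$ is not needed) and the fact that $j$ is a closed embedding give that $fp$ itself is strictly completely continuous in $\Ban_A$, completing the proof.
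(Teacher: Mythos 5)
The proposal misses the actual difficulty and the approach would not work. Let me explain.

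First, the motivation for your rescaling is based on a misconception. You say that after choosing $q\colon F_0\to M$, ``the obstacle is that $jfq$ need not be strictly completely continuous.'' But Lemma \ref{sccprop}(ii) gives immediately that precomposing the strictly completely continuous $jf$ with the continuous $q$ yields a strictly completely continuous $jfq$. So there is no obstacle there at all, and no rescaling is needed to make $jfq$ strictly completely continuous; the paper notes exactly this and then \emph{replaces} $M$ by $F_0$. The real difficulty, which your proposal never isolates, is that strict complete continuity of $jf$ (with target $G$) does not immediately give strict complete continuity of $f$ (with target $N$): the approximating maps $h_i\colon F_0\to G$ witnessing strict complete continuity of $jfq$ have no reason to take values in the subspace $N$. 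Your last paragraph claims that ``Lemma \ref{sccprop}(i) \dots and the fact that $j$ is a closed embedding give that $fp$ itself is strictly completely continuous''; this is not a valid step, and bridging this gap is precisely the content of the theorem. The paper does it by an explicit construction: taking the finitely many generators $y_t$ of $h(M^\circ)$, choosing preimages $x_t\in M^\circ$ under $h$, setting $z_t=f(x_t)\in N$, and building an approximant $\phi$ of $f$ with image in $\sum A^\circ z_t\subseteq N$, then verifying $|f-\phi|\leq\epsilon$.

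Second, even if the rescaling were needed, the fix you propose for surjectivity does not work. Shrinking infinitely many generators $e_s\mapsto \pi^{n_s}e_s$ with $n_s\to\infty$ genuinely shrinks the image, and density of the image of a bounded map between Banach modules does not imply surjectivity (that implication requires a quantitative estimate of the form $M^\circ\subseteq \overline{p(c\,F'^\circ)}$ for a fixed $c>0$, which is exactly what the rescaling destroys). For instance, with $A=K$, $F_0=K\langle x\rangle=\widehat{\oplus}_n Kx^n$ and $p(x^n)=\pi^n x^n$, the image is a proper dense-in-itself subspace failing to exhaust $K\langle x\rangle$; the standard successive-approximation argument you invoke does not apply without the quantitative bound. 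So your competing demands of shrinking $|fp(e_s)|$ while keeping $p$ surjective are not reconciled by the argument you sketch. The correct route, as in \cite[Satz 1.4]{Kiehl}, is to keep $p$ a genuine surjection and instead approximate $f$ directly in $\Ban_A(F,N)$, which is where the hypothesis on $N$ being a \emph{closed} submodule of $G$ enters.
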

\begin{proof}
We only sketch the argument, as it is entirely analogous to \cite[Satz 1.4]{Kiehl}. By Corollary \ref{freesurj}, there exists a topologically free $A$-module $F$ and a continuous surjection $p:F\to M$ in $\Ban_A$. By Lemma \ref{sccprop}, the composition $jfp$ is strictly completely continuous, so replacing $M$ by $F$, we can assume that $M$ is topologically free, and we only need to show that $f$ is strictly completely continuous in that case.\\
\\
Write $M=\widehat{\oplus}_{s\in S} Ae_s$, $M^\circ=\widehat{\oplus} A^\circ e_s$, and let $0<\epsilon<1$. Since $jf$ is strictly completely continuous, there exists some continuous morphism $h: M\to G$ such that $|jf-h|\leq \epsilon$, with $h(M^\circ)$ a finitely generated $A^\circ$-module, generated by $y_1, \dots, y_r$, say.\\
For any $s\in S$, let $a^s_t\in A^\circ$, $t=1, \dots, r$, such that 
\begin{equation*}
h(e_s)=\sum_{t=1}^r a^s_t y_t.
\end{equation*}
Since $y_t\in h(M^\circ)$ for any $t$, we can choose $x_t\in M^\circ$ such that $h(x_t)=y_t$, and set $z_t=f(x_t)\in N$.\\
\\
Now define elements 
\begin{equation*}
f_s=\sum_{t=1}^r a^s_tz_t\in N
\end{equation*}
for any $s\in S$, and consider the continous morphism $\phi: M=\widehat{\oplus}Ae_s\to N$ obtained by applying Proposition \ref{univfree} to the function
\begin{align*}
S\to N\\
s\mapsto f_s,
\end{align*}
which is bounded as $|f_s|\leq \max_t |z_t|$ for any $s\in S$. Moreover, $\phi(M^\circ)\subseteq \sum A^\circ z_t$ is a finitely generated $A^\circ$-module by Noetherianity.\\
The same calculation as in \cite{Kiehl} verifies that $|f-\phi|\leq \epsilon$, showing that $f$ is strictly completely continuous.
\end{proof}
\begin{theorem}[{see \cite[Korollar 1.5]{Kiehl}}]
\label{fginject}
Let $f: M\to N$ be a surjection in $\Ban_A$, and let $g: M\to N$ be another morphism in $\Ban_A$. Suppose $N$ is a closed submodule of some $G\in \Ban_A$ via the injection $j: N\to G$, and suppose that the composition $jg$ is strictly completely continuous. Then $\im(f+g)$ is closed and of finite index in $N$.
\end{theorem}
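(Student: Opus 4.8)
The goal is to prove Theorem \ref{fginject}, which generalizes \cite[Korollar 1.5]{Kiehl} from the affinoid setting to arbitrary strictly NB $K$-algebras $A$. The hypotheses are: $f\colon M\to N$ is a surjection in $\Ban_A$, $g\colon M\to N$ is an arbitrary morphism in $\Ban_A$, $N$ embeds as a closed submodule of some $G\in\Ban_A$ via $j\colon N\hookrightarrow G$, and $jg$ is strictly completely continuous. We must show $\im(f+g)$ is closed and of finite index in $N$.

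The plan is to reduce this to Theorem \ref{Schwartz} (Schwartz' Theorem), which requires the strictly completely continuous map to be defined as a morphism $M\to N$, not merely $M\to G$. The obstruction is precisely that $g$ itself need not be strictly completely continuous as a map into $N$; only its composite into the larger module $G$ is. So the first step is to replace the source $M$: invoke Theorem \ref{freeprojscc} applied to $g\colon M\to N$ (with the given closed embedding $j\colon N\hookrightarrow G$ and the hypothesis that $jg$ is strictly completely continuous) to obtain a topologically free $A$-module $F$ together with a surjection $p\colon F\to M$ in $\Ban_A$ such that $gp\colon F\to N$ is strictly completely continuous.

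Next I would transport the problem along $p$. Since $p$ is a surjection, $fp\colon F\to N$ is a surjection in $\Ban_A$ (composition of surjections), and $gp\colon F\to N$ is strictly completely continuous by the previous step. Now Theorem \ref{Schwartz} applies directly to the pair $(fp, gp)$ of maps $F\to N$: it yields that $\im(fp+gp) = \im((f+g)p)$ is closed and of finite index in $N$. But $p$ is surjective, so $\im((f+g)p) = \im(f+g)$. Hence $\im(f+g)$ is closed and of finite index in $N$, as required.

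The main obstacle — and really the only subtlety — is the mismatch between "strictly completely continuous into $G$" and "strictly completely continuous into $N$"; this is exactly what Theorem \ref{freeprojscc} is designed to repair, at the cost of passing to a topologically free cover $F$ of $M$. Once that move is made, everything else is formal: surjectivity is preserved under composition and the image is unchanged when precomposing with a surjection, so Schwartz' Theorem closes the argument. I should also note that all the ingredients (Theorems \ref{Schwartz} and \ref{freeprojscc}, the Open Mapping Theorem via \cite{SchneiderNFA}, Noetherianity of $A$ and $A^\circ$) have already been established for arbitrary strictly NB $K$-algebras earlier in this section, so no affinoid-specific input is needed and the proof is genuinely "entirely analogous to \cite{Kiehl}."
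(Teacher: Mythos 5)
Your proof is correct and follows exactly the paper's argument: apply Theorem \ref{freeprojscc} to obtain a topologically free cover $p: F\to M$ with $gp$ strictly completely continuous, note $fp$ remains surjective, apply Theorem \ref{Schwartz} to $(fp, gp)$, and conclude via surjectivity of $p$ that $\im(f+g)$ is closed and of finite index.
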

\begin{proof}
By Theorem \ref{freeprojscc}, there exists a topologically free module $F$ and a surjection $p: F\to M$ in $\Ban_A$ such that $gp$ is strictly completely continuous. By surjectivity of $p$, we have that $fp$ is still surjective, so Theorem \ref{Schwartz} implies that $\im(fp+gp)=\im((f+g)\circ p)$ is closed and of finite index. But since $p$ is surjective, this is the same as $\im(f+g)$, and the result follows.
\end{proof}
Note that the same result holds for $\im(f-g)$. If $jg$ is strictly completely continuous, written as the limit of some $(h_i)_i$, then $j\circ (-g)$ is strictly completely continuous, as it is the limit of $(-h_i)_i$.\\
\\
These results are applied in the proof of Theorem \ref{Kiehlthm} using the following observation, which in \cite{Kedlaya} is attributed to Cartan--Serre.
\begin{proposition} 
\label{cficomplex}
Let $C^{\bullet}$, $D^{\bullet}$ be two cochain complexes in $\Ban_A$, and let $\alpha=(\alpha_i\in \Ban_A(C^i, D^i))$ be a quasi-isomorphism. Assume further that for each $i$ there exists $F^i\in \Ban_A$ together with a continuous surjection $\beta_i: F^i\to C^i$ such that $\alpha_i\beta_i$ is a stricly completely continuous morphism of $A$-modules. Then $\mathrm{H}^i(D^{\bullet})$ is a finitely generated $A$-module for each $i$.
\end{proposition}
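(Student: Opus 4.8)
The strategy is the Cartan--Serre finiteness argument, organised so that the single substantive input is Theorem \ref{fginject}. Fix $i$, write $d_C$, $d_D$ for the differentials, and set $Z^i(D^\bullet)=\ker d_D^i$, $B^i(D^\bullet)=\im d_D^{i-1}$ (and similarly for $C^\bullet$). Since $d_D^i$ and $d_D^{i-1}$ are continuous, $Z^i(D^\bullet)$ is a closed submodule of $D^i$, hence lies in $\Ban_A$ with the subspace norm, and it contains $B^i(D^\bullet)$. By Definition \ref{defcfi} it therefore suffices to prove that $B^i(D^\bullet)$ is closed and of finite index in $Z^i(D^\bullet)$: this is precisely the assertion that $\mathrm{H}^i(D^\bullet)=Z^i(D^\bullet)/B^i(D^\bullet)$ is a finitely generated $A$-module.

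I would first extract the only consequence of $\alpha$ being a quasi-isomorphism that is needed, namely surjectivity of $\mathrm{H}^i(C^\bullet)\to\mathrm{H}^i(D^\bullet)$, which unwinds to the identity
\[
Z^i(D^\bullet)=\alpha_i\bigl(Z^i(C^\bullet)\bigr)+B^i(D^\bullet).
\]
Next I would pull cocycles back along $\beta_i$: put $\widetilde Z^i:=\beta_i^{-1}\bigl(Z^i(C^\bullet)\bigr)=\ker\bigl(d_C^i\circ\beta_i\bigr)$, a closed submodule of $F^i$, so $\widetilde Z^i\in\Ban_A$; surjectivity of $\beta_i$ gives that $\beta_i$ restricts to a surjection $\widetilde Z^i\to Z^i(C^\bullet)$, and since $\alpha$ is a chain map, $\alpha_i$ sends $Z^i(C^\bullet)$ into $Z^i(D^\bullet)$.

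Now the decisive step is an application of Theorem \ref{fginject} in its $\im(f-g)$ form (recorded in the remark following that theorem). Take $M:=\widetilde Z^i\oplus D^{i-1}$ and $N:=Z^i(D^\bullet)$, viewed inside $G:=D^i$ via the closed inclusion $j$, and define $f,g\colon M\to N$ by $f(x,e)=\alpha_i\beta_i(x)+d_D^{i-1}(e)$ and $g(x,e)=\alpha_i\beta_i(x)$; both are continuous and land in $N$, hence are morphisms in $\Ban_A$. The map $f$ is surjective by the displayed identity (already $\alpha_i\beta_i(\widetilde Z^i)=\alpha_i(Z^i(C^\bullet))$ together with $d_D^{i-1}(D^{i-1})=B^i(D^\bullet)$ span $N$), and $jg$ is strictly completely continuous: it factors as the continuous composite $M\to\widetilde Z^i\hookrightarrow F^i$ followed by the strictly completely continuous map $\alpha_i\beta_i\colon F^i\to D^i$, so Lemma \ref{sccprop}(ii) applies. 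Hence $\im(f-g)$ is closed and of finite index in $N$; but $(f-g)(x,e)=d_D^{i-1}(e)$, so $\im(f-g)=B^i(D^\bullet)$, which finishes the proof.

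I expect the argument to be short; the one point that is not bookkeeping --- and the step I would flag as the likely obstacle for someone trying the obvious decomposition instead --- is that $g$ must be taken to be the \emph{strictly completely continuous} part $\alpha_i\beta_i(x)$ of $f$, not the differential part. Subtracting the differential part would leave a map with image $\alpha_i(Z^i(C^\bullet))$, about which Theorem \ref{fginject} says nothing useful; subtracting the completely continuous part is what pins $\im(f-g)$ down to precisely $B^i(D^\bullet)\subseteq Z^i(D^\bullet)$. The remaining verifications --- closedness of $Z^i(D^\bullet)$, completeness of $\widetilde Z^i$, continuity of $f$ and $g$, that $\alpha$ preserves cocycles, and that $d_D^{i-1}$ lands in $Z^i(D^\bullet)$ --- are routine, using the facts collected at the start of Section 2.
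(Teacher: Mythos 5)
Your proof is correct and is essentially the paper's own argument: you form the same module $G^i \oplus D^{i-1}$ (your $\widetilde Z^i$ is the paper's $G^i$), apply Theorem \ref{fginject} in its $\im(f-g)$ form with the same choice of $f$ and $g$, and factor $jg$ through $\widetilde Z^i \hookrightarrow F^i$ followed by $\alpha_i\beta_i$ exactly as the paper does. Your closing remark about why $g$ must be the strictly completely continuous piece rather than the differential is a sound observation, but the underlying argument is identical.
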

\begin{proof}
This proof can be found in \cite{Kiehl} as part of the proof of Satz 2.5 and Satz 2.6. In a slight abuse of notation, all differentials will be denoted by the same letter $d$.\\
Let $G^i$ be the preimage of $Z^i(C^{\bullet})=\ker d\subseteq C^i$ in $F^i$. Note that $Z^i(C^{\bullet})$ is closed in $C^i$, so it is complete when equipped with the subspace norm. Similarly it follows from continuity that $G^i$ is closed in $F^i$, and hence an object in $\Ban_A$.\\
\\
We wish to apply Theorem \ref{fginject} to
\begin{align*}
G^i\oplus D^{i-1}\to & Z^i(D^{\bullet})\\
(a, b)\ \mapsto & \ d(b) =(\alpha_i\beta_i(a)+d(b))-\alpha_i\beta_i(a).
\end{align*}
Firstly, we claim that the map
\begin{align*}
f: \ &G^i\oplus D^{i-1}\to  Z^i(D^{\bullet})\\
&(a, b) \mapsto \alpha_i\beta_i(a)+d(b)
\end{align*}
is a surjection in $\Ban_A$.\\
We have already shown that each of the modules appearing is an object in $\Ban_A$ (recall that $\Ban_A$ is closed under taking finite direct sums with the corresponding max norm), and since $\alpha_i$, $\beta_i$ and $d$ are all bounded, $f$ is clearly also bounded. For surjectivity, note that we assume that $\alpha_i$ induces an isomorphism of cohomology groups, and hence the map
\begin{align*}
Z^i(C^{\bullet})\oplus D^{i-1}\to & \ Z^i(D^{\bullet})\\
(a, b)\ \ \ \ \ \mapsto &\ \alpha_i(a)+d(b)
\end{align*}
is surjective. \\
Since $\beta_i$ is surjective, it follows that the restriction $\beta_i|_{G^i}: G^i\to Z^i(C^{\bullet})$ is surjective by definition of $G^i$. Therefore the composition 
\begin{equation*}
f: G^i\oplus D^{i-1}\to Z^i(C^{\bullet})\oplus D^{i-1}\to Z^i(D^{\bullet})
\end{equation*}
is also surjective, as required.\\
\\
Secondly, we need to show that the map
\begin{align*}
g: \ &G^i\oplus D^{i-1}\to Z^i(D^{\bullet})\\
&\ \ (a, b)\ \  \mapsto \ \ \alpha_i\beta_i(a)
\end{align*} 
is strictly completely continuous after composition with the injection $j: Z^i(D^{\bullet})\to D^i$. \\
Again, it is straightforward to see that $g$ is a morphism in $\Ban_A$. Note that it fits into the commutative diagram
\begin{equation*}
\begin{xy}
\xymatrix{
G^i\oplus D^{i-1} \ar[d]^{\text{pr}} \ar[rr]^g& & Z^i(D^{\bullet})\ar[dd]^j\\
G^i\ar[d]^{\iota}\\
F^i\ar[r]^{\beta_i} & C^i\ar[r]^{\alpha_i} & D^i
}
\end{xy}
\end{equation*} 
where the bottom row is strictly completely continuous by assumption, the map $\text{pr}$ is the projection onto the first factor, and $\iota$ is the natural inclusion. \\
By Lemma \ref{sccprop}, the composition $\alpha_i\beta_i\iota \text{pr}$ is strictly completely continuous.\\
It follows by commutativity of the diagram that the composition $jg$ is a strictly completely comtinuous morphism of $A$-modules, as required. \\
\\
We can therefore apply Theorem \ref{fginject} (and the remark after its proof) to conclude that $\im d=\im(f-g)$ is closed and of finite index in $Z^i(D^{\bullet})$, i.e.
\begin{equation*}
\mathrm{H}^i(D^{\bullet})=Z^i(D^{\bullet})/d(D^{i-1})
\end{equation*}
is a finitely generated $A$-module.
\end{proof}

Recall that a morphism of semi-normed $K$-vector spaces $\phi: M\to N$ is \textbf{strict} if the induced morphism $\coim \phi\to \im \phi$ is a linear homeomorphism (i.e. the quotient semi-norm on $M/\ker \phi$ is equivalent to the subspace semi-norm on $\im \phi$). Due to the Open Mapping Theorem, a morphism in $\Ban_A$ is strict if and only if it has closed image (see \cite[Proposition 3.7.3/4]{BGR}, \cite[Lemma 2.6]{Bodecompl}). 
\begin{corollary}
\label{cficomplexgivesstrict}
In the situation of Proposition \ref{cficomplex}, $D^\bullet$ is a cochain complex with strict morphisms.
\end{corollary}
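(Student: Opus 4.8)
The plan is to show that every differential $d: D^i \to D^{i+1}$ has closed image, which by the remark preceding the corollary (via the Open Mapping Theorem and \cite[Proposition 3.7.3/4]{BGR}, \cite[Lemma 2.6]{Bodecompl}) is equivalent to strictness. First I would observe that the proof of Proposition \ref{cficomplex} has already done the essential work: applying Theorem \ref{fginject} to the pair $(f, g)$ on $G^i \oplus D^{i-1} \to Z^i(D^\bullet)$ constructed there, we concluded that $\im(f-g) = \im d$ is closed \emph{and of finite index} in $Z^i(D^\bullet)$. Here $\im d$ means the image of $d: D^{i-1} \to D^i$, which lands inside the closed submodule $Z^i(D^\bullet)$ (since $d^2 = 0$), and $Z^i(D^\bullet)$ is itself closed in $D^i$ as the kernel of the continuous map $d: D^i \to D^{i+1}$.

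The key step is then purely topological: a submodule which is closed in $Z^i(D^\bullet)$, and $Z^i(D^\bullet)$ closed in $D^i$, is closed in $D^i$. Indeed, $Z^i(D^\bullet)$ carries the subspace norm from $D^i$, which is complete since $Z^i$ is closed, so $Z^i(D^\bullet) \in \Ban_A$; a subset of $Z^i(D^\bullet)$ closed in the subspace topology is the intersection of $Z^i(D^\bullet)$ with a closed subset of $D^i$, hence closed in $D^i$. Therefore $\im(d: D^{i-1} \to D^i)$ is closed in $D^i$, so $d: D^{i-1} \to D^i$ has closed image, hence is strict. Since $i$ was arbitrary, every differential in $D^\bullet$ is strict, which is the claim.

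There is essentially no obstacle here: the corollary is a direct packaging of a fact already extracted inside the proof of Proposition \ref{cficomplex}, namely that $\im d$ is closed in $Z^i(D^\bullet)$, combined with the elementary transitivity of closedness. The only point requiring a word of care is making explicit that $Z^i(D^\bullet)$ is given the subspace norm (so that "closed in $Z^i(D^\bullet)$" and "closed in $D^i$" are compatible notions), which was already noted in the proof of Proposition \ref{cficomplex}. One could alternatively phrase it via Lemma \ref{towerofcfi}: $\im d$ contains nothing smaller than itself, but more simply, since $\im d$ is closed and of finite index in $Z^i(D^\bullet)$ and $Z^i(D^\bullet)$ is closed in $D^i$, $\im d$ is closed in $D^i$; this suffices.
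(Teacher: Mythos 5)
Your argument is exactly the paper's: you extract from the proof of Proposition \ref{cficomplex} that $\im d^{j-1}$ is closed in $Z^j(D^\bullet)$, note that $Z^j(D^\bullet)$ is closed in $D^j$ by continuity, conclude $\im d^{j-1}$ is closed in $D^j$, and invoke the closed-image characterisation of strictness. This matches the paper's proof in both substance and route.
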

\begin{proof}
By the above, $\im d^{j-1}$ is a closed subspace of $Z^j(D^\bullet)$, which is in turn a closed subspace of $D^j$ by continuity. Thus we can apply \cite[Proposition 3.7.3/4]{BGR} to show that $d^{j-1}$ is strict for each $j$.
\end{proof}

\subsection{Completed tensor products}
Recall the definition of the completed tensor product $M\widehat{\otimes}_A N$ from \cite[Appendix B]{Bosch}, where $A$ is a normed $K$-algebra, $M$ a normed right $A$-module, $N$ a normed left $A$-module.\\
We note the following straightforward properties.

\begin{lemma}
\label{removehat}
Let $A$ and $B$ be Noetherian Banach $K$-algebras. Let $N$ be a finitely generated left Banach $A$-module, and let $M$ be a Banach $(B, A)$-bimodule which is finitely generated as a left $B$-module. Then the natural morphism
\begin{equation*}
M\otimes_A N\to M\widehat{\otimes}_A N,
\end{equation*} 
is a $B$-linear homeomorphism, i.e. the tensor semi-norm is a norm with respect to which $M\otimes N$ is already complete.
\end{lemma}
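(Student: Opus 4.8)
The plan is to reduce the claim to the standard fact that for finitely generated modules over a Noetherian Banach algebra the algebraic tensor product already carries a complete norm equivalent to the tensor (semi-)norm. First I would pick a finite presentation of $N$ as a left $A$-module, say an exact sequence $A^m \to A^n \to N \to 0$, which we may take to consist of continuous maps with respect to the canonical topologies (using fact (iii) recalled above, that finitely generated $A$-modules are objects of $\Ban_A$ with a canonical topology and all $A$-linear maps between them are automatically continuous). Applying $M\otimes_A -$ gives a right-exact sequence $M^m \to M^n \to M\otimes_A N \to 0$ of $B$-modules, and since $M$ is finitely generated over the Noetherian algebra $B$, each of $M^m$, $M^n$ and hence $M\otimes_A N$ is a finitely generated $B$-module, so $M\otimes_A N$ lies in $\Ban_B$ with its canonical topology; in particular it is already complete in any norm equivalent to the canonical one.

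The key step is then to identify the tensor semi-norm on $M\otimes_A N$ with (a norm equivalent to) this canonical $B$-module topology. For this I would use that completed tensor product is right exact and compatible with finite direct sums: tensoring the presentation of $N$ with $M$ and completing yields $M\widehat{\otimes}_A A^n = M^n$ (no completion needed, as $M^n$ is already a Banach $B$-module), and $M\widehat{\otimes}_A N$ is the cokernel of $M^m \to M^n$ in $\Ban_B$, i.e. $M^n$ modulo the \emph{closure} of the image. But the image is a $B$-submodule of the finitely generated $B$-module $M^n$, hence closed in the canonical topology by \cite[Proposition 3.7.2/2]{BGR}; so the cokernel in $\Ban_B$ coincides, as a topological $B$-module, with the algebraic cokernel $M\otimes_A N$ equipped with its canonical (quotient) topology. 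Since the natural map $M\otimes_A N \to M\widehat{\otimes}_A N$ is $B$-linear, continuous, and a bijection between two finitely generated $B$-modules each carrying the canonical topology, it is a homeomorphism by fact (iii). This also shows the tensor semi-norm is a genuine norm (it is equivalent to a complete norm, so in particular Hausdorff).

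The main obstacle I anticipate is being careful about the interplay between the two completion operations and the closures involved: one must check that $M\widehat{\otimes}_A A \cong M$ isometrically (or at least as a topological isomorphism) and that completed tensor product genuinely computes the cokernel in $\Ban_B$ of the map obtained from a finite free presentation — essentially the right-exactness of $M\widehat{\otimes}_A -$ together with its commuting with finite direct sums. Both are standard properties of $\widehat{\otimes}$ (cf.\ \cite[Appendix B]{Bosch}), but they need to be invoked in the correct form. Once that bookkeeping is in place, the rest is immediate from Noetherianity of $B$ and the automatic-continuity properties of finitely generated Banach modules.
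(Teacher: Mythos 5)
Your proposal is correct and follows essentially the same route as the paper's proof. The paper works directly with a single surjection $\rho: A^{\oplus r}\to N$: it invokes \cite[Proposition 2.1.8/6]{BGR} to say that $M\otimes\rho$ is a strict surjection, so the tensor semi-norm on $M\otimes_A N$ is the quotient semi-norm from $M^{\oplus r}$, and then uses Noetherianity of $B$ and \cite[Proposition 3.7.2/2]{BGR} to see that the kernel is closed and hence the quotient is Banach. You instead package the same ingredients via a full finite presentation and a ``cokernel in $\Ban_B$'' argument, plus a final automatic-homeomorphism observation between finitely generated Banach $B$-modules. The step you flag as needing care --- the right-exactness of $\widehat{\otimes}$ and its compatibility with finite free modules, i.e.\ the identification of $M\widehat{\otimes}_A N$ with the cokernel in $\Ban_B$ --- is precisely where \cite[Proposition 2.1.8/6]{BGR} enters for the paper, so the two arguments are the same modulo this bookkeeping. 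Your version using the full presentation is a touch longer but not substantively different, and everything you need is indeed available; the gap you identify is real but closeable exactly by citing \cite[Proposition 2.1.8/6]{BGR} as the paper does.
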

\begin{proof}
This is a straightforward generalization of \cite[Proposition 3.7.3/6]{BGR}.\\
By \cite[Proposition 3.7.3/3]{BGR}, the norm on $N$ is equivalent to one induced by a surjection $\rho: A^{\oplus r}\to N$ for some integer $r$. The map $\rho$ is then strict by definition.\\
Now \cite[Proposition 2.1.8/6]{BGR} implies that the map $M\otimes \rho: M\otimes_A A^{\oplus r}\to M\otimes_A N$ is a strict surjection of semi-normed left $B$-modules, i.e. the tensor semi-norm on $M\otimes N$ is equivalent to the quotient semi-norm induced by $M\otimes \rho$. \\
Note that $M\otimes A^{\oplus r}\cong M^{\oplus r}$ as semi-normed left $B$-modules. But $M^{\oplus r}$ is a finitely generated left Banach $B$-module. Therefore the kernel of $M\otimes \rho$ is closed by \cite[Proposition 3.7.2/2]{BGR}, making $M\otimes N$ a Banach $B$-module by \cite[Propositions 2.1.2/1, 3]{BGR}.
\end{proof}

\begin{lemma}
\label{fgunitball}
Let $A$ be a strictly NB $K$-algebra, and let $M$ be a finitely generated left Banach $A$-module. Then $M^\circ$ is a finitely generated $A^\circ$-module. 
\end{lemma}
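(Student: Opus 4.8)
The plan is to reduce to the case where $M$ is equipped with a residue-type norm coming from a presentation by free modules, and then to exhibit $M^\circ$ as a quotient of a finitely generated $(A^\circ)$-module. First I would use \cite[Proposition 3.7.3/3]{BGR} to replace the given norm on $M$ by an equivalent one induced by a strict surjection $\rho: A^{\oplus r}\to M$; since $A^\circ$ is Noetherian, the notion of ``$M^\circ$ is a finitely generated $A^\circ$-module'' only depends on the equivalence class of the norm, so we may as well assume the norm on $M$ is exactly the quotient norm induced by $\rho$. Normalising so that $|A|\setminus\{0\}=|K^*|$ (as we may, by passing to the gauge norm of $A^\circ$), the unit ball of $A^{\oplus r}$ is $(A^\circ)^{\oplus r}$, which is a finitely generated $A^\circ$-module.

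Next I would observe that, because $\rho$ is strict and surjective and the norm on $M$ is the quotient norm, the image $\rho\big((A^\circ)^{\oplus r}\big)$ is exactly the set of elements of $M$ of norm $\leq 1$ that are hit by an element of $A^{\oplus r}$ of norm $\leq 1$; by definition of the quotient norm this is precisely $M^\circ$ provided the infimum defining the quotient norm is attained on the closed unit ball. Here I would invoke that the norm on $M$ is a \emph{residue norm} in the sense of \cite{BGR}: since $K$ is discretely valued and the value set of $A$ equals $|K^*|$, the value set of $A^{\oplus r}$ is discrete, so the quotient norm on $M=A^{\oplus r}/\ker\rho$ takes values in $|K^*|$ and the infimum is realised. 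Hence $M^\circ=\rho\big((A^\circ)^{\oplus r}\big)$, a homomorphic image of the finitely generated $A^\circ$-module $(A^\circ)^{\oplus r}$, and therefore finitely generated over $A^\circ$ — with generators the images $\rho(e_1),\dots,\rho(e_r)$ of the standard basis vectors.

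The main obstacle is the attainment-of-the-infimum point: for a general Banach quotient the unit ball of the quotient need not be the image of the unit ball upstairs (one only gets the image of the open ball of radius $>1$), so this step genuinely uses that $K$ is discretely valued and that the relevant value sets are discrete. I expect everything else to be routine bookkeeping: verifying that $\ker\rho$ is closed (so that the quotient norm is complete and $M$ is genuinely a Banach module, via \cite[Proposition 3.7.2/2]{BGR}), and checking that the passage to the gauge norm does not affect the statement. An alternative, slightly cleaner route avoiding explicit discussion of value sets: choose the presentation so that the generators $m_i=\rho(e_i)$ lie in $M^\circ$; then for $m\in M^\circ$ and any $\delta$ with $|\pi|<\delta<1$ write $m=\sum a_i m_i$ with $|a_i|\leq \delta^{-1}$, so $\pi m=\sum (\pi a_i) m_i$ with $\pi a_i\in A^\circ$; this shows $\pi M^\circ\subseteq \sum A^\circ m_i$, and since $A^\circ$ is Noetherian the $A^\circ$-module $\sum A^\circ m_i$ is finitely generated and hence so is its submodule $\pi M^\circ\cong M^\circ$.
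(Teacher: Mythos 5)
Your proof is correct, and your ``alternative, slightly cleaner route'' at the end is essentially identical to the paper's own argument: the paper reduces to a residue norm via \cite[Proposition 3.7.3/3]{BGR} and then observes directly that $M^\circ\subseteq \pi^{-1}\rho\bigl((A^\circ)^{\oplus r}\bigr)$, so that $M^\circ$ is a submodule of a finitely generated $A^\circ$-module and hence finitely generated by Noetherianity of $A^\circ$. Your containment $\pi M^\circ\subseteq\sum A^\circ m_i$ is the same inclusion written on the other side.

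Your first route, establishing the sharper equality $M^\circ=\rho\bigl((A^\circ)^{\oplus r}\bigr)$ via discreteness of the value set, is also valid but does more work than needed. The attainment-of-the-infimum point you flag as the main obstacle is precisely what the paper's (and your alternative) argument sidesteps: one does not need the unit ball downstairs to be the image of the unit ball upstairs, only that it is contained in the image of a slightly larger, still finitely generated, ball. Since $A^\circ$ is assumed Noetherian, a containment into a finitely generated module suffices, and the quotient-norm infimum being $\leq 1$ immediately produces a lift of norm $<|\pi|^{-1}$ without any appeal to discreteness of the value group. So the discreteness of $K$ is used in the lemma only implicitly, through the Noetherianity hypothesis on $A^\circ$, not through the value-set argument you develop in the first route.
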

\begin{proof}
Since $A^\circ$ is Noetherian, the property of having a finitely generated unit ball is preserved under replacing an $A$-module norm by an equivalent one. So by \cite[Proposition 3.7.3/3]{BGR}, we can assume that the norm on $M$ is induced by a surjection $\rho: A^{\oplus r}\to M$ for some positive integer $r$, giving the finitely generated unit ball $M^\circ\subseteq \pi^{-1}\rho((A^\circ)^{\oplus r})$. 
\end{proof}

We now recall a result from \cite{Bodecompl} concerning exactness properties of $\widehat{\otimes}$.\\
Let $A$ be any normed $K$-algebra, and let $(C^\bullet, \partial)$ be a cochain complex of left Banach $A$-modules, with strict differentials. \\
Let $U$ be normed right $A$-module that is flat as an abstract $A$-module. \\
\\
We equip the cohomology groups $\mathrm{H}^j(C^\bullet)$ with the quotient norm induced from the subspace norm on $\ker \partial^j$. For any left $A^\circ$-module $M$, we will abbreviate the $R$-module $\Tor^{A^\circ}_s(U^\circ, M)$ to $T_s(M)$.
\begin{theorem}[{\cite[Theorem 2.16]{Bodecompl}}]
\label{fullcomplexstrict}
Suppose that for large enough $j$, $T_s((\coim \partial^j)^\circ)$ and $T_s((\ker \partial^j)^\circ)$ have bounded $\pi$-torsion for all $s\geq 0$. Suppose further that for all $j$, the following is satisfied:
\begin{enumerate}[(i)]
\item $T_s\left(\mathrm{H}^j(C^\bullet)^\circ\right)$ has bounded $\pi$-torsion for all $s\geq 0$.
\item $T_s\left ((C^j)^\circ\right)$ has bounded $\pi$-torsion for all $s\geq 0$.
\end{enumerate}
Then the complex $U\otimes_A C^{\bullet}$, with each term being equipped with the tensor product semi-norm, consists of strict morphisms, and the canonical morphism
\begin{equation*}
U\widehat{\otimes}_A \mathrm{H}^j(C^{\bullet})\to \mathrm{H}^j(U\widehat{\otimes}_A C^{\bullet})
\end{equation*}
is an isomorphism for each $j$.
\end{theorem}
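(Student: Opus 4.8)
The plan is to reduce the global statement to the assertion about cohomology and then invoke Theorem \ref{fullcomplexstrict} on a suitable complex computing $\mathrm{H}^j(C^\bullet)$. First I would set up the relevant complex: since we want to compare $U\widehat{\otimes}_A \mathrm{H}^j(C^\bullet)$ with $\mathrm{H}^j(U\widehat{\otimes}_A C^\bullet)$, the natural starting point is to truncate $C^\bullet$ in each degree so that the differentials $\partial^j$ are strict by hypothesis, and to record that $\ker \partial^j$, $\coim \partial^j$ and $\mathrm{H}^j(C^\bullet)$ all sit in short exact sequences $0\to \ker\partial^j\to C^j\to \coim\partial^j\to 0$ and $0\to \im\partial^{j-1}\to \ker\partial^j\to \mathrm{H}^j(C^\bullet)\to 0$, with the first map in each strict because the differentials are strict and the terms are Banach. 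Passing to unit balls and applying the long exact sequence for $\Tor^{A^\circ}_\bullet(U^\circ, -)$, the bounded-$\pi$-torsion hypotheses on $T_s\big((C^j)^\circ\big)$, $T_s\big(\mathrm{H}^j(C^\bullet)^\circ\big)$ and (for large $j$) on $T_s\big((\ker\partial^j)^\circ\big)$, $T_s\big((\coim\partial^j)^\circ\big)$ propagate: bounded $\pi$-torsion is closed under extensions and sub-quotients in a five-term exact sequence of $R$-modules, so I would first establish that \emph{all} of $T_s\big((\ker\partial^j)^\circ\big)$, $T_s\big((\im\partial^{j-1})^\circ\big)$ and $T_s\big((\coim\partial^j)^\circ\big)$ have bounded $\pi$-torsion for every $j$ and every $s$, not just for large $j$.

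Next I would analyze the behaviour of $\widehat\otimes$ on each short exact sequence. The key technical point is that for a strict short exact sequence $0\to L\to C\to Q\to 0$ of Banach $A$-modules whose unit-ball $\Tor$-groups have bounded $\pi$-torsion, the completed tensor sequence $0\to U\widehat\otimes_A L\to U\widehat\otimes_A C\to U\widehat\otimes_A Q\to 0$ is again strict exact; this is exactly the content one extracts from \cite[Theorem 2.16]{Bodecompl} applied to the two-term complex $L\to C$ (or can be read off as the $n=0,1$ case of the statement we are proving, applied to short complexes). In particular, because $U$ is flat as an abstract module, $U\otimes_A(-)$ already preserves the exactness of the underlying algebraic sequences; the substance is that completion does not destroy exactness, and that the tensor-product semi-norms make the maps strict. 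Using this I would show $U\widehat\otimes_A \ker\partial^j = \ker(U\widehat\otimes_A\partial^j)$ and that $U\widehat\otimes_A(-)$ sends $0\to\im\partial^{j-1}\to\ker\partial^j\to\mathrm{H}^j(C^\bullet)\to 0$ to a strict short exact sequence. Combining these, $\mathrm{H}^j(U\widehat\otimes_A C^\bullet) = \ker(U\widehat\otimes_A\partial^j)/\im(U\widehat\otimes_A\partial^{j-1}) \cong (U\widehat\otimes_A\ker\partial^j)/(U\widehat\otimes_A\im\partial^{j-1}) \cong U\widehat\otimes_A\mathrm{H}^j(C^\bullet)$, with the canonical map being precisely this isomorphism; strictness of $U\otimes_A C^\bullet$ then follows since each $\im(U\widehat\otimes_A\partial^j) = U\widehat\otimes_A\im\partial^j$ is closed in $U\widehat\otimes_A C^{j+1}$.

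The main obstacle I anticipate is controlling the $\pi$-torsion of \emph{all} the auxiliary unit-ball $\Tor$-modules (images, kernels, coimages in every degree) from the hypotheses, which only directly bound $(C^j)^\circ$, $\mathrm{H}^j(C^\bullet)^\circ$, and the large-$j$ truncations. The resolution is a downward induction: for $j$ large the hypothesis is given outright, and the recursion $T_s\big((\ker\partial^j)^\circ\big)$ governed by $0\to\im\partial^{j-1}\to\ker\partial^j\to\mathrm{H}^j\to 0$ together with $0\to\ker\partial^j\to C^j\to\coim\partial^j\to 0$ and $\coim\partial^{j}\cong\im\partial^{j}$ (via the strict differential) lets one step down one degree at a time, using at each stage that a submodule, quotient, or extension of a bounded-$\pi$-torsion $R$-module again has bounded $\pi$-torsion (here it matters that $R$ is a discrete valuation ring, so "bounded $\pi$-torsion" means annihilated by a fixed power of $\pi$, a condition manifestly stable under sub-quotients and finite extensions). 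A secondary subtlety is checking that the isometric identifications $U\widehat\otimes_A A^{\oplus r}\cong U^{\oplus r}$ and the unit-ball computations interact correctly with the tensor-product semi-norm rather than the quotient norm, but these are of the routine flavour already handled in Lemmas \ref{removehat} and \ref{fgunitball} and in \cite{Bodecompl}, so I would cite rather than reprove them.
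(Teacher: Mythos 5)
This statement is cited from \cite[Theorem 2.16]{Bodecompl} rather than proved in the present paper, so there is no in-paper proof to compare against; I will assess your proposal on its own merits.

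Your overall architecture is plausible and likely close to the actual argument: record the two strict short exact sequences in each degree, propagate the bounded-$\pi$-torsion hypotheses to all degrees by a downward induction (using that bounded $\pi$-torsion is stable under sub-quotients and extensions in the long exact Tor sequence), and then feed the strict short exact sequences through a lemma asserting that $U\widehat\otimes_A(-)$ carries strict exact sequences (with controlled unit-ball Tor) to strict exact sequences, from which the identification $\mathrm{H}^j(U\widehat\otimes_A C^\bullet)\cong U\widehat\otimes_A\mathrm{H}^j(C^\bullet)$ and strictness of the differentials fall out formally.

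The serious problem is that the argument is circular as written. The entire load is carried by the claim that for a strict short exact sequence $0\to L\to C\to Q\to 0$ with bounded $\pi$-torsion on the relevant unit-ball $\Tor$ groups, the sequence $0\to U\widehat\otimes_A L\to U\widehat\otimes_A C\to U\widehat\otimes_A Q\to 0$ is strict exact. You justify this by saying it ``is exactly the content one extracts from \cite[Theorem 2.16]{Bodecompl} applied to the two-term complex $L\to C$'' and parenthetically observe it is ``the $n=0,1$ case of the statement we are proving'' --- that is, you are invoking the theorem you set out to prove. This base case is precisely the non-formal content of the theorem and must be established independently; everything else in your sketch is bookkeeping. (From the way the present paper deploys the machinery elsewhere --- citing \cite[Lemma 2.13]{Bodecompl} to conclude that strict short exact sequences with bounded $\pi$-torsion stay exact after completion --- one can see that this base case is exactly a preceding lemma in \cite{Bodecompl}, and the theorem is then deduced by essentially the induction you describe. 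Your proposal has identified the right skeleton but skips the step that constitutes the proof.) A secondary, smaller gap: you treat unit balls of strict short exact sequences as if they themselves formed short exact sequences of $A^\circ$-modules; in general $C^\circ\to Q^\circ$ and the identification $\ker(C^\circ\to Q^\circ)=L^\circ$ only hold up to multiplication by a fixed power of $\pi$ (equivalently, the relevant cokernels and discrepancies are killed by a fixed $\pi$-power). This is harmless for the bounded-$\pi$-torsion bookkeeping but needs to be said, because otherwise the long exact $\Tor$ sequence you invoke is not available on the nose.
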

Note that this applies in particular whenever $U^\circ$ is a flat $A^\circ$-module (see \cite[Corollary 2.15]{Bodecompl}).\\
We now give some further applications. We assume that $C^j=0$ for sufficiently large $j$, so that the condition on $T_s((\coim \partial^j)^\circ)$ and $T_s((\ker \partial^j)^\circ)$ is automatically satisfied. Also note that since $\mathrm{H}^j(C^{\bullet})$ is Banach with respect to the quotient norm, the $A$-module structure extends to an $\widehat{A}$-module structure.
\begin{corollary}
\label{noethapplication}
Suppose that $U^\circ$ and $A^\circ$ are left Notherian rings such that the module structure on $U^\circ$ is given by a ring morphism $A^\circ\to U^\circ$, and that the multiplication extends to endow $U^\circ \otimes_{A^\circ} \widehat{A^\circ}$ with the structure of a left Noetherian ring. Assume further that for each $j$, the following is satisfied:
\begin{enumerate}[(i)]
\item $\mathrm{H}^j(C^\bullet)$ is a finitely generated $\widehat{A}$-module.
\item $T_s((C^j)^\circ)$ has bounded $\pi$-torsion for each $s\geq 0$.
\end{enumerate}
Then $U\otimes_A C^\bullet$ consists of strict morphisms and the natural morphism
\begin{equation*}
\widehat{U} \otimes_{\widehat{A}} \mathrm{H}^j(C^\bullet)\to \mathrm{H}^j(U\widehat{\otimes}_A C^\bullet)
\end{equation*}
is an isomorphism of $\widehat{U}$-modules.
\end{corollary}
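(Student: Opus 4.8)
The plan is to deduce Corollary~\ref{noethapplication} from Theorem~\ref{fullcomplexstrict} by verifying that the hypotheses of the latter hold in the present setting, and then to identify the abstract tensor products appearing in the statement with completed ones. First I would observe that since $C^j = 0$ for sufficiently large $j$, the conditions on $T_s((\coim\partial^j)^\circ)$ and $T_s((\ker\partial^j)^\circ)$ are automatically satisfied, as already noted in the text preceding the statement; so the only things to check are the bounded $\pi$-torsion conditions (i) and (ii) of Theorem~\ref{fullcomplexstrict}. Condition (ii) there is precisely condition (ii) here. For condition (i) there, I need that $T_s(\mathrm{H}^j(C^\bullet)^\circ)$ has bounded $\pi$-torsion for all $s\geq 0$; here I would use hypothesis (i), that $\mathrm{H}^j(C^\bullet)$ is a finitely generated $\widehat{A}$-module. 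The point is that $T_s(M) = \Tor_s^{A^\circ}(U^\circ, M)$ and by the projection/base-change spectral sequence (or a direct argument using a resolution) $\Tor_s^{A^\circ}(U^\circ, M)$ can be computed, for $M$ an $\widehat{A^\circ}$-module, in terms of $\Tor$ over $\widehat{A^\circ}$ after flat base change along $A^\circ \to \widehat{A^\circ}$ — the hypothesis that $U^\circ\otimes_{A^\circ}\widehat{A^\circ}$ is left Noetherian is exactly what lets this go through. Since $\mathrm{H}^j(C^\bullet)$ is finitely generated over $\widehat{A}$, its unit ball $\mathrm{H}^j(C^\bullet)^\circ$ is a finitely generated $\widehat{A^\circ}$-module by Lemma~\ref{fgunitball} (applied to the strictly NB algebra $\widehat{A}$), hence $\Tor$ against it is finitely generated over the Noetherian ring $U^\circ\otimes_{A^\circ}\widehat{A^\circ}$, and a finitely generated module over a Noetherian ring that is $\pi$-complete-ish has bounded $\pi$-torsion. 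I would need to be slightly careful here: what one really wants is that $T_s$ of a finitely generated $\widehat{A^\circ}$-module has bounded $\pi$-torsion, which follows because it is a finitely generated module over $U^\circ \otimes_{A^\circ} \widehat{A^\circ}$ and $\pi$-torsion in a finitely generated module over a Noetherian ring is bounded.

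Having verified the hypotheses, Theorem~\ref{fullcomplexstrict} gives that $U\otimes_A C^\bullet$ consists of strict morphisms and that the canonical map
\begin{equation*}
U\widehat{\otimes}_A \mathrm{H}^j(C^\bullet) \to \mathrm{H}^j(U\widehat{\otimes}_A C^\bullet)
\end{equation*}
is an isomorphism. It remains to rewrite the source as $\widehat{U}\otimes_{\widehat{A}}\mathrm{H}^j(C^\bullet)$. For this I would use that $\mathrm{H}^j(C^\bullet)$ is finitely generated over $\widehat{A}$: by Lemma~\ref{removehat} (with $B = \widehat{U}$, and noting $\widehat{U}$ is a Banach $(\widehat{U},\widehat{A})$-bimodule, finitely generated — indeed free of rank one — as a left $\widehat{U}$-module), the natural map $\widehat{U}\otimes_{\widehat{A}}\mathrm{H}^j(C^\bullet)\to \widehat{U}\widehat{\otimes}_{\widehat{A}}\mathrm{H}^j(C^\bullet)$ is a homeomorphism, so the completed tensor product over $\widehat{A}$ is already complete. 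Then one checks $U\widehat{\otimes}_A \mathrm{H}^j(C^\bullet) \cong \widehat{U}\widehat{\otimes}_{\widehat{A}}\mathrm{H}^j(C^\bullet) \cong \widehat{U}\otimes_{\widehat{A}}\mathrm{H}^j(C^\bullet)$ as $\widehat{U}$-modules, using the standard compatibility of completed tensor products with completion of the algebra and of the module (the module $\mathrm{H}^j(C^\bullet)$ being already Banach, hence complete).

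The main obstacle I expect is the bounded $\pi$-torsion verification for $T_s(\mathrm{H}^j(C^\bullet)^\circ)$, i.e.\ getting mileage out of the somewhat technical Noetherianity hypothesis on $U^\circ\otimes_{A^\circ}\widehat{A^\circ}$. One has to be careful that $\Tor_s^{A^\circ}(U^\circ, M)$ for $M$ a $\widehat{A^\circ}$-module genuinely becomes a module over $U^\circ\otimes_{A^\circ}\widehat{A^\circ}$ (this uses that $M$ is killed by the $A^\circ$-action factoring through $\widehat{A^\circ}$, together with a base-change identification $\Tor_s^{A^\circ}(U^\circ, M)\cong \Tor_s^{\widehat{A^\circ}}(U^\circ\otimes_{A^\circ}\widehat{A^\circ}, M)$ valid when $A^\circ\to\widehat{A^\circ}$ is flat — which $\pi$-adic completion of a Noetherian ring is), and that this module is finitely generated. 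The remaining identifications of completed tensor products are routine once Lemmas~\ref{removehat} and~\ref{fgunitball} are in hand.
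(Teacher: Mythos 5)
Your proposal is correct and follows essentially the same route as the paper's proof: verify the hypotheses of Theorem~\ref{fullcomplexstrict} via Lemma~\ref{fgunitball} and flat base change along $A^\circ \to \widehat{A^\circ}$ to land in finitely generated modules over the Noetherian ring $U^\circ\otimes_{A^\circ}\widehat{A^\circ}$ (hence bounded $\pi$-torsion), then identify the completed tensor products using Lemma~\ref{removehat} and the compatibility of $\widehat{\otimes}$ with completion. The only cosmetic difference is that you hedge on the $\pi$-torsion step before landing on the right justification (Noetherianity alone suffices; no $\pi$-completeness is needed).
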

\begin{proof}
By Lemma \ref{complsNB}, $\widehat{A}$ is a strictly NB $K$-algebra whose unit ball is $\widehat{A^\circ}$. Therefore by Lemma \ref{fgunitball}, $\mathrm{H}^j(C^\bullet)^\circ$ is a finitely generated $\widehat{A^\circ}$-module. Now
\begin{equation*}
T_s(\mathrm{H}^j(C^\bullet)^\circ)=\Tor^{A^\circ}_s(U^\circ, \mathrm{H}^j(C^\bullet)^\circ)\cong \Tor^{\widehat{A^\circ}}_s(U^\circ\otimes_{A^\circ} \widehat{A^\circ}, \mathrm{H}^j(C^\bullet)^\circ)
\end{equation*} 
by \cite[Proposition 3.2.9]{Weibel}, as $\widehat{A^\circ}$ is flat over $A^\circ$ by \cite[3.2.3.(iv)]{Berthelot}.\\
By Noetherianity of $\widehat{A^\circ}$, $\mathrm{H}^j(C^\bullet)^\circ$ now admits a free resolution of finitely generated $\widehat{A^\circ}$-modules, so that each 
\begin{equation*}
\Tor^{\widehat{A}^\circ}_s(U^\circ \otimes \widehat{A^\circ}, \mathrm{H}^j(C^\bullet)^\circ)
\end{equation*}
is a finitely generated left $U^\circ\otimes \widehat{A^\circ}$-module, as we assume this ring to be Noetherian.\\
So by Noetherianity, the $\pi$-torsion submodule is also finitely generated, and thus $T_s(\mathrm{H}^j(C^\bullet)^\circ)$ has in fact bounded $\pi$-torsion for each $s\geq 0$. Now apply Theorem \ref{fullcomplexstrict}.\\
\\
For the last isomorphism, note that we have
\begin{equation*}
\widehat{U}\widehat{\otimes}_{\widehat{A}} \mathrm{H}^j(C^\bullet)\cong U\widehat{\otimes}_A\mathrm{H}^j(C^\bullet) \cong \mathrm{H}^j(U\widehat{\otimes}_A C^\bullet)
\end{equation*}
by \cite[Proposition 2.1.7/4]{BGR} and the above, and we can remove the completion symbol over the first tensor product by Lemma \ref{removehat}.
\end{proof}

\begin{corollary}
\label{banachapplication}
Suppose that $U^\circ$ and $A^\circ$ are left Noetherian rings such that the module structure on $U^\circ$ is given by a ring morphism $A^\circ\to U^\circ$, and that both $A$ and $U$ are Banach algebras. Assume further that for each $j$, the following is satisfied:
\begin{enumerate}[(i)]
\item $\mathrm{H}^j(C^\bullet)$ is a finitely generated $A$-module.
\item $T_s((C^j)^\circ)$ has bounded $\pi$-torsion for each $s\geq 0$.
\end{enumerate} 
Then $U\otimes_A C^\bullet$ consists of strict morphisms and $U\otimes_A \mathrm{H}^j(C^\bullet)\cong \mathrm{H}^j(U\widehat{\otimes} C^\bullet)$.
\end{corollary}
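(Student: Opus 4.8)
The plan is to reduce the statement to Corollary~\ref{noethapplication}. The one hypothesis appearing there which is not literally present here is that $U^\circ\otimes_{A^\circ}\widehat{A^\circ}$ be a left Noetherian ring, and the main point is that this becomes automatic once $A$ is assumed to be Banach. So I would begin with the elementary observation that the unit ball of a Banach $K$-algebra is already $\pi$-adically separated and complete.

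In detail: if $A$ is a Banach $K$-algebra, then $A^\circ=\{a\in A:|a|\leq 1\}$ is closed in $A$ and hence complete; and since scalar multiplication by $K$ is isometric and $\pi$ is a unit of $A$, one checks that $\pi^nA^\circ=\{a\in A:|a|\leq|\pi|^n\}$ for every $n\geq 0$. Thus the $\pi$-adic topology on $A^\circ$ coincides with the subspace topology and $\bigcap_n\pi^nA^\circ=0$, so $A^\circ$ is $\pi$-adically separated and complete, i.e.\ $\widehat{A^\circ}=A^\circ$; the same applies to $U^\circ$. Consequently $\widehat A=A$ and $\widehat U=U$, and $U^\circ\otimes_{A^\circ}\widehat{A^\circ}=U^\circ$, which is left Noetherian by assumption. (Note also that $A=A^\circ[\pi^{-1}]$ is left Noetherian with left Noetherian unit ball, so $A$ is strictly NB, and likewise $U$.)

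With this in place, all the hypotheses of Corollary~\ref{noethapplication} hold: $U^\circ$ and $A^\circ$ are left Noetherian, the module structure comes from $A^\circ\to U^\circ$, the ring $U^\circ\otimes_{A^\circ}\widehat{A^\circ}$ is left Noetherian, and conditions (i) and (ii) here say precisely that $\mathrm{H}^j(C^\bullet)$ is finitely generated over $\widehat A=A$ and that $T_s((C^j)^\circ)$ has bounded $\pi$-torsion. Corollary~\ref{noethapplication} then yields that $U\otimes_AC^\bullet$ consists of strict morphisms and that the natural map $\widehat U\otimes_{\widehat A}\mathrm{H}^j(C^\bullet)\to\mathrm{H}^j(U\widehat{\otimes}_AC^\bullet)$ is an isomorphism; since $\widehat A=A$ and $\widehat U=U$ this is exactly the assertion. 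I do not expect any real difficulty: the only genuinely new ingredient is the identification $\widehat{A^\circ}=A^\circ$, which is where the Banach hypothesis enters. Should one prefer not to route through Corollary~\ref{noethapplication}, the conclusion also follows directly from Theorem~\ref{fullcomplexstrict} — using Lemma~\ref{fgunitball} to see that $\mathrm{H}^j(C^\bullet)^\circ$ is finitely generated over $A^\circ$, whence $T_s(\mathrm{H}^j(C^\bullet)^\circ)$ is a finitely generated module over the Noetherian ring $U^\circ$ and so has bounded $\pi$-torsion — together with Lemma~\ref{removehat} to strip the completion off $U\widehat{\otimes}_A\mathrm{H}^j(C^\bullet)$.
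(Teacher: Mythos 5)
Your proof is correct and matches the paper's own argument, which likewise reduces to Corollary~\ref{noethapplication} via the observation that completeness of $A$ forces $\widehat{A^\circ}=A^\circ$ and hence $U^\circ\otimes_{A^\circ}\widehat{A^\circ}=U^\circ$. The extra detail you supply (the identification of the $\pi$-adic and subspace topologies on $A^\circ$, and the alternative route through Theorem~\ref{fullcomplexstrict}) is sound but not needed beyond what the paper records.
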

\begin{proof}
This is just a special case of the above: $\widehat{A^\circ}=A^\circ$, as we assume $A$ to be complete, so $U^\circ\otimes_{A^\circ} \widehat{A^\circ}=U^\circ$, which is a left Noetherian ring by assumption.
\end{proof}

\section{Statement of the Theorem}
\subsection{Fr\'echet--Stein algebras and coadmissible modules}
We recall some important notions from \cite{Ardakov1}.
\begin{definition}[{see \cite[Definition 9.1]{Ardakov1}}]
\label{defLiealgebroid}
A \textbf{Lie algebroid} on a rigid analytic $K$-variety $X$ is a pair $(\rho, \mathscr{L})$, where $\mathscr{L}$ is a locally free $\mathcal{O}_X$-module of finite rank on $X_{\rig}$ which is also a sheaf of $K$-Lie algebras, and the \textbf{anchor map} $\rho: \mathscr{L}\to \mathcal{T}_X$ is an $\mathcal{O}$-linear map of sheaves of Lie algebras, satisfying
\begin{equation*}
[x, ay]=a[x, y]+\rho(x)(a)y
\end{equation*}
for any $x, y\in \mathscr{L}(U)$, $a\in \mathcal{O}_X(U)$, $U$ any admissible open subset of $X$.
\end{definition} 
This is the natural sheaf analogue of Rinehart's notion of a $(K, A)$-Lie algebra, see \cite{Rinehart}.\\
\\
Let $X$ be a rigid analytic $K$-variety. To any Lie algebroid $(\rho, \mathscr{L})$ on $X$ we can associate the sheaf of Fr\'echet completed enveloping algebras $\wideparen{\mathscr{U}(\mathscr{L})}$, whose sections can be described as follows.\\
Let $U=\Sp A$ be an admissible open affinoid subspace of $X$, and let $\mathcal{A}$ be the unit ball of $A$ with respect to some residue norm (an `\textbf{affine formal model}'). Inside $\mathscr{L}(U)$, choose an $(R, \mathcal{A})$-Lie lattice, i.e. a finitely generated $\mathcal{A}$-submodule $\mathcal{L}$ such that
\begin{enumerate}[(i)]
\item $\mathcal{L}$ generates $\mathscr{L}(U)$ as an $A$-module;
\item $\mathcal{L}$ is closed under the Lie bracket of $\mathscr{L}(U)$;
\item $\mathcal{A}$ is preserved under the induced action of $\mathcal{L}$ on $A$ via $\rho$.
\end{enumerate}
Then $\wideparen{\mathscr{U}(\mathscr{L})}(U)$ is given by
\begin{equation*}
\wideparen{\mathscr{U}(\mathscr{L})}(U)=\wideparen{U_A(\mathscr{L}(U))}:=\varprojlim_n \widehat{U_{\mathcal{A}}(\pi^n\mathcal{L})}_K,
\end{equation*}
where $U_{\mathcal{A}}(\mathcal{L})$ is the enveloping algebra introduced in \cite{Rinehart}. A standard argument shows that this expression is independent of the choices made, see \cite[section 6.2]{Ardakov1}. By construction, $\wideparen{\mathscr{U}(\mathscr{L})}(U)$ carries the structure of a Fr\'echet $K$-algebra. \\
\\
If $X$ is a smooth rigid analytic $K$-variety, then the tangent sheaf $\mathcal{T}_X$ is a Lie algebroid, and we denote the resulting sheaf $\wideparen{\mathscr{U}(\mathcal{T}_X)}$ by $\wideparen{\mathcal{D}}_X$.
\begin{definition}[{see \cite[section 3]{Schneider03}}]
\label{defineFS}
A topological $K$-algebra $U$ is called a (left, two-sided) \textbf{Fr\'echet--Stein algebra} if $U\cong\varprojlim U_n$ is an inverse limit of countably many (left, two-sided) Noetherian Banach $K$-algebras $U_n$, such that for every $n$ the following is satisfied:
\begin{enumerate}[(i)]
\item The morphism $U_{n+1}\to U_n$ makes $U_n$ a flat $U_{n+1}$-module (on the right, on both sides).
\item The morphism $U_{n+1}\to U_n$ has dense image.
\end{enumerate}
\end{definition}
Given a Lie algebroid $\mathscr{L}$ on a rigid analytic $K$-variety $X$, it was shown in \cite[Theorem 3.5]{Bodecompl} that $\wideparen{\mathscr{U}(\mathscr{L})}(U)$ is a two-sided Fr\'echet--Stein algebra for any admissible open affinoid subspace $U$ of $X$.\\
For Fr\'echet--Stein algebras, the natural analogue of a coherent module is a coadmissible module, whose definition we recall below.
\begin{definition}[{see \cite[section 3]{Schneider03}}]
\label{definecoad}
A left module $M$ of a left Fr\'echet--Stein algebra $U=\varprojlim U_n$ is called \textbf{coadmissible} if $M=\varprojlim M_n$, such that the following is satisfied for every $n$:
\begin{enumerate}[(i)]
\item $M_n$ is a finitely generated left $U_n$-module.
\item The natural morphism $U_n\otimes_{U_{n+1}}M_{n+1}\to M_n$ is an isomorphism.
\end{enumerate}
\end{definition}
By \cite[Lemma 3.8]{Schneider03}, the notion of coadmissibility is independent of the chosen presentation $U\cong \varprojlim U_n$.\\
\\
We record the following basic results from \cite[section 3]{Schneider03}.
\begin{proposition}
\label{coadproperties}
Let $M=\varprojlim M_n$ be a coadmissible $U=\varprojlim U_n$-module. Then the following hold:
\begin{enumerate}[(i)]
\item The natural morphism $U_n\otimes_U M\to M_n$ is an isomorphism for each $n$ (see \cite[Corollary 3.1]{Schneider03}).
\item The system $(M_n)_n$ has the Mittag-Leffler property as described in \cite[0.13.2.4]{EGA}, so that $\varprojlim^{(j)}M_n=0$ for any $j\geq 1$ (see \cite[section 3, Theorem B]{Schneider03}).
\item The category of coadmissible $U$-modules is an abelian category, containing all finitely presented $U$-modules (see \cite[Corollaries 3.4, 3.5]{Schneider03}).
\end{enumerate}
\end{proposition}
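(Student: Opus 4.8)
The plan is to deduce all three parts from the foundational study of Fr\'echet--Stein algebras in \cite[section 3]{Schneider03}. Two facts make everything work: over a Noetherian Banach $K$-algebra $B$, every finitely generated $B$-module carries a canonical complete topology for which $B$-linear maps are automatically continuous, surjections are open, and every submodule is closed (\cite[Propositions 3.7.2/2, 3.7.3/3]{BGR}); and in a Fr\'echet--Stein presentation the flatness and dense-image hypotheses on $U_{n+1}\to U_n$ of Definition \ref{defineFS} transfer to finitely generated modules.

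First I would establish the density statement underlying (ii): for coadmissible $M=\varprojlim M_n$, the transition map $M_{n+1}\to M_n$ has dense image for the canonical topologies. Indeed $M_n=U_n\otimes_{U_{n+1}}M_{n+1}$, so if $m_1,\dots,m_r$ generate $M_{n+1}$ over $U_{n+1}$ then the elements $1\otimes m_i$ lie in the image of $M_{n+1}\to M_n$ and generate $M_n$ over $U_n$; since $U_{n+1}\to U_n$ has dense image and module multiplication is jointly continuous, any finite $U_n$-combination of the $1\otimes m_i$ is approximated by elements of that image. Part (ii) is then the classical Mittag-Leffler vanishing theorem for inverse systems of complete metrizable (here Banach) modules with dense transition maps, which yields $\varprojlim^{(j)}M_n=0$ for $j\geq 1$; this is \cite[section 3, Theorem B]{Schneider03}.

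For (i), surjectivity of $U_n\otimes_U M\to M_n$ follows because, combining the density just proved with the vanishing of $\varprojlim^{(1)}$, the map $M\to M_n$ is itself surjective, so a fortiori its $U_n$-span is all of $M_n$. The substantive point is injectivity, equivalently the exactness of the functor $U_n\otimes_U(-)$ on the category of coadmissible modules together with $\bigcap_m\ker(M\to M_m)=0$; one proves this by an inductive argument exploiting the flatness of $U_n$ over every $U_m$ with $m\geq n$ and a further Mittag-Leffler argument applied to the relevant kernel towers. This is \cite[Corollary 3.1]{Schneider03}. Given (i), part (iii) is then formal: for a morphism $f\colon M\to N$ of coadmissible modules the towers $(U_n\otimes_U\ker f)_n$ and $(U_n\otimes_U\coker f)_n$ satisfy the conditions of Definition \ref{definecoad} by exactness of $U_n\otimes_U(-)$, hence realize $\ker f$ and $\coker f$ as coadmissible modules; and a finitely presented $U$-module $M$ is coadmissible via $M_n:=U_n\otimes_U M$ (\cite[Corollaries 3.4, 3.5]{Schneider03}).

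The main obstacle is precisely the injectivity in (i), i.e.\ the exactness of $U_n\otimes_U(-)$ on coadmissible modules: unlike the levelwise flatness hard-wired into Definition \ref{defineFS}, this is an assertion about the limit algebra $U$ itself, and it requires the full interplay of Noetherianity, completeness of the canonical topologies, and $\varprojlim^{(1)}$-vanishing --- which is exactly why it can only be established after (ii).
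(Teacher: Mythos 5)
The paper does not prove this proposition; it is a verbatim recall of three facts from Schneider--Teitelbaum, with direct pointers to \cite[Corollary 3.1]{Schneider03}, \cite[section 3, Theorem B]{Schneider03} and \cite[Corollaries 3.4, 3.5]{Schneider03}. There is therefore no proof in the paper to compare against; what you have produced is a reconstruction of the Schneider--Teitelbaum arguments, and I will assess it as such.

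Your overall architecture (establish density of the transition maps; deduce $\varprojlim^{(1)}$-vanishing; use this for (i); deduce (iii) formally from exactness of $U_n\otimes_U(-)$) is broadly the right one, and the density argument you give is correct. However, there is a genuine gap in your treatment of (i). You assert that density of the images plus $\varprojlim^{(1)}M_n=0$ implies that $M\to M_n$ is \emph{surjective}. This is false, and a counterexample lives right inside the Fr\'echet--Stein formalism: take $U_n=K\langle \pi^nx\rangle$ with the restriction maps $U_{n+1}\to U_n$ (these have dense image and are flat, both rings being principal ideal domains), and let $M=U=\varprojlim U_n$, the algebra of entire power series, viewed as a coadmissible module over itself with $M_n=U_n$. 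Then $M\to M_0=K\langle x\rangle$ is the strict inclusion of entire functions into functions on the closed unit disk, which is not surjective. The topological Mittag-Leffler theorem you invoke only yields that $\mathrm{im}(M\to M_n)$ is \emph{dense}, not that it is everything; the subsequent sentence ``so a fortiori its $U_n$-span is all of $M_n$'' is drawing a conclusion from a false premise.

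The conclusion you want (surjectivity of $U_n\otimes_U M\to M_n$) is nonetheless correct, and the fix is short: the image of $U_n\otimes_U M\to M_n$ is a $U_n$-submodule of $M_n$ containing the dense subset $\mathrm{im}(M\to M_n)$; since $M_n$ is a finitely generated module over the Noetherian Banach algebra $U_n$, every $U_n$-submodule of $M_n$ is closed by \cite[Proposition 3.7.2/2]{BGR}; a closed dense $U_n$-submodule of $M_n$ must be all of $M_n$. You actually list this closedness fact among your tools at the start, but then appeal to the stronger and false surjectivity of $M\to M_n$ instead of using it. With this repair, and with the injectivity in (i) and part (iii) deferred to \cite{Schneider03} as you do, the sketch is sound.
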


Given a $(K, A)$-Lie algebra $L$ which is finitely generated projective over $A$, finitely generated $U_A(L)$-modules give rise to coadmissible $\wideparen{U_A(L)}$-modules in a natural way as follows.\\
As $U_A(L)$ is Noetherian, any finitely generated $U_A(L)$-module $M$ is finitely presented, so the module
\begin{equation*}
\wideparen{M}:=\wideparen{U_A(L)}\otimes_{U(L)} M
\end{equation*}
is a finitely presented $\wideparen{U(L)}$-module and thus coadmissible by property (iii) of Proposition \ref{coadproperties}.\\
Choose an $(R, \mathcal{A})$-Lie lattice $\mathcal{L}$ in $L$ and write $U_n=U_{\mathcal{A}}(\pi^n\mathcal{L})$. It now follows from property (i) in Proposition \ref{coadproperties} that
\begin{equation*}
\wideparen{M}\cong \varprojlim (\widehat{U_n}_K\otimes_{U(L)} M).
\end{equation*}
We call $\wideparen{M}$ the coadmissible completion of $M$, as in \cite[Definition 7.1]{Ardakov1}.  
\begin{lemma}[{\cite[Lemma 4.14]{Bodecompl}}]
\label{fgfrcompletion}
The functor $M\mapsto \wideparen{M}$ is exact on finitely generated $U_A(L)$-modules.
\end{lemma}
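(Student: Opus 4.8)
The plan is to work directly with the presentation $\wideparen{M}\cong\varprojlim_n\bigl(\widehat{U_n}_K\otimes_{U(L)}M\bigr)$ recalled above, where $U_n=U_{\mathcal{A}}(\pi^n\mathcal{L})$ for a fixed $(R,\mathcal{A})$-Lie lattice $\mathcal{L}$ in $L$, and to reduce exactness of $M\mapsto\wideparen{M}$ to termwise exactness of the functors $\widehat{U_n}_K\otimes_{U(L)}-$ together with a $\varprojlim^{(1)}$-vanishing statement. Since $U_A(L)$ is Noetherian, a short exact sequence $0\to M'\to M\to M''\to 0$ of finitely generated $U_A(L)$-modules has all three terms finitely generated, and right exactness of $\wideparen{(-)}$ is automatic because $\wideparen{M}=\wideparen{U(L)}\otimes_{U(L)}M$; so it suffices to prove that $0\to\wideparen{M'}\to\wideparen{M}\to\wideparen{M''}\to 0$ is exact.

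The key point is the flatness of $\widehat{U_n}_K$ over $U_A(L)$ for every $n$. Here $U_n$ is a two-sided Noetherian $R$-algebra which is $\pi$-torsion free, so its $\pi$-adic completion $\widehat{U_n}$ is flat over $U_n$ by the same reasoning (via Berthelot's results) as in the proof of Lemma \ref{complsNB}; since localization preserves flatness and $U_A(L)=U_n\otimes_R K$ is the localization $U_n[1/\pi]$, the ring $\widehat{U_n}_K=\widehat{U_n}\otimes_R K$ is flat over $U_A(L)$. Hence, tensoring $0\to M'\to M\to M''\to 0$ over $U_A(L)$ with $\widehat{U_n}_K$ produces, for each $n$, a short exact sequence $0\to M'_n\to M_n\to M''_n\to 0$ of finitely generated $\widehat{U_n}_K$-modules, where $M_n:=\widehat{U_n}_K\otimes_{U(L)}M$ and similarly for $M'$, $M''$; these are visibly compatible with the transition maps and therefore assemble into a short exact sequence of inverse systems.

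Finally I would pass to the inverse limit. Left exactness of $\varprojlim$ gives exactness of $0\to\wideparen{M'}\to\wideparen{M}\to\wideparen{M''}$, and surjectivity of $\wideparen{M}\to\wideparen{M''}$ follows from the six-term $\varprojlim$/$\varprojlim^{(1)}$ exact sequence as soon as $\varprojlim^{(1)}_n M'_n=0$. For this, note that $M'$ being finitely generated makes $\wideparen{M'}=\wideparen{U(L)}\otimes_{U(L)}M'$ finitely presented, hence coadmissible by Proposition \ref{coadproperties}(iii); by Proposition \ref{coadproperties}(i) its defining inverse system is exactly $(M'_n)_n$, so Proposition \ref{coadproperties}(ii) supplies the Mittag-Leffler property and thus the vanishing of $\varprojlim^{(1)}$, completing the argument. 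The only step demanding genuine care is the flatness of $\widehat{U_n}_K$ over $U_A(L)$; everything else is formal bookkeeping with inverse limits and the basic properties of coadmissible modules.
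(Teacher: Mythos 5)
Your proof is essentially correct; note that the paper gives no proof of its own here but cites \cite[Lemma 4.14]{Bodecompl}, so there is nothing internal to compare against. The argument you give is a sound, standard route: right exactness comes for free from $\wideparen{M}=\wideparen{U(L)}\otimes_{U(L)}M$, flatness of each $\widehat{U_n}_K$ over $U_A(L)$ gives termwise injectivity, and left exactness of $\varprojlim$ finishes the job. The flatness claim is not circular with the theorem that immediately follows the lemma, since that theorem concerns the inverse limit $\wideparen{U_A(L)}$, whereas you only need flatness of the individual Banach completions $\widehat{U_n}_K$.

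Two small remarks. First, your assertion that $U_n=U_{\mathcal{A}}(\pi^n\mathcal{L})$ is $\pi$-torsion free is automatic when $\mathcal{L}$ is free (or flat) over $\mathcal{A}$, but a general $(R,\mathcal{A})$-Lie lattice need not be projective; in that case you should replace $U_n$ by $\overline{U_n}=U_n/\pi\mathrm{-tor}(U_n)$ exactly as in the proof of Lemma \ref{complsNB}, observing that $\widehat{U_n}_K\cong\widehat{\overline{U_n}}_K$ and $\overline{U_n}\otimes_R K=U_A(L)$, so the base-change flatness argument goes through unchanged. Second, the $\varprojlim^{(1)}$ paragraph is redundant: once right exactness is granted, the only thing left to check is injectivity of $\wideparen{M'}\to\wideparen{M}$, which you already obtain from termwise flatness plus left exactness of $\varprojlim$; the Mittag-Leffler input is not needed.
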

\begin{theorem}
Let $L$ be a $(K, A)$-Lie algebra which is finitely generated projective as an $A$-module. Then $\wideparen{U_A(L)}$ is flat as a (left and right) $U_A(L)$-module.
\end{theorem}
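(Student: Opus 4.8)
The plan is to realize $\wideparen{U_A(L)}$ as a Fréchet--Stein completion and deduce flatness termwise from flatness of the completed Rees-type pieces. First I would fix an affine formal model $\mathcal{A}$ of $A$ and an $(R,\mathcal{A})$-Lie lattice $\mathcal{L}$ inside $L$ (which exists since $L$ is finitely generated projective over $A$), and write $U_n = \widehat{U_{\mathcal{A}}(\pi^n\mathcal{L})}_K$, so that $\wideparen{U_A(L)} = \varprojlim_n U_n$ is a (two-sided) Fréchet--Stein presentation, as recalled above from \cite[Theorem 3.5]{Bodecompl}. For a Fréchet--Stein algebra $U = \varprojlim U_n$, flatness of $U$ over any ring $B$ mapping compatibly into all the $U_n$ can be checked by verifying: (a) each $U_n$ is flat over $B$, and (b) each transition map $U_{n+1}\to U_n$ is flat, so that $U = \varprojlim U_n$ is flat over $U_{n+1}$ for all $n$ (Definition \ref{defineFS}), and then using that a coadmissible-type limit of flat modules is flat when one has the Mittag-Leffler vanishing of $\varprojlim^{(j)}$ from Proposition \ref{coadproperties}.(ii). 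Concretely, to show $\Tor_i^{U_A(L)}(\wideparen{U_A(L)}, M) = 0$ for $i>0$ and all finitely generated $M$, I would take a finite free resolution $P_\bullet \to M$ by finitely generated free $U_A(L)$-modules (possible since $U_A(L)$ is Noetherian), apply $\wideparen{U_A(L)}\otimes_{U(L)} -$, and identify the result with $\varprojlim_n (U_n \otimes_{U(L)} P_\bullet)$ using Proposition \ref{coadproperties}.(i); the key input is then that each $U_n = \widehat{U_{\mathcal{A}}(\pi^n\mathcal{L})}_K$ is flat over $U_A(L)$.

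The heart of the argument is therefore the single-level statement: $\widehat{U_{\mathcal{A}}(\pi^n\mathcal{L})}_K$ is flat over $U_A(L)$. I would prove this in two steps. Step one is to pass to integral forms: $\widehat{U_{\mathcal{A}}(\pi^n\mathcal{L})}$ is the $\pi$-adic completion of the Noetherian $R$-algebra $U_{\mathcal{A}}(\pi^n\mathcal{L})$, hence flat over $U_{\mathcal{A}}(\pi^n\mathcal{L})$ by \cite[3.2.3.(iv)]{Berthelot}; inverting $\pi$, $\widehat{U_{\mathcal{A}}(\pi^n\mathcal{L})}_K$ is flat over $U_{\mathcal{A}}(\pi^n\mathcal{L})_K = U_A(L)$ (using that $U_{\mathcal{A}}(\pi^n\mathcal{L})\otimes_R K = U_A(L)$, since $\pi^n\mathcal{L}$ spans $L$ over $A$ and localization is flat). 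Step two, which is the part I expect to require care, is the comparison of enveloping algebras at different lattice scales: one needs that $U_{\mathcal{A}}(\pi^n\mathcal{L})$ is Noetherian and that its $K$-span is genuinely $U_A(L)$, which rests on the standard PBW-type filtration arguments for Rinehart enveloping algebras (the associated graded of $U_{\mathcal{A}}(\pi^n\mathcal{L})$ is a symmetric algebra over $\mathcal{A}$ on the finitely generated projective module $\pi^n\mathcal{L}$, hence Noetherian, so $U_{\mathcal{A}}(\pi^n\mathcal{L})$ is Noetherian by a filtered-ring argument). These are exactly the facts assembled in \cite[section 6.2]{Ardakov1} and in Lemma \ref{complsNB} of this paper, so I would cite them rather than reprove them.

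Finally I would assemble the global statement. Given a short exact sequence $0\to M'\to M\to M''\to 0$ of $U_A(L)$-modules, reduce as usual to $M',M,M''$ finitely generated (flatness is detected on finitely generated modules, and $U_A(L)$ is Noetherian so submodules of finitely generated modules are finitely generated). Applying $U_n\otimes_{U(L)} -$ keeps exactness by step one, and then applying $\varprojlim_n$ preserves exactness because the inverse systems involved are the underlying systems of coadmissible $\wideparen{U_A(L)}$-modules (namely $\wideparen{M'}, \wideparen{M}, \wideparen{M''}$, using Lemma \ref{fgfrcompletion} to see the completion functor is exact, or directly the $\varprojlim^{(1)}=0$ statement of Proposition \ref{coadproperties}.(ii)). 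Hence $\wideparen{U_A(L)}\otimes_{U(L)} -$ is exact, i.e. $\wideparen{U_A(L)}$ is left flat; the right-flatness statement follows by the symmetric argument, using that $\wideparen{U_A(L)}$ is a two-sided Fréchet--Stein algebra and that $U_{\mathcal{A}}(\pi^n\mathcal{L})$ is a Noetherian ring on both sides. The main obstacle is genuinely Step two above — making sure the integral enveloping algebras at scale $\pi^n$ are Noetherian with the correct $K$-span — but this is precisely the content of the cited structure theory, so in the write-up it should reduce to a careful bookkeeping of references.
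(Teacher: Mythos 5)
Your proof is correct, but you take a longer route than the paper does. The paper appeals to the ideal criterion for flatness (\cite[Proposition 3.2.4]{Weibel}): because $U_A(L)$ is Noetherian, it suffices to show that $\wideparen{U(L)}\otimes_{U(L)} I \to \wideparen{U(L)}$ is injective for every left ideal $I$, and this map is exactly the coadmissible completion $\wideparen{I}\to \wideparen{U_A(L)}$ of the inclusion $I\hookrightarrow U_A(L)$, so injectivity falls out immediately from the exactness of the functor $M\mapsto\wideparen{M}$ (Lemma \ref{fgfrcompletion}); that is the entire argument. You instead compute $\Tor$ directly: you unfold $\wideparen{U_A(L)}=\varprojlim U_n$, establish flatness of each $U_n=\widehat{U_{\mathcal{A}}(\pi^n\mathcal{L})}_K$ over $U_A(L)$ via Berthelot's flatness of $\pi$-adic completion and base change along $R\to K$, and then pass exactness through the inverse limit with a Mittag--Leffler argument. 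That is all sound, and the intermediate fact that each $U_n$ is flat over $U_A(L)$ is genuinely useful (the same mechanism appears, e.g., in Lemma \ref{torgpsvanishn}), but it essentially re-derives the content that Lemma \ref{fgfrcompletion} already packages, and you end up citing that lemma anyway in the last step. Once you are willing to grant that lemma, the Baer-type criterion closes the argument with no further work and makes both the level-by-level flatness and the $\varprojlim^{(1)}=0$ bookkeeping unnecessary. Two small phrasing points: ``finite free resolution'' should read ``resolution by finitely generated free modules'' (Noetherianity gives finitely generated terms, not finite length), and the reduction ``flatness is detected on finitely generated modules'' is most precisely stated as the ideal criterion for finitely generated left ideals, which is exactly where the Noetherian hypothesis enters in the paper's proof.
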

\begin{proof}
Let $I$ be a left ideal of $U_A(L)$, which is automatically finitely generated by Noetherianity of $U_A(L)$. By \cite[Proposition 3.2.4]{Weibel}, it is sufficient to show that the map $\wideparen{U(L)}\otimes I\to \wideparen{U(L)}$ is injective. But this follows immediately from the previous lemma. Thus $\wideparen{U_A(L)}$ is flat as a right $U(L)$-module. The proof concerning the left module structure is entirely analogous.
\end{proof}
\subsection{Fr\'echet--Stein sheaves}
We now introduce a class of sheaves of algebras which is large enough to include not only those of the form $\wideparen{\mathscr{U}(\mathscr{L})}$, but also various quotients, for example the twisted sheaves $\wideparen{\mathcal{D}}^{\lambda}$ which we are going to define in section 6.

\begin{definition}
\label{globalFSsheaf}
A sheaf of topological $K$-algebras $\mathcal{F}$ on a rigid analytic $K$-variety $X$ is called a (left) \textbf{global Fr\'echet--Stein sheaf} if there exists 
\begin{enumerate}[(i)]
\item a collection of sites $(X_n)_{n\in \mathbb{N}}$ on $X$ such that $X_n$ is contained in $X_{n+1}$ for each $n$, and any $U\in X_w$ is in $X_n$ for sufficiently large $n$, likewise for $X_w$-coverings; and
\item for each $n$, a sheaf of $K$-algebras $\mathcal{F}_n$ on $X_n$, together with morphisms $\mathcal{F}_{n+1}|_{X_n}\to \mathcal{F}_n$,
\end{enumerate}
such that the following holds:
\begin{enumerate}[(i)]
\item there is an isomorphism $\mathcal{F}\cong\varprojlim \mathcal{F}_n$ (where we write $\varprojlim \mathcal{F}_n$ for the sheaf on $X$ obtained from $U\mapsto \varprojlim \mathcal{F}_n(U)$ for $U\in X_w$) which exhibits $\mathcal{F}(U)\cong \varprojlim \mathcal{F}_n(U)$ as a (left) Fr\'echet--Stein algebra for every admissible open affinoid subspace $U$ of $X$;
\item if $V$ is an affinoid subdomain of an admissible open affinoid subspace $U\subseteq X$ such that both $U$ and $V$ are open in $X_n$, then the restriction map $\mathcal{F}_n(U)\to \mathcal{F}_n(V)$ is flat (on the right); and
\item if $U\subseteq X$ is an admissible open affinoid subspace which is open in $X_n$ and $\mathfrak{U}$ is a finite covering of $U$ in $X_n$ by affinoid subdomains, then $\check{\mathrm{H}}^j(\mathfrak{U}, \mathcal{F}_n)=0$ for each $j>0$.
\end{enumerate}
\end{definition}
Given a left global Fr\'echet--Stein sheaf on an affinoid $K$-variety $X$, we can repeat all the arguments in \cite[sections 5, 8]{Ardakov1} to produce a localization functor $\Loc$, sending a coadmissible left $\mathcal{F}(X)$-module $M$ to the sheaf of $\mathcal{F}$-modules given by
\begin{equation*}
U\mapsto \mathcal{F}(U)\wideparen{\otimes}_{\mathcal{F}(X)} M
\end{equation*}
for each $U\in X_w$. By the same argument as in \cite[Theorem 4.16]{Bodecompl}, $\Loc M$ is a sheaf with vanishing higher \v{C}ech cohomology for every finite affinoid covering.
\begin{definition}
\label{defcoadforglFS}
Let $\mathcal{F}$ be a left global Fr\'echet--Stein sheaf on a rigid analytic $K$-variety $X$. A left $\mathcal{F}$-module $\mathcal{M}$ is then called coadmissible if there exists an admissible covering $\mathfrak{U}$ of $X$ by affinoid subspaces such that for every $U\in \mathfrak{U}$, the following holds:
\begin{enumerate}[(i)]
\item $\mathcal{M}(U)$ is a coadmissible $\mathcal{F}(U)$-module;
\item the natural morphism $\Loc \mathcal{M}(U)\to \mathcal{M}|_U$ is an isomorphism.
\end{enumerate}
\end{definition}

\begin{definition}
\label{defFSsheaf}
A sheaf of $K$-algebras $\mathcal{F}$ on a rigid analytic $K$-variety $X$ is a \textbf{full Fr\'echet--Stein sheaf} if for any admissible open affinoid subspace $U$ of $X$, the restriction $\mathcal{F}|_U$ is a global Fr\'echet--Stein sheaf on $U$.\\
An $\mathcal{F}$-module $\mathcal{M}$ is called coadmissible if there exists an admissible covering $\mathfrak{U}$ by affinoid subspaces such that for all $U\in \mathfrak{U}$, $\mathcal{M}|_U$ is a coadmissible $\mathcal{F}|_U$-module.
\end{definition}

The analogue of Kiehl's Theorem (\cite[Theorem 8.4]{Ardakov1}, \cite[Theorem 4.17]{Bodecompl}) still holds in this generalized context, so that if $\mathcal{M}$ is coadmissible with respect to one covering, then it is coadmissible with respect to any affinoid covering.\\
\\
If there exists a collection $\mathcal{S}$ of admissible open affinoid subspaces of $X$ forming a basis of the topology with the property that $\mathcal{F}|_U$ is a global Fr\'echet--Stein sheaf for each $U\in \mathcal{S}$, we call $\mathcal{F}$ simply a \textbf{Fr\'echet--Stein sheaf}. For this it is evidently sufficient to give one admissible covering $(U_i)$ of $X$ by affinoid subspaces such that $\mathcal{F}|_{U_i}$ is a global Fr\'echet--Stein sheaf for each $i$. \\
There is a natural analogue of the theory above for Fr\'echet--Stein sheaves, see \cite{Ardakov1}.\\
\\
We can now restate a result from \cite{Bodecompl} as follows.
\begin{proposition}[{see \cite[Theorems 3.5, 4.9, 4.10]{Bodecompl}}]
\label{recallglobal}
If $\mathscr{L}$ is a Lie algebroid on some rigid analytic $K$-variety $X$, then $\wideparen{\mathscr{U}(\mathscr{L})}$ is a full Fr\'echet--Stein sheaf. 
\end{proposition}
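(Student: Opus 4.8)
The plan is to reduce Proposition \ref{recallglobal} to the local statement, so that it suffices to show $\wideparen{\mathscr{U}(\mathscr{L})}|_U$ is a global Fr\'echet--Stein sheaf whenever $U=\Sp A$ is an admissible open affinoid subspace small enough that $\mathscr{L}(U)$ admits an $(R,\mathcal{A})$-Lie lattice $\mathcal{L}$ for some affine formal model $\mathcal{A}$; by the definition of a full Fr\'echet--Stein sheaf, once we have this for such $U$ covering $X$, we must then argue (as in \cite[Theorems 4.9, 4.10]{Bodecompl}) that the property descends to \emph{arbitrary} admissible open affinoid subspaces. So the first step is to fix one such $U$ and choose the witnessing data: take $X_n$ to be the site of affinoid subdomains $V\subseteq U$ on which $\pi^n\mathcal{L}$ remains a Lie lattice over the relevant formal model $\mathcal{A}_V$ of $\mathcal{O}(V)$ (so that $\widehat{U_{\mathcal{A}_V}(\pi^n\mathcal{L}|_V)}_K$ makes sense), and set $\mathcal{F}_n$ to be the sheaf on $X_n$ with sections $\widehat{U_{\mathcal{A}_V}(\pi^n\mathcal{L}|_V)}_K$ on such $V$; these are strictly NB $K$-algebras by Lemma \ref{complsNB}. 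The cofinality condition (i) in Definition \ref{globalFSsheaf} — every affinoid subdomain of $U$ lies in $X_n$ for large $n$, likewise for coverings — follows because on a given affinoid the Lie lattice condition is eventually satisfied after scaling by a power of $\pi$, and a finite cover involves only finitely many such conditions.

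The main work is then verifying the three numbered conditions in Definition \ref{globalFSsheaf}. Condition (i), that $\mathcal{F}(U')=\varprojlim_n \widehat{U_{\mathcal{A}_{U'}}(\pi^n\mathcal{L})}_K$ is a Fr\'echet--Stein algebra for every admissible open affinoid $U'\subseteq U$, is exactly \cite[Theorem 3.5]{Bodecompl}, and the identification $\wideparen{\mathscr{U}(\mathscr{L})}(U')\cong\varprojlim_n\mathcal{F}_n(U')$ is the defining description of the Fr\'echet completed enveloping algebra recalled in section 3 (independence of choices being standard, \cite[section 6.2]{Ardakov1}). Condition (ii), flatness of the restriction $\mathcal{F}_n(U')\to\mathcal{F}_n(V')$ for $V'\subseteq U'$ affinoid subdomains open in $X_n$, and condition (iii), acyclicity of $\mathcal{F}_n$ for finite affinoid coverings, are precisely the content of \cite[Theorem 4.9]{Bodecompl}: the point is that $\widehat{U_{\mathcal{A}}(\pi^n\mathcal{L})}_K$ is, up to a filtration argument using the Rees construction, built out of the structure sheaf, for which flatness of affinoid localization and Tate acyclicity are classical; concretely one passes to the associated graded, where $\gr \widehat{U_{\mathcal{A}}(\pi^n\mathcal{L})}_K$ is a symmetric algebra over $A$ (or $\mathcal{A}$-version thereof) and the statements reduce to coherent sheaf theory on a vector bundle over $U'$, then lifts back by completeness and an exhaustive-filtration spectral sequence argument.

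The step I expect to be the genuine obstacle is the passage from the small affinoids $U$ (where a single Lie lattice exists globally) to \emph{all} admissible open affinoid subspaces $U'$, i.e.\ upgrading ``Fr\'echet--Stein sheaf'' to ``full Fr\'echet--Stein sheaf''. On an arbitrary affinoid $U'$ one may only have a Lie lattice after refining to a cover, so the sites $X_n$ and sheaves $\mathcal{F}_n$ have to be defined by gluing local data, and one must check this gluing is consistent and that conditions (i)--(iii) are stable under it; this is the heart of \cite[Theorems 4.9, 4.10]{Bodecompl} and relies on the fact — itself nontrivial in the $\pi$-torsion case, cf.\ remark (i) after Theorem \ref{intromainthm} — that the relevant cohomology and flatness statements are local on the base in the appropriate sense. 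Since all of this is already proved in \cite{Bodecompl}, the proof here is essentially a matter of assembling \cite[Theorems 3.5, 4.9, 4.10]{Bodecompl} into the single statement, and I would simply cite them, spelling out only the translation into the language of Definitions \ref{globalFSsheaf} and \ref{defFSsheaf}.
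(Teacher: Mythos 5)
Your overall conclusion is right: the paper's proof of this proposition is nothing beyond the citation in the statement — the three cited theorems of \cite{Bodecompl} are exactly (i) $\wideparen{\mathscr{U}(\mathscr{L})}(U)$ being Fr\'echet--Stein for every admissible open affinoid $U$, and (ii)--(iii) flatness of restriction and \v{C}ech acyclicity for the $\mathscr{U}_n$, and assembling them into the language of Definition \ref{globalFSsheaf} is all that is required.

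However, one premise in your account of the ``genuine obstacle'' is off. You write that on an arbitrary affinoid $U'$ ``one may only have a Lie lattice after refining to a cover'' and so propose gluing local sites and sheaves. In fact a Lie lattice always exists on any admissible open affinoid $U'=\Sp A'$: choose any affine formal model $\mathcal{A}'$ and any finite generating set of $\mathscr{L}(U')$ over $A'$, and after scaling by a suitable $\pi^N$ the $\mathcal{A}'$-submodule they generate satisfies all three conditions (closure under the bracket and preservation of $\mathcal{A}'$ are obtained because the relevant structure constants and actions land in $A'$, and scaling pushes them into $\mathcal{A}'$). So there is no need to glue: the construction in the discussion after Definition \ref{rationalac} can be run directly on any affinoid, with the site $X_n=X_{\ac}(\pi^n\mathcal{L})$ defined intrinsically. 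The historical difficulty that distinguished ``full Fr\'echet--Stein'' from the weaker ``Fr\'echet--Stein'' in \cite{Ardakov1} was not about Lie lattice existence but about the \emph{proof technique} for conditions (ii) and (iii) of Definition \ref{globalFSsheaf}: the original arguments required additional hypotheses on formal models (avoidance of $\pi$-torsion) that are only available after shrinking to a basis, and the point of \cite[Theorems 4.9, 4.10]{Bodecompl} is precisely to remove those hypotheses via a finer completed-tensor-product analysis (cf.\ remark (i) after Theorem \ref{intromainthm}, which refers to ``possible $\pi$-torsion in certain formal models''). With that correction, your plan to simply cite the three theorems is exactly what the paper does.
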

Let $\mathscr{L}$ be a Lie algebroid on an affinoid $K$-variety $X$. For future reference, we recall the corresponding sites $X_n$ and sheaves $\mathcal{F}_n$ explicitly in this case.\\
\\
Let $X=\Sp A$, and let $\mathcal{A}$ be an affine formal model in $A$. Choose an $(R, \mathcal{A})$-Lie lattice $\mathcal{L}$ in $\mathscr{L}(X)$. Recall from \cite[Definition 3.1]{Ardakov1} that an affinoid subdomain $Y=\Sp B$ of $X$ is called $\mathcal{L}$-\textbf{admissible} if $B$ contains an $\mathcal{L}$-stable affine formal model, i.e. an affine formal model which contains the image of $\mathcal{A}$ under restriction and is preserved by the $\mathcal{L}$-action.\\
We now consider the site $X_n=X_{\ac}(\pi^n\mathcal{L})$ of $\pi^n\mathcal{L}$-accessible subdomains as defined in \cite{Ardakov1}.
\begin{definition}[{\cite[Definitions 4.6, 4.8]{Ardakov1}}]
\label{rationalac}
Let $Y$ be a rational subdomain of $X$. If $Y=X$, we say that it is $\mathcal{L}$-accessible in 0 steps. Inductively, if $n\geq 1$ then we say that it is $\mathcal{L}$-accessible in $n$ steps if there exists a chain $Y\subseteq Z\subseteq X$ such that the following is satisfied:
\begin{enumerate}[(i)]
\item $Z\subseteq X$ is $\mathcal{L}$-accessible in $(n-1)$ steps,
\item $Y=Z(f)$ or $Z(f^{-1})$ for some non-zero $f\in \mathcal{O}(Z)$,
\item there is an $\mathcal{L}$-stable affine formal model $\mathcal{C}\subset \mathcal{O}(Z)$ such that $\mathcal{L}\cdot f\subseteq \mathcal{C}$.
\end{enumerate}
A rational subdomain $Y\subseteq X$ is said to be $\mathcal{L}$-accessible if it is $\mathcal{L}$-accessible in $n$ steps for some $n\in \mathbb{N}$.\\
An affinoid subdomain $Y$ of $X$ is said to be $\mathcal{L}$-\textbf{accessible} if it is $\mathcal{L}$-admissible and there exists a finite covering $Y=\cup_{j=1}^r Y_j$ where each $Y_j$ is an $\mathcal{L}$-accessible rational subdomain of $X$.\\
A finite covering $\{Y_j\}$ of $X$ by affinoid subdomains is said to be $\mathcal{L}$-accessible if each $Y_j$ is an $\mathcal{L}$-accessible affinoid subdomain of $X$.
\end{definition}
We then define the sheaf $\mathscr{U}_n(\mathscr{L})$ on $X_{\ac}(\pi^n\mathcal{L})$ by setting
\begin{equation*}
\mathscr{U}_n(\mathscr{L})(Y)=\widehat{U_{\mathcal{B}}(\mathcal{B}\otimes_{\mathcal{A}}\pi^n\mathcal{L})}_K,
\end{equation*}
for any $\pi^n\mathcal{L}$-accessible affinoid subdomain $Y=\Sp B$ of $X$, where $\mathcal{B}\subset B$ is some $\pi^n\mathcal{L}$-stable affine formal model.\\
Also note that by \cite[Proposition 2.3]{Ardakov1}, $\mathscr{U}_n(\mathscr{L})(Y)$ is isomorphic as a $B$-module to $B\widehat{\otimes}_A U_A(\mathscr{L}(X))$, where $B$ is equipped with some residue norm (without loss of generality with unit ball $\mathcal{B}$ as above) and $U_A(\mathscr{L}(X))$ is equipped with the gauge semi-norm associated to $U_{\mathcal{A}}(\pi^n\mathcal{L})$.\\
It was shown in \cite{Bodecompl} that the sheaves $\mathscr{U}_n(\mathscr{L})$ have the desired properties.\\
\\
Recall the following result from \cite{Schneider03}.
\begin{lemma}[{see \cite[Proposition 3.7, Lemma 3.8]{Schneider03}}]
\label{quotiscoadenl}
Let $I$ be a closed two-sided ideal in a Fr\'echet--Stein algebra $U$. Then $U/I$ is a Fr\'echet--Stein algebra, and a $U/I$-module is coadmissible if and only if it is coadmissible as a $U$-module.
\end{lemma}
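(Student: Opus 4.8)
The plan is to build an explicit Fr\'echet--Stein presentation of $U/I$ out of a given one for $U$, and then to check that the resulting notion of coadmissibility over $U/I$ coincides with the one over $U$. So fix a presentation $U\cong\varprojlim U_n$ witnessing the Fr\'echet--Stein structure, and for each $n$ let $I_n$ be the closure of the image of $I$ under $U\to U_n$. First I would check that $I_n$ is a closed two-sided ideal of $U_n$: it is a closed subspace by construction, and for $a\in U_n$ and $x\in I_n$ one approximates $a$ by elements of the dense image of $U$ and $x$ by elements of the image of $I$, so that continuity of the multiplication forces $ax,xa\in I_n$. Since $U_n$ is Noetherian, $I_n$ is in particular a finitely generated (left and right) ideal, automatically closed by \cite[Proposition 3.7.2/2]{BGR}, and $U_n/I_n$ is again a Noetherian Banach $K$-algebra; the transition maps $U_{n+1}\to U_n$ descend to maps $U_{n+1}/I_{n+1}\to U_n/I_n$, which still have dense image.

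The heart of the argument is the pair of identifications $I\cong\varprojlim I_n$ and $U_n\otimes_{U_{n+1}}I_{n+1}\xrightarrow{\ \sim\ }I_n$. For the second, tensoring the inclusion $I_{n+1}\hookrightarrow U_{n+1}$ with the flat $U_{n+1}$-module $U_n$ keeps it injective, so $U_n\otimes_{U_{n+1}}I_{n+1}$ is identified with the left ideal of $U_n$ generated by the image of $I_{n+1}$; this ideal is finitely generated, hence closed, and it contains the image of $I$ under $U\to U_n$, hence contains $I_n$, while being visibly contained in $I_n$. For the first identification, each $I_n\hookrightarrow U_n$ is a closed embedding, so $\varprojlim I_n$ is a closed subspace of $U$ containing $I$; but the image of $I$ is dense in every $I_n$ by construction, so $I$ is dense in $\varprojlim I_n$, whence $I=\varprojlim I_n$ because $I$ is closed in $U$. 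In particular $(I_n)_n$ exhibits $I$ as a coadmissible $U$-module, so $\varprojlim^{(1)}I_n=0$ by Proposition \ref{coadproperties}.(ii); applying $\varprojlim$ to the exact sequences $0\to I_n\to U_n\to U_n/I_n\to 0$ then gives $U/I\cong\varprojlim U_n/I_n$.

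To conclude that this presents $U/I$ as a Fr\'echet--Stein algebra it only remains to see that $U_n/I_n$ is flat over $U_{n+1}/I_{n+1}$; but the isomorphism $U_n\otimes_{U_{n+1}}U_{n+1}/I_{n+1}\cong U_n/I_n$ just obtained, together with base change, identifies $(U_n/I_n)\otimes_{U_{n+1}/I_{n+1}}(-)$ with $U_n\otimes_{U_{n+1}}(-)$ on $U_{n+1}/I_{n+1}$-modules, which is exact by hypothesis. For the coadmissibility statement, view a $U/I$-module $\mathcal{M}$ as a $U$-module via $U\to U/I$. If $\mathcal{M}=\varprojlim M_n$ with $M_n$ finitely generated over $U_n/I_n$ and $M_n\cong(U_n/I_n)\otimes_{U_{n+1}/I_{n+1}}M_{n+1}$, then restriction of scalars along $U_n\to U_n/I_n$ together with the base-change identity presents $\mathcal{M}$ as a coadmissible $U$-module. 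Conversely, if $\mathcal{M}$ is coadmissible over $U$ then $\mathcal{M}\cong\varprojlim(U_n\otimes_U\mathcal{M})$ by Proposition \ref{coadproperties}.(i), and since $I$ acts as zero on $\mathcal{M}$ it acts as zero on the dense image of $\mathcal{M}$ in $U_n\otimes_U\mathcal{M}$, hence, by continuity, $I_n$ annihilates $U_n\otimes_U\mathcal{M}$; thus each $U_n\otimes_U\mathcal{M}$ is a finitely generated $U_n/I_n$-module, and the base-change identity shows these assemble into a coadmissible $U/I$-module. Independence of the presentation (Proposition \ref{coadproperties}) then removes the choices made.

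I expect the main obstacle to be the identification $I\cong\varprojlim I_n$ --- equivalently, recognising a closed two-sided ideal as a coadmissible $U$-submodule of $U$ --- since this is the step that genuinely combines Noetherianity of the $U_n$, density of the transition maps, and the Mittag--Leffler vanishing; everything after it is formal flat base change. This is, in essence, the content of \cite[Proposition 3.7, Lemma 3.8]{Schneider03}.
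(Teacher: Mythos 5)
The paper does not prove this lemma itself; it states it as a citation to \cite[Proposition 3.7, Lemma 3.8]{Schneider03}. Your proposal is a correct reconstruction of the Schneider--Teitelbaum argument and follows essentially the same route: define $I_n$ as the closure of the image of $I$ in $U_n$, use Noetherianity, flat base change, and a density argument to exhibit $(I_n)_n$ as a coadmissible presentation of $I$, and then pass to quotients via Mittag--Leffler vanishing to obtain the Fr\'echet--Stein presentation of $U/I$ and deduce the coadmissibility equivalence.
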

We will slightly extend this result and its sheaf analogue in order to produce further examples of global Fr\'echet--Stein sheaves.\\
Just as a Fr\'echet--Stein algebra $B=\varprojlim B_n$ carries a natural Fr\'echet topology as the inverse limit topology of the Banach norms on $B_n$, so any coadmissible $B$-module $M=\varprojlim M_n$ carries a canonical Fr\'echet topology induced by the canonical Banach module structures on each $M_n$.\\
An algebra structure on $M$ is said to have \textbf{continuous multiplication} (with respect to $(B_n)$) if we can choose for each sufficiently large $n$ a Banach $B_n$-module norm $|-|$ on $M_n$ such that the natural morphism $\iota_n: M\to M_n$ endows $M$ with a semi-norm $|-|_n$ which is submultiplicative, i.e.
\begin{equation*}
|xy|_n:=|\iota_n(xy)| \leq |\iota_n(x)|\cdot |\iota_n(y)|
\end{equation*}   
for any $x, y \in M$. If this holds, multiplication on $M$ extends continously to make $M_n=B_n\otimes_B M$ a Banach $K$-algebra.\\
It is important to note that this definition depends on the presentation $B\cong \varprojlim B_n$, and is stronger than requiring that $M$ be a Fr\'echet algebra with respect to the canonical Fr\'echet topology.

\begin{lemma}
\label{passtoFS}
Let $B=\varprojlim B_n$ be a left Fr\'echet--Stein $K$-algebra, and let $C$ be a $K$-algebra which is also a left coadmissible $B$-module with continuous multiplication via an algebra morphism $B\to C$. Then $C$ is a left Fr\'echet--Stein algebra.\\
If $M$ is a left $C$-module, then it is coadmissible if and only if it is coadmissible as a $B$-module.
\end{lemma}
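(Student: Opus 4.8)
\textbf{Proof plan for Lemma \ref{passtoFS}.}

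The plan is to exhibit the required Fréchet--Stein presentation of $C$ by setting $C_n = B_n \otimes_B C$. Since $C$ is coadmissible as a $B$-module with $C = \varprojlim C_n$, each $C_n$ is a finitely generated $B_n$-module, hence a Noetherian Banach $K$-algebra once we know it is a ring; here the continuous multiplication hypothesis is exactly what is needed, since it ensures that the chosen norm on $C_n$ is submultiplicative so that $C_n$ is a Banach $K$-algebra, and finite generation over the Noetherian $B_n$ makes $C_n$ left Noetherian as a ring. The transition maps $C_{n+1} \to C_n$ have dense image because the image contains $B_n \cdot \iota_{n+1}(C)$, which is dense in $C_n = B_n \otimes_B C$ by coadmissibility (the image of $C$ under $C \to C_n$ together with $B_n$ generates a dense submodule). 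So it remains to verify the flatness condition (i) of Definition \ref{defineFS}.

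For flatness of $C_n$ over $C_{n+1}$ on the right, the key step is the base-change identity $C_n = B_n \otimes_{B_{n+1}} C_{n+1}$, which follows from $C_n = B_n \otimes_B C = B_n \otimes_{B_{n+1}} (B_{n+1}\otimes_B C) = B_n \otimes_{B_{n+1}} C_{n+1}$. Thus right multiplication exhibits $C_n$ as obtained from the flat $B_{n+1}$-module $B_n$ (using that $B=\varprojlim B_n$ is Fréchet--Stein) by base change along $B_{n+1}\to C_{n+1}$, and flatness is preserved under such base change: for any right $C_n$-ideal, or more directly by \cite[Proposition 3.2.9]{Weibel}, $\Tor^{C_{n+1}}_s(C_n, -) = \Tor^{C_{n+1}}_s(B_n \otimes_{B_{n+1}} C_{n+1}, -)$ computes the derived functor of $B_n \otimes_{B_{n+1}} (-)$ restricted along $C_{n+1}$, which vanishes in positive degrees since $B_n$ is $B_{n+1}$-flat. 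Hence $C \cong \varprojlim C_n$ is a left Fréchet--Stein algebra.

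For the statement about modules, one direction is immediate: a coadmissible $C$-module $M = \varprojlim M_n$ with $M_n$ finitely generated over $C_n$ is, via the surjections $C_n^{\oplus r}\to M_n$ and finite generation of $C_n$ over $B_n$, also finitely generated over $B_n$, and the base-change isomorphisms $B_n \otimes_{B_{n+1}} M_{n+1} \cong C_n \otimes_{C_{n+1}} M_{n+1} \cong M_n$ (the first because $M_{n+1}$ is already a $C_{n+1}$-module, the second by coadmissibility over $C$) show $M$ is coadmissible over $B$. Conversely, if $M$ is coadmissible as a $B$-module, set $M_n = B_n \otimes_B M = C_n \otimes_C M$; one checks $M_n$ carries a $C_n$-module structure compatible with the $C$-action (using $C = \varprojlim C_n$ and the $B_n$-module structure), that $M_n$ is finitely generated over $C_n$ because it is already finitely generated over the subring $B_n$, and that $C_n \otimes_{C_{n+1}} M_{n+1} \cong B_n \otimes_{B_{n+1}} M_{n+1} \cong M_n$, so $M$ is coadmissible over $C$. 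By \cite[Lemma 3.8]{Schneider03} the notion is independent of presentation, so this is well-defined.

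The main obstacle is checking that the $C_n$ genuinely form a Fréchet--Stein presentation rather than merely an inverse system of Banach algebras: concretely, that the multiplication on $C$ extends to each $C_n$ (this is precisely what continuous multiplication buys us, and without it the $C_n$ need not even be algebras), and that flatness descends correctly along $B_{n+1}\to C_{n+1}$. The module equivalence is then a formal consequence once the presentation is in place, paralleling the proof of Lemma \ref{quotiscoadenl}, which is the special case $C = B/I$.
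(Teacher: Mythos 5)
Your proof follows the paper's argument essentially step for step: defining $C_n := B_n \otimes_B C$, invoking the continuous-multiplication hypothesis to make each $C_n$ a Noetherian Banach $K$-algebra, getting flatness of $C_n$ over $C_{n+1}$ from the identification $C_n\otimes_{C_{n+1}} - = B_n\otimes_{B_{n+1}} -$ on $C_{n+1}$-modules, and deducing the module equivalence from $B_n\otimes_B M \cong C_n\otimes_C M$ together with the compatible base-change isomorphisms. One small inaccuracy worth fixing: in the density step, the image of the ring map $C_{n+1}\to C_n$ is a subring, not a $B_n$-submodule, so it does not in general contain $B_n\cdot\iota_n(C)$ as you assert; what it does contain is $\iota_n(C)$ itself, which is already dense in $C_n$ by \cite[Theorem A]{Schneider03} (the paper instead derives density from that of $B_{n+1}\to B_n$ --- either route works once stated correctly). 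The citation of \cite[Proposition 3.2.9]{Weibel} for flatness is also not quite the right tool, since that requires $C_{n+1}$ to be flat over $B_{n+1}$; the direct observation that $(B_n\otimes_{B_{n+1}}C_{n+1})\otimes_{C_{n+1}} N \cong B_n\otimes_{B_{n+1}} N$ and hence the functor is exact, which you also mention, is the correct and sufficient argument.
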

\begin{proof}
By assumption, $C_n:=B_n\otimes_B C$ is a finitely generated left $B_n$-module and the canonical Banach norm gives rise to a submultiplicative semi-norm $|-|_n$ on $C$. Note that the image of $\iota_n: C\to C_n$ is dense by \cite[Theorem A]{Schneider03}. But then the completion of $C$ with respect to $|-|_n$ is a Banach $K$-algebra, which as a Banach space is naturally isomorphic to $C_n$ by construction.\\
Thus $C= \varprojlim C_n$ is the limit of Banach $K$-algebras, and each $C_n$ is left Noetherian, as it is finitely generated over $B_n$ via the natural morphism $B_n\to C_n$.\\
\\
Since the functor $C_n\otimes_{C_{n+1}}-$ can be written as 
\begin{equation*}
(B_n\otimes_{B_{n+1}} C_{n+1})\otimes_{C_{n+1}}-,
\end{equation*}
flatness follows from flatness of the maps $B_{n+1}\to B_n$.\\
Since the map $B_{n+1}\to B_n$ has dense image, it follows that $C_{n+1}\to C_n=B_n\otimes_{B_{n+1}} C_{n+1}$ also has dense image for each $n$. Thus $C$ is a left Fr\'echet--Stein algebra.\\
\\
The second part of the statement is now a simplified version of \cite[Lemma 3.8]{Schneider03}.\\
If $M$ is a left $C$-module, then 
\begin{equation*}
M_n:=C_n\otimes_C M\cong (B_n\otimes_B C)\otimes_C M=B_n\otimes_B M
\end{equation*} 
as a $B_n$-module, and $M_n$ is finitely generated as a $C_n$-module if and only if it is finitely generated as a $B_n$-module, because $C_n$ is finitely generated over $B_n$. Moreover,
\begin{align*}
B_n\otimes_{B_{n+1}} M_{n+1}&\cong (B_n\otimes_{B_{n+1}} C_{n+1})\otimes_{C_{n+1}} M_{n+1} \\
& \cong C_n\otimes_{C_{n+1}} M_{n+1},
\end{align*}
finishing the proof.
\end{proof}

\begin{proposition}
\label{FSbasechange}
Let $X$ be a rigid analytic $K$-variety and let $\mathcal{F}'=\varprojlim \mathcal{F}'_n$ be a left global Fr\'echet--Stein sheaf on $X$.\\
Let $\mathcal{F}$ be a sheaf of $K$-algebras on $X$ which is also a left coadmissible $\mathcal{F}'$-module via a morphism $\theta: \mathcal{F}'\to \mathcal{F}$. Assume that $\mathcal{F}(U)$ has continuous multiplication with respect to $(\mathcal{F}'_n(U))$ for each admissible open affinoid subspace $U$.\\
Then $\mathcal{F}$ is itself a left global Fr\'echet--Stein sheaf on $X$, and an $\mathcal{F}$-module $\mathcal{M}$ is coadmissible if and only if it is coadmissible as an $\mathcal{F}'$-module.
\end{proposition}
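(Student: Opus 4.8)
The plan is to transport the global Fr\'echet--Stein data of $\mathcal{F}'$ to $\mathcal{F}$ by base change along $\theta$. Fix sites $X_n$ and Banach sheaves $\mathcal{F}'_n$ on $X_n$ witnessing that $\mathcal{F}'$ is a global Fr\'echet--Stein sheaf, and keep the same sites $X_n$ for $\mathcal{F}$. Since the analogue of Kiehl's theorem holds for coadmissible modules over a global Fr\'echet--Stein sheaf, the hypothesis that $\mathcal{F}$ is a coadmissible $\mathcal{F}'$-module upgrades to: for \emph{every} admissible open affinoid $U\subseteq X$, the module $\mathcal{F}(U)$ is coadmissible over $\mathcal{F}'(U)$ and $\Loc_{\mathcal{F}'}\mathcal{F}(U)\cong \mathcal{F}|_U$. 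Together with the continuous-multiplication hypothesis, Lemma \ref{passtoFS} applied to $\mathcal{F}'(U)=\varprojlim \mathcal{F}'_n(U)$ and $C=\mathcal{F}(U)$ then shows each $\mathcal{F}(U)$ is a left Fr\'echet--Stein algebra with canonical presentation $\mathcal{F}(U)\cong \varprojlim \mathcal{F}_n(U)$, where $\mathcal{F}_n(U):=\mathcal{F}'_n(U)\otimes_{\mathcal{F}'(U)}\mathcal{F}(U)$ is a Noetherian Banach algebra finitely generated as an $\mathcal{F}'_n(U)$-module (the completed tensor product being the ordinary one by Lemma \ref{removehat}), and the transition maps $\mathcal{F}_{n+1}(U)\to \mathcal{F}_n(U)$ are the ring maps induced by $\mathcal{F}'_{n+1}(U)\to \mathcal{F}'_n(U)$.

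The substantive step is to check that $U\mapsto \mathcal{F}_n(U)$ is (the affinoid part of) a sheaf of $K$-algebras $\mathcal{F}_n$ on $X_n$, with morphisms $\mathcal{F}_{n+1}|_{X_n}\to \mathcal{F}_n$ as above, and that this data verifies conditions (ii)--(iii) of Definition \ref{globalFSsheaf} -- condition (i) being exactly the output of Lemma \ref{passtoFS} above, and $\mathcal{F}\cong \varprojlim \mathcal{F}_n$ being checked sectionwise on affinoids. As in the construction of $\Loc$ in \cite[\S\S 5, 8]{Ardakov1} (carried out for global Fr\'echet--Stein sheaves in \cite{Bodecompl}), the two inputs are: a base-change isomorphism $\mathcal{F}_n(V)\cong \mathcal{F}'_n(V)\otimes_{\mathcal{F}'_n(U)}\mathcal{F}_n(U)$ for $V$ an affinoid subdomain of $U$ with both open in $X_n$ -- obtained from coadmissibility of $\mathcal{F}(U)$ over $\mathcal{F}'(U)$ and flatness of the restriction $\mathcal{F}'_n(U)\to \mathcal{F}'_n(V)$ -- and the vanishing of higher \v{C}ech cohomology of $\mathcal{F}'_n$. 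For the latter, the \v{C}ech complex of $\mathcal{F}_n$ on a finite covering $\mathfrak{U}$ of an affinoid $U$ in $X_n$ is obtained from the augmented (hence exact) \v{C}ech complex of $\mathcal{F}'_n$ by applying $-\otimes_{\mathcal{F}'_n(U)}\mathcal{F}_n(U)$; all terms of the $\mathcal{F}'_n$-complex are flat over $\mathcal{F}'_n(U)$ by Definition \ref{globalFSsheaf}(ii), so this bounded exact complex of flats stays exact after tensoring, which simultaneously gives the sheaf property of $\mathcal{F}_n$ on $X_n$ and condition (iii) for $\mathcal{F}$. Condition (ii) for $\mathcal{F}$ follows from the same base-change isomorphism together with flatness of restriction for $\mathcal{F}'_n$, by the reassociation trick used in the proof of Lemma \ref{passtoFS}.

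For the coadmissibility comparison, let $\mathcal{M}$ be an $\mathcal{F}$-module and work over a fixed admissible affinoid covering. On each affinoid $U$ in it, the second half of Lemma \ref{passtoFS} gives that $\mathcal{M}(U)$ is coadmissible over $\mathcal{F}(U)$ if and only if it is coadmissible over $\mathcal{F}'(U)$; moreover its proof identifies the level-$n$ pieces $\mathcal{F}_n(U)\otimes_{\mathcal{F}(U)}\mathcal{M}(U)\cong \mathcal{F}'_n(U)\otimes_{\mathcal{F}'(U)}\mathcal{M}(U)$, so the level-$n$ localizations of $\mathcal{M}(U)$ built from $\mathcal{F}_n$ and from $\mathcal{F}'_n$ coincide, hence $\Loc_{\mathcal{F}}\mathcal{M}(U)\cong \Loc_{\mathcal{F}'}\mathcal{M}(U)$ and the conditions ``$\Loc\mathcal{M}(U)\cong\mathcal{M}|_U$'' of Definition \ref{defcoadforglFS} for the two structures are equivalent. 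The equivalence of coadmissibility of $\mathcal{M}$ over $\mathcal{F}$ and over $\mathcal{F}'$ follows.

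I expect the main obstacle to be the first half of the substantive step: verifying that the presheaf tensor product $U\mapsto \mathcal{F}'_n(U)\otimes_{\mathcal{F}'(U)}\mathcal{F}(U)$ is already a sheaf on $X_n$ with the expected sections and base-change behaviour. This is where the argument genuinely uses the interplay of flatness of the restriction maps of $\mathcal{F}'_n$, the \v{C}ech-acyclicity of $\mathcal{F}'_n$, and finite generation coming from coadmissibility, and it requires care with the left/right module structures in the noncommutative setting -- essentially transcribing the localization arguments of \cite[\S\S 5, 8]{Ardakov1} and \cite{Bodecompl} with $\mathcal{F}'_n(U)$ in place of a finitely generated enveloping algebra and $\mathcal{F}_n(U)$ in place of a finitely generated module over it.
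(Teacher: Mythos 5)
Your proof is correct and takes essentially the same route as the paper's: define $\mathcal{F}_n:=\mathcal{F}'_n\otimes_{\mathcal{F}'}\mathcal{F}$, check the global Fr\'echet--Stein sheaf axioms, and obtain the coadmissibility comparison from Lemma \ref{passtoFS} together with the base-change isomorphism $\mathcal{F}(V)\wideparen{\otimes}_{\mathcal{F}(U)}M\cong\mathcal{F}'(V)\wideparen{\otimes}_{\mathcal{F}'(U)}M$. The only difference is that the paper delegates the sheaf property and \v{C}ech-acyclicity of $\mathcal{F}_n$ to a cited result on localizations of coadmissible modules (\cite[Theorem~4.16]{Bodecompl}), whereas you re-derive them directly from the flatness of the $\mathcal{F}'_n$-restriction maps (a bounded exact complex of flat right modules stays exact after $-\otimes_{\mathcal{F}'_n(U)}\mathcal{F}_n(U)$), which is sound and is in fact how that cited fact is established.
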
 
\begin{proof}
Consider the sheaves $\mathcal{F}_n:=\mathcal{F}'_n\otimes_{\mathcal{F}'}\mathcal{F}$ on $X_n$. By assumption, this is in fact a sheaf of $K$-algebras, extending the multiplication on $\mathcal{F}$ by continuity. Now let $U$ be an admissible open affinoid subspace of $X$.\\
We have shown in \cite[Theorem 4.16]{Bodecompl} that the sheaf $\mathcal{F}_n|_U$ on $X_n$ has vanishing higher \v{C}ech cohomology. The flatness of restriction maps is again inherited from $\mathcal{F}'_n$, and $\mathcal{F}(U)$ is a Fr\'echet--Stein $K$-algebra by Lemma \ref{passtoFS}. This makes $\mathcal{F}$ a global Fr\'echet--Stein sheaf on $X$.\\
The statement on coadmissible modules is now just Lemma \ref{passtoFS} combined with the natural isomorphism
\begin{equation*}
\mathcal{F}(V)\wideparen{\otimes}_{\mathcal{F}(U)} M\cong \left(\mathcal{F}'(V)\wideparen{\otimes}_{\mathcal{F}'(U)} \mathcal{F}(U)\right)\wideparen{\otimes}_{\mathcal{F}(U)} M \cong \mathcal{F}'(V)\wideparen{\otimes}_{\mathcal{F}'(U)}M
\end{equation*}
for any coadmissible $\mathcal{F}(U)$-module $M$ and any affinoid subdomain $V$ of $U$ (see \cite[Proposition 7.4]{Ardakov1}).
\end{proof}
In this case, we call $\mathcal{F}$ a \textbf{coadmissible enlargement} of $\mathcal{F}'$.\\
A standard example of a coadmissible enlargement is given by the following. If $\mathscr{L}'\to \mathscr{L}$ is an epimorphism of Lie algebroids on an affinoid $K$-variety $X=\Sp A$, we can choose an affine formal model $\mathcal{A}$ in $A$ and an $(R, \mathcal{A})$-Lie lattice $\mathcal{L}'$ in $L':=\mathscr{L}'(X)$, and let $\mathcal{L}$ denote the image of $\mathcal{L}'$ in $L:=\mathscr{L}(X)$. Then $\mathcal{L}$ is an $(R, \mathcal{A})$-Lie lattice in $L$, and the induced map $U_{\mathcal{A}}(\pi^n\mathcal{L}')\to U_{\mathcal{A}}(\pi^n\mathcal{L})$ is surjective for each $n$.\\
By \cite[3.2.3.(iii)]{Berthelot}, $\widehat{U_{\mathcal{A}}(\pi^n\mathcal{L})}$ is isomorphic to $\widehat{U(\pi^n\mathcal{L}')}\otimes_{U(\pi^n\mathcal{L}')}U(\pi^n\mathcal{L})$ as a $\widehat{U(\pi^n\mathcal{L}')}$-module, and hence tensoring with $K$ yields
\begin{equation*}
\widehat{U(\pi^n\mathcal{L})}_K\cong \widehat{U(\pi^n\mathcal{L}')}_K\otimes_{U(L')} U(L).
\end{equation*}
Thus $\wideparen{\mathscr{U}(\mathscr{L})}(X)=\wideparen{U_A(L)}$ is the coadmissible completion of  the finitely generated $U_A(L')$-module $U_A(L)$ as discussed in Lemma \ref{fgfrcompletion}. This shows that $\wideparen{\mathscr{U}(\mathscr{L})}(X)$ is a coadmissible $\wideparen{\mathscr{U}(\mathscr{L}')}(X)$-module with continuous multiplication, and repeating the argument for arbitrary affinoid subdomains shows that the natural epimorphism $\wideparen{\mathscr{U}(\mathscr{L}')}\to \wideparen{\mathscr{U}(\mathscr{L})}$ turns $\wideparen{\mathscr{U}(\mathscr{L})}$ into a coadmissible enlargement of $\wideparen{\mathscr{U}(\mathscr{L}')}$. The proposition above can then be viewed as the sheaf analogue of Lemma \ref{quotiscoadenl}.\\
\\
Note that another example of coadmissible enlargement was already given in \cite[Theorem 5.1]{Schneider03}: there, the distribution algebra $D(G_0, K)$ was described as a free $D(H_0, K)$-module of finite rank, and a crucial step in the proof that $D(G_0, K)$ was Fr\'echet--Stein consisted in checking that the multiplication on $D(G_0, K)$ satisfied the necessary continuity condition.\\ 
We will see other examples arising as quotients of $\wideparen{\mathscr{U}(\mathscr{L})}$ in section 6.

\begin{lemma}
\label{pushfaffinoidglobal}
Let $h: X\to Y$ be an affinoid morphism of rigid analytic $K$-varieties, and let $\mathcal{F}$ be a global Fr\'echet--Stein sheaf on $X$. Then $h_*\mathcal{F}$ is a global Fr\'echet--Stein sheaf on $Y$.\\
If $\mathcal{M}$ is a coadmissible $\mathcal{F}$-module, then $h_*\mathcal{M}$ is a coadmissible $h_*\mathcal{F}$-module.\\
If $\mathcal{G}$ is a coadmissible enlargement of $\mathcal{F}$, then $h_*\mathcal{G}$ is a coadmissible enlargement of $h_*\mathcal{F}$. 
\end{lemma}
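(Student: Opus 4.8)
The plan is to transport the sheaf-of-sites data $(X_n,\mathcal{F}_n)_n$ underlying $\mathcal{F}$ from $X$ to $Y$ along the preimage functor $h^{-1}$. Recall that, $h$ being affinoid, $h^{-1}(V)$ is an admissible open affinoid subspace of $X$ for every affinoid subdomain $V$ of $Y$ (indeed for every admissible open affinoid subspace of $Y$), and that $h^{-1}$ carries affinoid subdomains of $Y$ and $Y_w$-coverings to affinoid subdomains of $X$ and $X_w$-coverings, compatibly with finite intersections. I would therefore define, for each $n$, the site $Y_n$ on $Y$ whose objects are the affinoid subdomains $V$ of $Y$ with $h^{-1}(V)$ an object of $X_n$, and whose coverings of such a $V$ are the finite families $\mathfrak{V}=\{V_j\}$ (with each $V_j\in Y_n$) for which $h^{-1}\mathfrak{V}:=\{h^{-1}(V_j)\}$ is a covering of $h^{-1}(V)$ in $X_n$. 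Checking that $Y_n$ is a Grothendieck topology, that $Y_n\subseteq Y_{n+1}$, and that every object and covering of $Y_w$ lies in $Y_n$ for $n\gg 0$, is routine and uses only that $X_n$ is a site with these properties together with the compatibilities above. One then sets $(h_*\mathcal{F})_n:=h_*\mathcal{F}_n$ on $Y_n$, i.e.\ $V\mapsto \mathcal{F}_n(h^{-1}V)$, with transition morphisms obtained by applying $h_*$ to $\mathcal{F}_{n+1}|_{X_n}\to \mathcal{F}_n$.

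With this set-up the three conditions of Definition \ref{globalFSsheaf} for $h_*\mathcal{F}$ reduce termwise to the corresponding conditions for $\mathcal{F}$, evaluated on preimages. For (i): for $V\in Y_w$ one has $(h_*\mathcal{F})(V)=\mathcal{F}(h^{-1}V)=\varprojlim_n \mathcal{F}_n(h^{-1}V)=\varprojlim_n (h_*\mathcal{F}_n)(V)$, so $h_*\mathcal{F}\cong\varprojlim h_*\mathcal{F}_n$; and when $V$ is an admissible open affinoid subspace of $Y$, so is $h^{-1}V$ in $X$, whence $\mathcal{F}(h^{-1}V)$ is Fr\'echet--Stein by the corresponding property of $\mathcal{F}$. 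For (ii): a pair $W\subseteq V$ of affinoid subdomains open in $Y_n$ pulls back to a pair $h^{-1}W\subseteq h^{-1}V$ of affinoid subdomains open in $X_n$, and flatness of $\mathcal{F}_n(h^{-1}V)\to \mathcal{F}_n(h^{-1}W)$ is condition (ii) for $\mathcal{F}$. For (iii): a finite covering $\mathfrak{V}$ of $V$ in $Y_n$ pulls back to a finite covering $h^{-1}\mathfrak{V}$ of $h^{-1}V$ in $X_n$, and since $h^{-1}$ commutes with the finite intersections occurring in the \v{C}ech complex, $\check{\mathrm{H}}^j(\mathfrak{V},h_*\mathcal{F}_n)=\check{\mathrm{H}}^j(h^{-1}\mathfrak{V},\mathcal{F}_n)$, which vanishes for $j>0$ by condition (iii) for $\mathcal{F}$. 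Hence $h_*\mathcal{F}$ is a global Fr\'echet--Stein sheaf on $Y$.

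For the module statement, choose (using that $h$ is affinoid) an admissible affinoid covering $(Y_i)$ of $Y$ such that each $X_i:=h^{-1}(Y_i)$ is affinoid; then $(X_i)$ is an admissible affinoid covering of $X$. By the Kiehl-type theorem for coadmissible modules over global Fr\'echet--Stein sheaves (the analogue of \cite[Theorem 4.17]{Bodecompl} recalled after Definition \ref{defFSsheaf}), coadmissibility of $\mathcal{M}$ may be tested on the covering $(X_i)$, so that $\mathcal{M}(X_i)$ is a coadmissible $\mathcal{F}(X_i)$-module and the natural morphism $\Loc\mathcal{M}(X_i)\to \mathcal{M}|_{X_i}$ is an isomorphism. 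Therefore $(h_*\mathcal{M})(Y_i)=\mathcal{M}(X_i)$ is a coadmissible $(h_*\mathcal{F})(Y_i)=\mathcal{F}(X_i)$-module, and for any affinoid subdomain $W\subseteq Y_i$ the localization formula gives, naturally in $W$,
\begin{align*}
\bigl(\Loc(h_*\mathcal{M})(Y_i)\bigr)(W) &= (h_*\mathcal{F})(W)\wideparen{\otimes}_{(h_*\mathcal{F})(Y_i)}(h_*\mathcal{M})(Y_i)\\
&= \mathcal{F}(h^{-1}W)\wideparen{\otimes}_{\mathcal{F}(X_i)}\mathcal{M}(X_i) = \bigl(\Loc\mathcal{M}(X_i)\bigr)(h^{-1}W) = \mathcal{M}(h^{-1}W) = (h_*\mathcal{M})(W).
\end{align*}
Since both sides are sheaves on $(Y_i)_w$ and the identification is compatible with restriction, the natural morphism $\Loc(h_*\mathcal{M})(Y_i)\to (h_*\mathcal{M})|_{Y_i}$ is an isomorphism, so $h_*\mathcal{M}$ is coadmissible with respect to $(Y_i)$. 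Finally, if $\mathcal{G}$ is a coadmissible enlargement of $\mathcal{F}$ via $\theta\colon \mathcal{F}\to\mathcal{G}$, then $h_*\mathcal{G}$ is a sheaf of $K$-algebras which, by what was just shown, is a coadmissible $h_*\mathcal{F}$-module via $h_*\theta$; and for each admissible open affinoid subspace $V$ of $Y$ the algebra $(h_*\mathcal{G})(V)=\mathcal{G}(h^{-1}V)$ has continuous multiplication with respect to $((h_*\mathcal{F})_n(V))=(\mathcal{F}_n(h^{-1}V))$, because $h^{-1}V$ is an admissible open affinoid subspace of $X$ and $\mathcal{G}$ is a coadmissible enlargement of $\mathcal{F}$. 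Thus $h_*\mathcal{G}$ is a coadmissible enlargement of $h_*\mathcal{F}$.

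The main obstacle I anticipate is the bookkeeping in the first paragraph: one must verify carefully that $h^{-1}$ is compatible with the site structures, i.e.\ that each $Y_n$ really is a Grothendieck topology and that preimages of $Y_n$-data are genuinely $X_n$-data (here a concrete description of the $X_n$, as in Definition \ref{rationalac}, is convenient). The other slightly delicate point is the passage, in the module part, from an arbitrary covering witnessing coadmissibility of $\mathcal{M}$ to the covering $(X_i)=h^{-1}(Y_i)$ imposed by $h$, which genuinely requires the Kiehl-type independence-of-covering statement rather than just the definition of coadmissibility.
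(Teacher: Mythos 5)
Your proof is correct and follows essentially the same route as the paper: the paper likewise defines $Y_n$ by declaring $V$ open in $Y_n$ precisely when $h^{-1}V$ is open in $X_n$ (and similarly for coverings), sets $(h_*\mathcal{F})_n := h_*\mathcal{F}_n$, and then simply notes that the claims follow from the definitions because preimages of admissible open affinoids under an affinoid morphism are again admissible open affinoids. Your version usefully spells out what the paper leaves terse, in particular the appeal to the Kiehl-type independence-of-covering statement needed to test coadmissibility of $\mathcal{M}$ on the covering $(h^{-1}Y_i)$.
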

\begin{proof}
Let $\mathcal{F}_n$ be sheaves on sites $X_n$, satisfying the conditions given in Definition \ref{globalFSsheaf}. Define $Y_n$ to be the Grothendieck topology on $Y$ induced by $X_n$, i.e. an affinoid subspace $U$ in $Y_w$ is open in $Y_n$ if $h^{-1}U$ is open in $X_n$, analogously for coverings. Then the sheaf $h_*\mathcal{F}_n$ is defined on $Y_n$, and $h_*\mathcal{F}\cong \varprojlim h_*\mathcal{F}_n$.\\
The claims now follow immediately from the definitions, as for any admissible open affinoid subspace $U$ of $Y$, its preimage $h^{-1}U$ is an admissible open affinoid subspace of $X$ by assumption.
\end{proof}

\subsection{Proper morphisms}
We now describe the geometric situation we will be concerned with in this paper.
\begin{definition}[{see \cite[Definition 6.3/6]{Bosch}}]
\label{relcpct}
Let $f: X\to Y$ be a morphism of rigid analytic varieties with $Y$ being affinoid, and let $U\subseteq U'\subseteq X$ be admissible open affinoid subspaces. We say $U$ is \textbf{relatively compact} in $U'$ (with respect to $Y$), or $U$ lies in the interior of $U'$ with respect to $Y$, if the map $\mathcal{O}_Y(Y)\to \mathcal{O}_X(U')$ gives rise to a surjection 
\begin{equation*}
\theta: \mathcal{O}_Y(Y)\langle x_1, \dots x_l\rangle \to \mathcal{O}_X(U')
\end{equation*}
for some integer $l$, such that
\begin{equation*}
U\subseteq \{x\in U': |f_i(x)|<1\},
\end{equation*}
where $f_i$ is the image of $x_i$ under $\theta$.
\end{definition}
Recall that $|f_i(x)|$ is the norm of the residue class $\overline{f_i}$ in $\mathcal{O}_X(U')/\mathfrak{m}_x$, a finite field extension of $K$, with $\mathfrak{m}_x$ the maximal ideal of $\mathcal{O}_X(U')$ corresponding to $x\in U'$.
\begin{definition}[{see \cite[Definition 6.3/8]{Bosch}}]
\label{proper}
A morphism $f: X\to Y$ between rigid analytic varieties is \textbf{proper} if it is separated and there exists an admissible affinoid covering $(\Sp A_i)_{i\in I}$ of $Y$ such that for all $i\in I$, $X_i=f^{-1}(\Sp A_i)$ has two finite admissible affinoid coverings $(U_{ij})$, $(V_{ij})$ with $V_{ij}$ being relatively compact in $U_{ij}$ with respect to $\Sp A_i$ for each $j$.
\end{definition}
As properness is local on the base (see \cite[Proposition 9.6.2/3]{BGR}), we will often restrict our attention to the case when $Y=\Sp A$ is itself affinoid and satisfies the condition in Definition \ref{proper}, i.e. we have two finite admissible affinoid coverings $\mathfrak{U}=(U_i)$, $\mathfrak{V}=(V_i)$ of $X$ such that $V_i$ is relatively compact in $U_i$ with respect to $Y$ for each $i$. Thus there exists a commutative diagram
\begin{equation*}
\begin{xy}
\xymatrix{
A\langle x_1, \dots, x_l\rangle \ar[d]^{\theta_i} \ar[rd]^{h_i}\\
\mathcal{O}_X(U_i)\ar[r]^{\text{res}} & \mathcal{O}_X(V_i)}
\end{xy}
\end{equation*}
such that the map $\theta_i$ is surjective and
\begin{equation*}
|h_i(x_j)|_{\text{sup}}<1
\end{equation*}
for any $j=1, \dots, l$ by the maximum principle \cite[Theorem 3.1/15]{Bosch}.\\
In particular, $h_i(x_j)$ is topologically nilpotent in $\mathcal{O}_X(V_i)$ for each $j$ (it follows from \cite[Corollary 3.1/18]{Bosch} that this notion is independent of the choice of norm on $\mathcal{O}_X(V_i)$).\\
Moreover, writing $U_{i_1\dots i_j}$ for the finite intersection $U_{i_1}\cap \dots \cap U_{i_j}$, it follows from separatedness that all $U_{i_1\dots i_j}$ and $V_{i_1\dots i_j}$ are admissible open affinoid subspaces of $X$, and that $V_{i_1\dots i_j}$ is relatively compact in $U_{i_1\dots i_j}$ with respect to $Y$ (see \cite[Lemma 6.3/7.(iii)]{Bosch}).\\
\\
In this situation (i.e. when the covering $(\Sp A_i)$ in Definition \ref{proper} consists of a single affinoid) we say that $f: X\to Y$ is \textbf{elementary proper}.\\
\\
Note that if $f: X\to Y=\Sp A$ is elementary proper, then $\mathcal{O}_X(X)$ is a finitely generated $A$-module by Theorem \ref{Kiehlthm}. In particular, it is of topologically finite type, and hence an affinoid $K$-algebra. The morphism $f$ thus admits a factorization $X\to \Sp \mathcal{O}_X(X)\to Y$.\\
More generally, we have the following version of Stein factorization.
\begin{proposition}[{see \cite[Proposition 9.6.3/5]{BGR}}]
\label{steinfac}
Let $f: X\to Y$ be a proper morphism of rigid analytic $K$-varieties. Then there exists a rigid analytic $K$-variety $Z$ and a factorization
\begin{equation*}
\begin{xy}
\xymatrix{X\ar[rd]_f \ar[r]^g & Z\ar[d]^h\\
& Y}
\end{xy}
\end{equation*}
where $g$ is a surjective proper morphism with connected fibres and $g_*\mathcal{O}_X\cong \mathcal{O}_Z$, and where $h$ is finite.
\end{proposition}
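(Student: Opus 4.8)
The plan is to realise $Z$ as a gluing of relative analytic spectra of the coherent sheaf of $\mathcal{O}_Y$-algebras $f_*\mathcal{O}_X$. Since properness is local on the base (\cite[Proposition 9.6.2/3]{BGR}) and the construction will be entirely canonical, I would first reduce to the case that $f$ is elementary proper, so that $Y=\Sp A$ and, by Theorem \ref{Kiehlthm}, $B:=\mathcal{O}_X(X)$ is a finite $A$-algebra and hence an affinoid $K$-algebra. Set $Z:=\Sp B$, with $h\colon Z\to Y$ the finite morphism induced by $A\to B$. The $A$-algebra structure on $\mathcal{O}_X(X)=B$ together with the universal property of $\Sp$ produces a canonical morphism $g\colon X\to Z$ over $Y$, so that $h\circ g=f$; by construction the comorphism $g^{\#}\colon\mathcal{O}_Z\to g_*\mathcal{O}_X$ is, over $Z=\Sp B$ itself, the identity of $B=\mathcal{O}_X(g^{-1}Z)$, and since being an isomorphism of sheaves may be checked on an admissible covering, $g^{\#}$ is an isomorphism; thus $g_*\mathcal{O}_X\cong\mathcal{O}_Z$.

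Next I would check that $g$ is proper and surjective. It is separated, because $f$ is separated and $h$, being finite, is separated. For the covering condition, keep the finite admissible affinoid coverings $\mathfrak{U}=(U_i)$ and $\mathfrak{V}=(V_i)$ of $X$ witnessing elementary properness of $f$ over $\Sp A$; they also witness properness of $g$ over $\Sp B$. Indeed, a surjection $A\langle x_1,\dots,x_l\rangle\twoheadrightarrow\mathcal{O}_X(U_i)$ as in Definition \ref{relcpct} factors through $B\langle x_1,\dots,x_l\rangle$ via $A\to B=\mathcal{O}_X(X)\to\mathcal{O}_X(U_i)$, so $B\langle x_1,\dots,x_l\rangle\to\mathcal{O}_X(U_i)$ stays surjective, while the condition that the images of the $x_j$ have supremum norm $<1$ on $V_i$ is unchanged; hence $V_i$ is relatively compact in $U_i$ with respect to $\Sp B$. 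For surjectivity I would use that a proper morphism is closed: if $g$ were not surjective, its image with the reduced structure would be a closed analytic subset $Z_0\subsetneq Z$, so on some affinoid $\Sp C\subseteq Z$ it has the form $V(I)=\Sp(C/I)$ with $C=\mathcal{O}_Z(\Sp C)$ and $I\neq 0$; then $g$ restricted over $\Sp C$ factors through $\Sp(C/I)$, forcing the comorphism $C\to\mathcal{O}_X(g^{-1}\Sp C)$ --- which, under the isomorphism $g^{\#}$, is the identity of $C$ --- to factor through $C\twoheadrightarrow C/I$, whence $I=0$, a contradiction.

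The remaining and genuinely substantial point is that the fibres of $g$ are connected. I would argue by contradiction: suppose $F=g^{-1}(z)$ is disconnected for some $z\in Z$. Then $\mathcal{O}$ carries a nontrivial idempotent on $F$, and --- idempotents lifting along nilpotent extensions --- on every infinitesimal thickening of $F$ in $X$ as well. Here one would invoke a rigid analytic analogue of Grothendieck's theorem on formal functions, which remains to be established and uses properness of $g$: it identifies the completion $\widehat{\mathcal{O}}_{Z,z}$ of $(g_*\mathcal{O}_X)_z=\mathcal{O}_{Z,z}$ with $\varprojlim_n\mathcal{O}_X(g^{-1}\Sp C)/\mathfrak{m}_z^n\mathcal{O}_X(g^{-1}\Sp C)$ for an affinoid $\Sp C\ni z$. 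The above idempotents then assemble to a nontrivial idempotent of the complete local ring $\widehat{\mathcal{O}}_{Z,z}$, which is absurd; hence $F$ is connected. I expect this step --- formulating and proving the required formal functions, or proper base change, statement in the rigid analytic category --- to be the main obstacle, everything else being routine manipulation of Kiehl's theorem (Theorem \ref{Kiehlthm}) together with the definitions recalled above. Finally, undoing the reduction to elementary properness, the locally constructed triples $(Z,g,h)$ glue over an admissible affinoid covering of $Y$, since each arises canonically as the relative $\Sp$ of the coherent $\mathcal{O}_Y$-algebra $f_*\mathcal{O}_X$.
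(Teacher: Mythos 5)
The paper does not prove this proposition; it cites it directly as \cite[Proposition 9.6.3/5]{BGR}, so there is no in-text argument to compare against. Your construction of $Z$, $g$, $h$ — localise on $Y$ until $f$ is elementary proper, set $Z=\Sp\,\mathcal{O}_X(X)$ using Theorem \ref{Kiehlthm}, and glue via the relative $\Sp$ of the coherent $\mathcal{O}_Y$-algebra $f_*\mathcal{O}_X$ — is the standard route and is essentially how the statement is obtained in BGR.

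As a proof, however, your attempt has a genuine gap, which you are candid about: the connected-fibres step relies on a rigid-analytic theorem on formal functions (or proper base change to infinitesimal fibres) that you describe as ``remains to be established.'' This is too pessimistic — such a statement is available in the rigid-analytic category and is part of the machinery BGR and Kiehl develop around exactly this proposition — but as written you have neither proved it nor cited it, so the argument is incomplete at precisely the step you identify as substantial. There is also a smaller circularity: your claim that $g_*\mathcal{O}_X\cong\mathcal{O}_Z$ because ``being an isomorphism of sheaves may be checked on an admissible covering'' only verifies the isomorphism over $U'=Z$ itself; for a general affinoid subdomain $U'\subseteq Z$ the identification $\mathcal{O}_Z(U')\xrightarrow{\sim}\mathcal{O}_X(g^{-1}U')$ is not automatic and is really part of what must be shown. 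A clean fix is to note that $h_*(g^{\#})$ is the identity on $f_*\mathcal{O}_X$ and that $h_*$ is faithful since $h$ is finite, or to invoke flat base change for $R^0g_*$ along $U'\hookrightarrow Z$. Finally, your surjectivity argument tacitly uses that the image of a proper rigid-analytic morphism is a closed analytic subset; this is correct but should be stated (it is itself a consequence of Theorem \ref{Kiehlthm}).
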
 

Let $f: X\to Y$ be a proper morphism of rigid analytic $K$-varieties, and let $(\rho, \mathscr{L})$ be a Lie algebroid on $X$.\\
Most of this paper will be devoted to the special case when $\mathscr{L}$ is a \emph{free} $\mathcal{O}_X$-module. The more general result will be a relatively straightforward corollary.

\begin{theorem}
\label{KiehlDXfree}
Let $f: X\to Y$ be an elementary proper morphism of rigid analytic $K$-varieties, and let $\mathscr{L}$ be a Lie algebroid on $X$ which is free as an $\mathcal{O}_X$-module. Then $\wideparen{\mathscr{U}(\mathscr{L})}$ is a global Fr\'echet--Stein sheaf on $X$, $f_*\wideparen{\mathscr{U}(\mathscr{L})}$ is a global Fr\'echet--Stein sheaf on $Y$, and if $\mathcal{M}$ is a coadmissible left $\wideparen{\mathscr{U}(\mathscr{L})}$-module, then $\mathrm{R}^jf_*\mathcal{M}$ is a coadmissible left $f_*\wideparen{\mathscr{U}(\mathscr{L})}$-module for each $j\geq 0$.
\end{theorem}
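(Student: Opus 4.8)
The plan is to reduce everything to the affinoid setting on $Y=\Sp A$, where $f$ is elementary proper, and then to run the Cartan--Serre--Kiehl machinery of section 2 level by level in the Fr\'echet--Stein filtration. First I would fix an affine formal model $\mathcal{A}$ in $A$, a free $\mathcal{O}_X$-basis of $\mathscr{L}$ over a suitable covering, and an $(R,\mathcal{A})$-Lie lattice $\mathcal{L}$ in $\mathscr{L}(X)$ compatible with the relatively compact coverings $\mathfrak{V}=(V_i)\subseteq \mathfrak{U}=(U_i)$ coming from elementary properness; shrinking if necessary I can assume each $U_i,V_i$ (and all their finite intersections) is $\pi^n\mathcal{L}$-accessible for all $n$, so that $\mathscr{U}_n(\mathscr{L})$ is defined and acyclic on these coverings, and $\wideparen{\mathscr{U}(\mathscr{L})}$ is a global Fr\'echet--Stein sheaf on $X$ by Proposition \ref{recallglobal}. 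Pushing forward along the affinoid morphism $f$, Lemma \ref{pushfaffinoidglobal} would formally give that $f_*\wideparen{\mathscr{U}(\mathscr{L})}\cong \varprojlim f_*\mathscr{U}_n(\mathscr{L})$ is a global Fr\'echet--Stein sheaf on $Y$ \emph{once} we know each $(f_*\mathscr{U}_n(\mathscr{L}))(Y)=\mathscr{U}_n(\mathscr{L})(X)$ is a Noetherian Banach $K$-algebra with the right flatness and acyclicity; the Noetherianity is where Kiehl's theorem enters, applied not to $f$ itself but to the lifted proper map of vector bundles as in remark (ii) of the introduction.

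The heart of the argument is the level-$n$ statement: if $N$ is a finitely generated $\mathscr{U}_n(\mathscr{L})(X)$-module, then $\mathrm{R}^j f_* (\Loc N)$ has finitely generated global sections over $\mathscr{U}_n(\mathscr{L})(X)$ and its formation commutes with restriction to $\pi^n\mathcal{L}$-accessible subdomains of $Y$. To prove this I would compute $\mathrm{R}^j f_*$ via the \v{C}ech complex $C^\bullet(\mathfrak{U},\Loc N)$ and compare it with $C^\bullet(\mathfrak{V},\Loc N)$; since each $V_{i_0\cdots i_p}$ is relatively compact in $U_{i_0\cdots i_p}$ over $Y$, the restriction map $\mathcal{O}_X(U_{i_0\cdots i_p})\to\mathcal{O}_X(V_{i_0\cdots i_p})$ factors through a surjection from $A\langle x_1,\dots,x_l\rangle$ sending the $x_j$ to topologically nilpotent elements. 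This is exactly the input needed for Corollary \ref{sccpowerseries}: the induced restriction map on sections of $\Loc N$ (which are finitely generated over the corresponding Banach algebra, hence quotients of topologically free modules over $A\langle x\rangle$ after Lemma \ref{removehat}) is strictly completely continuous after composition with an appropriate closed embedding. Feeding the \v{C}ech bicomplex and the quasi-isomorphism between the $\mathfrak U$- and $\mathfrak V$-complexes into Proposition \ref{cficomplex} yields that each $\mathrm{H}^j$ is finitely generated over $A$, and strengthening the module structure (the differentials are $\mathscr{U}_n(\mathscr{L})(X)$-linear) gives finite generation over $\mathscr{U}_n(\mathscr{L})(X)$; Corollary \ref{cficomplexgivesstrict} gives strictness of all the differentials.

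Next I would assemble the levels. Given a coadmissible $\wideparen{\mathscr{U}(\mathscr{L})}$-module $\mathcal{M}$ with $\mathcal{M}(X)=\varprojlim M_n$, I would apply the level-$n$ result to $N=M_n$ and need two compatibilities: that $\mathrm{R}^j f_*(\Loc M_n)$ on $Y_n$ restricts correctly along accessible subdomains (this is the strictness from Corollary \ref{cficomplexgivesstrict} together with the flat base-change behaviour of $\widehat{\otimes}$ from \cite{Bodecompl}, i.e. Corollary \ref{noethapplication}), and that the transition maps $\mathscr{U}_n(\mathscr{L})(X)\otimes_{\mathscr{U}_{n+1}(\mathscr{L})(X)}\mathrm{R}^jf_*(\Loc M_{n+1})(Y)\xrightarrow{\ \sim\ }\mathrm{R}^jf_*(\Loc M_n)(Y)$ are isomorphisms, so that $\varprojlim$ of the levels is coadmissible. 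The second of these is the delicate point: the natural map comes from $\mathscr{U}_n(\mathscr{L})(X)\widehat\otimes_{\mathscr{U}_{n+1}(\mathscr{L})(X)}(-)$ applied to the level-$(n+1)$ \v{C}ech complex, and we want it to commute with taking $\mathrm{H}^j$; this is precisely the content of Corollary \ref{banachapplication} (with $A=\mathscr{U}_{n+1}(\mathscr{L})(X)$, $U=\mathscr{U}_n(\mathscr{L})(X)$, flat over it), provided the hypothesis on bounded $\pi$-torsion of $T_s((C^j)^\circ)$ holds. Verifying that the relevant unit balls — sections of $\Loc M_{n+1}$ over the $U_{i_0\cdots i_p}$, which are finitely generated over $\widehat{U_{\mathcal{C}}(\pi^{n+1}\mathcal{C}\otimes\mathcal{L})}$ — have the no-$\pi$-torsion or bounded-$\pi$-torsion property is the main technical obstacle, and is exactly the kind of subtlety flagged in remark (i) of the introduction; I expect it to force working on a base of the topology (accessible subdomains) rather than all affinoids, yielding a global Fr\'echet--Stein sheaf as claimed. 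Finally, by Proposition \ref{coadproperties}(ii) the derived pushforward computed via \v{C}ech agrees with $\varprojlim$ over the levels (the Mittag--Leffler/$\varprojlim^{(1)}=0$ property kills the correction terms), so $\mathrm{R}^jf_*\mathcal{M}=\varprojlim_n \mathrm{R}^jf_*(\Loc M_n)$ is coadmissible over $f_*\wideparen{\mathscr{U}(\mathscr{L})}$, completing the proof; the general $Y$ then follows since properness and coadmissibility are local on the base.
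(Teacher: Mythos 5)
Your proposal has the right skeleton — reduce to affinoid $Y$ via Stein factorization, run the Schwartz/Cartan--Serre machinery on the \v{C}ech complexes $\check{C}^\bullet(\mathfrak{U},\mathcal{M}_n)\to \check{C}^\bullet(\mathfrak{V},\mathcal{M}_n)$ to get finite generation at each level $n$, then use the completed-tensor-product machinery from \cite{Bodecompl} to verify the $\mathscr{U}_n\otimes_{\mathscr{U}_{n+1}}(-)$ compatibility — and this does match the structure of the paper's argument. However, there are three places where the argument as written would not go through.

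First, a small but genuine misstep: you invoke Lemma \ref{pushfaffinoidglobal} to get that $f_*\wideparen{\mathscr{U}(\mathscr{L})}$ is a global Fr\'echet--Stein sheaf on $Y$, but that lemma requires the morphism to be \emph{affinoid}, and $f$ is elementary proper with $X$ non-affinoid. The correct route (Lemma \ref{pushffsalg}) is to first observe that $f_*\mathscr{L}$ is a (free) Lie algebroid on $Y$ and then prove the isomorphism $\wideparen{\mathscr{U}(f_*\mathscr{L})}\xrightarrow{\sim} f_*\wideparen{\mathscr{U}(\mathscr{L})}$ using Kiehl's theorem and the flatness results of \cite{Bodecompl}; the global Fr\'echet--Stein structure on $Y$ then comes from Proposition \ref{recallglobal} applied to $Y$, which is affinoid.

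Second, and more seriously, your proposed use of Corollary \ref{banachapplication} with $A=\mathscr{U}_{n+1}(X)$ and $U=\mathscr{U}_n(X)$ (both already complete) has a real gap: the bounded-$\pi$-torsion hypothesis on $\Tor_s^{\widehat{U(\pi^{n+1}\mathcal{L})}}\bigl(\widehat{U(\pi^n\mathcal{L})},\,(C^j)^\circ\bigr)$ is not something you can check with the available tools, and you flag it only as "the main technical obstacle" without a mechanism. The paper avoids this by applying Corollary \ref{noethapplication} instead, with the \emph{non-completed} polynomial rings $A^\circ=U(\pi\mathcal{L})$, $U^\circ=U(\mathcal{L})$. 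This is the decisive choice: $U(\mathcal{L})$ and $U(\pi\mathcal{L})$ are free $\mathcal{A}$-modules by Rinehart's theorem, so the Tor groups collapse via flat base change (Lemma \ref{torgpsvanishn}), and the change-of-rings target $U(\mathcal{L})\otimes_{U(\pi\mathcal{L})}\widehat{U(\pi\mathcal{L})}$ — which is \emph{not} the same as $\widehat{U(\mathcal{L})}$ — is shown to be Noetherian by a new filtration argument (Lemma \ref{changeringnoeth}), which then bounds the $\pi$-torsion. Your diagnosis that the $\pi$-torsion problem "forces working on a base of the topology" is also off: that caveat in the introduction concerns Theorem \ref{intromainthm}.(ii), not the free case proved here, and does not stem from $\pi$-torsion of these particular Tor groups.

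Third, the statement that the level-$n$ cohomology sheaf "restricts correctly along accessible subdomains" by "flat base-change behaviour of $\widehat{\otimes}$" passes over what is in fact an entire section of the proof (Section 5, Steps A--C). Showing $\wideparen{\mathscr{U}}(f^{-1}U)\wideparen{\otimes}_{\wideparen{\mathscr{U}}(X)}\mathrm{H}^j(X,\mathcal{M})\cong \mathrm{H}^j(f^{-1}U,\mathcal{M})$ is not a formal base-change statement; it requires a separate induction along $\pi^n\mathcal{L}$-accessible rational subdomains, using the presentations $0\to A\langle t\rangle\to A\langle t\rangle\to B_i\to 0$ and a five-lemma argument (Theorem \ref{stepalocalize}), precisely because $\mathscr{U}_n(f^{-1}U)$ is generally not flat over $\mathscr{U}_n(X)$ on the nose before completing. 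This step needs to be done, not asserted.
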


While we will prove the above theorem for coadmissible left modules, all arguments can be easily adapted to right modules. From now on, all coadmissible modules will be understood to be \emph{left} modules.\\
\\
By Lemma \ref{pushfaffinoidglobal}, we can assume without loss of generality that $f$ is equal to the first map in its Stein factorization, i.e. $Y=\Sp A$, where $A=\mathcal{O}_X(X)$. We will work in this specific setting until the end of section 5. \\
Note that if $U\subseteq Y$ is an affinoid subdomain of $Y$, then all our assumptions are still satisfied after restricting to $f|_{f^{-1}U}: f^{-1}U\to U$. If $U=\Sp B$, then $\mathcal{O}_X(f^{-1}U)=B$ by Kiehl's Proper Mapping Theorem, $\mathscr{L}|_{f^{-1}U}$ is a free Lie algebroid and $f|_{f^{-1}U}$ is an elementary proper morphism $f^{-1}U\to U$ by the behaviour of relative compactness under direct products (see \cite[Lemma 6.3/7.(i)]{Bosch}).

\subsection{The sheaves $\mathscr{U}_n$ and $\mathcal{M}_n$}
Let us abbreviate $\wideparen{\mathscr{U}(\mathscr{L})}$ to $\wideparen{\mathscr{U}}$, and let $\mathcal{M}$ be a coadmissible $\wideparen{\mathscr{U}}$-module.\\
We begin by showing that both $\wideparen{\mathscr{U}}$ and $f_*\wideparen{\mathscr{U}}$ are global Fr\'echet--Stein sheaves. For this, we will construct sheaves $\mathscr{U}_n$ such that $\varprojlim \mathscr{U}_n\cong\wideparen{\mathscr{U}}$, similarly to the discussion of $\wideparen{\mathscr{U}(\mathscr{L})}$ on an \emph{affinoid} $K$-variety in \cite{Bodecompl}. We will then proceed by describing a similar construction for $\mathcal{M}$, which will allow us to reduce to the Noetherian case.
\begin{lemma}
\label{pushforwalgebroid}
The pushforward $f_*\mathscr{L}$ is a Lie algebroid on $Y$.
\end{lemma}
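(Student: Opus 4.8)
The plan is to verify directly that $f_*\mathscr{L}$, together with an appropriately defined anchor map, satisfies all the conditions in Definition \ref{defLiealgebroid} on $Y$. Recall we are working in the setting where $f: X \to Y = \Sp A$ is elementary proper with $A = \mathcal{O}_X(X)$, so in particular $f$ is affinoid after passing to the Stein factorization is \emph{not} available here (the fibres of $f$ need not be points), but $f$ is at least proper; however, for the sheaf-theoretic statement we only need that $f_*\mathscr{O}_X = \mathcal{O}_Y$ and that $f_*$ of a coherent sheaf is coherent, both of which hold by Theorem \ref{Kiehlthm} and the normalization $A = \mathcal{O}_X(X)$ together with the fact that restriction to affinoid subdomains $U \subseteq Y$ preserves the hypotheses (as noted after Theorem \ref{KiehlDXfree}, $\mathcal{O}_X(f^{-1}U) = \mathcal{O}_Y(U)$). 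First I would note that $f_*\mathscr{L}$ is a sheaf of $K$-Lie algebras on $Y$ simply because $f_*$ is left exact and the Lie bracket on $\mathscr{L}$, being a $K$-bilinear sheaf morphism $\mathscr{L} \times \mathscr{L} \to \mathscr{L}$, pushes forward to a $K$-bilinear bracket on $f_*\mathscr{L}$ satisfying the Jacobi identity and antisymmetry sectionwise.

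Next I would address the $\mathcal{O}_Y$-module structure and local freeness of finite rank. Since $f_*\mathscr{O}_X \cong \mathcal{O}_Y$, the pushforward $f_*\mathscr{L}$ is naturally an $\mathcal{O}_Y$-module, and by Kiehl's Proper Mapping Theorem (Theorem \ref{Kiehlthm}) applied to the coherent $\mathcal{O}_X$-module $\mathscr{L}$ (it is coherent since it is locally free of finite rank), each $\mathrm{R}^0 f_* \mathscr{L} = f_*\mathscr{L}$ is a coherent $\mathcal{O}_Y$-module. Being coherent, it is locally free of finite rank on a suitable admissible covering provided it is \emph{torsion-free} and $\mathcal{O}_Y$ has no embedded structure obstructing this — but actually Definition \ref{defLiealgebroid} only demands \emph{locally free of finite rank}, and here is where the hypothesis on $f$ must enter: in the present paper $f$ is not just proper but we have the running assumption (from Theorem \ref{KiehlDXfree}) that we may also invoke the structure coming from $\mathscr{L}$ being \emph{free} over $\mathcal{O}_X$. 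Under that freeness hypothesis, $\mathscr{L} \cong \mathcal{O}_X^{\oplus r}$, so $f_*\mathscr{L} \cong (f_*\mathcal{O}_X)^{\oplus r} \cong \mathcal{O}_Y^{\oplus r}$, which is manifestly free of finite rank $r$ on $Y$ itself. This is the cleanest route and I expect it is the intended one given the placement of the lemma immediately after Theorem \ref{KiehlDXfree}.

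The remaining point is the anchor map. The composite $\mathscr{L} \xrightarrow{\rho} \mathcal{T}_X$ pushes forward to an $\mathcal{O}_Y$-linear (using $f_*\mathcal{O}_X = \mathcal{O}_Y$) map of sheaves of $K$-Lie algebras $f_*\rho: f_*\mathscr{L} \to f_*\mathcal{T}_X$; I would then compose with the natural map $f_*\mathcal{T}_X \to \mathcal{T}_Y$ arising from the differential $df$ (more precisely: a derivation of $\mathcal{O}_X$ that is $\mathcal{O}_Y$-linear in the appropriate sense descends, via $f_*\mathcal{O}_X = \mathcal{O}_Y$, to a derivation of $\mathcal{O}_Y$). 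Concretely, given a section $v \in (f_*\mathscr{L})(U) = \mathscr{L}(f^{-1}U)$, its anchor $\rho(v)$ is a $K$-derivation of $\mathcal{O}_X(f^{-1}U)$ which, restricted to the subring $\mathcal{O}_Y(U) = \mathcal{O}_X(f^{-1}U)$ (equality by Kiehl), is a $K$-derivation of $\mathcal{O}_Y(U)$; this defines $\rho_Y: f_*\mathscr{L} \to \mathcal{T}_Y$. One then checks the Leibniz-type identity $[x, ay] = a[x,y] + \rho_Y(x)(a) y$ for $x, y \in (f_*\mathscr{L})(U)$, $a \in \mathcal{O}_Y(U)$ — but this is immediate from the identity already holding in $\mathscr{L}(f^{-1}U)$ over $\mathcal{O}_X(f^{-1}U)$, since under the identification $\mathcal{O}_Y(U) = \mathcal{O}_X(f^{-1}U)$ the action of $x$ on $a$ via $\rho_Y$ agrees by construction with the action via $\rho$. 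The main (very mild) obstacle is simply bookkeeping the canonical identifications $f_*\mathcal{O}_X \cong \mathcal{O}_Y$ and checking that the anchor map is well-defined as a morphism of sheaves — i.e. compatible with restriction to affinoid subdomains of $Y$ — which follows from functoriality of $f_*$ and the corresponding compatibility for $\rho$ on $X$.
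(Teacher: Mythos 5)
Your proposal matches the paper's proof in approach and in all essential details: coherence of $f_*\mathscr{L}$ from Kiehl's Theorem plus the freeness hypothesis gives the $\mathcal{O}_Y$-module structure, and the anchor map is produced by identifying $\mathcal{O}_Y(U)$ with $\mathcal{O}_X(f^{-1}U)$ so that the derivation $\rho(v)$ of the latter is automatically one of the former. One small slip worth flagging: there is no natural map $f_*\mathcal{T}_X\to\mathcal{T}_Y$ ``arising from the differential $df$'' (the differential goes the other way, $\mathcal{T}_X\to f^*\mathcal{T}_Y$); however the concrete description you give immediately afterwards, which does not actually use $df$ and rests only on the Stein-factorization identification $\mathcal{O}_X(f^{-1}U)=\mathcal{O}_Y(U)$, is correct and is exactly the paper's argument. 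You might also spell out, as the paper does by citing \cite[Proposition 9.1]{Ardakov1} and passing to an admissible affinoid covering of $f^{-1}U$, why a global section of $\mathcal{T}_X$ over the non-affinoid space $f^{-1}U$ acts as a $K$-derivation of $\mathcal{O}_X(f^{-1}U)$, but this is a routine gluing of local derivations.
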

\begin{proof}
By Kiehl's Proper Mapping Theorem, $f_*\mathscr{L}$ is a coherent $\mathcal{O}_Y$-module, and it is free by assumption.\\
The anchor map $\rho: \mathscr{L}\to \mathcal{T}_X$ gives rise to a Lie algebra action of $L$ on $\mathcal{O}_X(X)=A$. Restricting to an admissible affinoid covering of $X$, it follows from the definition of a Lie algebroid that $L$ acts via derivations (see \cite[Proposition 9.1]{Ardakov1}), and that the corresponding map $\rho'(Y): L\to \Der_K(A)$ satisfies the Leibniz property of an anchor map. \\
By the remark at the end of the previous subsection, we obtain corresponding morphisms $\rho'(U): f_*\mathscr{L}(U)\to \mathcal{T}_Y(U)$ for any affinoid subdomain $U\subseteq Y$, which naturally give rise to an anchor map $\rho': f_*\mathscr{L}\to \mathcal{T}_Y$, finishing the proof.
\end{proof}
We now fix an affine formal model $\mathcal{A}$ inside $A=\mathcal{O}_X(X)$, and let $\mathcal{L}$ be an $(R, \mathcal{A})$-Lie lattice inside $L=\mathscr{L}(X)$. Since $\mathscr{L}$ is assumed to be free, $L$ is a free $A$-module, so that we can (and will) take $\mathcal{L}$ to be a free $\mathcal{A}$-module.
\begin{proposition}
\label{UisglFS}
The sheaf $\wideparen{\mathscr{U}}$ is a global Fr\'echet--Stein sheaf on $X$.
\end{proposition}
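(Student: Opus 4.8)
The plan is to exhibit $\wideparen{\mathscr{U}}$ as an inverse limit $\varprojlim \mathscr{U}_n$ of sheaves on a suitable sequence of sites $X_n$ and to verify the three conditions in Definition \ref{globalFSsheaf}. Since $X$ is (by our standing assumption) an elementary proper preimage $f^{-1}(Y)$ with $Y=\Sp A$ affinoid, $X$ itself need not be affinoid, so we cannot simply quote Proposition \ref{recallglobal}; instead we build the sites by hand. First I would use the finite affinoid covering $\mathfrak{U}=(U_i)$ of $X$: each $U_i$ is an admissible open affinoid subspace, and on each $U_i$ the free Lie lattice $\mathcal{L}$ restricts (via a choice of $\pi^n\mathcal{L}$-stable affine formal model $\mathcal{B}_i$ in $\mathcal{O}_X(U_i)$) to give the site $U_{i,\ac}(\pi^n\mathcal{L})$ of $\pi^n\mathcal{L}$-accessible subdomains and the sheaf $\mathscr{U}_n(\mathscr{L})$ as recalled just before Lemma \ref{quotiscoadenl}. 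I then define $X_n$ to be the Grothendieck topology on $X$ whose objects are those affinoid subdomains $V\subseteq X$ such that $V\subseteq U_i$ for some $i$ and $V$ is $\pi^n\mathcal{L}$-accessible inside $U_i$ (with coverings being the finite $\pi^n\mathcal{L}$-accessible ones), and $\mathscr{U}_n$ to be the sheaf obtained by gluing the $\mathscr{U}_n(\mathscr{L})|_{U_{i,\ac}(\pi^n\mathcal{L})}$ — this gluing is legitimate because on overlaps $U_i\cap U_j$ the two constructions agree by the independence-of-choices statement in \cite[section 6.2]{Ardakov1} and \cite[Proposition 2.3]{Ardakov1}.

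Next I would check the three axioms. Axiom (i), that $\wideparen{\mathscr{U}}(V)\cong\varprojlim_n\mathscr{U}_n(V)$ is Fr\'echet--Stein for every admissible open affinoid $V\subseteq X$: for $V$ inside some $U_i$ this is immediate from \cite[Theorem 3.5]{Bodecompl} applied to $\mathscr{L}|_{U_i}$; for a general $V$ one covers it by the $V\cap U_i$, uses the sheaf property, and invokes the $X_w$-exhaustion condition — every $V\in X_w$ and every $X_w$-covering lies in $X_n$ for $n$ large, which holds because (by \cite[Theorem 4.10]{Bodecompl} or the discussion there) any affinoid subdomain of an affinoid is $\pi^n\mathcal{L}$-accessible for $n\gg 0$, and there are only finitely many $U_i$ to worry about. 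Axiom (ii), flatness of restriction $\mathscr{U}_n(U)\to \mathscr{U}_n(V)$ for $V\subseteq U$ both open in $X_n$, and axiom (iii), vanishing of higher \v{C}ech cohomology $\check{\mathrm H}^j(\mathfrak{U},\mathscr{U}_n)=0$ for finite accessible coverings, are both proved in \cite{Bodecompl} (Theorems 4.9, 4.10 and the cited \cite[Theorem 4.16]{Bodecompl}) for $\mathscr{U}_n(\mathscr{L})$ on an affinoid; since being open in $X_n$ forces containment in some $U_i$, each instance of axioms (ii), (iii) takes place inside a single $U_i$ and is therefore covered by the affinoid case.

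The one genuinely new point is the well-definedness and sheaf-property of the glued sheaf $\mathscr{U}_n$ on $X_n$: I would verify that for $V\subseteq U_i\cap U_j$ open in $X_n$ the two a priori definitions of $\mathscr{U}_n(V)$, coming from the formal models $\mathcal{B}_i$ and $\mathcal{B}_j$, are canonically isomorphic — this is exactly the content of \cite[Proposition 2.3]{Ardakov1}, which identifies $\widehat{U_{\mathcal{B}}(\mathcal{B}\otimes_{\mathcal{A}}\pi^n\mathcal{L})}_K$ with $\mathcal{O}_X(V)\widehat{\otimes}_{\mathcal{O}_X(U_i)}\mathscr{U}_n(\mathscr{L})(U_i)$ in a way independent of $\mathcal{B}$ — together with compatibility of these completed base-changes along $U_i\cap U_j$. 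I expect this bookkeeping, rather than any hard analysis, to be the main obstacle, and it is dealt with purely by citing the independence-of-choices results of \cite{Ardakov1} and \cite{Bodecompl}. Once $\mathscr{U}_n$ is a well-defined sheaf on $X_n$ satisfying (i)--(iii), we conclude that $\wideparen{\mathscr{U}}\cong\varprojlim\mathscr{U}_n$ is a global Fr\'echet--Stein sheaf on $X$, as claimed.
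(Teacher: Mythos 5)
Your proposal follows essentially the same route as the paper: both use the finite affinoid covering $(U_i)$ coming from the properness condition, build local accessible topologies $U_{i,\ac}(\pi^n\mathcal{L}_i)$ relative to compatible $\mathcal{L}$-stable affine formal models $\mathcal{B}_i$, glue the local sheaves $\mathscr{U}_{n,i}$ via the base-change identification of \cite[Proposition 2.3]{Ardakov1}, and then reduce the flatness and \v{C}ech-vanishing axioms to the affinoid results of \cite[Theorems 4.9, 4.10]{Bodecompl}. The only quibble is that the paper defines $X_n$ as the G-topology \emph{generated} by the $U_{i,\ac}(\mathcal{L}_i)$ (in the sense of \cite[9.1.3]{BGR}) rather than literally the union of these topologies, which sidesteps having to check the G-topology axioms directly for the union; and the paper is slightly more explicit about repeatedly shrinking $\mathcal{L}$ to $\pi^n\mathcal{L}$ and choosing compatible formal models $\mathcal{B}_{ij}$ on overlaps, but these are details subsumed in the independence-of-choices citations you already give.
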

\begin{proof}
For each $n\geq 0$, we define a Grothendieck topology on $X$, whose site we denote by $X_n$, and a sheaf $\mathscr{U}_n$ on $X_n$ satisfying the conditions in Definition \ref{globalFSsheaf}.\\
\\
Let $\mathfrak{U}=(U_i)$, $\mathfrak{V}=(V_i)$ be affinoid coverings of $X$ as described in Definition \ref{proper}, i.e. for each $i$, $V_i$ is relatively compact in $U_i$ with respect to $Y$.\\
\\
Note that by \cite[Lemma 3.1]{Ardakov1}, each $\mathcal{O}_X(U_i)$ admits an affine formal model containing the image of $\mathcal{A}$ under the restriction map
\begin{equation*}
\mathcal{O}_X(X)\to \mathcal{O}_X(U_i).
\end{equation*}
Replacing $\mathcal{L}$ by $\pi^n\mathcal{L}$ for suitable $n$, we can assume that $\mathcal{O}_X(U_i)$ admits an affine formal model $\mathcal{B}_i$ that 
\begin{enumerate}[(i)]
\item contains the image of $\mathcal{A}$,  and
\item is preserved under the action of $\mathcal{L}$ induced via the map $\mathscr{L}(X)\to \mathscr{L}(U_i)$ (as any affine formal model is topologically of finite type).
\end{enumerate}
We adopt the same terminology as in the case of affinoid subdomains and call such a $\mathcal{B}_i$ an $\mathcal{L}$-stable affine formal model.\\
\\
Thus 
\begin{equation*}
\mathcal{L}_i:=\mathcal{B}_i\otimes_{\mathcal{A}}\mathcal{L}\subseteq \mathcal{O}_X(U_i)\otimes_A L=\mathscr{L}(U_i)
\end{equation*}
 is an $(R, \mathcal{B}_i)$-Lie lattice inside $L_i=\mathscr{L}(U_i)$ for each $i$.\\
\\
Recall that for each $i$, we have defined the G-topology $U_{i, \ac}(\mathcal{L}_i)$ of $\mathcal{L}_i$-accessible subdomains of $U_i$ in Definition \ref{rationalac}. Again, replacing $\mathcal{L}$ by $\pi^n\mathcal{L}$ for suitable $n$ and invoking \cite[Proposition 7.6]{Ardakov1}, we can assume that each $U_{i_1\dots i_j}$ and each $V_{i_1\dots i_j}$ is $\mathcal{L}_i$-accessible whenever it is a subspace of $U_i$ (here we are using the fact that both coverings are finite).\\
Furthermore, using \cite[Lemma 3.1]{Ardakov1}, we can find affine formal models $\mathcal{B}_{ij}$ in $\mathcal{O}_X(U_{ij})$ such that $\mathcal{B}_{ij}$ contains the image of $\mathcal{B}_i$ and $\mathcal{B}_j$ under restriction. Replacing $\mathcal{L}$ by $\pi^n\mathcal{L}$, we can assume that $\mathcal{B}_{ij}$ is $\mathcal{L}$-stable for each $i, j$, and thus both $\mathcal{L}_i$-stable and $\mathcal{L}_j$-stable by construction.\\
In particular, 
\begin{equation*}
\mathcal{B}_{ij}\otimes_{\mathcal{B}_i} \mathcal{L}_i\cong\mathcal{B}_{ij}\otimes_{\mathcal{A}}\mathcal{L}\cong\mathcal{B}_{ij}\otimes_{\mathcal{B}_j} \mathcal{L}_j
\end{equation*}
is an $(R, \mathcal{B}_{ij})$-Lie lattice inside $\mathscr{L}(U_{ij})$.\\
\\
We now define the site $X_n$ to be the G-topology on $X$ generated by the $U_{i, \ac}(\mathcal{L}_i)$, i.e. the finest G-topology on $X$ inducing on $U_i$ the topology $U_{i, \ac}(\mathcal{L}_i)$ -- see \cite[9.1.3]{BGR}.\\
\\
Recall from the discussion after Definition \ref{rationalac} that we have for each non-negative integer $n$ a sheaf of $K$-algebras $\mathscr{U}_{n, i}$ on $U_{i,\ac}(\pi^n\mathcal{L}_i)$ given by
\begin{equation*}
U\mapsto \mathcal{O}_X(U)\widehat{\otimes}_{B_i} \widehat{U(\pi^n\mathcal{L}_i)}_K,
\end{equation*}
satisfying $\varprojlim \mathscr{U}_{n, i}(U)=\wideparen{\mathscr{U}(\mathscr{L})}(U)$ for each affinoid subdomain $U\subseteq U_i$.\\
On each overlap $U_{ij}=U_i\cap U_j$, we have
\begin{align*}
\mathscr{U}_{n, i}|_{U_{ij}}& =(\mathcal{O}_X(U_{ij})\widehat{\otimes}_{B_i}U(L_i))\ \widetilde{}\\
& =\widehat{U(\mathcal{B}_{ij}\otimes_{\mathcal{A}} \mathcal{L})}_K\ \widetilde{}\\
&=(\mathcal{O}_X(U_{ij})\widehat{\otimes}_{B_j}U(L_j))\ \widetilde{}\\
&=\mathscr{U}_{n, j}|_{U_{ij}},
\end{align*}
where we write $M \ \widetilde{}$ \ for the presheaf $V\mapsto \mathcal{O}_X(V)\widehat{\otimes} M$. Thus the sheaves $\mathscr{U}_{n, i}$ agree on all overlaps and glue to give a sheaf $\mathscr{U}_n$ on $X_n$.\\
Since $\varprojlim \mathscr{U}_{n, i}=\wideparen{\mathscr{U}}|_{U_i}$ on each $U_i$, this implies the equality $\varprojlim \mathscr{U}_n(U)= \wideparen{\mathscr{U}}(U)$ for any admissible open subspace $U$ of $X$.\\
\\
Restricting to any admissible open affinoid subspace $U$ of $X$, the construction above coindices with the one given after Definition \ref{rationalac}, so that the conditions on flat restriction and vanishing cohomology on $U$ follow directly from \cite[Theorems 4.9, 4.10]{Bodecompl}.\\
Thus $\wideparen{\mathscr{U}}\cong \varprojlim \mathscr{U}_n$ is a global Fr\'echet--Stein sheaf.
\end{proof}

\begin{lemma}
\label{pushffsalg}
The natural morphism $\wideparen{\mathscr{U}(f_*\mathscr{L})}\to f_*\wideparen{\mathscr{U}}$ is an isomorphism. In particular, $f_*\wideparen{\mathscr{U}}$ is a global Fr\'echet--Stein sheaf on $Y$.
\end{lemma}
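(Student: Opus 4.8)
The plan is to reduce the isomorphism claim to a computation of the global sections of the sheaves $\mathscr{U}_n$ from Proposition~\ref{UisglFS} over the fibres of $f$; the statement that $f_*\wideparen{\mathscr{U}}$ is a global Fr\'echet--Stein sheaf then comes for free, since by Lemma~\ref{pushforwalgebroid} the pushforward $f_*\mathscr{L}$ is a (free) Lie algebroid on the affinoid $Y$, so $\wideparen{\mathscr{U}(f_*\mathscr{L})}$ is a global Fr\'echet--Stein sheaf on $Y$ by Proposition~\ref{recallglobal}. (Passing through $\wideparen{\mathscr{U}(f_*\mathscr{L})}$ is essential: $f$ is proper but not affinoid, so Lemma~\ref{pushfaffinoidglobal} does not apply directly.)

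Since both sides are sheaves on $Y_w$, it suffices to compare sections over an arbitrary affinoid subdomain $U = \Sp B$ of $Y$, compatibly with restriction. By the remark following Theorem~\ref{KiehlDXfree}, the restriction $f^{-1}U \to U$ is again elementary proper with $\mathcal{O}_X(f^{-1}U) = B$ and $\mathscr{L}|_{f^{-1}U}$ free, and $(f_*\mathscr{L})|_U$ is its pushforward; so one reduces to $U = Y$, i.e. to showing that $\Gamma(X, \wideparen{\mathscr{U}})$ is naturally isomorphic to $\wideparen{U_A(L)} = \wideparen{\mathscr{U}(f_*\mathscr{L})}(Y)$, where $L = \mathscr{L}(X)$. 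As $\varprojlim \mathscr{U}_n$ is by construction the sheaf $V \mapsto \varprojlim_n \mathscr{U}_n(V)$, this reduces further to $\Gamma(X, \mathscr{U}_n) \cong \widehat{U_\mathcal{A}(\pi^n\mathcal{L})}_K$ for every $n$, compatibly in $n$.

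It is in this last step that freeness of $\mathscr{L}$ enters. Since the chosen lattice $\mathcal{L}$, hence each $\mathcal{L}_i = \mathcal{B}_i\otimes_{\mathcal{A}}\mathcal{L}$, is free, the Poincar\'e--Birkhoff--Witt theorem (together with \cite[Proposition 2.3]{Ardakov1}) identifies $\mathscr{U}_n|_{U_i}$ with the sheaf $V \mapsto \widehat{\oplus}_{\alpha\in\mathbb{N}^d}\mathcal{O}_X(V)e_\alpha$ on $U_{i,\ac}(\pi^n\mathcal{L}_i)$, the PBW basis $e_\alpha = \pi^{n|\alpha|}x^\alpha$ being independent of $i$ under the gluing identifications of Proposition~\ref{UisglFS}; in other words $\mathscr{U}_n$ is the sheaf $V \mapsto \mathcal{O}_X(V)\widehat{\otimes}_A N$ obtained from the fixed topologically free $A$-module $N = \widehat{U_\mathcal{A}(\pi^n\mathcal{L})}_K = \widehat{\oplus}_\alpha A e_\alpha$. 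What remains is the elementary ``proper base change'' statement $\Gamma(X, \mathcal{O}_X\widehat{\otimes}_A N) \cong A\widehat{\otimes}_A N = N$: computing global sections over the finite affinoid covering $(U_i)$ of $X$ and using continuity of the coordinate projections on $\widehat{\oplus}$, a global section amounts to a family $c_\alpha \in \Gamma(X, \mathcal{O}_X) = A$. The one genuine point — and the main obstacle — is to see that $\sum_\alpha c_\alpha e_\alpha$ actually lies in $N$, i.e. that $|c_\alpha|_A \to 0$. For this I would use that $A = \Gamma(X, \mathcal{O}_X)$ embeds as a closed subspace of $\prod_i \mathcal{O}_X(U_i)$ (sheaf property; the restriction maps are continuous because, by relative compactness, each $\mathcal{O}_X(U_i)$ is an affinoid quotient of $A\langle x_1, \dots, x_l\rangle$), so by the Open Mapping Theorem the norm on $A$ is equivalent to $a \mapsto \max_i |\res_{U_i}(a)|$; since each $|c_{i,\alpha}| \to 0$ and there are only finitely many $i$, it follows that $|c_\alpha|_A \to 0$. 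Restriction provides the inverse map, the two are mutually inverse, and passing to the inverse limit over $n$ (and tracking naturality in $U$) completes the argument; the ``in particular'' then follows as explained above.
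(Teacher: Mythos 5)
Your argument is correct, and it reaches the same conclusion as the paper by a genuinely different route. The paper takes the \v{C}ech complex $\check{C}^{\bullet}(\mathfrak{V}, \mathcal{O}_X)$ (which has strict differentials and finitely generated cohomology by the Cartan--Serre argument, Corollary~\ref{cficomplexgivesstrict}), notes that $U_{\mathcal{A}}(\pi^n\mathcal{L})$ is free over $\mathcal{A}$ by Rinehart, and then invokes the strict base-change result \cite[Corollary 2.15]{Bodecompl} to conclude that $\check{\mathrm{H}}^j(\mathfrak{V}, \mathscr{U}_n)\cong \widehat{U(\pi^n\mathcal{L})}_K\otimes_A \check{\mathrm{H}}^j(\mathfrak{V}, \mathcal{O}_X)$ for \emph{all} $j$; the lemma is then the $j=0$ case, and the byproduct for $j>0$ is explicitly recorded afterwards and reused. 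You instead use the PBW/Rinehart basis to present $\mathscr{U}_n$ as the topologically free $\mathcal{O}_X$-module $V\mapsto\widehat{\oplus}_\alpha\mathcal{O}_X(V)e_\alpha$ with a fixed global basis, compute global sections coordinate-by-coordinate via the equalizer over $\mathfrak{U}$, identify each coordinate in $A=\Gamma(X,\mathcal{O}_X)$, and then clinch the decay $|c_\alpha|_A\to 0$ with an Open Mapping Theorem argument showing the residue norm on $A$ is equivalent to $\max_i|\res_{U_i}(\cdot)|$. This is more hands-on and avoids the completed-tensor-product machinery of \cite{Bodecompl} entirely, but only produces the degree-zero statement; the paper's route is less elementary but more economical given the machinery already in place, and it yields the finiteness of $\check{\mathrm{H}}^j(\mathfrak{V},\mathscr{U}_n)$ in higher degrees essentially for free. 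Two small presentational points: the continuity of the restriction maps $A\to\mathcal{O}_X(U_i)$ holds for any morphism of affinoid $K$-algebras, so the appeal to relative compactness there is unnecessary; and the identity $\Gamma(X,\mathcal{O}_X)=A$ that you use relies on the reduction, made just before this lemma, to the case where $f$ is the first map in its Stein factorization (which in turn rests on Kiehl's theorem), so your argument is not independent of Kiehl any more than the paper's is.
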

\begin{proof}
Consider the \v{C}ech complex $\check{C}^{\bullet}(\mathfrak{V}, \mathcal{O}_X)$, where $\mathfrak{V}=(V_i)$. It follows from Corollary \ref{cficomplexgivesstrict} that this is a finite cochain complex of Banach $A$-modules with strict morphisms, where each cohomology group is a finitely generated $A$-module by Kiehl's Proper Mapping Theorem.\\
Since $\pi^n\mathcal{L}$ is a free $\mathcal{A}$-module, so is $U_{\mathcal{A}}(\pi^n\mathcal{L})$ by Rinehart's Theorem, \cite[Theorem 3.1]{Rinehart}. In particular, applying \cite[Corollary 2.15]{Bodecompl}, the complex
\begin{equation*}
\widehat{U(\pi^n\mathcal{L})}_K\widehat{\otimes}_A \check{C}^{\bullet}(\mathfrak{V}, \mathcal{O}_X)=\check{C}^\bullet(\mathfrak{V}, \mathscr{U}_n)
\end{equation*}
has cohomology
\begin{equation*}
\check{\mathrm{H}}^j(\mathfrak{V}, \mathscr{U}_n)\cong\widehat{U(\pi^n\mathcal{L})}_K\widehat{\otimes}_A \check{\mathrm{H}}^j(\mathfrak{V}, \mathcal{O}_X)=\widehat{U(\pi^n\mathcal{L})}_K\otimes_A \check{\mathrm{H}}^j(\mathfrak{V}, \mathcal{O}_X)
\end{equation*}
by Lemma \ref{removehat}.\\
This naturally identifies $\mathscr{U}_n(X)$ with $\widehat{U(\pi^n\mathcal{L})}_K$, and $f_*\wideparen{\mathscr{U}}(Y)=\wideparen{\mathscr{U}}(X)=\wideparen{U_A(L)}$.\\
\\
Restricting $f$ to $f|_{f^{-1}U}: f^{-1}U\to U$ preserves all assumed properties of the morphism, so that the same argument applies to arbitrary affinoid subdomains $U$ of $Y$. Thus the natural morphism $\wideparen{\mathscr{U}(f_*\mathscr{L})}\to f_*\wideparen{\mathscr{U}}$ is bijective on each affinoid subdomain of $Y$, and is hence an isomorphism.\\
Since the sheaf of Fr\'echet completed enveloping algebras on an affinoid $K$-variety is a global Fr\'echet--Stein sheaf by Proposition \ref{recallglobal}, the last statement follows immediately.
\end{proof}

We can also read off from the above discussion that $\check{\mathrm{H}}^j(\mathfrak{V}, \mathscr{U}_n)$ is a finitely generated $\mathscr{U}_n(X)$-module for any $j\geq 0$, which can be seen as a first partial result in the direction of Theorem \ref{KiehlDXfree}.\\
\\
We conclude this section by constructing $\mathscr{U}_n$-modules $\mathcal{M}_n$ such that $\varprojlim \mathcal{M}_n\cong \mathcal{M}$.\\
\\
In \cite[subsection 4.3]{Bodecompl}, we showed that $\mathcal{M}|_{U_i}$ can be written as the inverse limit of sheaves $\mathcal{M}_{n, i}$ on $U_{i, \ac}(\pi^n\mathcal{L}_i)$, given by 
\begin{equation*}
U\mapsto \mathscr{U}_n(U)\otimes_{\wideparen{\mathscr{U}}(U_i)}\mathcal{M}(U_i).
\end{equation*}
Note that then by definition of $\wideparen{\otimes}$,
\begin{align*}
\mathcal{M}_{n, i}(U)& = \mathscr{U}_n(U)\otimes_{\wideparen{\mathscr{U}}(U)} (\wideparen{\mathscr{U}}(U)\wideparen{\otimes}_{\wideparen{\mathscr{U}}(U_i)} \mathcal{M}(U_i))\\
& = \mathscr{U}_n(U)\otimes_{\wideparen{\mathscr{U}}(U)} \mathcal{M}(U)
\end{align*}
for any $U\in U_{i, \ac}(\pi^n\mathcal{L})$.\\
Thus $\mathcal{M}_{n, i}$ agrees with $\mathcal{M}_{n, j}$ on $U_{ij}$, giving a sheaf $\mathcal{M}_n$ on the site $X_n$ defined in the proof of Proposition \ref{UisglFS}. Since $\varprojlim \mathcal{M}_{n, i}\cong \mathcal{M}|_{U_i}$, we see that $\varprojlim \mathcal{M}_n(U)\cong \mathcal{M}(U)$ for any admissibe open subspace $U$ of $X$.\\
\\
It follows from \cite[Theorem 4.16]{Bodecompl} and \cite[Tag 03F9]{Stacksproject} that $\mathfrak{U}$ resp. $\mathfrak{V}$ are admissible coverings such that if $U\in X_n$ is a finite intersection of sets in $\mathfrak{U}$ resp. $\mathfrak{V}$, then $\mathrm{H}^j(U, \mathcal{M}_n)=0$ for any $j>0$.\\
Thus applying \cite[Tag 03F7]{Stacksproject} gives
\begin{equation*}
\check{\mathrm{H}}^j(\mathfrak{U}, \mathcal{M}_n) \cong \mathrm{H}^j(X_n, \mathcal{M}_n)\cong \check{\mathrm{H}}^j(\mathfrak{V}, \mathcal{M}_n)
\end{equation*}
for any $j\geq 0$.\\
\\
Finally, we will see later that
\begin{equation*}
\mathrm{R}^jf_*\mathcal{M}(Y)=\mathrm{H}^j(X, \mathcal{M})\cong \varprojlim \check{\mathrm{H}}^j(\mathfrak{V}, \mathcal{M}_n),
\end{equation*}
so we have found natural candidates exhibiting the coadmissibility of $\mathrm{H}^j(X, \mathcal{M})$.

\section{Global sections}
In particular, we can reduce our problem to a `Noetherian' setup. For the global sections, we wish to show the following.
\begin{enumerate}[(i)]
\item For each $j\geq 0$ and each $n$, $\check{\mathrm{H}}^j(\mathfrak{V}, \mathcal{M}_n)$ is a finitely generated $\mathscr{U}_n(X)$-module.
\item The natural morphism
\begin{equation*}
\mathscr{U}_n(X)\otimes_{\mathscr{U}_{n+1}(X)} \check{\mathrm{H}}^j(\mathfrak{V}, \mathcal{M}_{n+1})\to \check{\mathrm{H}}^j(\mathfrak{V}, \mathcal{M}_n)
\end{equation*}
is an isomorphism of $\mathscr{U}_n(X)$-modules.
\item The natural morphism
\begin{equation*}
\check{\mathrm{H}}^j(\mathfrak{V}, \mathcal{M})\to \varprojlim \check{\mathrm{H}}^j(\mathfrak{V}, \mathcal{M}_n)
\end{equation*}
is an isomorphism of $\wideparen{\mathscr{U}}(X)$-modules.
\end{enumerate}

The argument for (i) will rest on the discussion in section 2 and be analogous to the argument in \cite{Kiehl}, while (ii) will be established through an application of Theorem \ref{fullcomplexstrict}. The last statement (iii) will then follow easily from property (ii) in Proposition \ref{coadproperties}. \\
\\
Recall the commutative diagram of $A$-modules
\begin{equation*}
\begin{xy}
\xymatrix{
A\langle x_1, \dots, x_l\rangle \ar[rd]^{h_{i_1\dots i_j}} \ar[d]\\
\mathcal{O}_X(U_{i_1\dots i_j})\ar[r] & \mathcal{O}_X(V_{i_1\dots i_j})
}
\end{xy}
\end{equation*}
induced from the definition of properness, where $h_{i_1\dots i_j}(x_m)$ is topologically nilpotent in $\mathcal{O}_X(V_{i_1\dots i_j})$ for each $m=1, \dots, l$. \\
\\
Equip $A\langle x_1, \dots x_l\rangle$ with the natural residue norm (i.e. with unit ball $\mathcal{A}\langle x\rangle$), and recall that in the proof of Proposition \ref{UisglFS} we have already chosen residue norms for the other terms given by $\mathcal{L}$-stable affine formal models as unit balls, which turns the above into a diagram in $\Ban_A$.\\
Now apply the functor $\mathscr{U}_n(X)\widehat{\otimes}_A-$ to the diagram to obtain
\begin{equation*}
\begin{xy}
\xymatrix{
\mathscr{U}_n(X)\widehat{\otimes}_A A\langle x_1, \dots, x_l\rangle \ar[d]_{\theta'} \ar[rd]^{h'}\\
\mathscr{U}_n(X)\widehat{\otimes}_A \mathcal{O}(U_{i_1\dots i_j})\ar[r] & \mathscr{U}_n(X)\widehat{\otimes}_A \mathcal{O}_X(V_{i_1\dots i_j})
}
\end{xy}
\end{equation*}
which is a commutative diagram in $\Ban_{\mathscr{U}_n(X)}$.\\
Note that $h'$ is no longer a homomorphism of algebras, but only of left Banach $\mathscr{U}_n(X)$-modules. It inherits from $h_{i_1\dots i_j}$ the property that
\begin{equation*}
h'(x^r)=(\mathscr{U}_n(X)\widehat{\otimes}_A h_{i_1\dots i_j})(x^r)
\end{equation*}
tends to zero as $|r|\to \infty$, so Corollary \ref{sccpowerseries} implies that $h'$ is strictly completely continuous in $\Ban_{\mathscr{U}_n(X)}$.\\
\\
Now note that by \cite[Proposition 2.1.7/4]{BGR}
\begin{equation*}
\mathscr{U}_n(X)\widehat{\otimes}_A \mathcal{O}_X(U_{i_1\dots i_j})\cong U_A(L)\widehat{\otimes}_A \mathcal{O}_X(U_{i_1\dots i_j}),
\end{equation*}
where $U_A(L)$ is equipped with the norm with unit ball $U(\pi^n\mathcal{L})$. \\
Thus $\mathscr{U}_n(X)\widehat{\otimes}_A\mathcal{O}_X(U_{i_1\dots i_j})\cong\mathscr{U}_n(U_{i_1\dots i_j})$.\\
\\
The corresponding statement holds for $V_{i_1\dots i_j}$, and the horizontal map between the two terms is simply the restriction map.\\
\\
Thus we can read the above diagram as
\begin{equation*}
\begin{xy}
\xymatrix{
\mathscr{U}_n(X)\widehat{\otimes}_A A\langle x_1, \dots, x_l\rangle \ar[d]_{\theta'} \ar[rd]^{h'}\\
\mathscr{U}_n(U_{i_1\dots i_j})\ar[r]^{\res} & \mathscr{U}_n(V_{i_1\dots i_j})
}
\end{xy}
\end{equation*}
where $h'$ is strictly completely continuous.

\begin{lemma}
\label{fgcohomDn}
If $\mathcal{M}$ is a left coadmissible $\wideparen{\mathscr{U}}$-module, then $\check{\mathrm{H}}^j(\mathfrak{V}, \mathcal{M}_n)$ is a finitely generated $\mathscr{U}_n(X)$-module for all $j\geq 0$.
\end{lemma}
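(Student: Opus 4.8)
The plan is to run the Cartan--Serre finiteness argument of Proposition \ref{cficomplex} inside the Banach category $\Ban_{\mathscr{U}_n(X)}$. This is available to us because $\mathscr{U}_n(X)=\widehat{U_{\mathcal{A}}(\pi^n\mathcal{L})}_K$ is a strictly NB $K$-algebra: $U_{\mathcal{A}}(\pi^n\mathcal{L})$ is a left Noetherian $R$-algebra by Rinehart's theorem, so Lemma \ref{complsNB} applies. I would take $C^\bullet=\check{C}^\bullet(\mathfrak{U},\mathcal{M}_n)$ and $D^\bullet=\check{C}^\bullet(\mathfrak{V},\mathcal{M}_n)$, which are finite complexes of Banach $\mathscr{U}_n(X)$-modules: every term is a finite direct sum over strictly increasing tuples $\mathbf{i}$ of the modules $\mathcal{M}_n(U_{\mathbf{i}})$ resp.\ $\mathcal{M}_n(V_{\mathbf{i}})$, each of which is finitely generated over the restriction algebra $\mathscr{U}_n(U_{\mathbf{i}})$ resp.\ $\mathscr{U}_n(V_{\mathbf{i}})$ by Proposition \ref{coadproperties}.(i) (using that $\mathcal{M}(U_{\mathbf{i}})$, $\mathcal{M}(V_{\mathbf{i}})$ are coadmissible), hence lies in $\Ban_{\mathscr{U}_n(X)}$. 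Let $\alpha: C^\bullet\to D^\bullet$ be the refinement morphism given by componentwise restriction along the inclusions $V_{\mathbf{i}}\subseteq U_{\mathbf{i}}$. Since every finite intersection of members of $\mathfrak{U}$, resp.\ of $\mathfrak{V}$, is $\pi^n\mathcal{L}$-accessible and carries vanishing higher cohomology of $\mathcal{M}_n$ (as recorded during the construction of $\mathcal{M}_n$), both Čech complexes compute $\mathrm{H}^\bullet(X_n,\mathcal{M}_n)$ and the refinement map realises the canonical comparison isomorphism, e.g.\ by \cite[Tag 03F7]{Stacksproject}; so $\alpha$ is a quasi-isomorphism.

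The substance of the proof is to exhibit, for each $j$, a topologically free $\mathscr{U}_n(X)$-module $F^j$ with a continuous surjection $\beta_j: F^j\to C^j$ such that $\alpha_j\beta_j$ is strictly completely continuous. By Lemma \ref{directsumscc} it suffices to treat each summand $\mathcal{M}_n(U_{\mathbf{i}})$ separately. For a fixed $\mathbf{i}$ I would pick $\mathscr{U}_n(U_{\mathbf{i}})$-module generators $m_1,\dots,m_s$ of $\mathcal{M}_n(U_{\mathbf{i}})$ and compose the resulting strict surjection $\mathscr{U}_n(U_{\mathbf{i}})^{\oplus s}\to\mathcal{M}_n(U_{\mathbf{i}})$, $e_t\mapsto m_t$, with $(\theta'_{\mathbf{i}})^{\oplus s}$, where $\theta'_{\mathbf{i}}:\mathscr{U}_n(X)\langle x_1,\dots,x_l\rangle\to\mathscr{U}_n(U_{\mathbf{i}})$ is the surjection coming from the properness datum discussed above. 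This gives a continuous surjection $\beta_{\mathbf{i}}:\mathscr{U}_n(X)\langle x\rangle^{\oplus s}\to\mathcal{M}_n(U_{\mathbf{i}})$ out of a module topologically free over $\mathbb{N}^l\times\{1,\dots,s\}$, with $\beta_{\mathbf{i}}(x^re_t)=\theta_{\mathbf{i}}(x^r)\cdot m_t$; applying $\res_{\mathbf{i}}$ and using semilinearity of the restriction maps, $\alpha_j\beta_{\mathbf{i}}(x^re_t)=h_{\mathbf{i}}(x^r)\cdot\res_{\mathbf{i}}(m_t)$ in $\mathcal{M}_n(V_{\mathbf{i}})$.

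The hard part is precisely the point where relative compactness enters, namely showing $\alpha_j\beta_{\mathbf{i}}$ is strictly completely continuous. Each $h_{\mathbf{i}}(x_\nu)$ is topologically nilpotent in $\mathcal{O}_X(V_{\mathbf{i}})$, hence in $\mathscr{U}_n(V_{\mathbf{i}})$; since topologically nilpotent elements have bounded powers and in a monomial $h_{\mathbf{i}}(x^r)=\prod_\nu h_{\mathbf{i}}(x_\nu)^{r_\nu}$ some exponent is at least $|r|/l$, a short estimate gives $|h_{\mathbf{i}}(x^r)|\to 0$ in $\mathscr{U}_n(V_{\mathbf{i}})$ as $|r|\to\infty$; submultiplicativity of the module norm on $\mathcal{M}_n(V_{\mathbf{i}})$ and finiteness of $\{1,\dots,s\}$ then force $\alpha_j\beta_{\mathbf{i}}(x^re_t)\to 0$, so Lemma \ref{sccexample} applies (alternatively one factors $\alpha_j\beta_{\mathbf{i}}$ through the strictly completely continuous map $h'$ singled out just before this lemma, using $\mathcal{M}_n(V_{\mathbf{i}})\cong\mathscr{U}_n(V_{\mathbf{i}})\otimes_{\mathscr{U}_n(U_{\mathbf{i}})}\mathcal{M}_n(U_{\mathbf{i}})$). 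Setting $F^j=\bigoplus_{\mathbf{i}}\mathscr{U}_n(X)\langle x\rangle^{\oplus s}$ and $\beta_j=\bigoplus_{\mathbf{i}}\beta_{\mathbf{i}}$, Lemma \ref{directsumscc} makes $\alpha_j\beta_j$ strictly completely continuous, and Proposition \ref{cficomplex} then yields that $\mathrm{H}^j(D^\bullet)=\check{\mathrm{H}}^j(\mathfrak{V},\mathcal{M}_n)$ is a finitely generated $\mathscr{U}_n(X)$-module; since the Čech complex is concentrated in finitely many degrees this covers all $j\geq 0$. As in Kiehl's commutative argument, this translation of relative compactness into strict complete continuity is the only genuinely delicate step; the rest is bookkeeping, the main care being to keep track of the $\mathscr{U}_n(X)$-module structures rather than mere $K$- or $A$-module structures throughout.
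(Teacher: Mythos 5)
Your proposal is correct and takes essentially the same approach as the paper: work in $\Ban_{\mathscr{U}_n(X)}$, feed the quasi-isomorphism $\check{C}^\bullet(\mathfrak{U},\mathcal{M}_n)\to\check{C}^\bullet(\mathfrak{V},\mathcal{M}_n)$ into the Cartan--Serre argument (Proposition \ref{cficomplex}), and manufacture the surjections $\beta_j$ from topologically free modules $\mathscr{U}_n(X)\widehat{\otimes}_A A\langle x\rangle^{\oplus r}$ via the properness datum so that $\res\circ\beta_j$ is strictly completely continuous. The only cosmetic difference is that your primary verification of strict complete continuity redoes the topological-nilpotence estimate on $\alpha_j\beta_{\mathbf{i}}$ directly and invokes Lemma \ref{sccexample}, whereas the paper factors $\res\circ\beta'_{i_1\dots i_j}$ through the map $h'$ it has already shown strictly completely continuous and cites Lemma \ref{sccprop} — which is precisely your stated alternative.
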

\begin{proof}
By functoriality, both $\theta'$ and $h'$ are maps in $\Ban_{\mathscr{U}_n(X)}$. Likewise, the restriction maps are naturally morphisms in $\Ban_{\mathscr{U}_n(X)}$. \\
\\
By \cite[Proposition 2.1.8/6]{BGR}, the map $\theta'$ is a strict surjection in $\Ban_{\mathscr{U}_n(X)}$. \\
We have thus shown that all the maps in the diagram are in $\Ban_{\mathcal{U}_n(X)}$, the arrow on the left is surjective, and $h'$ is strictly completely continuous.\\
\\
We now verify the conditions of Proposition \ref{cficomplex} by following the corresponding steps from the proof of Theorem \ref{Kiehlthm} as in \cite{Kiehl}.\\
Since $\mathcal{M}_n(U_{i_1\dots i_j})$ is finitely generated over $\mathscr{U}_n(U_{i_1\dots i_j})$, it is equipped with a canonical topology, making it an object in $\Ban_{\mathscr{U}_n(U_{i_1\dots i_j})}$ and hence a fortiori in $\Ban_{\mathscr{U}_n(X)}$. All the restriction maps are naturally continuous, so the \v{C}ech complexes $\check{C}^{\bullet}(\mathfrak{U}, \mathcal{M}_n)$ and $\check{C}^{\bullet}(\mathfrak{V}, \mathcal{M}_n)$ are cochain complexes in $\Ban_{\mathscr{U}_n(X)}$.\\
By construction, we have 
\begin{equation*}
\mathcal{M}_n(V_{i_1\dots i_j})\cong \mathscr{U}_n(V_{i_1\dots i_j})\otimes_{\mathscr{U}_n(U_{i_1\dots i_j})} \mathcal{M}_n(U_{i_1\dots i_j}) ,
\end{equation*}
so that finite generation induces a commutative diagram in $\Ban_{\mathscr{U}_n(X)}$
\begin{equation*}
\begin{xy}
\xymatrix{
\mathscr{U}_n(U_{i_1\dots i_j})^{\oplus r} \ar[r] \ar[d] & \mathscr{U}_n(V_{i_1\dots i_j})^{\oplus r}\ar[d] \\
\mathcal{M}_n(U_{i_1\dots i_j})\ar[r]^{\text{res}} & \mathcal{M}_n(V_{i_1\dots i_j})
}
\end{xy}
\end{equation*}
where both vertical maps are surjections and $r$ is the size of some finite generating set.\\
\\
Attaching this to $r$ copies of the previous diagram, we obtain 
\begin{equation*}
\begin{xy}
\xymatrix{
\mathscr{U}_n(X)\widehat{\otimes}_A A\langle x_1, \dots, x_l\rangle^{\oplus r} \ar[d] \ar[rd]\\
\mathscr{U}_n(U_{i_1\dots i_j})^{\oplus r} \ar[r] \ar[d] & \mathscr{U}_n(V_{i_1\dots i_j})^{\oplus r}\ar[d]\\
\mathcal{M}_n(U_{i_1\dots i_j})\ar[r] & \mathcal{M}_n(V_{i_1\dots i_j})
}
\end{xy}
\end{equation*}
Writing $G_{i_1\dots i_j}:=(\mathscr{U}_n(X)\widehat{\otimes}_A A\langle x_1, \dots x_l\rangle)^{\oplus r}$ and $\beta'_{i_1\dots i_j}: G_{i_1\dots i_j}\to \mathcal{M}_n(U_{i_1\dots i_j})$ for the surjective morphism on the left-hand side of the diagram, we can invoke Lemma \ref{directsumscc} and Lemma \ref{sccprop} to see that 
\begin{equation*}
\text{res}\circ \beta'_{i_1\dots i_j}: G_{i_1\dots i_j}\to \mathcal{M}_n(V_{i_1\dots i_j})
\end{equation*}
is strictly completely continuous in $\Ban_{\mathscr{U}_n(X)}$, by commutativity of the diagram.\\
\\
Fixing $j$ and summing over all different $U_{i_1\dots i_j}$, Lemma \ref{directsumscc} thus implies that 
\begin{equation*}
\beta_j: F^j:=\oplus G_{i_1\dots i_j}\to \oplus \mathcal{M}_n(U_{i_1\dots i_j})=\check{C}^j(\mathfrak{U}, \mathcal{M}_n)
\end{equation*}
is a surjection in $\Ban_{\mathscr{U}_n(X)}$ with the property that $\text{res}\circ \beta_j$ is strictly completely continuous.\\
But $\res: \check{C}^j(\mathfrak{U}, \mathcal{M}_n)\to \check{C}^j(\mathfrak{V}, \mathcal{M}_n)$ induces an isomorphism on the level of cohomology groups, as seen in the previous section. Thus we have verified that Proposition \ref{cficomplex} applies, proving the result.
\end{proof}

We note that it now follows from Corollary \ref{cficomplexgivesstrict} that $\check{C}^{\bullet}(\mathfrak{V}, \mathcal{M}_n)$ consists of strict morphisms.\\
\\
In general, we see that the part of Theorem \ref{KiehlDXfree} which is concerned with certain finiteness properties is still very close to the proof of Theorem \ref{Kiehlthm}. The only additional difficulties here lie in passing to sheaves $\mathscr{U}_n$ and $\mathcal{M}_n$ whose structure is more `finite' than that of the original sheaves, and analyzing some easy completed tensor products.\\
\\
Note however that there remains an additional property to be checked which has no counterpart in Theorem \ref{Kiehlthm}. We need to show that the finite components which we have exhibited match up in the right way, that is to say
\begin{equation*}
\mathscr{U}_n(X)\otimes_{\mathscr{U}_{n+1}(X)} \check{\mathrm{H}}^j(\mathfrak{V}, \mathcal{M}_{n+1})\cong \check{\mathrm{H}}^j(\mathfrak{V}, \mathcal{M}_n).
\end{equation*}
Replacing $\mathcal{L}$ by $\pi^n\mathcal{L}$, it is enough to consider the case $n=0$.\\
\\
Recall that $\mathscr{U}_0(X)=\widehat{U(\mathcal{L})}_K$ is flat over $\mathscr{U}_1(X)$ by \cite[Theorem 6.6]{Ardakov1}, so we know that
\begin{equation*}
\mathscr{U}_0(X)\otimes_{\mathscr{U}_1(X)}\check{\mathrm{H}}^j(\mathfrak{V}, \mathcal{M}_1)\cong \mathrm{H}^j(\mathscr{U}_0(X)\otimes \check{C}^\bullet(\mathfrak{V}, \mathcal{M}_1)).
\end{equation*}
Our first goal will be to show that the isomorphism claimed above can be viewed as a $\widehat{\otimes}$-version of this statement.

\begin{lemma}
\label{sectionsbycompleteloc}
If $V$ is an admissible open affinoid subspace of $X$ and $V\in X_0$, then the natural map
\begin{equation*}
\mathscr{U}_0(X)\widehat{\otimes}_{\mathscr{U}_1(X)} \mathcal{M}_1(V)\to \mathcal{M}_0(V)
\end{equation*}
is an isomorphism of $\mathscr{U}_0(X)$-modules.
\end{lemma}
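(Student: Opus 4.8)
The plan is to reduce the statement, by transitivity of the completed tensor product, to the coadmissibility of $\mathcal{M}(V)$ over $\wideparen{\mathscr{U}}(V)$. No flatness is needed here; in contrast to the compatibility (ii) of the previous section, which will require Theorem~\ref{fullcomplexstrict}, this lemma is essentially a bookkeeping statement about how the finitely generated pieces fit together.

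First I would record the base-change identity $\mathscr{U}_n(V)\cong\mathscr{U}_n(X)\,\widehat{\otimes}_A\,\mathcal{O}_X(V)$ for any affinoid $V\in X_n$. For $V$ an accessible subdomain of one of the $U_i$ this is immediate from the construction of $\mathscr{U}_n$ in the proof of Proposition~\ref{UisglFS} together with \cite[Proposition 2.3]{Ardakov1}. For a general affinoid $V\in X_n$, one covers $V$ by finitely many such subdomains; the associated \v{C}ech complex of $\mathcal{O}_X$ is a strict resolution of $\mathcal{O}_X(V)$ by Tate acyclicity, and applying $\mathscr{U}_n(X)\,\widehat{\otimes}_A(-)=\widehat{U(\pi^n\mathcal{L})}_K\,\widehat{\otimes}_A(-)$ --- which preserves strict exactness, since $U_{\mathcal{A}}(\pi^n\mathcal{L})$ is free over $\mathcal{A}$ by Rinehart's theorem so that \cite[Corollary 2.15]{Bodecompl} applies --- together with the sheaf property of $\mathscr{U}_n$ identifies $\mathscr{U}_n(V)$ with $\mathscr{U}_n(X)\,\widehat{\otimes}_A\,\mathcal{O}_X(V)$. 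Transitivity of $\widehat{\otimes}$ (see \cite[Proposition 2.1.7/4]{BGR}, \cite[Appendix B]{Bosch}) then yields a natural isomorphism
\[
\mathscr{U}_0(X)\,\widehat{\otimes}_{\mathscr{U}_1(X)}\,\mathscr{U}_1(V)\;\cong\;\mathscr{U}_0(X)\,\widehat{\otimes}_A\,\mathcal{O}_X(V)\;=\;\mathscr{U}_0(V),
\]
compatibly with the structural morphisms from $\mathscr{U}_1(V)$.

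With this in hand I would run the chain of natural isomorphisms
\begin{align*}
\mathscr{U}_0(X)\,\widehat{\otimes}_{\mathscr{U}_1(X)}\,\mathcal{M}_1(V)
&\;\cong\;\bigl(\mathscr{U}_0(X)\,\widehat{\otimes}_{\mathscr{U}_1(X)}\,\mathscr{U}_1(V)\bigr)\,\widehat{\otimes}_{\mathscr{U}_1(V)}\,\mathcal{M}_1(V)\\
&\;\cong\;\mathscr{U}_0(V)\,\widehat{\otimes}_{\mathscr{U}_1(V)}\,\mathcal{M}_1(V)\\
&\;\cong\;\mathscr{U}_0(V)\,\otimes_{\mathscr{U}_1(V)}\,\mathcal{M}_1(V)\\
&\;\cong\;\mathcal{M}_0(V),
\end{align*}
where the first isomorphism is again transitivity of $\widehat{\otimes}$, using that the $\mathscr{U}_1(X)$-action on $\mathcal{M}_1(V)$ factors through $\mathscr{U}_1(V)$ and that $\mathcal{M}_1(V)$, being finitely generated over the Noetherian Banach algebra $\mathscr{U}_1(V)$, is a Banach module, so $\mathscr{U}_1(V)\,\widehat{\otimes}_{\mathscr{U}_1(V)}\,\mathcal{M}_1(V)=\mathcal{M}_1(V)$; the second is the identity established above; the third is Lemma~\ref{removehat}, applied with $A=\mathscr{U}_1(V)$, $B=\mathscr{U}_0(V)$, $N=\mathcal{M}_1(V)$ and $M=\mathscr{U}_0(V)$ (a $(\mathscr{U}_0(V),\mathscr{U}_1(V))$-bimodule, finitely generated on the left); and the last is the defining property of coadmissibility. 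For this last step, $\mathcal{M}(V)$ is a coadmissible $\wideparen{\mathscr{U}}(V)=\varprojlim_n\mathscr{U}_n(V)$-module --- since $\mathcal{M}$ is coadmissible and $V$ is affinoid, by the Kiehl-type theorem \cite[Theorem 8.4]{Ardakov1}, \cite[Theorem 4.17]{Bodecompl} --- and $\mathcal{M}_n(V)=\mathscr{U}_n(V)\otimes_{\wideparen{\mathscr{U}}(V)}\mathcal{M}(V)$ is its $n$-th piece by Proposition~\ref{coadproperties}(i), so condition~(ii) of Definition~\ref{definecoad} with $n=0$ gives $\mathscr{U}_0(V)\otimes_{\mathscr{U}_1(V)}\mathcal{M}_1(V)\cong\mathcal{M}_0(V)$. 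Finally I would check that the composite of these isomorphisms is the map in the statement: each constituent is induced by the evident restriction or comparison maps, so this is a routine verification.

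The one genuinely non-formal input is the base-change identity $\mathscr{U}_n(V)\cong\mathscr{U}_n(X)\,\widehat{\otimes}_A\,\mathcal{O}_X(V)$ (and the analogous description of $\mathcal{M}_n(V)$) when $V$ is an arbitrary object of $X_n$ rather than an accessible subdomain of a single $U_i$; I expect this to be the step requiring the most care. It is worth noting, however, that in the only application of this lemma --- verifying the compatibility (ii) for global sections with respect to the covering $\mathfrak{V}=(V_i)$ --- the relevant $V$ are the finite intersections $V_{i_1\dots i_j}$, which are accessible subdomains by construction, so there the identity is immediate and only the formal part of the argument is needed.
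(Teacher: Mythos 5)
Your proposal is correct and runs essentially the same chain of isomorphisms as the paper's own proof, just in the opposite direction (you build up from $\mathscr{U}_0(X)\widehat{\otimes}_{\mathscr{U}_1(X)}\mathcal{M}_1(V)$ towards $\mathcal{M}_0(V)$, whereas the paper unwinds $\mathcal{M}_0(V)$ down to the tensor expression); the ingredients — the base-change identity $\mathscr{U}_n(V)\cong\mathscr{U}_n(X)\widehat{\otimes}_A\mathcal{O}_X(V)$, associativity of $\widehat{\otimes}$ via \cite[Proposition 2.1.7/6]{BGR}, Lemma~\ref{removehat}, and the coadmissibility compatibility $\mathscr{U}_0(V)\otimes_{\mathscr{U}_1(V)}\mathcal{M}_1(V)\cong\mathcal{M}_0(V)$ — are the same in both. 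The \v{C}ech argument you sketch to justify the base-change identity for an arbitrary affinoid $V\in X_n$ is supplementary: the paper simply uses the identity without comment, and, as your final paragraph correctly observes, the only instances in which the lemma is invoked are the accessible intersections $V_{i_1\dots i_j}$, for which the identity is immediate from the construction of $\mathscr{U}_n$ in the proof of Proposition~\ref{UisglFS}.
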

\begin{proof}
Using Lemma \ref{removehat} and associativity of the completed tensor product (see \cite[Proposition 2.1.7/6]{BGR}), we have the following chain of isomorphisms
\begin{align*}
\mathcal{M}_0(V)& \cong \mathscr{U}_0(V)\otimes_{\mathscr{U}_1(V)} \mathcal{M}_1(V)\\
&\cong\mathscr{U}_0(V)\widehat{\otimes}_{\mathscr{U}_1(V)} \mathcal{M}_1(V)\\
& \cong \left( \widehat{U(\mathcal{L})}_K\widehat{\otimes}_A \mathcal{O}_X(V)\right) \widehat{\otimes}_{\mathscr{U}_1(V)} \mathcal{M}_1(V)\\
&\cong \left(\widehat{U(\mathcal{L})}_K\widehat{\otimes}_{\widehat{U(\pi\mathcal{L})}_K} \widehat{U(\pi\mathcal{L})}_K \widehat{\otimes}_A \mathcal{O}_X(V)\right) \widehat{\otimes}_{\mathscr{U}_1(V)} \mathcal{M}_1(V)\\
& \cong \widehat{U(\mathcal{L})}_K\widehat{\otimes}_{\widehat{U(\pi\mathcal{L})}_K} \left(\left(\widehat{U(\pi\mathcal{L})}_K\widehat{\otimes}_A \mathcal{O}_X(V)\right) \widehat{\otimes}_{\mathscr{U}_1(V)} \mathcal{M}_1(V)\right) \\
&\cong\mathscr{U}_0(X)\widehat{\otimes}_{\mathscr{U}_1(X)}\left(\mathscr{U}_1(V)\widehat{\otimes}_{\mathscr{U}_1(V)} \mathcal{M}_1(V)\right)\\
&\cong \mathscr{U}_0(X)\widehat{\otimes}_{\mathscr{U}_1(X)} \mathcal{M}_1(V),
\end{align*}
as required.
\end{proof}

To continue in our proof of Theorem \ref{KiehlDXfree}, we therefore wish to show that
\begin{equation*}
\mathscr{U}_0(X)\widehat{\otimes}_{\mathscr{U}_1(X)} \check{\mathrm{H}}^j(\mathfrak{V}, \mathcal{M}_1) \cong \mathrm{H}^j(\mathscr{U}_0(X)\widehat{\otimes}_{\mathscr{U}_1(X)} \check{C}^{\bullet}(\mathfrak{V}, \mathcal{M}_1)).
\end{equation*}
This will be achieved by checking all the conditions in Corollary \ref{noethapplication}, where the role of $A^\circ$ is played by $U(\pi \mathcal{L})$, and that of $U^\circ$ by $U(\mathcal{L})$.

\begin{lemma}
\label{changeringnoeth}
$U(\mathcal{L})\otimes_{U(\pi\mathcal{L})} \widehat{U(\pi\mathcal{L})}$ carries a natural ring structure, making it a left and right Noetherian ring.
\end{lemma}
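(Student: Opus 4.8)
The plan is to filter $U(\mathcal{L})$ by finitely generated $U(\pi\mathcal{L})$-submodules, pass to $\widehat{U(\pi\mathcal{L})}$ by flat base change, and recognise the associated graded ring as a quotient of a polynomial ring in central variables over a Noetherian ring. Fix a free $\mathcal{A}$-basis $x_1,\dots,x_d$ of $\mathcal{L}$; by Rinehart's theorem \cite{Rinehart}, $U(\mathcal{L})$ is then $\mathcal{A}$-free on the monomials $x^\alpha$ ($\alpha\in\mathbb{N}^d$), and $U(\pi\mathcal{L})$ is Noetherian with associated graded $\Sym_{\mathcal{A}}(\pi\mathcal{L})$ for its order filtration, a polynomial ring over the Noetherian ring $\mathcal{A}$; consequently $\widehat{U(\pi\mathcal{L})}$ is Noetherian and flat over $U(\pi\mathcal{L})$, $\pi$-adic completion is exact on finitely generated $U(\pi\mathcal{L})$-modules, and there $\widehat{N}\cong N\otimes_{U(\pi\mathcal{L})}\widehat{U(\pi\mathcal{L})}$ (cf. \cite{Berthelot}, Lemma \ref{complsNB}). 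For $m\geq 0$ set $M_m:=\sum_{|\alpha|\leq m}U(\pi\mathcal{L})x^\alpha\subseteq U(\mathcal{L})$. Since $\mathrm{ad}(x_i)$ is a derivation of $U(\mathcal{L})$ carrying the generators $\mathcal{A}$, $\pi\mathcal{L}$ of $U(\pi\mathcal{L})$ back into $U(\pi\mathcal{L})$, it stabilises $U(\pi\mathcal{L})$, and from this one deduces $M_mM_{m'}\subseteq M_{m+m'}$; moreover $M_0=U(\pi\mathcal{L})$, $\bigcup_m M_m=U(\mathcal{L})$, and each $M_m$ is a finitely generated $U(\pi\mathcal{L})$-module.

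Next I would install the ring structure. Flatness gives $\widehat{M_m}\hookrightarrow\widehat{M_{m+1}}$ and $S:=U(\mathcal{L})\otimes_{U(\pi\mathcal{L})}\widehat{U(\pi\mathcal{L})}=\varinjlim_m\widehat{M_m}$ as an increasing union. The multiplication of $U(\mathcal{L})$ restricts to $U(\pi\mathcal{L})$-balanced bimodule maps $M_m\otimes_{U(\pi\mathcal{L})}M_{m'}\to M_{m+m'}$; completing them, using the identification $\widehat{M_m\otimes_{U(\pi\mathcal{L})}M_{m'}}\cong\widehat{M_m}\otimes_{\widehat{U(\pi\mathcal{L})}}\widehat{M_{m'}}$ (again by flatness), yields compatible, associative, unital pairings $\widehat{M_m}\otimes_{\widehat{U(\pi\mathcal{L})}}\widehat{M_{m'}}\to\widehat{M_{m+m'}}$ with unit $1\in\widehat{M_0}=\widehat{U(\pi\mathcal{L})}$. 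In the colimit these make $S$ a $K$-algebra, containing $\widehat{U(\pi\mathcal{L})}$ as a subring and carrying the exhaustive, positively indexed ring filtration $\{\widehat{M_m}\}_{m\geq 0}$ with $\widehat{M_m}\cdot\widehat{M_{m'}}\subseteq\widehat{M_{m+m'}}$; this is the natural ring structure referred to in Corollary \ref{noethapplication}.

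For Noetherianity I would pass to $\gr S:=\bigoplus_{m\geq 0}\widehat{M_m}/\widehat{M_{m-1}}$. Exactness of completion identifies $\widehat{M_m}/\widehat{M_{m-1}}\cong\widehat{M_m/M_{m-1}}$, whence an isomorphism of graded rings $\gr S\cong\gr^M U(\mathcal{L})\otimes_{U(\pi\mathcal{L})}\widehat{U(\pi\mathcal{L})}$, with $\gr^M U(\mathcal{L})$ the associated graded of the filtered ring $(U(\mathcal{L}),M_\bullet)$. The point is that $\gr^M U(\mathcal{L})$ is generated over its degree-zero part $U(\pi\mathcal{L})$ by the degree-one elements $\bar x_1,\dots,\bar x_d$, and that these are central: for $u\in U(\pi\mathcal{L})$ one has $x_iu-ux_i=\mathrm{ad}(x_i)(u)\in U(\pi\mathcal{L})=M_0$ while $x_iu\in M_1$, so the commutator vanishes in $M_1/M_0$, and $x_ix_j-x_jx_i=[x_i,x_j]\in\mathcal{L}\subseteq M_1$ while $x_ix_j\in M_2$, so it vanishes in $M_2/M_1$. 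Hence $\gr^M U(\mathcal{L})$ is a quotient of the polynomial ring $U(\pi\mathcal{L})[\bar x_1,\dots,\bar x_d]$ in $d$ central indeterminates, so $\gr S$ is a quotient of $\widehat{U(\pi\mathcal{L})}[\bar x_1,\dots,\bar x_d]$; the latter is two-sided Noetherian by the Hilbert basis theorem (central indeterminates over the Noetherian ring $\widehat{U(\pi\mathcal{L})}$), hence so is $\gr S$. Since a ring with an exhaustive filtration bounded below whose associated graded is left (resp. right) Noetherian is itself left (resp. right) Noetherian --- no completeness of the filtration being needed --- applying this to $\{\widehat{M_m}\}$ shows $S$ is left and right Noetherian.

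I expect the main obstacle to be the analysis of $\gr^M U(\mathcal{L})$: carefully verifying $M_mM_{m'}\subseteq M_{m+m'}$ and that $\mathrm{ad}(x_i)$ stabilises $U(\pi\mathcal{L})$, and then that the $\bar x_i$ are central generators. One can in fact pin $\gr^M U(\mathcal{L})$ down as $U(\pi\mathcal{L})[\bar x_1,\dots,\bar x_d]/(\pi\bar x_1,\dots,\pi\bar x_d)$, each $M_m/M_{m-1}$ being $U(\pi\mathcal{L})/\pi U(\pi\mathcal{L})$-free on $\{\bar x^\gamma:|\gamma|=m\}$ by Rinehart's theorem, but only the weaker statement is needed here. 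A secondary point requiring care is the bookkeeping behind the ring structure --- the identification $\widehat{M_m\otimes_{U(\pi\mathcal{L})}M_{m'}}\cong\widehat{M_m}\otimes_{\widehat{U(\pi\mathcal{L})}}\widehat{M_{m'}}$ and the compatibility of the resulting pairings with the inclusions $\widehat{M_m}\hookrightarrow\widehat{M_{m+1}}$.
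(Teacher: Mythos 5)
Your proof is correct, and at its core it runs the same argument as the paper: filter the ring so that $\widehat{U(\pi\mathcal{L})}$ sits in degree $0$ and the images of the generators $x_i$ in degree $1$, observe (via $[\mathcal{L},U(\pi\mathcal{L})]\subseteq U(\pi\mathcal{L})$) that the $\bar x_i$ are central in the associated graded, deduce that $\gr$ is a quotient of a polynomial ring over the Noetherian ring $\widehat{U(\pi\mathcal{L})}$, and then invoke the standard result that a ring with a positive exhaustive filtration with Noetherian associated graded is itself Noetherian (the paper cites \cite[Corollary D.IV.5]{Oyst}). The one genuine divergence is how the ring structure on $U(\mathcal{L})\otimes_{U(\pi\mathcal{L})}\widehat{U(\pi\mathcal{L})}$ is produced. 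The paper takes the shorter path: using freeness of $\mathcal{L}$ and flatness of $\widehat{U(\pi\mathcal{L})}$ over $U(\pi\mathcal{L})$, it embeds the tensor product into $\widehat{U(\pi\mathcal{L})}_K$, identifying it with the subset $U(\mathcal{L})\cdot\widehat{U(\pi\mathcal{L})}$, and then needs only the single commutator computation to recognise that subset as a subring --- associativity, unitality and so on come for free from the ambient algebra. You instead build the multiplication from scratch as a colimit of completed pairings $\widehat{M_m}\otimes_{\widehat{U(\pi\mathcal{L})}}\widehat{M_{m'}}\to\widehat{M_{m+m'}}$. This is perfectly sound (and your identification of $\gr^M U(\mathcal{L})$ as $U(\pi\mathcal{L})[\bar x]/(\pi\bar x)$ is a nice extra precision not needed for the proof), but it creates exactly the bookkeeping you flag at the end --- checking associativity and compatibility across the colimit --- all of which the paper's embedding trick makes automatic. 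So: same strategy, with the paper's realisation of the ring structure inside $\widehat{U(\pi\mathcal{L})}_K$ being the slicker of the two.
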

\begin{proof}
By freeness of $\mathcal{L}$, we have a natural injection $U_{\mathcal{A}}(\mathcal{L})\to U_A(L)$ (by Rinehart's Theorem, \cite[Theorem 3.1]{Rinehart}). Since $\widehat{U(\pi\mathcal{L})}$ is flat over $U(\pi\mathcal{L})$ by \cite[3.2.3.(iv)]{Berthelot}, we thus can view $U(\mathcal{L})\otimes_{U(\pi\mathcal{L})} \widehat{U(\pi\mathcal{L})}$ as a subset of
\begin{equation*}
U_A(L)\otimes_{U(\pi\mathcal{L})} \widehat{U(\pi\mathcal{L})}=U_A(L)\otimes_{U_A(L)} \widehat{U(\pi\mathcal{L})}_K=\widehat{U(\pi\mathcal{L})}_K,
\end{equation*}
identifying it with $U(\mathcal{L})\cdot \widehat{U(\pi\mathcal{L})}$. We will show that this is a Noetherian subring of $\widehat{U(\pi\mathcal{L})}_K$.\\
\\
Since $[\mathcal{L}, \pi\mathcal{L}]\subseteq \pi\mathcal{L}$, an easy inductive argument shows that $[\mathcal{L}, U(\pi\mathcal{L})]\subseteq U(\pi\mathcal{L})$, where the commutator is understood in $U_A(L)$.\\
Hence we have that for each $\partial\in \mathcal{L}$, the commutator map
\begin{equation*}
[\partial, -]: U_A(L)\to U_A(L)
\end{equation*}
preserves $U(\pi\mathcal{L})$, i.e. is a bounded linear map on $U_A(L)$ with unit ball $U(\pi\mathcal{L})$, where we can take $1$ as a bound.\\
Thus passing to the completion, $[\mathcal{L}, \widehat{U(\pi\mathcal{L})}]\subseteq \widehat{U(\pi\mathcal{L})}$, where the commutator is understood in $\widehat{U(\pi\mathcal{L})}_K$. Therefore
\begin{equation*}
U(\mathcal{L})\cdot \widehat{U(\pi\mathcal{L})}
\end{equation*}
is a subring of $\widehat{U(\pi\mathcal{L})}_K$ by another easy induction argument, as required.\\
Denote this ring by $\mathcal{E}$.\\
\\
Let $F_\bullet U(\mathcal{L})$ be the usual degree filtration on $U_{\mathcal{A}}(\mathcal{L})$. Then $\mathcal{E}$ is filtered by
\begin{equation*}
F'_i\mathcal{E}=F_iU(\mathcal{L})\cdot \widehat{U(\pi\mathcal{L})},
\end{equation*}
such that the following is satisfied:
\begin{enumerate}[(i)]
\item $F'_0\mathcal{E}=\widehat{U(\pi\mathcal{L})}$.
\item $F'_i\mathcal{E}\cdot F'_j\mathcal{E}\subseteq F'_{i+j}\mathcal{E}$, by reiterating the above commutator expression.
\end{enumerate}
Just as in the proof of \cite[Theorem 3.5]{Bodecompl}, the associated graded ring $\text{gr}'\mathcal{E}$ is generated by finitely many central elements over the zeroth piece $\widehat{U(\pi\mathcal{L})}$, which is Noetherian by Rinehart's Theorem and \cite[3.2.3.(vi)]{Berthelot}.\\
Thus $\mathcal{E}$ is a Noetherian ring by \cite[Corollary D.IV.5]{Oyst}.
\end{proof}

We thus have confirmed that the first condition in Corollary \ref{noethapplication} is satisfied. It remains to show that the relevant Tor groups have bounded $\pi$-torsion.\\
\\
Write $\mathcal{O}_X(V_{i_1\dots i_j})=B$, and let $\mathcal{B}=\mathcal{B}_{V_{i_1\dots i_j}}$ be an $\mathcal{L}$-stable affine formal model, as discussed in the previous section.\\
Denote by $U_{\mathcal{B}}$ the Noetherian ring $U_{\mathcal{B}}(\mathcal{B}\otimes_{\mathcal{A}}\pi \mathcal{L})$ and by $\widehat{U_{\mathcal{B}}}$ its $\pi$-adic completion. Note that this is the unit ball of $\mathscr{U}_1(V_{i_1\dots i_j})$.\\
Similarly to the above, we have the following lemma.
\begin{lemma}
\label{locringnoeth}
$U(\mathcal{L})\otimes_{U(\pi\mathcal{L})} \widehat{U_{\mathcal{B}}}$ carries a natural ring structure, making it a left and right Noetherian ring.
\end{lemma}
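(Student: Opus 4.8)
The plan is to reduce the statement to Lemma \ref{changeringnoeth}, but applied not to $(\mathcal{A},\mathcal{L})$ itself but to the affine formal model $\mathcal{B}$ of the affinoid algebra $B=\mathcal{O}_X(V_{i_1\dots i_j})$ together with the free $(R,\mathcal{B})$-Lie lattice $\mathcal{L}_V:=\mathcal{B}\otimes_{\mathcal{A}}\mathcal{L}$ inside $\mathscr{L}(V_{i_1\dots i_j})=B\otimes_A L$. Here $\mathcal{L}_V$ is a genuine Lie lattice because $\mathcal{B}$ is $\mathcal{L}$-stable, it is free over $\mathcal{B}$ by freeness of $\mathcal{L}$ over $\mathcal{A}$, and with this notation $U_{\mathcal{B}}=U_{\mathcal{B}}(\pi\mathcal{L}_V)$ and $\widehat{U_{\mathcal{B}}}=\widehat{U_{\mathcal{B}}(\pi\mathcal{L}_V)}$. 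The proof of Lemma \ref{changeringnoeth} only uses freeness of the Lie lattice over a Noetherian affine formal model, so it applies verbatim and shows that $U_{\mathcal{B}}(\mathcal{L}_V)\otimes_{U_{\mathcal{B}}(\pi\mathcal{L}_V)}\widehat{U_{\mathcal{B}}}$ carries a natural ring structure and is left and right Noetherian. It therefore suffices to produce a multiplicative isomorphism
\[
U(\mathcal{L})\otimes_{U(\pi\mathcal{L})}\widehat{U_{\mathcal{B}}}\ \cong\ U_{\mathcal{B}}(\mathcal{L}_V)\otimes_{U_{\mathcal{B}}(\pi\mathcal{L}_V)}\widehat{U_{\mathcal{B}}}
\]
and transport the ring structure and Noetherianity along it.

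Since the left $U(\pi\mathcal{L})$-action on $\widehat{U_{\mathcal{B}}}$ factors through the ring map $U(\pi\mathcal{L})\to U_{\mathcal{B}}(\pi\mathcal{L}_V)$, associativity of $\otimes$ reduces the above to showing that the natural map
\[
\phi\colon\ U(\mathcal{L})\otimes_{U(\pi\mathcal{L})}U_{\mathcal{B}}(\pi\mathcal{L}_V)\ \longrightarrow\ U_{\mathcal{B}}(\mathcal{L}_V),\qquad u\otimes v\longmapsto \overline{u}\,v,
\]
is an isomorphism of $\bigl(U(\mathcal{L}),\,U_{\mathcal{B}}(\pi\mathcal{L}_V)\bigr)$-bimodules; tensoring such an isomorphism with $\widehat{U_{\mathcal{B}}}$ over $U_{\mathcal{B}}(\pi\mathcal{L}_V)$ then gives what we want, and compatibility with multiplication is automatic from the construction. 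Surjectivity of $\phi$ is the easy half: its image is the subset $\overline{U(\mathcal{L})}\cdot U_{\mathcal{B}}(\pi\mathcal{L}_V)$ of $U_{\mathcal{B}}(\mathcal{L}_V)$, which contains $\mathcal{B}$ (the degree-zero part of $U_{\mathcal{B}}(\pi\mathcal{L}_V)$) and the image of $\mathcal{L}$, so a downward induction on the degree filtration — using that the anchor of $\mathcal{L}_V$ preserves $\mathcal{B}$ — shows it is all of $U_{\mathcal{B}}(\mathcal{L}_V)$.

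The heart of the matter, and the step I would spend the most care on, is injectivity of $\phi$. One cannot simply tensor an injection, since $U(\mathcal{L})$ is \emph{not} flat over $U(\pi\mathcal{L})$ (already for $\mathcal{L}$ of rank one there are nonvanishing higher $\mathrm{Tor}$ groups), so a priori the source of $\phi$ could carry $\pi$-torsion that one must rule out. Instead I would pass to associated graded rings for the degree filtrations, which are compatible under all the maps in play because $\pi\partial$ has degree $1$ both in $U(\pi\mathcal{L})$ and in $U(\mathcal{L})$. By Rinehart's theorem (\cite[Theorem 3.1]{Rinehart}) these graded rings are the symmetric algebras $\Sym_{\mathcal{A}}\mathcal{L}$, $\Sym_{\mathcal{A}}(\pi\mathcal{L})$, $\Sym_{\mathcal{B}}(\pi\mathcal{L}_V)$ and $\Sym_{\mathcal{B}}(\mathcal{L}_V)$, and a direct computation with a free basis $\partial_1,\dots,\partial_d$ of $\mathcal{L}$ over $\mathcal{A}$ identifies
\[
\gr U(\mathcal{L})\otimes_{\gr U(\pi\mathcal{L})}\gr U_{\mathcal{B}}(\pi\mathcal{L}_V)\ \cong\ \mathcal{B}[\partial_1,\dots,\partial_d]\ \cong\ \gr U_{\mathcal{B}}(\mathcal{L}_V).
\]
The key point is that the canonical surjection $\gr U(\mathcal{L})\otimes_{\gr U(\pi\mathcal{L})}\gr U_{\mathcal{B}}(\pi\mathcal{L}_V)\twoheadrightarrow \gr\bigl(U(\mathcal{L})\otimes_{U(\pi\mathcal{L})}U_{\mathcal{B}}(\pi\mathcal{L}_V)\bigr)$, followed by $\gr\phi$, is precisely this identification; since $\gr$ of a tensor product is in general only a \emph{quotient} of the tensor product of the graded pieces, it is exactly this factorisation of an isomorphism through $\gr\phi$ that forces both arrows, and in particular $\gr\phi$, to be isomorphisms.

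Finally, since the degree filtrations are exhaustive and bounded below, $\phi$ itself is an isomorphism, so that $U(\mathcal{L})\otimes_{U(\pi\mathcal{L})}\widehat{U_{\mathcal{B}}}$ is isomorphic — compatibly with multiplication — to $U_{\mathcal{B}}(\mathcal{L}_V)\otimes_{U_{\mathcal{B}}(\pi\mathcal{L}_V)}\widehat{U_{\mathcal{B}}}$, and the lemma follows from the instance of Lemma \ref{changeringnoeth} recorded at the start. I would expect the bookkeeping around $\pi$-torsion in $\mathcal{B}$ (replacing $\mathcal{B}$ by $\mathcal{B}/\pi\text{-tor}(\mathcal{B})$ where needed, which does not change $\widehat{U_{\mathcal{B}}}$ up to the relevant identification) to be a routine but necessary preliminary, and the graded comparison above to be the only genuinely non-formal ingredient.
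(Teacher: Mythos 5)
Your proof is correct, and it relies on the same underlying mechanism as the paper (the filtration-by-$\mathcal{L}$-degree Noetherianity argument of Lemma \ref{changeringnoeth}), but your treatment of the bimodule isomorphism $\phi$ is considerably more elaborate than the paper's. The paper obtains the identification $U(\mathcal{L})\otimes_{U(\pi\mathcal{L})}U_{\mathcal{B}}(\pi\mathcal{L}_V)\cong U_{\mathcal{B}}(\mathcal{L}_V)$ essentially in one line: since $\mathcal{B}$ is $\mathcal{L}$-stable and $\mathcal{L}$ is free over $\mathcal{A}$, base change for Rinehart enveloping algebras gives $U_{\mathcal{B}}(\pi\mathcal{L}_V)\cong U(\pi\mathcal{L})\otimes_{\mathcal{A}}\mathcal{B}$ as $(U(\pi\mathcal{L}),\mathcal{B})$-bimodules, so associativity of $\otimes$ alone yields
\[
U(\mathcal{L})\otimes_{U(\pi\mathcal{L})}U_{\mathcal{B}}(\pi\mathcal{L}_V)\cong U(\mathcal{L})\otimes_{U(\pi\mathcal{L})}\bigl(U(\pi\mathcal{L})\otimes_{\mathcal{A}}\mathcal{B}\bigr)\cong U(\mathcal{L})\otimes_{\mathcal{A}}\mathcal{B}\cong U_{\mathcal{B}}(\mathcal{L}_V),
\]
with no flatness of $U(\mathcal{L})$ over $U(\pi\mathcal{L})$ required and no graded comparison: the factor you tensor against cancels against the $\otimes_{U(\pi\mathcal{L})}U(\pi\mathcal{L})$ before any injectivity question can arise. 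The step you flag as the heart of the matter is therefore formal, and the associated-graded argument, while correct, substitutes a nontrivial-looking argument for a tautological one. The paper then uses flatness of $U(\mathcal{L})$ over $\mathcal{A}$ and of $\widehat{U_{\mathcal{B}}}$ over $U_{\mathcal{B}}$ to embed $U(\mathcal{L})\otimes_{U(\pi\mathcal{L})}\widehat{U_{\mathcal{B}}}$ as the subring $U(\mathcal{L})\cdot\widehat{U_{\mathcal{B}}}$ of $\widehat{U_{\mathcal{B}}}_K$, and re-runs the filtration argument of Lemma \ref{changeringnoeth} there rather than quoting the lemma verbatim for $(\mathcal{B},\mathcal{L}_V)$ as you do; this is a presentational difference only, and your reduction to the lemma is arguably cleaner. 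Finally, your worry about $\pi$-torsion in $\mathcal{B}$ is unfounded: $\mathcal{B}$ is an affine formal model, hence by definition the unit ball of a residue norm on $B$ and therefore $\pi$-torsion-free, so no replacement by $\mathcal{B}/\pi\mathrm{-tor}(\mathcal{B})$ is needed.
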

\begin{proof}
As before, we identify the tensor product with a certain subset of a $K$-algebra.\\
Since $U(\mathcal{L})$ is flat over $\mathcal{A}$, we have an injection 
\begin{equation*}
U(\mathcal{L})\otimes_{U(\pi\mathcal{L})} U(\pi\mathcal{L}) \otimes_{\mathcal{A}} \mathcal{B}=U(\mathcal{L})\otimes_{\mathcal{A}} \mathcal{B}\to U(\mathcal{L})\otimes_{\mathcal{A}} B=U_A(L)\otimes_A B.
\end{equation*}
As $U(\pi\mathcal{L})\otimes \mathcal{B}\cong U(\mathcal{B}\otimes \pi\mathcal{L})$, and $\widehat{U(\mathcal{B}\otimes\pi\mathcal{L})}$ is flat over $U(\mathcal{B}\otimes \pi\mathcal{L})$ by \cite[3.2.3.(iv)]{Berthelot}, this induces an injective map
\begin{equation*}
U(\mathcal{L})\otimes_{U(\pi\mathcal{L})} \widehat{U(\mathcal{B}\otimes \pi\mathcal{L})}\to U(B\otimes L)\otimes_{U(B\otimes L)} \widehat{U(\mathcal{B}\otimes \pi\mathcal{L})}_K,
\end{equation*}
and the right-hand side is clearly just $ \widehat{U(\mathcal{B}\otimes \pi\mathcal{L})}_K= \widehat{U_{\mathcal{B}}}\otimes K$. The map above identifies $U(\mathcal{L})\otimes \widehat{U_{\mathcal{B}}}$ with $U(\mathcal{L})\cdot \widehat{U_{\mathcal{B}}}$ in this algebra. \\
Since $\mathcal{B}$ is $\mathcal{L}$-stable, we can repeat the argument in Lemma \ref{changeringnoeth} to show this is a Noetherian subring of $\widehat{U(\mathcal{B}\otimes \pi\mathcal{L})}_K$.
\end{proof}

\begin{lemma}
\label{torgpsvanishn}
Let $N$ be a finitely generated $\widehat{U_{\mathcal{B}}}$-module.\\
Then the module $\Tor^{U(\pi\mathcal{L})}_s(U(\mathcal{L}), N)$ has bounded $\pi$-torsion for each $s\geq 0$.\\
Thus $\Tor^{U(\pi\mathcal{L})}_s(U(\mathcal{L}), \check{C}^j(\mathfrak{V}, \mathcal{M}_1)^\circ)$ has bounded $\pi$-torsion for each $s\geq 0$ and each $j$.
\end{lemma}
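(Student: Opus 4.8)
The plan is to reduce the $\Tor$-computation to finitely generated modules over the Noetherian ring $\mathcal{E}':=U(\mathcal{L})\otimes_{U(\pi\mathcal{L})}\widehat{U_{\mathcal{B}}}$ of Lemma \ref{locringnoeth}. First note that the second assertion follows from the first: one writes $\check{C}^j(\mathfrak{V},\mathcal{M}_1)^\circ=\bigoplus \mathcal{M}_1(V_{i_1\dots i_j})^\circ$, observes that each summand is the unit ball of a finitely generated Banach $\mathscr{U}_1(V_{i_1\dots i_j})$-module and hence a finitely generated $\widehat{U_{\mathcal{B}}}$-module by Lemma \ref{fgunitball}, and uses that $\Tor^{U(\pi\mathcal{L})}_s(U(\mathcal{L}),-)$ commutes with finite direct sums. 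So it is enough to show: if $N$ is a finitely generated $\widehat{U_{\mathcal{B}}}$-module, then $\Tor^{U(\pi\mathcal{L})}_s(U(\mathcal{L}),N)$ has bounded $\pi$-torsion for each $s$.

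The key step is the vanishing
\[
\Tor^{U(\pi\mathcal{L})}_s(U(\mathcal{L}),\widehat{U_{\mathcal{B}}})=0\qquad\text{for }s>0 .
\]
Since $\widehat{U_{\mathcal{B}}}$ is flat over $U_{\mathcal{B}}=U_{\mathcal{B}}(\mathcal{B}\otimes_{\mathcal{A}}\pi\mathcal{L})$ by \cite[3.2.3.(iv)]{Berthelot}, base change in the second argument identifies the left-hand side with $\Tor^{U(\pi\mathcal{L})}_s(U(\mathcal{L}),U_{\mathcal{B}})\otimes_{U_{\mathcal{B}}}\widehat{U_{\mathcal{B}}}$, so it suffices to kill $\Tor^{U(\pi\mathcal{L})}_s(U(\mathcal{L}),U_{\mathcal{B}})$ for $s>0$. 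Using \cite[Proposition 2.3]{Ardakov1} together with Rinehart's PBW theorem \cite[Theorem 3.1]{Rinehart}, $U_{\mathcal{B}}$ is, as a left $U(\pi\mathcal{L})$-module, the induced module $U(\pi\mathcal{L})\otimes_{\mathcal{A}}\mathcal{B}$; as $U(\pi\mathcal{L})$ is free, hence flat, as a right $\mathcal{A}$-module (again \cite[Theorem 3.1]{Rinehart}), the usual change-of-rings isomorphism gives $\Tor^{U(\pi\mathcal{L})}_s(U(\mathcal{L}),U_{\mathcal{B}})\cong \Tor^{\mathcal{A}}_s(U(\mathcal{L}),\mathcal{B})$, and this vanishes for $s>0$ because $U(\mathcal{L})=U_{\mathcal{A}}(\mathcal{L})$ is itself free, hence flat, as a right $\mathcal{A}$-module. (No analogous flatness of $\widehat{U_{\mathcal{B}}}$ over $\widehat{U(\pi\mathcal{L})}$ holds integrally; it is exactly the freeness of $U(\mathcal{L})$ over $\mathcal{A}$ that is being exploited.)

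Granting this, I would pick a resolution $Q_\bullet\to U(\mathcal{L})$ by free right $U(\pi\mathcal{L})$-modules; by the vanishing just established, $Q_\bullet\otimes_{U(\pi\mathcal{L})}\widehat{U_{\mathcal{B}}}$ is a resolution of $\mathcal{E}'=U(\mathcal{L})\otimes_{U(\pi\mathcal{L})}\widehat{U_{\mathcal{B}}}$ by free $\widehat{U_{\mathcal{B}}}$-modules, whence by associativity of the tensor product $\Tor^{U(\pi\mathcal{L})}_s(U(\mathcal{L}),N)\cong \Tor^{\widehat{U_{\mathcal{B}}}}_s(\mathcal{E}',N)$ for every left $\widehat{U_{\mathcal{B}}}$-module $N$. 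When $N$ is finitely generated over the Noetherian ring $\widehat{U_{\mathcal{B}}}$, resolving $N$ by finitely generated free $\widehat{U_{\mathcal{B}}}$-modules and applying $\mathcal{E}'\otimes_{\widehat{U_{\mathcal{B}}}}-$ produces a complex of finitely generated free $\mathcal{E}'$-modules; since $\mathcal{E}'$ is Noetherian by Lemma \ref{locringnoeth}, each homology module $\Tor^{\widehat{U_{\mathcal{B}}}}_s(\mathcal{E}',N)$ is a finitely generated $\mathcal{E}'$-module, and a finitely generated module over a Noetherian ring has bounded $\pi$-torsion.

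The main obstacle is the bookkeeping in the key step, specifically pinning down the left $U(\pi\mathcal{L})$-module structure on $U_{\mathcal{B}}$ as an induced module so that the change-of-rings isomorphism applies — this is where \cite[Proposition 2.3]{Ardakov1} and the precise form of Rinehart's theorem are needed, and some care with left/right module structures is required. Once that is in place the vanishing of $\Tor^{\mathcal{A}}_s(U(\mathcal{L}),\mathcal{B})$ is immediate, and everything else is formal given Lemmas \ref{fgunitball} and \ref{locringnoeth}.
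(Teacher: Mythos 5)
Your proof is correct, and while it shares the key ingredients with the paper's argument, it packages them in a genuinely different and somewhat cleaner way. Both proofs reduce the second assertion to the first via Lemma~\ref{fgunitball} and additivity of $\Tor$, and both rely on the vanishing $\Tor^{U(\pi\mathcal{L})}_s(U(\mathcal{L}),\widehat{U_{\mathcal{B}}})=0$ for $s\geq 1$ (established identically: flatness of $U(\mathcal{L})$ and $U(\pi\mathcal{L})$ over $\mathcal{A}$ gives vanishing of $\Tor^{U(\pi\mathcal{L})}_s(U(\mathcal{L}),U_{\mathcal{B}})$ by change of rings, and flatness of $\widehat{U_{\mathcal{B}}}$ over $U_{\mathcal{B}}$ by \cite[3.2.3.(iv)]{Berthelot} propagates this to the completion). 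Where you diverge from the paper is in what you do with this vanishing. The paper runs a dimension-shifting induction: it takes a presentation $0\to N'\to\widehat{U_{\mathcal{B}}}^{\oplus r}\to N\to 0$, observes from the long exact sequence that $T_s(N)\hookrightarrow T_{s-1}(N')$ for $s\geq 1$, and handles $s=0$ directly by Noetherianity of $\mathcal{E}'$ from Lemma~\ref{locringnoeth}. You instead upgrade the vanishing into the Cartan--Eilenberg change-of-rings isomorphism $\Tor^{U(\pi\mathcal{L})}_s(U(\mathcal{L}),N)\cong\Tor^{\widehat{U_{\mathcal{B}}}}_s(\mathcal{E}',N)$, then resolve $N$ by finitely generated free $\widehat{U_{\mathcal{B}}}$-modules and read off that the right-hand side is a finitely generated $\mathcal{E}'$-module. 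This route is more conceptual, avoids the induction, and yields the slightly stronger conclusion that each $\Tor$ group is a finitely generated $\mathcal{E}'$-module rather than merely having bounded $\pi$-torsion. The one step where you rightly flag care is needed is the identification of $U_{\mathcal{B}}$ with the induced module $U(\pi\mathcal{L})\otimes_{\mathcal{A}}\mathcal{B}$ as a left $U(\pi\mathcal{L})$-module; this holds because the multiplication map respects the order filtrations on both sides and induces an isomorphism $\Sym_{\mathcal{A}}(\pi\mathcal{L})\otimes_{\mathcal{A}}\mathcal{B}\cong\Sym_{\mathcal{B}}(\mathcal{B}\otimes_{\mathcal{A}}\pi\mathcal{L})$ on associated gradeds, and the paper uses the same identification implicitly.
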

\begin{proof}
We abbreviate the functor $\Tor^{U(\pi\mathcal{L})}_s(U(\mathcal{L}), -)$ to $T_s(-)$.\\
By Noetherianity, we have a short exact sequence
\begin{equation*}
0 \to N' \to \widehat{U_{\mathcal{B}}}^{\oplus r} \to N \to 0
\end{equation*}
for some integer $r$ and some finitely generated $\widehat{U_{\mathcal{B}}}$-module $N'$.\\
We will use this to prove the lemma inductively via the corresponding long exact sequence.\\
\\
For $s=0$, we have that 
\begin{equation*}
U(\mathcal{L})\otimes_{U(\pi\mathcal{L})} N=U(\mathcal{L})\otimes_{U(\pi\mathcal{L})} \widehat{U_\mathcal{B}} \otimes_{\widehat{U_\mathcal{B}}}N
\end{equation*}
is a finitely generated $U(\mathcal{L})\otimes \widehat{U_\mathcal{B}}$-module and hence has bounded $\pi$-torsion by Noetherianity (see Lemma \ref{locringnoeth}).\\
\\
Next, we show that 
\begin{equation*}
\Tor^{U(\pi\mathcal{L})}_s(U(\mathcal{L}), \widehat{U_{\mathcal{B}}})=0
\end{equation*}
for $s\geq 1$.\\
For this note that by flatness of $U(\mathcal{L})$ and $U(\pi\mathcal{L})$ over $\mathcal{A}$, we have
\begin{equation*}
0=\Tor^{\mathcal{A}}_s(U(\mathcal{L}), \mathcal{B})=\Tor^{U(\pi\mathcal{L})}_s(U(\mathcal{L}), U(\pi\mathcal{L})\otimes_{\mathcal{A}} \mathcal{B}),
\end{equation*}
using \cite[Proposition 3.2.9]{Weibel}. Therefore, $\Tor^{U(\pi\mathcal{L})}_s(U(\mathcal{L}), U_{\mathcal{B}})=0$.\\
As moreover $\widehat{U_{\mathcal{B}}}=\widehat{U(\mathcal{B}\otimes \pi\mathcal{L})}$ is flat over $U_{\mathcal{B}}$ by \cite[3.2.3.(iv)]{Berthelot}, we obtain
\begin{equation*}
\text{Tor}^{U(\pi\mathcal{L})}_s(U(\mathcal{L}), \widehat{U_{\mathcal{B}}})=0
\end{equation*}
for $s\geq 1$ by \cite[Corollary 3.2.10]{Weibel}.\\
\\
Thus, the long exact sequence
\begin{equation*}
\dots \to T_s(N')\to T_s\left(\widehat{U_{\mathcal{B}}}\right)^{\oplus r} \to T_s(N)\to T_{s-1}(N')\to \dots
\end{equation*}
shows that $T_s(N)\to T_{s-1}(N')$ is an injection for all $s$, and an isomorphism for $s\geq 2$.\\
So if we suppose that $T_{s-1}(N)$ has bounded $\pi$-torsion for any finitely generated $\widehat{U_{\mathcal{B}}}$-module $N$, this holds in particular for $N'$, proving that $T_s(N)$ has bounded $\pi$-torsion as well.\\
By induction, this finishes the proof of the first statement.\\
\\
Now $\mathcal{M}_1(V_{i_1\dots i_j})^\circ$ is a finitely generated $\widehat{U_{\mathcal{B}}}=\widehat{U(\mathcal{B}\otimes \pi\mathcal{L})}$-module by Lemma \ref{fgunitball}, so taking the corresponding finite direct sum to form the $j$th term of the \v{C}ech complex proves the result.
\end{proof}

\begin{theorem}
\label{matchupn}
The natural morphism
\begin{equation*}
\mathscr{U}_n(X)\otimes_{\mathscr{U}_{n+1}(X)} \check{\mathrm{H}}^j(\mathfrak{V}, \mathcal{M}_{n+1})\to \mathrm{H}^j(\mathscr{U}_n(X)\widehat{\otimes}_{\mathscr{U}_{n+1}(X)} \check{C}^\bullet(\mathfrak{V}, \mathcal{M}_{n+1}))
\end{equation*}
is an isomorphism of $\mathscr{U}_n(X)$-modules for each $n\geq 0$, $j\geq 0$.
\end{theorem}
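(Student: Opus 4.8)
As remarked already, replacing $\mathcal{L}$ by $\pi^n\mathcal{L}$ reduces us to the case $n=0$, so we must show that the natural map
\[
\mathscr{U}_0(X)\otimes_{\mathscr{U}_1(X)}\check{\mathrm{H}}^j(\mathfrak{V}, \mathcal{M}_1)\longrightarrow \mathrm{H}^j\bigl(\mathscr{U}_0(X)\widehat{\otimes}_{\mathscr{U}_1(X)}\check{C}^\bullet(\mathfrak{V}, \mathcal{M}_1)\bigr)
\]
is an isomorphism. The plan is to obtain this directly from Corollary \ref{noethapplication}, applied with $A^\circ=U_{\mathcal{A}}(\pi\mathcal{L})$ and $U^\circ=U_{\mathcal{A}}(\mathcal{L})$, where $A$ and $U$ both denote the ring $U_A(L)$, regarded as a normed $K$-algebra via the gauge norm with unit ball $A^\circ$, respectively $U^\circ$. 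By Lemma \ref{complsNB} we then have $\widehat{A}=\widehat{U(\pi\mathcal{L})}_K=\mathscr{U}_1(X)$ with $\widehat{A^\circ}=\widehat{U(\pi\mathcal{L})}$, and $\widehat{U}=\widehat{U(\mathcal{L})}_K=\mathscr{U}_0(X)$, while the complex to which the corollary is applied is $C^\bullet=\check{C}^\bullet(\mathfrak{V}, \mathcal{M}_1)$.

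I would then check the hypotheses of Corollary \ref{noethapplication} in turn. Both $U_{\mathcal{A}}(\pi\mathcal{L})$ and $U_{\mathcal{A}}(\mathcal{L})$ are left Noetherian by Rinehart's Theorem \cite[Theorem 3.1]{Rinehart}, using that $\mathcal{L}$, and hence $\pi\mathcal{L}$, is a free $\mathcal{A}$-module; the module structure $A^\circ\to U^\circ$ is the canonical injection $U_{\mathcal{A}}(\pi\mathcal{L})\hookrightarrow U_{\mathcal{A}}(\mathcal{L})$ of enveloping algebras (so that $U$, being free of rank one over $A$ as an abstract module, is in particular flat over $A$), and the required Noetherianity of $U^\circ\otimes_{A^\circ}\widehat{A^\circ}=U(\mathcal{L})\otimes_{U(\pi\mathcal{L})}\widehat{U(\pi\mathcal{L})}$ is precisely Lemma \ref{changeringnoeth}. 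The complex $C^\bullet=\check{C}^\bullet(\mathfrak{V}, \mathcal{M}_1)$ is finite (as $\mathfrak{V}$ is a finite covering) and consists of Banach $\mathscr{U}_1(X)$-modules, each $\mathcal{M}_1(V_{i_1\dots i_j})$ being finitely generated over $\mathscr{U}_1(V_{i_1\dots i_j})$ and hence carrying its canonical norm, and therefore of Banach $A$-modules via $A\to\widehat{A}=\mathscr{U}_1(X)$; its differentials are strict, as recorded after Lemma \ref{fgcohomDn}. Condition (i) of Corollary \ref{noethapplication}, that $\mathrm{H}^j(C^\bullet)=\check{\mathrm{H}}^j(\mathfrak{V}, \mathcal{M}_1)$ is finitely generated over $\widehat{A}=\mathscr{U}_1(X)$, is Lemma \ref{fgcohomDn}; condition (ii), that $\Tor^{A^\circ}_s(U^\circ, (C^j)^\circ)=\Tor^{U(\pi\mathcal{L})}_s(U(\mathcal{L}), \check{C}^j(\mathfrak{V}, \mathcal{M}_1)^\circ)$ has bounded $\pi$-torsion for all $s$, is the final assertion of Lemma \ref{torgpsvanishn}.

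Corollary \ref{noethapplication} then yields that $U\otimes_A C^\bullet$ consists of strict morphisms and that $\widehat{U}\otimes_{\widehat{A}}\mathrm{H}^j(C^\bullet)\xrightarrow{\ \sim\ }\mathrm{H}^j(U\widehat{\otimes}_A C^\bullet)$ for every $j$. It remains to rewrite both sides in the desired form: $\widehat{U}\otimes_{\widehat{A}}\mathrm{H}^j(C^\bullet)$ is literally $\mathscr{U}_0(X)\otimes_{\mathscr{U}_1(X)}\check{\mathrm{H}}^j(\mathfrak{V}, \mathcal{M}_1)$, and since the completed tensor product depends only on the completions of the two factors (\cite[Proposition 2.1.7/4]{BGR}), the isomorphism $U\widehat{\otimes}_A C^j\cong\mathscr{U}_0(X)\widehat{\otimes}_{\mathscr{U}_1(X)}\check{C}^j(\mathfrak{V}, \mathcal{M}_1)$ holds termwise and naturally in $j$, so that $\mathrm{H}^j(U\widehat{\otimes}_A C^\bullet)\cong\mathrm{H}^j(\mathscr{U}_0(X)\widehat{\otimes}_{\mathscr{U}_1(X)}\check{C}^\bullet(\mathfrak{V}, \mathcal{M}_1))$; tracing through these identifications shows that the resulting map is the one in the statement. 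All the genuine input — Noetherianity of the change-of-rings ring, finite generation of the \v{C}ech cohomology of $\mathcal{M}_1$, and boundedness of the relevant Tor groups — has already been isolated in Lemmas \ref{changeringnoeth}, \ref{fgcohomDn} and \ref{torgpsvanishn}, so the only real work here is the bookkeeping: choosing the correct non-complete normed models $A$ and $U$ with the prescribed unit balls and completions, and matching the four tensor products appearing in Corollary \ref{noethapplication} with the sheaf-theoretic expressions over $\mathscr{U}_0(X)$ and $\mathscr{U}_1(X)$. This last matching of tensor products is the step I would expect to require the most care.
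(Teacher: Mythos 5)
Your proof is correct and follows essentially the same route as the paper: reduce to $n=0$, take $A=U_A(L)$ with unit ball $U_{\mathcal{A}}(\pi\mathcal{L})$ and $U=U_A(L)$ with unit ball $U_{\mathcal{A}}(\mathcal{L})$, and feed the \v{C}ech complex $\check{C}^\bullet(\mathfrak{V},\mathcal{M}_1)$ into Corollary \ref{noethapplication} using Lemmas \ref{changeringnoeth}, \ref{fgcohomDn} and \ref{torgpsvanishn} for the three hypotheses, then convert via \cite[Proposition 2.1.7/4]{BGR}. You even spell out a couple of background checks (flatness of $U$ over $A$, Noetherianity of $U_{\mathcal{A}}(\mathcal{L})$ and $U_{\mathcal{A}}(\pi\mathcal{L})$) that the paper leaves implicit.
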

\begin{proof}
Without loss of generality, we can assume $n=0$.\\
Then the theorem is precisely Corollary \ref{noethapplication} applied to the \v{C}ech complex $\check{C}^\bullet(\mathfrak{V}, \mathcal{M}_1)$. This is a finite cochain complex in $\text{Ban}_{\mathscr{U}_1(X)}$ with strict morphisms by Corollary \ref{cficomplexgivesstrict}.\\
By Lemma \ref{changeringnoeth}, $U(\mathcal{L})\otimes_{U(\pi\mathcal{L})} \widehat{U(\pi\mathcal{L})}$ is a Noetherian ring, $\check{\mathrm{H}}^j(\mathfrak{V}, \mathcal{M}_1)$ is a finitely generated $\mathscr{U}_1(X)=\widehat{U(\pi\mathcal{L})}_K$-module by Theorem \ref{fgcohomDn}, and Lemma \ref{torgpsvanishn} ensures bounded $\pi$-torsion for each of the Tor groups. Thus Corollary \ref{noethapplication} states that
\begin{equation*}
U_A(L)'\otimes_{U_A(L)} \check{C}^\bullet(\mathfrak{V}, \mathcal{M}_1)
\end{equation*}
is a strict complex, where $U(L)$ is equipped with the norm with unit ball $U(\pi\mathcal{L})$ and we write $U(L)'$ for $U(L)$ with unit ball $U(\mathcal{L})$. \\
Thus Corollary \ref{noethapplication} together with \cite[Proposition 2.1.7/4]{BGR} implies that 
\begin{align*}
\mathscr{U}_0(X)\otimes_{\mathscr{U}_1(X)} \check{\mathrm{H}}^j(\mathfrak{V}, \mathcal{M}_1)&= \widehat{U(L)'}\otimes_{\widehat{U(L)}} \check{\mathrm{H}}^j(\mathfrak{V}, \mathcal{M}_1)\\
& \cong \mathrm{H}^j(U(L)'\widehat{\otimes}_{U(L)} \check{C}^\bullet(\mathfrak{V}, \mathcal{M}_1))\\
&\cong \mathrm{H}^j(\mathscr{U}_0(X)\widehat{\otimes}_{\mathscr{U}_1(X)} \check{C}^{\bullet}(\mathfrak{V}, \mathcal{M}_1)),
\end{align*}
proving the result.
\end{proof}

\begin{corollary}
\label{cohomcoad}
The $\wideparen{\mathscr{U}}(X)$-module $\varprojlim \check{\mathrm{H}}^j(\mathfrak{V}, \mathcal{M}_n)$ is coadmissible for each $j\geq 0$.
\end{corollary}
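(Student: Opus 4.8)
The plan is to verify directly the two defining properties of a coadmissible module (Definition \ref{definecoad}) for the inverse system $\bigl(\check{\mathrm{H}}^j(\mathfrak{V}, \mathcal{M}_n)\bigr)_n$, using the Fr\'echet--Stein presentation $\wideparen{\mathscr{U}}(X)=\varprojlim \mathscr{U}_n(X)$ with $\mathscr{U}_n(X)=\widehat{U(\pi^n\mathcal{L})}_K$. Property (i), that $\check{\mathrm{H}}^j(\mathfrak{V}, \mathcal{M}_n)$ is a finitely generated $\mathscr{U}_n(X)$-module, is exactly Lemma \ref{fgcohomDn}. So all the work lies in property (ii), namely that the natural morphism $\mathscr{U}_n(X)\otimes_{\mathscr{U}_{n+1}(X)}\check{\mathrm{H}}^j(\mathfrak{V}, \mathcal{M}_{n+1})\to \check{\mathrm{H}}^j(\mathfrak{V}, \mathcal{M}_n)$ is an isomorphism.

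After rescaling $\mathcal{L}$ by a power of $\pi$ it suffices to treat the transition from level $1$ to level $0$. First I would identify the complex $\mathscr{U}_0(X)\widehat{\otimes}_{\mathscr{U}_1(X)}\check{C}^\bullet(\mathfrak{V}, \mathcal{M}_1)$ with $\check{C}^\bullet(\mathfrak{V}, \mathcal{M}_0)$: every intersection $V_{i_0\dots i_j}$ lies in $X_0$ by the construction of the site, so Lemma \ref{sectionsbycompleteloc} supplies a natural isomorphism $\mathscr{U}_0(X)\widehat{\otimes}_{\mathscr{U}_1(X)}\mathcal{M}_1(V_{i_0\dots i_j})\cong \mathcal{M}_0(V_{i_0\dots i_j})$, and since $\widehat{\otimes}$ commutes with finite direct sums and with the \v{C}ech differentials these glue to an isomorphism of complexes in $\Ban_{\mathscr{U}_0(X)}$. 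Then I would invoke Theorem \ref{matchupn}, which states that the natural map
\begin{equation*}
\mathscr{U}_0(X)\otimes_{\mathscr{U}_1(X)}\check{\mathrm{H}}^j(\mathfrak{V}, \mathcal{M}_1)\longrightarrow \mathrm{H}^j\bigl(\mathscr{U}_0(X)\widehat{\otimes}_{\mathscr{U}_1(X)}\check{C}^\bullet(\mathfrak{V}, \mathcal{M}_1)\bigr)
\end{equation*}
is an isomorphism; composing with the cohomology of the identification above yields an isomorphism onto $\check{\mathrm{H}}^j(\mathfrak{V}, \mathcal{M}_0)$. Property (ii) follows once one checks that this composite is the canonical morphism, and then $\varprojlim \check{\mathrm{H}}^j(\mathfrak{V}, \mathcal{M}_n)$ is coadmissible by Definition \ref{definecoad}.

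The hard part will be precisely that last compatibility check: one must verify that the isomorphism of Theorem \ref{matchupn}, the termwise isomorphism of Lemma \ref{sectionsbycompleteloc}, and the transition maps of the system $\bigl(\check{\mathrm{H}}^j(\mathfrak{V}, \mathcal{M}_n)\bigr)_n$ are all induced by the single morphism of sheaves $\mathcal{M}_{n+1}|_{X_n}\to \mathcal{M}_n$, so that their composite really is the map in Definition \ref{definecoad}(ii). Concretely, on the $j$-th \v{C}ech term the transition map is the canonical localization $\mathcal{M}_1(V_{i_0\dots i_j})\to \mathcal{M}_0(V_{i_0\dots i_j})=\mathscr{U}_0(V_{i_0\dots i_j})\otimes_{\mathscr{U}_1(V_{i_0\dots i_j})}\mathcal{M}_1(V_{i_0\dots i_j})$, which factors through $\mathscr{U}_0(X)\widehat{\otimes}_{\mathscr{U}_1(X)}(-)$ compatibly with Lemma \ref{sectionsbycompleteloc}; this is a routine but somewhat tedious diagram chase. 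Granting it, both conditions of Definition \ref{definecoad} are satisfied, which proves the corollary.
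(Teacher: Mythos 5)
Your proposal is correct and follows exactly the paper's argument: property (i) is Lemma \ref{fgcohomDn}, and property (ii) comes from Theorem \ref{matchupn} together with the termwise identification $\mathscr{U}_n(X)\widehat{\otimes}_{\mathscr{U}_{n+1}(X)} \check{C}^\bullet(\mathfrak{V}, \mathcal{M}_{n+1}) = \check{C}^\bullet(\mathfrak{V}, \mathcal{M}_n)$ provided by Lemma \ref{sectionsbycompleteloc}. The compatibility check you flag as the remaining hard part is left implicit in the paper, so if anything you are being slightly more careful, but the route is identical.
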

\begin{proof}
Each module $\check{\mathrm{H}}^j(\mathfrak{V}, \mathcal{M}_n)$ is a finitely generated $\mathscr{U}_n(X)$-module by Theorem \ref{fgcohomDn}, and
\begin{equation*}
\mathscr{U}_n(X)\otimes_{\mathscr{U}_{n+1}(X)} \check{\mathrm{H}}^j(\mathfrak{V}, \mathcal{M}_{n+1})\cong \check{\mathrm{H}}^j(\mathfrak{V}, \mathcal{M}_n)
\end{equation*}
by the theorem above combined with the observation that
\begin{equation*}
\mathscr{U}_n(X)\widehat{\otimes}_{\mathscr{U}_{n+1}(X)} \check{C}^\bullet(\mathfrak{V}, \mathcal{M}_{n+1})= \check{C}^\bullet(\mathfrak{V}, \mathcal{M}_n)
\end{equation*}
by Lemma \ref{sectionsbycompleteloc}.
\end{proof}

Finally, fixing an integer $j$, we show that $\varprojlim \check{\mathrm{H}}^j(\mathfrak{V}, \mathcal{M}_n)$ gives indeed the corresponding higher direct image of $\mathcal{M}$.

\begin{proposition}
\label{projlimiso}
For each $j\geq 0$, the canonical morphism of $\wideparen{\mathscr{U}}(X)$-modules
\begin{equation*}
\mathrm{H}^j(X, \mathcal{M})\cong \varprojlim \check{\mathrm{H}}^j(\mathfrak{V}, \mathcal{M}_n)
\end{equation*}
is an isomorphism.
\end{proposition}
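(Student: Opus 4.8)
The plan is to realise $\mathrm{H}^j(X,\mathcal{M})$ as the cohomology of the inverse limit of the complexes $\check{C}^{\bullet}(\mathfrak{V},\mathcal{M}_n)$ and then to play the Mittag--Leffler properties supplied by coadmissibility against the $\varprojlim^{(1)}$-term. First I would note that $\mathfrak{V}$ is a Leray covering for $\mathcal{M}$: each finite intersection $V_{i_0\dots i_j}$ is an admissible open affinoid subspace of $X$, and $\mathcal{M}|_{V_{i_0\dots i_j}}$ is a coadmissible $\wideparen{\mathscr{U}}|_{V_{i_0\dots i_j}}$-module (restriction of a coadmissible module to an admissible affinoid is again coadmissible, by the $\wideparen{\mathscr{U}}$-module analogue of Kiehl's Theorem, \cite[Theorems 4.16, 4.17]{Bodecompl}), so $\mathrm{H}^q(V_{i_0\dots i_j},\mathcal{M})=0$ for $q>0$ and \cite[Tag 03F7]{Stacksproject} gives $\check{\mathrm{H}}^j(\mathfrak{V},\mathcal{M})\cong \mathrm{H}^j(X,\mathcal{M})$. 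It therefore suffices to prove $\check{\mathrm{H}}^j(\mathfrak{V},\mathcal{M})\cong \varprojlim_n \check{\mathrm{H}}^j(\mathfrak{V},\mathcal{M}_n)$.

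Next I would pass to inverse limits of complexes. Since $\varprojlim_n \mathcal{M}_n(U)\cong \mathcal{M}(U)$ for every admissible open $U\subseteq X$ and finite products commute with inverse limits, $\check{C}^{\bullet}(\mathfrak{V},\mathcal{M})\cong\varprojlim_n \check{C}^{\bullet}(\mathfrak{V},\mathcal{M}_n)$ as cochain complexes. For each multi-index the $\wideparen{\mathscr{U}}(V_{i_0\dots i_j})$-module $\mathcal{M}(V_{i_0\dots i_j})$ is coadmissible with $n$-th constituent $\mathcal{M}_n(V_{i_0\dots i_j})=\mathscr{U}_n(V_{i_0\dots i_j})\otimes_{\wideparen{\mathscr{U}}(V_{i_0\dots i_j})}\mathcal{M}(V_{i_0\dots i_j})$, so by Proposition \ref{coadproperties}.(ii) the tower $(\mathcal{M}_n(V_{i_0\dots i_j}))_n$ is Mittag--Leffler in the sense of \cite[0.13.2.4]{EGA}; taking the relevant finite direct products, each tower $(\check{C}^j(\mathfrak{V},\mathcal{M}_n))_n$ is Mittag--Leffler, whence $\varprojlim^{(1)}\check{C}^j(\mathfrak{V},\mathcal{M}_n)=0$ for all $j$. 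The standard $\varprojlim^{(1)}$-sequence for an inverse system of complexes with levelwise vanishing $\varprojlim^{(1)}$ (see \cite[\S 3.5]{Weibel}) then yields, for each $j$, a short exact sequence
\begin{equation*}
0\longrightarrow \varprojlim^{(1)}\check{\mathrm{H}}^{j-1}(\mathfrak{V},\mathcal{M}_n)\longrightarrow \check{\mathrm{H}}^j(\mathfrak{V},\mathcal{M})\longrightarrow \varprojlim_n \check{\mathrm{H}}^j(\mathfrak{V},\mathcal{M}_n)\longrightarrow 0 .
\end{equation*}

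To finish I would kill the $\varprojlim^{(1)}$-term: by Corollary \ref{cohomcoad} the tower $(\check{\mathrm{H}}^{j-1}(\mathfrak{V},\mathcal{M}_n))_n$, with its natural transition maps, is the defining system of the coadmissible $\wideparen{\mathscr{U}}(X)$-module $\varprojlim_n\check{\mathrm{H}}^{j-1}(\mathfrak{V},\mathcal{M}_n)$, so Proposition \ref{coadproperties}.(ii) again forces $\varprojlim^{(1)}\check{\mathrm{H}}^{j-1}(\mathfrak{V},\mathcal{M}_n)=0$. Plugging this into the displayed sequence gives $\check{\mathrm{H}}^j(\mathfrak{V},\mathcal{M})\cong \varprojlim_n\check{\mathrm{H}}^j(\mathfrak{V},\mathcal{M}_n)$, and combining with the first paragraph proves the proposition.

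I expect the main obstacle to be purely a matter of bookkeeping rather than of substance: one must make sure that the two towers to which Proposition \ref{coadproperties}.(ii) is applied --- the levelwise terms $\check{C}^j(\mathfrak{V},\mathcal{M}_n)$ and their cohomology $\check{\mathrm{H}}^j(\mathfrak{V},\mathcal{M}_n)$ --- really are the constituent systems of honest coadmissible modules carrying the correct transition maps. This is exactly what the identifications already in place deliver: the sheaf equality $\varprojlim\mathcal{M}_n=\mathcal{M}$ together with $\mathcal{M}_n(V)=\mathscr{U}_n(V)\otimes_{\wideparen{\mathscr{U}}(V)}\mathcal{M}(V)$ handle the first tower, and Corollary \ref{cohomcoad} handles the second; beyond this the argument is formal.
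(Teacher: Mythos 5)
Your proposal is correct and follows essentially the same route as the paper: both arguments reduce to $\check{\mathrm{H}}^j(\mathfrak{V},\mathcal{M})$ via the Leray/Cartan criterion, then commute cohomology with the inverse limit using the Mittag--Leffler property of the towers $(\check{C}^j(\mathfrak{V},\mathcal{M}_n))_n$ (from Proposition \ref{coadproperties}.(ii)) and $(\check{\mathrm{H}}^j(\mathfrak{V},\mathcal{M}_n))_n$ (from Corollary \ref{cohomcoad}). The paper cites \cite[Proposition 0.13.2.3]{EGA} for this step in one stroke, whereas you unpack it through the $\varprojlim^{(1)}$-short exact sequence from \cite[\S 3.5]{Weibel} and then kill the $\varprojlim^{(1)}$-term; these are the same argument.
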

\begin{proof}
By Proposition \ref{coadproperties}.(ii), each system $(\check{C}^j(\mathfrak{V}, \mathcal{M}_n))_n$ satisfies the Mittag-Leffler property as described in \cite[0.13.2.4]{EGA}, and by Corollary \ref{cohomcoad}, so does the inverse system $(\check{\mathrm{H}}^j(\mathfrak{V}, \mathcal{M}_n))_n$. Hence we can apply \cite[Proposition 0.13.2.3]{EGA} to deduce that
\begin{equation*}
\check{\mathrm{H}}^j(\mathfrak{V}, \mathcal{M})=\mathrm{H}^j(\varprojlim \check{C}^\bullet(\mathfrak{V}, \mathcal{M}_n))\cong \varprojlim \check{\mathrm{H}}^j(\mathfrak{V}, \mathcal{M}_n).
\end{equation*}
Since $\mathcal{M}$ also has vanishing higher \v{C}ech cohomology on affinoids by the comment following \cite[Theorem 4.16]{Bodecompl}, we have $\mathrm{H}^j(X, \mathcal{M})\cong \check{\mathrm{H}}^j(\mathfrak{V}, \mathcal{M})$ by \cite[Tag 03F7]{Stacksproject}, and the result follows.
\end{proof}

This concludes the proof that $\mathrm{R}^jf_*\mathcal{M}(Y)=\mathrm{H}^j(X, \mathcal{M})$ is a coadmissible $\wideparen{\mathscr{U}}(X)$-module.

\section{Localization}
It remains to show that other sections of the sheaf are obtained by localization, that is if $U=\Sp B\subseteq Y$ is an affinoid subdomain, we want to show that
\begin{equation*}
\wideparen{\mathscr{U}}(f^{-1}U)\wideparen{\otimes}_{\wideparen{\mathscr{U}}(X)} \mathrm{H}^j(X, \mathcal{M}):=\varprojlim \left(\mathscr{U}_n(f^{-1}U)\otimes_{\mathscr{U}_n(X)} \mathrm{H}^j(X, \mathcal{M}_n)\right)\cong \mathrm{H}^j(f^{-1}U, \mathcal{M})
\end{equation*}
via the natural morphism.\\
Similarly to the above, our strategy will consist in a reduction to the Noetherian components of the coadmissible module and an argument involving completed tensor products similar to Theorem \ref{matchupn}.\\
\\
Recall from Lemma \ref{pushforwalgebroid} that $f_*\mathscr{L}$ is a Lie algebroid on $Y$ with $f_*\mathscr{L}(Y)=L$. In particular, we can talk about the $\pi^n\mathcal{L}$-\textbf{accessible} affinoid subdomains of $Y$, as introduced in Definition \ref{rationalac}.\\
\\
Our plan looks as follows.
\begin{itemize}
\item Step A: Consider the case where $U$ is a rational subdomain which is $\pi^n\mathcal{L}$-accessible in one step. As $B=\mathcal{O}_Y(U)$ can be described as a quotient of $A\langle t\rangle$, we will establish some properties relating to $A\langle t\rangle$ before passing to the quotient via some homological algebra.
\item Step B: An easy inductive argument extends the result to any $\pi^n\mathcal{L}$-accessible rational subdomain.
\item Step C: Passing to suitable coverings and arguing locally, we can generalize to arbitrary $\pi^n\mathcal{L}$-accessible affinoid subdomains. Since any affinoid subdomain is $\pi^n\mathcal{L}$-accessible for sufficiently large $n$ (\cite[Proposition 7.6]{Ardakov1}), this finishes the proof. 
\end{itemize}

\subsection*{Step A}
Let $x\in A$ be non-zero such that $\pi^n\mathcal{L}\cdot x\subseteq \mathcal{A}$, and consider 
\begin{equation*}
Y_1=Y(x)=\Sp B_1, \ Y_2=Y(x^{-1})=\Sp B_2.
\end{equation*} 
We adopt the notation from \cite{Ardakov1}. Let $a$ be a positive integer satisfying $\pi^ax\in \mathcal{A}$, and define 
\begin{equation*}
u_1=\pi^ax-\pi^at\in \mathcal{A}\langle t\rangle, \ u_2=\pi^axt-\pi^a\in \mathcal{A}\langle t\rangle.
\end{equation*}
Then we have short exact sequences
\begin{equation*}
\begin{xy}
\xymatrix{
0 \ar[r]& A\langle t\rangle \ar[r]^{u_i\cdot} &  A\langle t \rangle \ar[r]^{\rho} &  B_i \ar[r] & 0
}
\end{xy}
\end{equation*}
for $i=1, 2$ by \cite[Lemma 4.1]{Ardakov1}.\\
We define $\mathcal{B}_i=\mathcal{A}\langle t\rangle/u_i\mathcal{A}\langle t \rangle$. Then $\overline{\mathcal{B}_i}=\mathcal{B}_i/\pi\mathrm{-tor}(\mathcal{B}_i)=\rho(\mathcal{A}\langle t\rangle)$ is a $\pi^n\mathcal{L}$-stable affine formal model in $B_i$, see \cite[Lemma 4.3]{Ardakov1}.\\
\\
By the definition of $\wideparen{\otimes}$ and Proposition \ref{projlimiso}, it will be enough to show that the natural morphism
\begin{equation*}
\mathscr{U}_n(f^{-1}Y_i)\otimes_{\mathscr{U}_n(X)} \check{\mathrm{H}}^j(\mathfrak{V}, \mathcal{M}_n)\to \check{\mathrm{H}}^j(\mathfrak{V}\cap f^{-1}Y_i, \mathcal{M}_n)
\end{equation*}
is an isomorphism for $i=1, 2$.\\
\\
First recall that by Kiehl's Proper Mapping Theorem, $\mathcal{O}_X(f^{-1}Y_i)=B_i$, and
\begin{equation*}
\mathscr{U}_n(f^{-1}Y_i)=\widehat{U(\overline{\mathcal{B}_i}\otimes \pi^n\mathcal{L})}_K\cong B_i\widehat{\otimes}_A \mathscr{U}_n(X)
\end{equation*}
by Corollary \ref{pushffsalg}. \\
\\
Note that for any admissible open affinoid subspace $V\subseteq X$, we have 
\begin{equation*}
f^{-1}Y_i\cap V=\Sp B_i\times_{\Sp A} \Sp \mathcal{O}_X(V)=\Sp \left(B_i\widehat{\otimes}_A \mathcal{O}_X(V)\right)
\end{equation*}
by \cite[Proposition 7.1.4/4]{BGR}, and hence
\begin{align*}
\mathcal{M}_n(f^{-1}Y_i\cap V)&=\mathscr{U}_n(f^{-1}Y_i\cap V)\otimes_{\mathscr{U}_n(V)} \mathcal{M}_n(V)\\
&= \mathscr{U}_n(f^{-1}Y_i\cap V)\widehat{\otimes}_{\mathscr{U}_n(V)} \mathcal{M}_n(V)\\
&=\left((B_i\widehat{\otimes}_A \mathcal{O}_X(V))\widehat{\otimes}_{\mathcal{O}_X(V)} \mathscr{U}_n(V)\right)\widehat{\otimes}_{\mathscr{U}_n(V)} \mathcal{M}_n(V)\\
&=(B_i\widehat{\otimes}_A\mathcal{O}_X(V))\widehat{\otimes}_{\mathcal{O}_X(V)} \mathcal{M}_n(V)\\
&=B_i\widehat{\otimes}_A \mathcal{M}_n(V).
\end{align*} 
Thus
\begin{equation*}
\check{C}^\bullet(\mathfrak{V}\cap f^{-1}Y_i, \mathcal{M}_n)\cong B_i\widehat{\otimes}_A \check{C}^\bullet(\mathfrak{V}, \mathcal{M}_n),
\end{equation*}
which in turn can be written as $\mathscr{U}_n(f^{-1}Y_i)\widehat{\otimes}_{\mathscr{U}_n(X)} \check{C}^\bullet(\mathfrak{V}, \mathcal{M}_n)$ by the above.\\
\\
We thus wish to show that
\begin{equation*}
\mathscr{U}_n(f^{-1}Y_i)\widehat{\otimes}_{\mathscr{U}_n(X)} \check{\mathrm{H}}^j(\mathfrak{V}, \mathcal{M}_n)\cong \mathrm{H}^j(\mathscr{U}_n(f^{-1}Y_i)\widehat{\otimes}_{\mathscr{U}_n(X)} \check{C}^\bullet(\mathfrak{V}, \mathcal{M}_n)).
\end{equation*}
We will prove this isomorphism by a number of lemmas, mainly exploiting the short exact sequence
\begin{equation*}
0\to A\langle t\rangle \to A\langle t\rangle \to B_i\to 0
\end{equation*}
and our study of completed tensor products.\\
\\
Recall from \cite[Proposition 4.2]{Ardakov1} that the $\pi^n\mathcal{L}$-action on $\mathcal{A}$ lifts to an action $\sigma_i: \pi^n\mathcal{L}\to \Der_R(\mathcal{A}\langle t\rangle)$, turning $\mathcal{A}\langle t\rangle\otimes_{\mathcal{A}} \pi^n\mathcal{L}$ into an $(R, \mathcal{A}\langle t\rangle)$-Lie algebra.
We will write
\begin{equation*}
\mathscr{U}_n(X)\langle t\rangle_i=\widehat{U(\mathcal{A}\langle t\rangle \otimes_{\mathcal{A}} \pi^n\mathcal{L})}_K
\end{equation*}
for the corresponding completed enveloping algebra. 
\begin{lemma}
\label{tversionloc}
The natural map
\begin{equation*}
\mathscr{U}_n(X)\langle t\rangle_i \otimes_{\mathscr{U}_n(X)} \check{\mathrm{H}}^j(\mathfrak{V}, \mathcal{M}_n)\to \mathrm{H}^j(\mathscr{U}_n(X)\langle t\rangle_i \widehat{\otimes}_{\mathscr{U}_n(X)} \check{C}^\bullet(\mathfrak{V}, \mathcal{M}_n))
\end{equation*}
is an isomorphism of left $\mathscr{U}_n(X)\langle t\rangle_i$-modules for each $j\geq 0$.
\end{lemma}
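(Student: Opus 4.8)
The plan is to prove this lemma by closely following the proof of Theorem~\ref{matchupn}, replacing the pair $\bigl(U(\pi\mathcal{L}),\ U(\mathcal{L})\bigr)$ there by $\bigl(U(\pi^n\mathcal{L}),\ U(\mathcal{A}\langle t\rangle\otimes_{\mathcal{A}}\pi^n\mathcal{L})\bigr)$. Precisely, I would apply Corollary~\ref{noethapplication} to the finite cochain complex $\check{C}^{\bullet}(\mathfrak{V},\mathcal{M}_n)$, which lies in $\Ban_{\mathscr{U}_n(X)}$ and is strict by Corollary~\ref{cficomplexgivesstrict}, taking $A=U(\pi^n\mathcal{L})_K$ with unit ball $A^{\circ}=U(\pi^n\mathcal{L})$ (so $\widehat{A}=\mathscr{U}_n(X)$) and $U=U(\mathcal{A}\langle t\rangle\otimes_{\mathcal{A}}\pi^n\mathcal{L})_K$ with unit ball $U^{\circ}=U(\mathcal{A}\langle t\rangle\otimes_{\mathcal{A}}\pi^n\mathcal{L})$ (so $\widehat{U}=\mathscr{U}_n(X)\langle t\rangle_i$). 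The ring homomorphism $A^{\circ}\to U^{\circ}$ is the one induced on enveloping algebras by the morphism of $(R,\mathcal{A})$- to $(R,\mathcal{A}\langle t\rangle)$-Lie algebras attached to the lifted action $\sigma_i$ of \cite[Proposition~4.2]{Ardakov1}. Since the terms of $\check{C}^{\bullet}(\mathfrak{V},\mathcal{M}_n)$ are already complete, \cite[Proposition~2.1.7/4]{BGR} identifies $\widehat{U}\widehat{\otimes}_A\check{C}^{\bullet}(\mathfrak{V},\mathcal{M}_n)$ with $\mathscr{U}_n(X)\langle t\rangle_i\widehat{\otimes}_{\mathscr{U}_n(X)}\check{C}^{\bullet}(\mathfrak{V},\mathcal{M}_n)$, exactly as in Theorem~\ref{matchupn}, so that the canonical isomorphism delivered by Corollary~\ref{noethapplication} is precisely the one asserted here; along the way one also obtains that $U\otimes_A\check{C}^{\bullet}(\mathfrak{V},\mathcal{M}_n)$ is a strict complex.

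It then remains to verify the hypotheses of Corollary~\ref{noethapplication}. Finite generation of $\check{\mathrm{H}}^j(\mathfrak{V},\mathcal{M}_n)$ over $\mathscr{U}_n(X)=\widehat{A}$ is Lemma~\ref{fgcohomDn} (applied with $\mathcal{L}$ replaced by $\pi^n\mathcal{L}$), and left Noetherianity of $A^{\circ}$ and $U^{\circ}$ is Rinehart's Theorem. For the ring $U^{\circ}\otimes_{A^{\circ}}\widehat{A^{\circ}}=U(\mathcal{A}\langle t\rangle\otimes\pi^n\mathcal{L})\otimes_{U(\pi^n\mathcal{L})}\widehat{U(\pi^n\mathcal{L})}$ I would copy the proof of Lemma~\ref{changeringnoeth}: using the base-change identification $U(\mathcal{A}\langle t\rangle\otimes_{\mathcal{A}}\pi^n\mathcal{L})\cong\mathcal{A}\langle t\rangle\otimes_{\mathcal{A}}U(\pi^n\mathcal{L})$ and flatness of $\widehat{U(\pi^n\mathcal{L})}$ over $U(\pi^n\mathcal{L})$ (\cite[3.2.3.(iv)]{Berthelot}), realise this tensor product as the subring $U(\mathcal{A}\langle t\rangle\otimes\pi^n\mathcal{L})\cdot\widehat{U(\pi^n\mathcal{L})}$ of $\mathscr{U}_n(X)\langle t\rangle_i^{\circ}$, filter it by the PBW degree filtration, observe that its zeroth piece is $\mathcal{A}\langle t\rangle\cdot\widehat{U(\pi^n\mathcal{L})}$ — a skew–Tate extension $\widehat{U(\pi^n\mathcal{L})}\langle t;\delta\rangle$ of the Noetherian ring $\widehat{U(\pi^n\mathcal{L})}$, Noetherian by the reduction-mod-$\pi$ argument of \cite[Theorem~3.5]{Bodecompl} — and that the associated graded is generated over this zeroth piece by finitely many central elements (the symbols of an $\mathcal{A}$-basis of $\pi^n\mathcal{L}$); the filtered Noetherian criterion \cite[Corollary~D.IV.5]{Oyst} then applies.

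For the bounded-torsion condition, writing $T_s(-)=\Tor^{U(\pi^n\mathcal{L})}_s\bigl(U(\mathcal{A}\langle t\rangle\otimes\pi^n\mathcal{L}),-\bigr)$, I would run the d\'evissage of Lemma~\ref{torgpsvanishn} verbatim. The single new input is that $U(\mathcal{A}\langle t\rangle\otimes_{\mathcal{A}}\pi^n\mathcal{L})$ is flat over $\mathcal{A}$ — by PBW it is $\mathcal{A}\langle t\rangle\otimes_{\mathcal{A}}(\text{free }\mathcal{A}\text{-module})$ and $\mathcal{A}\langle t\rangle$ is $\pi$-adically complete, hence flat, over the Noetherian ring $\mathcal{A}$ — whence $\Tor^{\mathcal{A}}_s\bigl(U(\mathcal{A}\langle t\rangle\otimes\pi^n\mathcal{L}),\mathcal{B}\bigr)=0$ for $s\geq1$ and, via \cite[Proposition~3.2.9, Corollary~3.2.10]{Weibel} together with flatness of $\widehat{U_{\mathcal{B}}}$ over $U_{\mathcal{B}}=U(\pi^n\mathcal{L})\otimes_{\mathcal{A}}\mathcal{B}\cong U(\mathcal{B}\otimes_{\mathcal{A}}\pi^n\mathcal{L})$, also $T_s(\widehat{U_{\mathcal{B}}})=0$ for $s\geq1$. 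As $\check{C}^j(\mathfrak{V},\mathcal{M}_n)^{\circ}$ is a finite direct sum of the $\mathcal{M}_n(V_{i_1\dots i_j})^{\circ}$, each finitely generated over the relevant $\widehat{U_{\mathcal{B}}}$ by Lemma~\ref{fgunitball}, the base case $T_0$ of the d\'evissage is finitely generated over $U(\mathcal{A}\langle t\rangle\otimes\pi^n\mathcal{L})\otimes_{U(\pi^n\mathcal{L})}\widehat{U_{\mathcal{B}}}$, which is left Noetherian by the analogue of Lemma~\ref{locringnoeth} (proved exactly as in the previous paragraph, with $\mathcal{B}$ in place of $\mathcal{A}$), hence has bounded $\pi$-torsion; the inductive step uses the long exact $\Tor$-sequence attached to a finite presentation of each module. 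Together with the previous paragraph this verifies all the hypotheses of Corollary~\ref{noethapplication}, and the lemma follows.

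I expect no new conceptual obstacle: the substance is exactly the homological machinery of Section~4. The only genuine labour is bookkeeping — keeping track of the correct left/right sides in the PBW and base-change isomorphisms for the noncommutative algebra $U(\mathcal{A}\langle t\rangle\otimes_{\mathcal{A}}\pi^n\mathcal{L})$, and checking that the Noetherianity arguments of Lemmas~\ref{changeringnoeth} and~\ref{locringnoeth} survive the replacement of a localization by the Tate variable $t$ (this is where the skew–Tate extension $\widehat{U(\pi^n\mathcal{L})}\langle t;\delta\rangle$ appears and has to be shown Noetherian). The mildest subtlety is that $\mathcal{A}\langle t\rangle$ is merely flat, not free, over $\mathcal{A}$, so the d\'evissage is phrased through flatness rather than freeness; but that is already the shape of the argument in Lemma~\ref{torgpsvanishn}.
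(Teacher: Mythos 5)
Your plan reproduces the Noetherian machinery of Theorem~\ref{matchupn} (Lemmas~\ref{changeringnoeth}--\ref{torgpsvanishn}) for the pair $\bigl(U(\pi^n\mathcal{L}),\,U(\mathcal{A}\langle t\rangle\otimes_{\mathcal{A}}\pi^n\mathcal{L})\bigr)$, but there is a genuine gap at exactly the point you flagged as the ``only genuine labour.'' You claim the zeroth piece of the PBW filtration on $U^\circ\otimes_{A^\circ}\widehat{A^\circ}$ is $\mathcal{A}\langle t\rangle\cdot\widehat{U(\pi^n\mathcal{L})}$ \emph{and} that this is a skew--Tate extension $\widehat{U(\pi^n\mathcal{L})}\langle t;\delta\rangle$. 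These are not the same ring: $\mathcal{A}\langle t\rangle\cdot\widehat{U(\pi^n\mathcal{L})}$ is the (algebraic) image of $\mathcal{A}\langle t\rangle\otimes_{\mathcal{A}}\widehat{U(\pi^n\mathcal{L})}$, which is a \emph{dense, proper, non-complete} subring of the $\pi$-adically completed skew--Tate extension. (Neither factor is finitely generated over $\mathcal{A}$, so the tensor product is not complete.) Already in the commutative toy case, $R\langle t\rangle\otimes_R R\langle s\rangle$ is a dense proper subring of $R\langle t,s\rangle$ and is not known to be Noetherian --- indeed Banach-algebra tensor products of Tate rings over the field are famously non-Noetherian --- so the Noetherianity of your $F'_0$ cannot be asserted without argument, and the reduction-mod-$\pi$ technique of \cite[Theorem~3.5]{Bodecompl} applies only to $\pi$-adic completions, which $F'_0$ is not. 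Without this, the hypothesis of Corollary~\ref{noethapplication} that $U^\circ\otimes_{A^\circ}\widehat{A^\circ}$ is Noetherian is unverified and the whole d\'evissage collapses. By contrast, in Lemma~\ref{changeringnoeth} the zeroth piece really is the complete ring $\widehat{U(\pi\mathcal{L})}$ (because $F_0U(\mathcal{L})=\mathcal{A}$ and $\mathcal{A}\cdot\widehat{U(\pi\mathcal{L})}=\widehat{U(\pi\mathcal{L})}$), which is why that argument closes and yours does not.

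The deeper issue is a missed simplification. Theorem~\ref{matchupn} needs the full Noetherian machinery because the inclusion of \emph{unit balls} $U(\pi\mathcal{L})\hookrightarrow U(\mathcal{L})$ is not flat; one therefore has to control Tor-groups by Noetherianity. Here, however, the relevant base-change is governed by the \emph{commutative} flat map $\mathcal{A}\hookrightarrow\mathcal{A}\langle t\rangle$. The paper exploits this: by \cite[Proposition~2.3]{Ardakov1}, $\mathscr{U}_n(X)\langle t\rangle_i\cong A\langle t\rangle\widehat{\otimes}_A\mathscr{U}_n(X)$ as a right $\mathscr{U}_n(X)$-module, with unit ball $\mathcal{A}\langle t\rangle\otimes_{\mathcal{A}}\widehat{U(\pi^n\mathcal{L})}$, so the morphism in the statement can be rewritten as $A\langle t\rangle\widehat{\otimes}_A\check{\mathrm{H}}^j(\mathfrak{V},\mathcal{M}_n)\to\mathrm{H}^j\bigl(A\langle t\rangle\widehat{\otimes}_A\check{C}^\bullet(\mathfrak{V},\mathcal{M}_n)\bigr)$, viewing $\check{C}^\bullet(\mathfrak{V},\mathcal{M}_n)$ merely as a strict complex of Banach $A$-modules. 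Since $\mathcal{A}\langle t\rangle$ is flat over the Noetherian ring $\mathcal{A}$, \cite[Corollary~2.15]{Bodecompl} --- the flat-unit-ball case of Theorem~\ref{fullcomplexstrict}, requiring no Tor computations and no Noetherianity of tensor rings --- gives the isomorphism directly, and one translates back. You should recognise this dichotomy: when the unit-ball base change is flat, use \cite[Corollary~2.15]{Bodecompl}; only when it is not (as across $\mathscr{U}_1(X)\to\mathscr{U}_0(X)$) is Corollary~\ref{noethapplication} and its Noetherianity burden needed.
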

\begin{proof}
We know that $\check{C}^\bullet(\mathfrak{V}, \mathcal{M}_n)$ is a finite cochain complex of Banach $\mathscr{U}_n(X)$-modules, a fortiori of Banach $A$-modules, with strict morphisms.\\
\\
As as right $\mathscr{U}_n(X)$-module, $\mathscr{U}_n(X)\langle t\rangle_i$ is isomorphic to 
\begin{equation*}
A\langle t\rangle \widehat{\otimes}_A \mathscr{U}_n(X)
\end{equation*}
by \cite[Proposition 2.3]{Ardakov1}, which is the completion of $A\langle t\rangle \otimes_A \mathscr{U}_n(X)$ with respect to the tensor product semi-norm with unit ball given by
\begin{equation*}
\mathcal{A}\langle t\rangle \otimes_{\mathcal{A}} \widehat{U(\pi\mathcal{L})}.
\end{equation*}
In particular, viewing the morphism in the statement of the lemma as a morphism of $A\langle t\rangle$-modules, it can be written as
\begin{equation*}
A\langle t\rangle \widehat{\otimes}_A \check{\mathrm{H}}^j(\mathfrak{V}, \mathcal{M}_n)\to \mathrm{H}^j(A\langle t\rangle \widehat{\otimes}_A \check{C}^\bullet(\mathfrak{V}, \mathcal{M}_n)).
\end{equation*}
Since $\mathcal{A}\langle t\rangle$ is flat over $\mathcal{A}$ by \cite[Remark 7.3/2]{Bosch}, this is an isomorphism of $A\langle t\rangle$-modules by \cite[Corollary 2.15]{Bodecompl} and hence a bijection. Thus the natural morphism
\begin{equation*}
\mathscr{U}_n(X)\langle t\rangle_i\widehat{\otimes}_{\mathscr{U}_n(X)} \check{\mathrm{H}}^j(\mathfrak{V}, \mathcal{M}_n)\to \mathrm{H}^j(\mathscr{U}_n(X)\langle t\rangle_i\widehat{\otimes} \check{C}^{\bullet}(\mathfrak{V}, \mathcal{M}_n))
\end{equation*}
is an isomorphism of $\mathscr{U}_n(X)\langle t\rangle_i$-modules.
\end{proof}

We now fix a finite set of indices $i_1, \dots, i_j$ and write $V=V_{i_1\dots i_j}$, $C=\mathcal{O}_X(V)$.

\begin{lemma}
\label{partialtensorbddpitor}
Let $\mathcal{C}$ be a $\pi^n\mathcal{L}$-stable affine formal model in $C$. Then
\begin{equation*}
\mathcal{B}_i\otimes_{\mathcal{A}} \widehat{U_{\mathcal{C}}(\mathcal{C}\otimes_{\mathcal{A}} \pi^n\mathcal{L})}
\end{equation*}
has bounded $\pi$-torsion.
\end{lemma}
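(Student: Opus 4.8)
The plan is to reduce the statement, in two steps, to a boundedness assertion about finitely generated modules over a Noetherian Tate algebra, for which the completed tensor product estimates of \cite{Bodecompl} are available.

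First I would get rid of the $\pi$-torsion of $\mathcal{B}_i$ itself. Since $\mathcal{B}_i=\mathcal{A}\langle t\rangle/u_i\mathcal{A}\langle t\rangle$ is a quotient of the Noetherian ring $\mathcal{A}\langle t\rangle$ (Noetherian by \cite[3.2.3.(vi)]{Berthelot}, as in the proof of Lemma \ref{complsNB}), it is Noetherian, so the ideal $\pi\text{-tor}(\mathcal{B}_i)=\ker(\mathcal{B}_i\twoheadrightarrow\overline{\mathcal{B}_i})$ is finitely generated; being a $\pi$-torsion module it is killed by some $\pi^m$. Tensoring $0\to\pi\text{-tor}(\mathcal{B}_i)\to\mathcal{B}_i\to\overline{\mathcal{B}_i}\to 0$ over $\mathcal{A}$ with $\mathcal{E}:=\widehat{U_{\mathcal{C}}(\mathcal{C}\otimes_{\mathcal{A}}\pi^n\mathcal{L})}$ and chasing $\pi$-torsion through the resulting right-exact sequence reduces the claim to: $\overline{\mathcal{B}_i}\otimes_{\mathcal{A}}\mathcal{E}$ has bounded $\pi$-torsion, where now $\overline{\mathcal{B}_i}$ is a $\pi$-torsion-free, $\pi^n\mathcal{L}$-stable affine formal model in $B_i$. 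Next I would strip off the enveloping algebra: by freeness of $\pi^n\mathcal{L}$ over $\mathcal{A}$, Rinehart's Theorem \cite[Theorem 3.1]{Rinehart} shows $U_{\mathcal{C}}(\mathcal{C}\otimes_{\mathcal{A}}\pi^n\mathcal{L})$ is free, hence flat, over $\mathcal{C}$, so $\mathcal{E}$ is $\mathcal{C}$-flat by \cite[3.2.3.(iv)]{Berthelot}; thus $\overline{\mathcal{B}_i}\otimes_{\mathcal{A}}\mathcal{E}\cong(\overline{\mathcal{B}_i}\otimes_{\mathcal{A}}\mathcal{C})\otimes_{\mathcal{C}}\mathcal{E}$, and since $\mathcal{E}$ is flat and $\pi$-torsion-free over the $\pi$-torsion-free ring $\mathcal{C}$, a short exact sequence argument (as in Lemma \ref{torgpsvanishn}) reduces the claim once more to: $P:=\overline{\mathcal{B}_i}\otimes_{\mathcal{A}}\mathcal{C}$ has bounded $\pi$-torsion.

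To treat $P$, I would compare it with its $\pi$-adic completion. As $\overline{\mathcal{B}_i}$ is a quotient of $\mathcal{A}\langle t\rangle$, $P$ is a cyclic module over $\mathcal{S}:=\mathcal{A}\langle t\rangle\otimes_{\mathcal{A}}\mathcal{C}$, and since reduction mod $\pi^j$ collapses the completion in $\mathcal{A}\langle t\rangle$ (exactly as in the computation identifying $\varprojlim\mathscr{U}_n$ with $\wideparen{\mathscr{U}}$), the $\pi$-adic completion of $\mathcal{S}$ is the Tate algebra $\mathcal{C}\langle t\rangle$, which is Noetherian by \cite[3.2.3.(vi)]{Berthelot}. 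Hence $\widehat{P}=\mathcal{C}\langle t\rangle/J\mathcal{C}\langle t\rangle$ (with $J\subseteq\mathcal{A}\langle t\rangle$ the finitely generated defining ideal of $\overline{\mathcal{B}_i}$) is a finitely generated module over the Noetherian ring $\mathcal{C}\langle t\rangle$, so its $\pi$-torsion submodule is finitely generated, hence bounded. It remains to transfer this boundedness back along $P\to\widehat{P}$, i.e. to show that $\bigl(\bigcap_j\pi^jP\bigr)\cap\pi\text{-tor}(P)$ is bounded.

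This last point is the main obstacle, because $\mathcal{S}=\mathcal{A}\langle t\rangle\otimes_{\mathcal{A}}\mathcal{C}$ is in general neither $\pi$-adically complete nor Noetherian, so Krull's intersection theorem and the Artin--Rees lemma are not available over it directly. I would deal with it by filtering: writing $P$ as the colimit of the submodules $\overline{\mathcal{B}_i}\otimes_{\mathcal{A}}\mathcal{C}'$ obtained by replacing $\mathcal{C}$ with the $\mathcal{A}$-subalgebras $\mathcal{C}'$ generated by finitely many of its (topologically nilpotent) algebra generators together with the coordinate functions appearing in the properness data, each such $\overline{\mathcal{B}_i}\otimes_{\mathcal{A}}\mathcal{C}'$ is a quotient of a Tate algebra over the Noetherian ring $\mathcal{A}\langle t\rangle$, hence Noetherian, so Artin--Rees applies there; the content is then to show the resulting bounds can be chosen uniformly in $\mathcal{C}'$, which is precisely the kind of estimate controlled by Theorem \ref{fullcomplexstrict} and the analysis of $\widehat{\otimes}$ versus $\otimes$ in \cite{Bodecompl} (compare Lemma \ref{torgpsvanishn} and Corollary \ref{noethapplication}). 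I expect this uniformity to be the only genuinely delicate step; unwinding the two reductions then yields the lemma. (In the special case where the image of $x$ in $\mathcal{O}_X(V_{i_1\dots i_j})$ is a non-zero-divisor, so that $u_i$ acts injectively on all the modules in sight, the intermediate objects are outright $\pi$-torsion-free and the uniformity issue does not arise.)
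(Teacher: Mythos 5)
There is a genuine gap in your argument, and you have accurately located it yourself: the reduction to the claim that $P:=\overline{\mathcal{B}_i}\otimes_{\mathcal{A}}\mathcal{C}$ has bounded $\pi$-torsion is fine (both reduction steps work), but your proposed route to that claim is not complete. Passing to the $\pi$-adic completion $\widehat{P}$ and invoking Noetherianity of $\mathcal{C}\langle t\rangle$ does control the $\pi$-torsion of $\widehat{P}$, but it gives you no handle on $\bigl(\bigcap_j\pi^jP\bigr)\cap\pi\text{-tor}(P)$, and the ring $\mathcal{A}\langle t\rangle\otimes_{\mathcal{A}}\mathcal{C}$ over which $P$ is cyclic is indeed neither complete nor (known to be) Noetherian. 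The filtration argument you sketch for this last step is not a proof but an expectation that a uniform bound exists, and Theorem \ref{fullcomplexstrict} does not address it: that theorem \emph{assumes} bounded $\pi$-torsion of various Tor modules as input, it does not produce such bounds.

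The missing idea is to avoid the completed object $\overline{\mathcal{B}_i}$ (which is a quotient of the Tate algebra $\mathcal{A}\langle t\rangle$, hence not of finite type over $\mathcal{A}$) in favour of the polynomial-ring model $\mathcal{B}'_i:=\mathcal{A}[t]/u_i\mathcal{A}[t]$, whose $\pi$-adic completion is $\mathcal{B}_i$. Then $\mathcal{B}'_i\otimes_{\mathcal{A}}\mathcal{C}$ is of finite type over the Noetherian ring $\mathcal{C}$, so it is Noetherian and has bounded $\pi$-torsion for free. Since $\mathcal{B}_i=\widehat{\mathcal{B}'_i}$ is flat over $\mathcal{B}'_i$ by \cite[Theorem 7.2]{Eisenbud}, and $\widehat{U_{\mathcal{C}}(\mathcal{C}\otimes_{\mathcal{A}}\pi^n\mathcal{L})}$ is flat over $\mathcal{C}$ (by Rinehart plus \cite[3.2.3.(iv)]{Berthelot}, as you noted), one factors
\begin{equation*}
\mathcal{B}_i\otimes_{\mathcal{A}}\widehat{U_{\mathcal{C}}(\mathcal{C}\otimes_{\mathcal{A}}\pi^n\mathcal{L})}\cong \mathcal{B}_i\otimes_{\mathcal{B}'_i}\bigl(\mathcal{B}'_i\otimes_{\mathcal{A}}\mathcal{C}\bigr)\otimes_{\mathcal{C}}\widehat{U_{\mathcal{C}}(\mathcal{C}\otimes_{\mathcal{A}}\pi^n\mathcal{L})},
\end{equation*}
and bounded $\pi$-torsion propagates through both flat tensorings. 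This gives the lemma directly, without any passage from a module to its completion and hence without the uniformity issue you flagged. (Your first reduction, from $\mathcal{B}_i$ to $\overline{\mathcal{B}_i}$, is also superfluous: in the paper's organisation it is the content of the subsequent Corollary \ref{bddpitorcorollary}, deduced \emph{from} the present lemma, not a step towards it.)
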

\begin{proof}
Define $\mathcal{B}'_i=\mathcal{A}[t]/u_i\mathcal{A}[t]$. Note that the $\pi$-adic completion of the short exact sequence
\begin{equation*}
\begin{xy}
\xymatrix{
0\ar[r]& \mathcal{A}[t]\ar[r]^{u_i\cdot}& \mathcal{A}[t] \ar[r]& \mathcal{B}'_i\ar[r]& 0}
\end{xy}
\end{equation*}
is obtained by applying the exact functor $\mathcal{A}\langle t\rangle \otimes_{\mathcal{A}[t]}-$ by \cite[Theorem 7.2]{Eisenbud}. In particular, $\widehat{\mathcal{B}'_i}=\mathcal{B}_i$.\\
\\
Since $\mathcal{B}'_i$ is of finite type over $\mathcal{A}$, the ring $\mathcal{B}'_i\otimes_{\mathcal{A}} \mathcal{C}$ is of finite type over $\mathcal{C}$ and is hence Noetherian. In particular, it has bounded $\pi$-torsion.\\
\\
Tensoring with a flat module preserves the property of bounded $\pi$-torsion, as the $\pi$-torsion of an $R$-module $M$ is given by the kernel of the natural map $M\to M\otimes_R K$. Since $\mathcal{B}_i=\widehat{\mathcal{B}'_i}$, it is flat over $\mathcal{B}'_i$ by \cite[Theorem 7.2]{Eisenbud}. Moreover,
\begin{equation*}
U(\mathcal{C}\otimes_{\mathcal{A}} \pi\mathcal{L})\cong \mathcal{C}\otimes_{\mathcal{A}}U(\pi^n\mathcal{L})
\end{equation*}
is flat over $\mathcal{C}$, and $\widehat{U(\mathcal{C}\otimes_{\mathcal{A}} \pi^n\mathcal{L})}$ is flat over $U(\mathcal{C}\otimes_{\mathcal{A}} \pi^n\mathcal{L})$, again by \cite[3.2.3.(iv)]{Berthelot}.\\
Thus 
\begin{equation*}
\mathcal{B}_i\otimes_{\mathcal{A}} \widehat{U(\mathcal{C}\otimes_{\mathcal{A}} \pi^n\mathcal{L})}\cong\mathcal{B}_i\otimes_{\mathcal{B}'_i} (\mathcal{B}'_i\otimes_{\mathcal{A}} \mathcal{C}) \otimes_{\mathcal{C}} \widehat{U(\mathcal{C}\otimes \pi^n\mathcal{L})}
\end{equation*}
has bounded $\pi$-torsion, as required.
\end{proof}

\begin{corollary}
\label{bddpitorcorollary}
Let $\mathcal{C}$ be a $\pi^n\mathcal{L}$-stable affine formal model in $C$. Then 
\begin{equation*}
\overline{\mathcal{B}_i}\otimes_{\mathcal{A}}\widehat{U(\mathcal{C}\otimes_{\mathcal{A}} \pi^n\mathcal{L})}
\end{equation*}
has bounded $\pi$-torsion.
\end{corollary}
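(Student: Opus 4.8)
The plan is to derive this corollary directly from Lemma \ref{partialtensorbddpitor}, whose proof has already done the real work; the only thing to add is the passage from $\mathcal{B}_i$ to its maximal $\pi$-torsion-free quotient $\overline{\mathcal{B}_i}=\mathcal{B}_i/\pi\mathrm{-tor}(\mathcal{B}_i)$. Write $N=\widehat{U(\mathcal{C}\otimes_{\mathcal{A}}\pi^n\mathcal{L})}$, which is the same ring as the one in Lemma \ref{partialtensorbddpitor} (the subscript $\mathcal{C}$ there only records the base ring). Applying the right exact functor $-\otimes_{\mathcal{A}}N$ to the short exact sequence $0\to\pi\mathrm{-tor}(\mathcal{B}_i)\to\mathcal{B}_i\to\overline{\mathcal{B}_i}\to 0$ gives a surjection $\mathcal{B}_i\otimes_{\mathcal{A}}N\to\overline{\mathcal{B}_i}\otimes_{\mathcal{A}}N$ whose kernel $J$ is the image of $\pi\mathrm{-tor}(\mathcal{B}_i)\otimes_{\mathcal{A}}N$.

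First I would note that $J$ consists of $\pi$-power-torsion elements: it is additively generated by images of simple tensors $t\otimes m$ with $\pi^{k}t=0$ for some $k$, and a finite sum of such elements is annihilated by a suitable power of $\pi$, so that $J\subseteq\pi\mathrm{-tor}(\mathcal{B}_i\otimes_{\mathcal{A}}N)$. By Lemma \ref{partialtensorbddpitor} there is a fixed $a$ with $\pi^{a}\cdot\pi\mathrm{-tor}(\mathcal{B}_i\otimes_{\mathcal{A}}N)=0$, hence $\pi^{a}J=0$. It then remains to invoke the elementary fact that if $M$ is an $R$-module with $\pi^{a}\cdot\pi\mathrm{-tor}(M)=0$ and $J\subseteq M$ satisfies $\pi^{a}J=0$, then $\pi^{a}\cdot\pi\mathrm{-tor}(M/J)=0$ as well: lifting $\bar x\in M/J$ with $\pi^{k}\bar x=0$ to some $x\in M$ gives $\pi^{a+k}x=0$, so $x\in\pi\mathrm{-tor}(M)$ and $\pi^{a}x=0$, whence $\pi^{a}\bar x=0$. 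Applying this with $M=\mathcal{B}_i\otimes_{\mathcal{A}}N$ finishes the proof.

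I do not expect a genuine obstacle here; the only subtlety worth flagging is that one must not argue ``$\overline{\mathcal{B}_i}\otimes_{\mathcal{A}}N$ is a quotient of $\mathcal{B}_i\otimes_{\mathcal{A}}N$, which has bounded $\pi$-torsion, hence so does the quotient'' — this implication is false in general, as quotients can acquire unbounded $\pi$-torsion. What rescues the argument is precisely that the kernel $J$ is not merely a quotient of a torsion module but actually sits inside $\pi\mathrm{-tor}(\mathcal{B}_i\otimes_{\mathcal{A}}N)$, on which Lemma \ref{partialtensorbddpitor} provides a uniform bound.
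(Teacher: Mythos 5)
Your argument is correct and follows essentially the same diagram chase as the paper's proof: tensor the sequence $0\to\pi\text{-tor}(\mathcal{B}_i)\to\mathcal{B}_i\to\overline{\mathcal{B}_i}\to 0$ with $N$, bound the kernel $J$ by a fixed power of $\pi$, and then lift torsion elements from the quotient and appeal to Lemma \ref{partialtensorbddpitor}. The one small variation is in how you bound $J$: the paper invokes Noetherianity of $\mathcal{B}_i$ to get $\pi^r\cdot\pi\text{-tor}(\mathcal{B}_i)=0$ and transports that bound to $J$, whereas you observe directly that $J\subseteq\pi\text{-tor}(\mathcal{B}_i\otimes_{\mathcal{A}}N)$ and reuse the bound supplied by Lemma \ref{partialtensorbddpitor} — a marginally more economical route, but the same proof in substance.
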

\begin{proof}
Writing $T=\pi \mathrm{-tor}(\mathcal{B}_i)$, we have a short exact sequence
\begin{equation*}
0\to T\to \mathcal{B}_i\to \overline{\mathcal{B}_i}\to 0.
\end{equation*}
Since $\mathcal{B}_i$ is Noetherian, $T$ is annihilated by some power of $\pi$, i.e. for some $r$ we have $\pi^rT=0$.\\
\\
Now consider the exact sequence
\begin{equation*}
T\otimes\widehat{U(\mathcal{C}\otimes_{\mathcal{A}} \pi^n\mathcal{L})}\to \mathcal{B}_i\otimes \widehat{U(\mathcal{C}\otimes_{\mathcal{A}} \pi^n\mathcal{L})}\to \overline{\mathcal{B}_i}\otimes \widehat{U(\mathcal{C}\otimes_{\mathcal{A}} \pi^n\mathcal{L})}\to 0.
\end{equation*}
If $x\in \overline{\mathcal{B}_i}\otimes \widehat{U(\mathcal{C}\otimes_{\mathcal{A}} \pi^n\mathcal{L})}$ is killed by $\pi^s$, then any preimage $y\in \mathcal{B}_i\otimes \widehat{U(\mathcal{C}\otimes_{\mathcal{A}} \pi^n\mathcal{L})}$ has the property that $\pi^sy$ gets sent to $0$. In particular, $\pi^{r+s}y=0$, as $\pi^rT=0$. So if $y$ is a preimage of $x$, then $y$ is $\pi$-torsion. Since $\mathcal{B}_i\otimes \widehat{U(\mathcal{C}\otimes_{\mathcal{A}} \pi^n\mathcal{L})}$ has bounded $\pi$-torsion by the previous lemma, this proves the result.
\end{proof}

\begin{lemma}
\label{sesofalg}
There is a short exact sequence
\begin{equation*}
\begin{xy}
\xymatrix{
0 \ar[r] & A\langle t\rangle \widehat{\otimes}_A \mathscr{U}_n(V)\ar[r]^{u_i\cdot} & A\langle t\rangle\widehat{\otimes}_A \mathscr{U}_n(V)\ar[r] & B_i\widehat{\otimes}_A \mathscr{U}_n(V)\ar[r] & 0
}
\end{xy}
\end{equation*}
of left $A\langle t\rangle$-modules.
\end{lemma}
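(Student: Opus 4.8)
\emph{Approach.} The plan is to obtain this short exact sequence from the structure-sheaf version over the affinoid $V$ by base change along $A\to\mathscr{U}_n(X)$. The crucial point is that, although $\mathscr{U}_n(V)^\circ$ need not be flat over $\mathcal A$, the unit ball $\mathscr{U}_n(X)^\circ=\widehat{U(\pi^n\mathcal L)}$ \emph{is} flat over $A^\circ=\mathcal A$: since $\pi^n\mathcal L$ is free over $\mathcal A$, Rinehart's Theorem \cite[Theorem 3.1]{Rinehart} makes $U_{\mathcal A}(\pi^n\mathcal L)$ a free $\mathcal A$-module, and $\widehat{U(\pi^n\mathcal L)}$ is flat over it by \cite[3.2.3.(iv)]{Berthelot}.

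\emph{Main steps.} First I would record the affinoid input. As $A=\mathcal O_X(X)=\mathcal O_Y(Y)$, the subdomain $f^{-1}Y_i\cap V$ is the Weierstrass, resp.\ Laurent, subdomain of $V=\Sp C$ cut out by the image of $x$; thus $C_i:=\mathcal O_X(f^{-1}Y_i\cap V)\cong B_i\widehat{\otimes}_A C$ by \cite[Proposition 7.1.4/4]{BGR}, and applying \cite[Lemma 4.1]{Ardakov1} with $V$ in place of $Y$ gives a short exact sequence of Banach $A$-modules
\begin{equation*}
0\to C\langle t\rangle \xrightarrow{u_i\cdot} C\langle t\rangle \xrightarrow{\rho} C_i\to 0,
\end{equation*}
which is strict, $u_iC\langle t\rangle=\ker\rho$ being closed and $\rho$ being open. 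Applying the right-module version of \cite[Corollary 2.15]{Bodecompl} — valid since $\mathscr{U}_n(X)^\circ$ is a flat $\mathcal A$-module — the functor $(-)\widehat{\otimes}_A\mathscr{U}_n(X)$ carries this strict complex to a strict short exact sequence
\begin{equation*}
0\to C\langle t\rangle\widehat{\otimes}_A\mathscr{U}_n(X) \xrightarrow{u_i\cdot} C\langle t\rangle\widehat{\otimes}_A\mathscr{U}_n(X)\to C_i\widehat{\otimes}_A\mathscr{U}_n(X)\to 0.
\end{equation*}
Finally, using associativity of the completed tensor product \cite[Proposition 2.1.7/6]{BGR}, the identity $A\langle t\rangle=A\widehat{\otimes}_K K\langle t\rangle$, and $\mathscr{U}_n(V)\cong\mathcal O_X(V)\widehat{\otimes}_A\mathscr{U}_n(X)$ (from the construction of $\mathscr{U}_n$ in the proof of Proposition \ref{UisglFS}), I would identify $C\langle t\rangle\widehat{\otimes}_A\mathscr{U}_n(X)\cong A\langle t\rangle\widehat{\otimes}_A\mathscr{U}_n(V)$ and $C_i\widehat{\otimes}_A\mathscr{U}_n(X)\cong B_i\widehat{\otimes}_A\mathscr{U}_n(V)$, turning the first differential into left multiplication by $u_i\in A\langle t\rangle$ and yielding the assertion.

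\emph{Main obstacle.} The point needing care is purely one of bookkeeping: the flat module $\mathscr{U}_n(X)$ must sit on the \emph{right} of $\widehat{\otimes}_A$ so that the left $A\langle t\rangle$-structure — which lives on the $C\langle t\rangle$-factor and descends to the tensor product precisely because $C\langle t\rangle$ and $C_i$ are commutative — is preserved; this is why \cite[Corollary 2.15]{Bodecompl} is invoked in its mirror form (equivalently, over $A^{\mathrm{op}}$), and one must check that the successive associativity isomorphisms respect the relevant module structures. Once the flatness of $\mathscr{U}_n(X)^\circ$ over $\mathcal A$ is in hand, the genuine homological content — exactness, and in particular injectivity of $u_i\cdot$ after completing the tensor product — is immediate.
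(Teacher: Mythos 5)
Your proof is correct, but it takes a genuinely different route from the paper. The paper starts from the $A$-level short exact sequence $0\to A\langle t\rangle \xrightarrow{u_i\cdot} A\langle t\rangle \to B_i\to 0$, tensors \emph{uncompleted} with $\mathscr{U}_n(V)$ (exact since $B_i$ is flat over $A$), and then passes to the completion using \cite[Lemma 2.13]{Bodecompl}; the justification for this last step is that $\overline{\mathcal{B}_i}\otimes_{\mathcal{A}} \widehat{U(\mathcal{C}\otimes_{\mathcal{A}}\pi^n\mathcal{L})}$ has bounded $\pi$-torsion, a fact that required the two preliminary results Lemma \ref{partialtensorbddpitor} and Corollary \ref{bddpitorcorollary}. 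You instead begin with the analogous strict short exact sequence $0\to C\langle t\rangle\to C\langle t\rangle\to C_i\to 0$ over $V$ (same application of \cite[Lemma 4.1]{Ardakov1}, since $\pi^a x$ lands in the $\mathcal{L}$-stable formal model $\mathcal{C}$), and then apply $-\widehat{\otimes}_A\mathscr{U}_n(X)$ using \cite[Corollary 2.15]{Bodecompl}, whose hypothesis is met because $\mathscr{U}_n(X)^\circ=\widehat{U(\pi^n\mathcal{L})}$ is $\mathcal{A}$-flat (Rinehart plus Berthelot). This sidesteps the bounded $\pi$-torsion computations entirely: the paper is forced into them because $\mathscr{U}_n(V)^\circ$ is not \emph{a priori} flat over $\mathcal{A}$, whereas $\mathscr{U}_n(X)^\circ$ is, and by placing the completed enveloping algebra on the side of the tensor product where flatness is available you avoid having to control the torsion. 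Your remaining identifications via associativity of $\widehat{\otimes}$ and $\mathscr{U}_n(V)\cong \mathcal{O}_X(V)\widehat{\otimes}_A\mathscr{U}_n(X)$ are sound and do preserve the left $A\langle t\rangle$-structure, so the argument closes. If anything, your route is slightly more economical as a standalone proof of this lemma, at the cost of leaving the $\pi$-torsion picture (which the paper spells out) implicit.
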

\begin{proof}
This is an easy variant of \cite[Lemma 4.11]{Bodecompl} and \cite[Proposition 4.3.c)]{Ardakov1}.\\
First note that the short exact sequence
\begin{equation*}
0\to A\langle t\rangle \to A\langle t\rangle \to B_i\to 0
\end{equation*}
consists of strict morphisms by \cite[Proposition 3.1/20]{Bosch} and \cite[Lemma 2.6]{Bodecompl}.\\
\\
Since $B_i$ is flat over $A$ by \cite[Corollary 4.1/5]{Bosch}, we have 
\begin{equation*}
\Tor^A_1(B_i, \mathscr{U}_n(V))=0,
\end{equation*}
so tensoring with $\mathscr{U}_n(V)$ yields a short exact sequence
\begin{equation*}
0\to A\langle t\rangle \otimes_A \mathscr{U}_n(V)\to A\langle t\rangle \otimes_A \mathscr{U}_n(V)\to B_i\otimes_A \mathscr{U}_n(V)\to 0.
\end{equation*}
Finally, $\overline{\mathcal{B}_i}\otimes_{\mathcal{A}} \widehat{U(\mathcal{C}\otimes_{\mathcal{A}} \pi^n\mathcal{L})}$ has bounded $\pi$-torsion by Corollary \ref{bddpitorcorollary}, so that the short exact sequence above consists of strict morphisms and stays exact after completion by \cite[Lemma 2.13]{Bodecompl}.
\end{proof}

\begin{lemma}
\label{sesUnVmod}
Let $N$ be a finitely generated left Banach $\mathscr{U}_n(V)$-module. Then we have a short exact sequence
\begin{equation*}
\begin{xy}
\xymatrix{
0 \ar[r]& A\langle t\rangle \widehat{\otimes}_A N\ar[r]^{u_i\cdot} & A\langle t\rangle \widehat{\otimes}_A N\ar[r]& B_i\widehat{\otimes}_A N\ar[r]& 0
}
\end{xy}
\end{equation*}
of left $A\langle t\rangle$-modules, analogously for right modules.
\end{lemma}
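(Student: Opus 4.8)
The plan is to reduce Lemma~\ref{sesUnVmod} to Lemma~\ref{sesofalg} (the case $N=\mathscr{U}_n(V)$) by a change-of-rings argument. Since $N$ is a $\mathscr{U}_n(V)$-module via the structure map $A\to\mathscr{U}_n(V)$, associativity of the completed tensor product (\cite[Proposition 2.1.7/6]{BGR}) gives natural identifications
\begin{equation*}
A\langle t\rangle\widehat{\otimes}_A N\cong\big(A\langle t\rangle\widehat{\otimes}_A\mathscr{U}_n(V)\big)\widehat{\otimes}_{\mathscr{U}_n(V)}N,\qquad B_i\widehat{\otimes}_A N\cong\big(B_i\widehat{\otimes}_A\mathscr{U}_n(V)\big)\widehat{\otimes}_{\mathscr{U}_n(V)}N.
\end{equation*}
Write $P=A\langle t\rangle\widehat{\otimes}_A\mathscr{U}_n(V)$, which by \cite[Proposition 2.3]{Ardakov1} is the completed enveloping algebra $\mathscr{U}_n(V)\langle t\rangle_i$, hence a left and right Noetherian Banach $K$-algebra (Rinehart's Theorem \cite[Theorem 3.1]{Rinehart} together with \cite[3.2.3.(vi)]{Berthelot}), and $Q=B_i\widehat{\otimes}_A\mathscr{U}_n(V)\cong\mathscr{U}_n(f^{-1}Y_i\cap V)$, which is also Noetherian Banach; both receive ring maps from $\mathscr{U}_n(V)$, and Lemma~\ref{sesofalg} provides a short exact sequence $0\to P\xrightarrow{u_i\cdot}P\to Q\to 0$ of left $\mathscr{U}_n(V)$-modules.

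As $N$ is finitely generated over the Noetherian ring $\mathscr{U}_n(V)$, Lemma~\ref{removehat} (applied with $P$, resp.\ $Q$, playing the role of both the ambient Noetherian algebra and the bimodule) gives $P\widehat{\otimes}_{\mathscr{U}_n(V)}N\cong P\otimes_{\mathscr{U}_n(V)}N$ and $Q\widehat{\otimes}_{\mathscr{U}_n(V)}N\cong Q\otimes_{\mathscr{U}_n(V)}N$, both finitely generated modules (over $P$, resp.\ $Q$) carrying their canonical topologies. It therefore suffices to prove that
\begin{equation*}
0\to P\otimes_{\mathscr{U}_n(V)}N\xrightarrow{u_i\cdot}P\otimes_{\mathscr{U}_n(V)}N\to Q\otimes_{\mathscr{U}_n(V)}N\to 0
\end{equation*}
is exact as a sequence of abstract modules, and by right-exactness of $-\otimes_{\mathscr{U}_n(V)}N$ this reduces to the vanishing of $\Tor^{\mathscr{U}_n(V)}_1(Q,N)$.

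For this last point I use that $Q\cong\mathscr{U}_n(f^{-1}Y_i\cap V)$ and that, for $i=1,2$, $f^{-1}Y_i\cap V$ is a rational subdomain of $V$ of the form $V(x|_V)$ or $V((x|_V)^{-1})$ which is $\pi^n\mathcal{L}$-accessible in one step: the $\pi^n\mathcal{L}$-stable affine formal model $\mathcal{C}$ of $C$ contains the image of $\mathcal{A}$, and $\pi^n\mathcal{L}\cdot x\subseteq\mathcal{A}$ forces $\pi^n\mathcal{L}\cdot x|_V\subseteq\mathcal{C}$, so this subdomain lies in the site $X_n$. Hence the restriction map $\mathscr{U}_n(V)\to\mathscr{U}_n(f^{-1}Y_i\cap V)$ is flat on the right by \cite[Theorem 4.9]{Bodecompl}, so $\Tor^{\mathscr{U}_n(V)}_1(Q,N)=0$ and the displayed sequence is exact; its maps are $A\langle t\rangle$-linear by construction, giving the assertion for left modules, and the right-module version follows verbatim using left-flatness of the restriction maps. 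I expect the only genuine obstacle to be bookkeeping in this last step---confirming that $f^{-1}Y_i\cap V$ really sits inside the site $X_n$ defining $\mathscr{U}_n$, and keeping track of the abstract-module identifications furnished by associativity and Lemma~\ref{removehat}---rather than any delicate homological algebra. A more computational alternative would imitate the proof of Lemma~\ref{sesofalg}, tensoring $0\to A\langle t\rangle\to A\langle t\rangle\to B_i\to 0$ with $N$ over $A$ and invoking Corollary~\ref{bddpitorcorollary} together with \cite[Lemma 2.13]{Bodecompl}; that route requires more care with the $\pi$-torsion of $\overline{\mathcal{B}_i}\otimes_{\mathcal{A}}N^\circ$, which is why I would favour the change-of-rings argument above.
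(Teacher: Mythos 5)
Your proposal is correct and takes a genuinely different, more streamlined route than the paper. The paper's proof chooses a strict presentation $0\to N'\to \mathscr{U}_n(V)^{\oplus r}\to N\to 0$, applies $A\langle t\rangle\widehat{\otimes}_A(-)$ and $B_i\widehat{\otimes}_A(-)$ to the presentation (justified by flatness of $\mathcal{A}\langle t\rangle$ over $\mathcal{A}$ together with \cite[Lemma 2.13]{Bodecompl}, and by the accessibility of $f^{-1}Y_i\cap V$ together with \cite[Theorem 4.10]{Bodecompl} respectively), and then runs a kernel--cokernel diagram chase, feeding in Lemma~\ref{sesofalg} along the middle column and bootstrapping from $N$ to $N'$ to close the argument. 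You instead use associativity of $\widehat{\otimes}$ to rewrite $A\langle t\rangle\widehat{\otimes}_A N$ and $B_i\widehat{\otimes}_A N$ as $P\widehat{\otimes}_{\mathscr{U}_n(V)}N$ and $Q\widehat{\otimes}_{\mathscr{U}_n(V)}N$ with $P=\mathscr{U}_n(X)\langle t\rangle_i\widehat{\otimes}_{\mathscr{U}_n(X)}\mathscr{U}_n(V)$ and $Q=\mathscr{U}_n(f^{-1}Y_i\cap V)$, remove the completions via Lemma~\ref{removehat} (which is exactly where the finite generation of $N$ enters), and then tensor the bimodule sequence of Lemma~\ref{sesofalg} with $N$, so the only nontrivial point is $\Tor_1^{\mathscr{U}_n(V)}(Q,N)=0$, which follows from right-flatness of the restriction map $\mathscr{U}_n(V)\to Q$. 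Both proofs ultimately rest on this same flatness; you make it the pivot of the argument, while the paper buries it inside an application of \cite[Theorem 4.10]{Bodecompl}. Your version is cleaner, avoids the bounded-$\pi$-torsion bookkeeping entirely, and makes the dependence on finite generation of $N$ (via Lemma~\ref{removehat}) transparent. Two small corrections: the short exact sequence from Lemma~\ref{sesofalg} should be regarded as a sequence of $(A\langle t\rangle, \mathscr{U}_n(V))$-bimodules, hence in particular of \emph{right} $\mathscr{U}_n(V)$-modules (you wrote ``left''), so that applying $(-)\otimes_{\mathscr{U}_n(V)}N$ is legitimate; and in justifying that $P$ is a Noetherian Banach $K$-algebra you should be slightly careful, since the paper's $\mathscr{U}_n(X)\langle t\rangle_i$ is defined over $X$, not over $V$ --- the clean statement is that $A\langle t\rangle\widehat{\otimes}_A\mathscr{U}_n(V)\cong C\langle t\rangle\widehat{\otimes}_C\mathscr{U}_n(V)$ is the completed enveloping algebra $\widehat{U(\mathcal{C}\langle t\rangle\otimes_{\mathcal{C}}(\mathcal{C}\otimes_{\mathcal{A}}\pi^n\mathcal{L}))}_K$, which gives the required ring structure and Noetherianity by the same Rinehart--Berthelot argument.
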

\begin{proof}
Since $N$ is finitely generated over the Noetherian algebra $\mathscr{U}_n(V)$, we have a short exact sequence
\begin{equation*}
0\to N'\to \mathscr{U}_n(V)^{\oplus r} \to N\to 0,
\end{equation*}
where $N'$ is another finitely generated Banach module over $\mathscr{U}_n(V)$. By \cite[Lemma 2.6]{Bodecompl}, this consists of strict morphisms.\\
\\
Since $\mathcal{A}\langle t\rangle $ is flat over $\mathcal{A}$ by \cite[Remark 7.3/2]{Bosch}, we know by \cite[Lemma 2.13]{Bodecompl} that
\begin{equation*}
0\to A\langle t\rangle \widehat{\otimes}_A N'\to A\langle t\rangle \widehat{\otimes}_A \mathscr{U}_n(V)^{\oplus r} \to A\langle t\rangle \widehat{\otimes}_A N\to 0
\end{equation*}
is exact.\\
\\
Moreover, $B_i\widehat{\otimes}_A N'\cong \mathscr{U}_n(f^{-1}Y_i\cap V)\otimes_{\mathscr{U}_n(V)} N'$ as left $B_i$-modules, where we could omit the completion symbol on the right-hand side by Lemma \ref{removehat}. Likewise for the other terms.\\
Now $f^{-1}Y_i\cap V$ is a rational subdomain of $V$ by \cite[Proposition 3.3/13]{Bosch}, and is actually $\mathcal{C}\otimes \pi^n\mathcal{L}$-accessible - it is $V(x)$ if $i=1$, $V(x^{-1})$ if $i=2$, again by \cite[Proposition 3.3/13]{Bosch}. So by \cite[Theorem 4.10]{Bodecompl}, we know that
\begin{equation*}
0\to B_i\widehat{\otimes}_A N'\to B_i\widehat{\otimes}_A \mathscr{U}_n(V)^{\oplus r} \to B_i\widehat{\otimes}_A N\to 0
\end{equation*} 
is exact.\\
\\
We thus obtain the following commutative diagram of left $A\langle t\rangle$-modules
\begin{equation*}
\begin{xy}
\xymatrix{
0\ar[r] & A\langle t\rangle \widehat{\otimes}_A N'\ar[r] \ar[d]^{f_1} & A\langle t\rangle \widehat{\otimes}_A \mathscr{U}_n(V)^{\oplus r} \ar[r] \ar[d]^{g_1} & A\langle t\rangle \widehat{\otimes}_A N\ar[r] \ar[d]^{h_1}& 0\\
0\ar[r] & A\langle t\rangle \widehat{\otimes}_A N'\ar[r] \ar[d]^{f_2} & A\langle t\rangle \widehat{\otimes}_A \mathscr{U}_n(V)^{\oplus r} \ar[r] \ar[d]^{g_2} & A\langle t\rangle \widehat{\otimes}_A N\ar[r] \ar[d]^{h_2}& 0\\
0\ar[r] & B_i \widehat{\otimes}_A N'\ar[r] & B_i \widehat{\otimes}_A \mathscr{U}_n(V)^{\oplus r} \ar[r] & B_i \widehat{\otimes}_A N\ar[r]& 0
}
\end{xy}
\end{equation*}
where each row is exact.\\
We know from \cite[Proposition 2.1.8/6]{BGR} that $f_2$, $g_2$ and $h_2$ are surjections, so we have a long exact sequence
\begin{equation*}
0\to \ker f_1\to \ker g_1\to \ker h_1\to \ker f_2/\im f_1 \to \ker g_2/\im g_1\to \ker h_2/\im h_1\to 0.
\end{equation*}
By Lemma \ref{sesofalg}, this becomes
\begin{equation*}
0 \to \ker f_1\to 0 \to \ker h_1\to \ker f_2/\im f_1\to 0 \to\ker h_2/\im h_1\to 0,
\end{equation*}
so we immediately get that $\ker h_2=\im h_1$. But this argument holds for any finitely generated $\mathscr{U}_n(V)$-module, so in particular for $N'$. Thus $\ker f_2=\im f_1$, and by exactness $\ker h_1=0$.\\
\\
Thus
\begin{equation*}
0\to A\langle t\rangle \widehat{\otimes}_A N\to A\langle t\rangle \widehat{\otimes}_A N\to B_i\widehat{\otimes}_A N\to 0
\end{equation*}
is a short exact sequence.
\end{proof}

\begin{theorem}
\label{stepalocalize}
The natural morphism
\begin{equation*}
\mathscr{U}_n(f^{-1}Y_i)\otimes_{\mathscr{U}_n(X)} \check{\mathrm{H}}^j(\mathfrak{V}, \mathcal{M}_n)\to \mathrm{H}^j(\mathscr{U}_n(f^{-1}Y_i)\widehat{\otimes}_{\mathscr{U}_n(X)} \check{C}^\bullet(\mathfrak{V}, \mathcal{M}_n))
\end{equation*}
is an isomorphism of $\mathscr{U}_n(f^{-1}Y_i)$-modules for each $j\geq 0$.
\end{theorem}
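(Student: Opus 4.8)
The plan is to descend the ``$t$-version'' isomorphism of Lemma \ref{tversionloc} through the short exact sequence $0\to A\langle t\rangle\xrightarrow{u_i\cdot}A\langle t\rangle\to B_i\to 0$ by means of the long exact cohomology sequence it induces after applying $-\widehat{\otimes}_A\check{C}^\bullet(\mathfrak{V},\mathcal{M}_n)$. First I would record the two identifications that reduce the statement to one about $B_i\widehat{\otimes}_A-$: the preamble to Step A already gives $\mathscr{U}_n(f^{-1}Y_i)\widehat{\otimes}_{\mathscr{U}_n(X)}\check{C}^\bullet(\mathfrak{V},\mathcal{M}_n)\cong B_i\widehat{\otimes}_A\check{C}^\bullet(\mathfrak{V},\mathcal{M}_n)$, and since $\check{\mathrm{H}}^j(\mathfrak{V},\mathcal{M}_n)$ is finitely generated over $\mathscr{U}_n(X)$ by Theorem \ref{fgcohomDn}, Lemma \ref{removehat} together with associativity of $\widehat{\otimes}$ (\cite[Proposition 2.1.7/6]{BGR}) and $\mathscr{U}_n(f^{-1}Y_i)\cong B_i\widehat{\otimes}_A\mathscr{U}_n(X)$ give $\mathscr{U}_n(f^{-1}Y_i)\otimes_{\mathscr{U}_n(X)}\check{\mathrm{H}}^j(\mathfrak{V},\mathcal{M}_n)\cong B_i\widehat{\otimes}_A\check{\mathrm{H}}^j(\mathfrak{V},\mathcal{M}_n)$. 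So it is enough to show that the natural map $B_i\widehat{\otimes}_A\check{\mathrm{H}}^j(\mathfrak{V},\mathcal{M}_n)\to\mathrm{H}^j(B_i\widehat{\otimes}_A\check{C}^\bullet(\mathfrak{V},\mathcal{M}_n))$ is an isomorphism.

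Next I would apply Lemma \ref{sesUnVmod} to each direct summand $\mathcal{M}_n(V_{i_0\dots i_j})$ of $\check{C}^j(\mathfrak{V},\mathcal{M}_n)$ (a finitely generated Banach $\mathscr{U}_n(V_{i_0\dots i_j})$-module) and take the resulting finite direct sums; since $\widehat{\otimes}$ commutes with finite direct sums and $u_i$ is central in $A\langle t\rangle$, hence commutes with the ($A\langle t\rangle$-linear) \v{C}ech differentials, this produces a short exact sequence of cochain complexes of left $A\langle t\rangle$-modules
\[
0\to A\langle t\rangle\widehat{\otimes}_A\check{C}^\bullet(\mathfrak{V},\mathcal{M}_n)\xrightarrow{u_i\cdot}A\langle t\rangle\widehat{\otimes}_A\check{C}^\bullet(\mathfrak{V},\mathcal{M}_n)\to B_i\widehat{\otimes}_A\check{C}^\bullet(\mathfrak{V},\mathcal{M}_n)\to 0.
\]
Passing to the associated long exact cohomology sequence and identifying $\mathrm{H}^j(A\langle t\rangle\widehat{\otimes}_A\check{C}^\bullet(\mathfrak{V},\mathcal{M}_n))\cong A\langle t\rangle\widehat{\otimes}_A\check{\mathrm{H}}^j(\mathfrak{V},\mathcal{M}_n)$ via Lemma \ref{tversionloc} --- an $A\langle t\rangle$-linear, hence natural, identification under which the endomorphism induced by $u_i\cdot$ becomes multiplication by $u_i$ --- reduces the theorem to showing that this multiplication by $u_i$ on $A\langle t\rangle\widehat{\otimes}_A\check{\mathrm{H}}^j(\mathfrak{V},\mathcal{M}_n)$ is injective for each $j$. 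Granting that, all connecting maps vanish, the long exact sequence breaks into short exact sequences, and comparing the cokernel of $u_i\cdot$ with the cokernel obtained below (a diagram chase using naturality of the map $M\otimes\mathrm{H}^j\to\mathrm{H}^j(M\widehat{\otimes}-)$ and the cochain map $A\langle t\rangle\widehat{\otimes}_A\check{C}^\bullet\to B_i\widehat{\otimes}_A\check{C}^\bullet$) identifies the map in the statement with an isomorphism.

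For the injectivity I would argue as follows. The proof of Lemma \ref{sesofalg} applies verbatim with $\mathscr{U}_n(V)$ replaced by $\mathscr{U}_n(X)$ --- using flatness of $B_i$ over $A$ and Corollary \ref{bddpitorcorollary} with $\mathcal{C}=\mathcal{A}$ to get bounded $\pi$-torsion, so the sequence remains exact after completion --- giving a short exact sequence of right $\mathscr{U}_n(X)$-modules $0\to\mathscr{U}_n(X)\langle t\rangle_i\xrightarrow{u_i\cdot}\mathscr{U}_n(X)\langle t\rangle_i\to\mathscr{U}_n(f^{-1}Y_i)\to 0$. Moreover $\mathscr{U}_n(f^{-1}Y_i)$ is flat over $\mathscr{U}_n(X)$: although $X$ is not affinoid, $Y=\Sp A$ is, and $Y_i=Y(x)$ is $\pi^n\mathcal{L}$-accessible in one step in $Y$ for the Lie algebroid $f_*\mathscr{L}$ (this is precisely the condition $\pi^n\mathcal{L}\cdot x\subseteq\mathcal{A}$), so the flatness of the corresponding restriction map $\mathscr{U}_n(X)=\widehat{U(\pi^n\mathcal{L})}_K\to\widehat{U(\overline{\mathcal{B}_i}\otimes\pi^n\mathcal{L})}_K=\mathscr{U}_n(f^{-1}Y_i)$ follows from \cite[Theorem 4.9]{Bodecompl}. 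Tensoring the above short exact sequence over $\mathscr{U}_n(X)$ with the finitely generated module $\check{\mathrm{H}}^j(\mathfrak{V},\mathcal{M}_n)$ therefore keeps it exact, and after the identification $\mathscr{U}_n(X)\langle t\rangle_i\otimes_{\mathscr{U}_n(X)}\check{\mathrm{H}}^j(\mathfrak{V},\mathcal{M}_n)\cong A\langle t\rangle\widehat{\otimes}_A\check{\mathrm{H}}^j(\mathfrak{V},\mathcal{M}_n)$ (again Lemma \ref{removehat}) this yields both the required injectivity and the identification of the cokernel with $\mathscr{U}_n(f^{-1}Y_i)\otimes_{\mathscr{U}_n(X)}\check{\mathrm{H}}^j(\mathfrak{V},\mathcal{M}_n)$.

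The main obstacle is this injectivity step: one has to notice that the exactness of the base change $\mathscr{U}_n(X)\to\mathscr{U}_n(f^{-1}Y_i)$ must be imported from the affinoid localization theory of \cite{Bodecompl} applied to the base $Y$ --- which is affinoid --- rather than to $X$, which is only proper over $Y$; and one has to check that Lemmas \ref{sesofalg} and \ref{sesUnVmod} genuinely go through with $X$ in place of the affinoid pieces $V_{i_0\dots i_j}$. The remaining ingredients --- the short exact sequence of complexes, the long exact sequence, and the final comparison of cokernels --- are formal.
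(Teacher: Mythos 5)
Your proof is correct and takes essentially the same route as the paper: Lemma \ref{sesUnVmod} gives the short exact sequence of complexes, Lemma \ref{tversionloc} identifies the $A\langle t\rangle\widehat{\otimes}$-cohomology, and flatness of $\mathscr{U}_n(f^{-1}Y_i)$ over $\mathscr{U}_n(X)$ (imported, as you note, from the affinoid localization theory applied to $Y$) makes the tensored short exact sequence of algebras stay exact. The paper closes the argument by applying the 5-lemma to the resulting commutative ladder, rather than arguing explicitly that the connecting maps vanish and comparing cokernels, and cites the short exact sequence $0\to\mathscr{U}_n(X)\langle t\rangle_i\to\mathscr{U}_n(X)\langle t\rangle_i\to\mathscr{U}_n(f^{-1}Y_i)\to 0$ directly from \cite[Lemma 4.11]{Bodecompl} rather than re-deriving it as a special case of Lemma \ref{sesofalg}; these are cosmetic differences only.
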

\begin{proof}
We abbreviate $\check{\mathrm{H}}^j(\mathfrak{V}, \mathcal{M}_n)$ to $H^j$ and $\check{C}^\bullet(\mathfrak{V}, \mathcal{M}_n)$ to $C^\bullet$.\\
\\
Since 
\begin{equation*}
\mathscr{U}_n(f^{-1}Y_i)\cong B_i\widehat{\otimes}_A \mathscr{U}_n(X)
\end{equation*}
as $B_i$-modules, it is enough to show that the natural morphism
\begin{equation*}
B_i\widehat{\otimes}_A H^j\to \mathrm{H}^j(B_i\widehat{\otimes}_A C^\bullet)
\end{equation*}
is an isomorphism of left $B_i$-modules, or equivalently of $A\langle t\rangle$-modules.\\ 
\\
Since $\mathscr{U}_n(f^{-1}Y_i)=\mathscr{U}_n(f_*\mathscr{L})(Y_i)$ is flat over $\mathscr{U}_n(X)=\mathscr{U}_n(f_*\mathscr{L})(Y)$ on the right by \cite[Theorem 4.10]{Bodecompl}, we know that
\begin{equation*}
\Tor^{\mathscr{U}_n(X)}_1\left(\mathscr{U}_n(f^{-1}Y_i), H^j\right)=0,
\end{equation*}
so that the short exact sequence of $(A\langle t\rangle, \mathscr{U}_n(X))$-bimodules from \cite[Lemma 4.11]{Bodecompl}, 
\begin{equation*}
\begin{xy}
\xymatrix{
0 \ar[r]& \mathscr{U}_n(X)\langle t\rangle_i \ar[r]^{u_i\cdot}& \mathscr{U}_n(X)\langle t\rangle_i \ar[r]& \mathscr{U}_n(f^{-1}Y_i)\ar[r]& 0}
\end{xy}
\end{equation*}
remains exact after applying the functor $-\otimes_{\mathscr{U}_n(X)} H^j$, producing a short exact sequence of left $A\langle t\rangle$-modules, which can be written as 
\begin{equation*}
0\to A\langle t\rangle \widehat{\otimes}_A H^j \to A\langle t \rangle \widehat{\otimes}_A H^j\to B_i\widehat{\otimes}_A H^j\to 0.
\end{equation*}
By Lemma \ref{sesUnVmod}, we also have a short exact sequence
\begin{equation*}
0\to A\langle t\rangle \widehat{\otimes}_A C^\bullet\to A\langle t\rangle \widehat{\otimes}_A C^\bullet\to B_i\widehat{\otimes}_A C^\bullet \to 0,
\end{equation*}
of left $A\langle t \rangle$-modules.\\
\\
This now induces a long exact sequence
\begin{equation*}
\dots\to \mathrm{H}^j(A\langle t\rangle\widehat{\otimes} C^\bullet)\to \mathrm{H}^j(A\langle t\rangle\widehat{\otimes} C^\bullet) \to \mathrm{H}^j(B_i\widehat{\otimes}C^\bullet)\to \dots
\end{equation*}
fitting into a commutative diagram
\begin{equation*}
\begin{xy}
\xymatrix{
A\langle t \rangle\widehat{\otimes} H^j\ar[r]^{u_i\cdot} \ar[d]^{\cong} & A\langle t\rangle\widehat{\otimes} H^j\ar[r] \ar[d]^{\cong} & B_i\widehat{\otimes} H^j\ar[r]^{\xi} \ar[d]^{\theta} & A\langle t\rangle \widehat{\otimes} H^{j+1}\ar[r]^{u_i\cdot} \ar[d]^{\cong} & A\langle t\rangle \widehat{\otimes} H^{j+1}\ar[d]^{\cong}\\
\mathrm{H}^j(A\langle t\rangle\widehat{\otimes} C^\bullet) \ar[r] & \mathrm{H}^j(A\langle t\rangle\widehat{\otimes} C^\bullet) \ar[r] & \mathrm{H}^j(B_i\widehat{\otimes} C^\bullet)\ar[r]& \mathrm{H}^{j+1}(A\langle t\rangle \widehat{\otimes} C^\bullet)\ar[r] & \mathrm{H}^{j+1}(A\langle t\rangle\widehat{\otimes} C^\bullet)
}
\end{xy}
\end{equation*}
where the vertical maps are isomorphisms as indicated by Lemma \ref{tversionloc}.\\
\\
The top row is exact (with $\xi$ being the zero map) by the exactness of the short exact sequences above, and the bottom row is exact by construction, so $\theta$ is an isomorphism by the 5-lemma, as required.
\end{proof}

\subsection*{Step B}
We now generalize the argument to arbitrary $\pi^n\mathcal{L}$-accessible rational subdomains.
\begin{proposition}
\label{stepblocalize}
Let $U\subseteq Y$ be a $\pi^n\mathcal{L}$-accessible rational subdomain of $Y$. Then the natural morphism
\begin{equation*}
\mathscr{U}_n(f^{-1}U)\otimes_{\mathscr{U}_n(X)} \check{\mathrm{H}}^j(\mathfrak{V}, \mathcal{M}_n)\to \mathrm{H}^j(\mathscr{U}_n(f^{-1}U)\widehat{\otimes}_{\mathscr{U}_n(X)} \check{C}^\bullet(\mathfrak{V}, \mathcal{M}_n))
\end{equation*}
is an isomorphism for each $j\geq 0$, and thus
\begin{equation*}
\wideparen{\mathscr{U}}(f^{-1}U)\wideparen{\otimes}_{\wideparen{\mathscr{U}}(X)} \mathrm{H}^j(X, \mathcal{M})\cong \mathrm{H}^j(f^{-1}U, \mathcal{M}).
\end{equation*}
\end{proposition}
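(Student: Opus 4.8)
The plan is to induct on the number $k$ of steps in which $U$ is $\pi^n\mathcal{L}$-accessible (Definition \ref{rationalac}). The case $k=0$ is vacuous, since then $U=Y$, and the case $k=1$ is precisely Theorem \ref{stepalocalize}: by Definition \ref{rationalac} we have $U=Y(x)$ or $U=Y(x^{-1})$ for some non-zero $x\in A$ with $\pi^n\mathcal{L}\cdot x$ contained in a $\pi^n\mathcal{L}$-stable affine formal model of $A$, which we may take to be $\mathcal{A}$.

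For the inductive step, write $U=Z(x)$ or $U=Z(x^{-1})$, where $Z\subseteq Y$ is $\pi^n\mathcal{L}$-accessible in $k-1$ steps and $\mathcal{C}\subset\mathcal{O}_Y(Z)$ is a $\pi^n\mathcal{L}$-stable affine formal model with $\pi^n\mathcal{L}\cdot x\subseteq\mathcal{C}$. The restricted morphism $f_Z\colon f^{-1}Z\to Z$ is again elementary proper (relative compactness is preserved under restriction of the base, \cite[Lemma 6.3/7]{Bosch}), $\mathcal{O}_X(f^{-1}Z)=\mathcal{O}_Y(Z)$ by Kiehl's Proper Mapping Theorem, $\mathscr{L}|_{f^{-1}Z}$ is again free, and $\mathcal{M}|_{f^{-1}Z}$ is coadmissible. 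Moreover $\mathcal{C}\otimes_{\mathcal{A}}\pi^n\mathcal{L}$ is an $(R,\mathcal{C})$-Lie lattice in $f_*(\mathscr{L}|_{f^{-1}Z})(Z)=\mathcal{O}_Y(Z)\otimes_A L$ with respect to which $U$ is accessible from $Z$ in one step, since $(\mathcal{C}\otimes_{\mathcal{A}}\pi^n\mathcal{L})\cdot x=\mathcal{C}\cdot(\pi^n\mathcal{L}\cdot x)\subseteq\mathcal{C}$. By \cite[Proposition 2.3]{Ardakov1} together with the identity $\mathcal{M}_n(f^{-1}W\cap V)=\mathcal{O}_Y(W)\widehat{\otimes}_A\mathcal{M}_n(V)$, valid for every affinoid subdomain $W\subseteq Y$ (as established in Step A), the sheaves $\mathscr{U}_n$ and $\mathcal{M}_n$ restrict along $f^{-1}Z\hookrightarrow X$ to the sheaves attached by the construction of subsection 3.4 to the datum $(f_Z,\mathscr{L}|_{f^{-1}Z},\mathcal{C}\otimes_{\mathcal{A}}\pi^n\mathcal{L})$ at index $0$, and the Čech cohomology over $f^{-1}Z$ may be computed on the restricted covering $\mathfrak{V}\cap f^{-1}Z$. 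The argument of Step A applied to $f_Z$ therefore yields that
\begin{equation*}
\mathscr{U}_n(f^{-1}U)\otimes_{\mathscr{U}_n(f^{-1}Z)}\check{\mathrm{H}}^j(\mathfrak{V}\cap f^{-1}Z,\mathcal{M}_n)\ \longrightarrow\ \mathrm{H}^j\bigl(\mathscr{U}_n(f^{-1}U)\widehat{\otimes}_{\mathscr{U}_n(f^{-1}Z)}\check{C}^\bullet(\mathfrak{V}\cap f^{-1}Z,\mathcal{M}_n)\bigr)
\end{equation*}
is an isomorphism.

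It then remains to splice this with the inductive hypothesis for $Z$. From $\mathcal{M}_n(f^{-1}W\cap V)=\mathcal{O}_Y(W)\widehat{\otimes}_A\mathcal{M}_n(V)$ one obtains $\check{C}^\bullet(\mathfrak{V}\cap f^{-1}W,\mathcal{M}_n)\cong\mathscr{U}_n(f^{-1}W)\widehat{\otimes}_{\mathscr{U}_n(X)}\check{C}^\bullet(\mathfrak{V},\mathcal{M}_n)$ for any affinoid subdomain $W\subseteq Y$, so the inductive hypothesis reads $\mathscr{U}_n(f^{-1}Z)\otimes_{\mathscr{U}_n(X)}\check{\mathrm{H}}^j(\mathfrak{V},\mathcal{M}_n)\cong\check{\mathrm{H}}^j(\mathfrak{V}\cap f^{-1}Z,\mathcal{M}_n)$. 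Substituting into the displayed isomorphism and using associativity of the completed tensor product (\cite[Proposition 2.1.7/6]{BGR}) together with $\mathscr{U}_n(f^{-1}U)\widehat{\otimes}_{\mathscr{U}_n(f^{-1}Z)}\mathscr{U}_n(f^{-1}Z)\cong\mathscr{U}_n(f^{-1}U)$ collapses the chain to an isomorphism
\begin{equation*}
\mathscr{U}_n(f^{-1}U)\otimes_{\mathscr{U}_n(X)}\check{\mathrm{H}}^j(\mathfrak{V},\mathcal{M}_n)\ \cong\ \mathrm{H}^j\bigl(\mathscr{U}_n(f^{-1}U)\widehat{\otimes}_{\mathscr{U}_n(X)}\check{C}^\bullet(\mathfrak{V},\mathcal{M}_n)\bigr)=\check{\mathrm{H}}^j(\mathfrak{V}\cap f^{-1}U,\mathcal{M}_n),
\end{equation*}
which is the first assertion. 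The second follows by passing to the inverse limit over $n$: the left-hand side is $\wideparen{\mathscr{U}}(f^{-1}U)\wideparen{\otimes}_{\wideparen{\mathscr{U}}(X)}\mathrm{H}^j(X,\mathcal{M})$ by definition (using $\mathrm{H}^j(X,\mathcal{M})\cong\varprojlim\check{\mathrm{H}}^j(\mathfrak{V},\mathcal{M}_n)$, Proposition \ref{projlimiso}), while the right-hand side is $\varprojlim\check{\mathrm{H}}^j(\mathfrak{V}\cap f^{-1}U,\mathcal{M}_n)\cong\mathrm{H}^j(f^{-1}U,\mathcal{M})$ by Proposition \ref{projlimiso} applied to the elementary proper morphism $f|_{f^{-1}U}\colon f^{-1}U\to U$, the required Mittag-Leffler property being supplied by coadmissibility of $\varprojlim\check{\mathrm{H}}^j(\mathfrak{V}\cap f^{-1}U,\mathcal{M}_n)$ exactly as in Corollary \ref{cohomcoad}.

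I expect the main obstacle to lie not in any of the homological steps — these are all handled by Step A and by the completed tensor product machinery of section 2 — but in the bookkeeping of the second paragraph: verifying that the restricted sheaves $\mathscr{U}_n|_{f^{-1}Z}$, $\mathcal{M}_n|_{f^{-1}Z}$ and the restricted coverings are exactly the data to which the Step A argument applies, and keeping the affine formal models, Lie lattices and the various indices consistent throughout the induction. This is all accounted for by the compatibility of the construction of subsection 3.4 with restriction to $f^{-1}Z$, but it is the point that must be carried out with care.
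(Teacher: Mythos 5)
Your proof is correct and takes essentially the same route as the paper: induct on the number of accessibility steps, apply Theorem \ref{stepalocalize} to the one-step inclusion $U\subseteq Z$ after restricting the elementary proper morphism to $f^{-1}Z\to Z$ (the paper calls the intermediate subdomain $V$), and splice with the inductive hypothesis via associativity of $\widehat{\otimes}$. Your spelling out of the compatibility of the restricted data with subsection 3.4, and of the passage to the inverse limit via Proposition \ref{projlimiso} and Corollary \ref{cohomcoad}, makes explicit what the paper leaves implicit, but the argument is the same.
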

\begin{proof}
Let $U$ be $\pi^n\mathcal{L}$-accessible in $r$ steps. Theorem \ref{stepalocalize} proves the case of $r=1$. We proceed inductively. Let $V\subseteq Y$ be a $\pi^n\mathcal{L}$-accessible rational subdomain in $r-1$ steps, containing $U$ and satisfying the properties in Definition \ref{rationalac}, so that $U=V(x)$ or $V(x^{-1})$ for a suitable $x\in \mathcal{O}_Y(V)$.\\
By induction hypothesis, we have
\begin{align*}
\mathscr{U}_n(f^{-1}V)\otimes_{\mathscr{U}_n(X)} \check{\mathrm{H}}^j(\mathfrak{V}, \mathcal{M}_n)&\cong \mathrm{H}^j(\mathscr{U}_n(f^{-1}V)\widehat{\otimes}_{\mathscr{U}_n(X)} \check{C}^\bullet(\mathfrak{V}, \mathcal{M}_n))\\
& \cong \mathrm{H}^j(f^{-1}V\cap \mathfrak{V}, \mathcal{M}_n).
\end{align*}
Moreover, the restriction 
\begin{equation*}
f|_{f^{-1}V}: f^{-1}V\to V
\end{equation*}
is an elementary proper morphism with trivial Stein factorization, and $\mathscr{L}|_{f^{-1}V}$ is a free Lie algebroid on $f^{-1}V$, i.e. all our assumption remain valid under restriction. But now $U$ is a rational subdomain of $V$ which is $\mathcal{C}\otimes_{\mathcal{A}} \pi^n\mathcal{L}$-accessible in one step, where $\mathcal{C}$ is a suitable affine formal model in $\mathcal{O}_Y(V)$. Thus Theorem \ref{stepalocalize} implies that
\begin{equation*}
\mathscr{U}_n(f^{-1}U)\otimes_{\mathscr{U}_n(f^{-1}V)} \check{\mathrm{H}}^j(f^{-1}V\cap \mathfrak{V}, \mathcal{M}_n)\cong \mathrm{H}^j(\mathscr{U}_n(f^{-1}U)\widehat{\otimes}_{\mathscr{U}_n(f^{-1}V)} \check{C}^\bullet(f^{-1}V\cap \mathfrak{V}, \mathcal{M}_n)).
\end{equation*}
Writing $H^j$ for $\check{\mathrm{H}}^j(\mathfrak{V}, \mathcal{M}_n)$, we therefore obtain
\begin{align*}
\mathscr{U}_n(f^{-1}U)\otimes_{\mathscr{U}_n(X)} H^j& \cong \mathscr{U}_n(f^{-1}U)\otimes_{\mathscr{U}_n(f^{-1}V)} \mathscr{U}_n(f^{-1}V) \otimes_{\mathscr{U}_n(X)} H^j\\
& \cong \mathscr{U}_n(f^{-1}U) \otimes_{\mathscr{U}_n(f^{-1}V)} \check{\mathrm{H}}^j(f^{-1}\cap \mathfrak{V}, \mathcal{M}_n),
\end{align*}
and thus
\begin{align*}
\mathscr{U}_n(f^{-1}U)\otimes_{\mathscr{U}_n(X)} H^j& \cong \mathrm{H}^j(\mathscr{U}_n(f^{-1}U)\widehat{\otimes}_{\mathscr{U}_n(f^{-1}V)} \check{C}^\bullet(f^{-1}V\cap \mathfrak{V}, \mathcal{M}_n))\\
 & \cong \mathrm{H}^j(\mathscr{U}_n(f^{-1}U)\widehat{\otimes}_{\mathscr{U}_n(f^{-1}V)} \mathscr{U}_n(f^{-1}V) \widehat{\otimes}_{\mathscr{U}_n(X)} \check{C}^\bullet(\mathfrak{V}, \mathcal{M}_n))\\
& \cong \mathrm{H}^j(\mathscr{U}_n(f^{-1}U)\widehat{\otimes}_{\mathscr{U}_n(X)} \check{C}^\bullet(\mathfrak{V}, \mathcal{M}_n)),
\end{align*}
as required.
\end{proof}

\subsection*{Step C}
\begin{theorem}
\label{stepclocalize}
Let $U\subseteq Y$ be an affinoid subdomain. Then the natural morphism
\begin{equation*}
\wideparen{\mathscr{U}}(f^{-1}U)\wideparen{\otimes}_{\wideparen{\mathscr{U}}(X)} \mathrm{H}^j(X, \mathcal{M})\to \mathrm{H}^j(f^{-1}U, \mathcal{M})
\end{equation*}
is an isomorphism for each $j\geq 0$.
\end{theorem}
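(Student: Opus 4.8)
The plan is to reduce, exactly as in Steps A and B, to a statement at each finite level $n$. First, after replacing $\mathcal{L}$ by $\pi^N\mathcal{L}$ for a suitable $N$ and invoking \cite[Proposition 7.6]{Ardakov1}, I would fix a finite covering $U=\bigcup_k U_k$ of $U$ by rational subdomains and arrange that $U$ itself and all of the finitely many iterated intersections $U_{k_0\dots k_p}$ are $\pi^n\mathcal{L}$-accessible for every $n$. It then suffices to prove that for each $n$ the natural morphism
\[
\mathscr{U}_n(f^{-1}U)\otimes_{\mathscr{U}_n(X)}\check{\mathrm H}^j(\mathfrak{V},\mathcal{M}_n)\longrightarrow \mathrm H^j\bigl(\mathscr{U}_n(f^{-1}U)\widehat{\otimes}_{\mathscr{U}_n(X)}\check C^\bullet(\mathfrak{V},\mathcal{M}_n)\bigr)
\]
is an isomorphism; passing to $\varprojlim_n$ then gives the theorem, since the left-hand side is $\wideparen{\mathscr{U}}(f^{-1}U)\wideparen{\otimes}_{\wideparen{\mathscr{U}}(X)}\mathrm H^j(X,\mathcal{M})$ by definition, while the right-hand side becomes $\mathrm H^j(f^{-1}U,\mathcal{M})$ by Proposition \ref{projlimiso} applied to the restriction $f|_{f^{-1}U}\colon f^{-1}U\to U$ (which again satisfies all hypotheses of the theorem).

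Before the main argument I would record the base-change computations of Step A in the slightly greater generality of an arbitrary $\pi^n\mathcal{L}$-accessible affinoid subdomain $V\subseteq Y$: using Theorem \ref{Kiehlthm}, Lemma \ref{removehat} and associativity of $\widehat{\otimes}$ one obtains $\mathscr{U}_n(f^{-1}V)\cong \mathcal{O}_Y(V)\widehat{\otimes}_A\mathscr{U}_n(X)$ and $\mathcal{M}_n(f^{-1}V\cap V_{i_0\dots i_q})\cong \mathcal{O}_Y(V)\widehat{\otimes}_A\mathcal{M}_n(V_{i_0\dots i_q})$, hence a natural identification $\mathscr{U}_n(f^{-1}V)\widehat{\otimes}_{\mathscr{U}_n(X)}\check C^\bullet(\mathfrak{V},\mathcal{M}_n)\cong\check C^\bullet(\mathfrak{V}\cap f^{-1}V,\mathcal{M}_n)$. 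In particular the target of the displayed morphism is $\check{\mathrm H}^j(\mathfrak{V}\cap f^{-1}U,\mathcal{M}_n)=\mathrm H^j(f^{-1}U,\mathcal{M}_n)$, the last step being the \v{C}ech-to-cohomology comparison \cite[Tag 03F7]{Stacksproject} on $f^{-1}U$.

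The heart of the proof will be to exhibit both sides as sections over $U$ of sheaves on the site $Y_n=Y_{\ac}(\pi^n\mathcal{L})$ and to compare these sheaves. Since $\check{\mathrm H}^j(\mathfrak{V},\mathcal{M}_n)$ is a finitely generated $\mathscr{U}_n(X)=\mathscr{U}_n(f_*\mathscr{L})(Y)$-module (Lemma \ref{fgcohomDn}), its localisation $\widetilde{H}^j_n\colon V\mapsto \mathscr{U}_n(f^{-1}V)\otimes_{\mathscr{U}_n(X)}\check{\mathrm H}^j(\mathfrak{V},\mathcal{M}_n)$ is, by \cite[Theorems 4.10, 4.16]{Bodecompl} applied to the Lie algebroid $f_*\mathscr{L}$ on $Y$, a sheaf on $Y_n$ whose higher \v{C}ech cohomology vanishes on every accessible affinoid. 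On the target side I would introduce the complex of sheaves $\mathcal{K}^\bullet$ on $Y_n$ given by $\mathcal{K}^q\colon V\mapsto \check C^q(\mathfrak{V}\cap f^{-1}V,\mathcal{M}_n)$; each $\mathcal{K}^q$ is a finite direct sum of pushforwards of $\mathcal{M}_n$, hence again a sheaf with vanishing higher \v{C}ech cohomology on accessible affinoids (by \cite[Theorem 4.16]{Bodecompl}), and $\Gamma(U,\mathcal{K}^\bullet)=\check C^\bullet(\mathfrak{V}\cap f^{-1}U,\mathcal{M}_n)$. Step B (Proposition \ref{stepblocalize}), combined with the flatness of $\mathscr{U}_n(f^{-1}V)$ over $\mathscr{U}_n(X)$ for $\pi^n\mathcal{L}$-accessible \emph{rational} $V$ (\cite[Theorem 4.10]{Bodecompl}), shows that the presheaf $V\mapsto\mathrm H^j(\mathcal{K}^\bullet(V))$ agrees with $\widetilde{H}^j_n$ on the basis of $\pi^n\mathcal{L}$-accessible rational subdomains, so that $\mathcal{H}^j(\mathcal{K}^\bullet)\cong\widetilde{H}^j_n$ as sheaves on $Y_n$. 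As the terms $\mathcal{K}^q$ \emph{and} the cohomology sheaves $\mathcal{H}^j(\mathcal{K}^\bullet)\cong\widetilde{H}^j_n$ are all acyclic for $\Gamma(U,-)$, comparing the two hypercohomology spectral sequences of $\mathcal{K}^\bullet$ over $U$ gives
\[
\mathrm H^j(f^{-1}U,\mathcal{M}_n)=\mathrm H^j\bigl(\Gamma(U,\mathcal{K}^\bullet)\bigr)\cong\Gamma\bigl(U,\mathcal{H}^j(\mathcal{K}^\bullet)\bigr)=\mathscr{U}_n(f^{-1}U)\otimes_{\mathscr{U}_n(X)}\check{\mathrm H}^j(\mathfrak{V},\mathcal{M}_n),
\]
and a final check identifies this composite with the natural morphism above.

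I expect the last comparison to be the main obstacle. One cannot run a naive \v{C}ech/Mayer--Vietoris argument for the covering $(f^{-1}U_k)$ of $f^{-1}U$, because the higher direct images live in higher cohomological degree, so $\mathrm H^j(f^{-1}U,\mathcal{M}_n)$ is not simply the kernel of $\prod_k\mathrm H^j(f^{-1}U_k,\mathcal{M}_n)\to\prod_{k<k'}\mathrm H^j(f^{-1}U_{kk'},\mathcal{M}_n)$; the way around this is to pass through the full localised \v{C}ech complex $\mathcal{K}^\bullet$ on $Y_n$ and identify its cohomology sheaves, and it is here that Step B, the finiteness of $\check{\mathrm H}^j(\mathfrak{V},\mathcal{M}_n)$ (Lemma \ref{fgcohomDn}) and the acyclicity of localisations of finitely generated modules all enter essentially. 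A secondary, purely combinatorial, point to watch is ensuring that $U$ and the finitely many intersections $U_{k_0\dots k_p}$ are $\pi^n\mathcal{L}$-accessible, which is arranged once and for all by enlarging $\mathcal{L}$ at the outset via \cite[Proposition 7.6]{Ardakov1}.
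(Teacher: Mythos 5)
Your proposal is correct and lands on the same underlying idea as the paper -- reduce to a covering of $U$ by $\pi^n\mathcal{L}$-accessible rational subdomains and feed Step~B into a sheaf-theoretic comparison -- but the technical implementation is noticeably different. The paper stays at the coadmissible ($\varprojlim$) level throughout: it observes that by Corollary~\ref{cohomcoad} and Proposition~\ref{projlimiso} the left-hand side is the coadmissible $\wideparen{\mathscr{U}(f_*\mathscr{L})}(U)$-module $M=\wideparen{\mathscr{U}}(f^{-1}U)\wideparen{\otimes}_{\wideparen{\mathscr{U}}(X)}\mathrm{H}^j(X,\mathcal{M})$, forms the natural morphism of sheaves $\Loc M\to(\mathrm{R}^jf_*\mathcal{M})|_U$, notes that Step~B makes it an isomorphism on each $W_{i_1\dots i_p}$, and then closes by ``considering the corresponding \v{C}ech complex.'' You instead first descend to a fixed level $n$, package the localised \v{C}ech complex into a complex of sheaves $\mathcal{K}^\bullet$ on $Y_n$, identify $\mathcal{H}^j(\mathcal{K}^\bullet)$ with the localisation of $\check{\mathrm{H}}^j(\mathfrak{V},\mathcal{M}_n)$ via Step~B on a basis, and compare the two hypercohomology spectral sequences of $\mathcal{K}^\bullet$ over $U$. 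The trade-off is this: the paper's route is shorter because $\Loc M$ is already known to be an acyclic sheaf of coadmissible modules (no spectral sequences need to be invoked by name), whereas your route is more explicit about \emph{why} the \v{C}ech comparison works -- you correctly flag that the presheaf $V\mapsto\mathrm{H}^j(f^{-1}V,\mathcal{M}_n)$ is not a priori a sheaf, so a naive ``sheaves agreeing on a covering'' argument does not immediately give $\mathrm{H}^j(f^{-1}U,\mathcal{M}_n)$, and you supply the hypercohomology degeneracy argument that makes the step rigorous. That is exactly the content being compressed into the paper's one-line ``\v{C}ech complex'' closer, and it is a genuine virtue of your write-up that you spell it out. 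Two small points to check if you were to write this out in full: when you invoke acyclicity of the $\mathcal{K}^q$ and of $\mathcal{H}^j(\mathcal{K}^\bullet)$, you should be careful to distinguish vanishing of \v{C}ech cohomology (which \cite[Theorem 4.16]{Bodecompl} gives) from vanishing of derived-functor cohomology, and route the comparison through \cite[Tags 03F7, 03F9]{Stacksproject} as the paper does elsewhere; and in the passage to $\varprojlim_n$ you should also confirm the Mittag--Leffler property as in Proposition~\ref{projlimiso}, which is needed to commute $\varprojlim$ with $\mathrm{H}^j$.
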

\begin{proof}
We know from \cite[Proposition 7.6]{Ardakov1} that $U$ is $\pi^n\mathcal{L}$-accessible for sufficiently large $n$, so there exists a finite covering of $U$ by $\pi^n\mathcal{L}$-accessible rational subdomains $(W_i)$ of $Y$ for sufficiently large $n$. \\
By Corollary \ref{cohomcoad} and Proposition \ref{projlimiso}, $\mathrm{H}^j(X, \mathcal{M})$ is a coadmissible $\wideparen{\mathscr{U}}(X)$-module, so that
\begin{equation*}
\wideparen{\mathscr{U}}(f^{-1}U)\wideparen{\otimes}_{\wideparen{\mathscr{U}}(X)} \mathrm{H}^j(X, \mathcal{M})
\end{equation*}
is a coadmissible $\wideparen{\mathscr{U}}(f^{-1}U)=\wideparen{\mathscr{U}(f_*\mathscr{L})}(U)$-module.\\
\\
We have a natural morphism
\begin{equation*}
\Loc \left( \wideparen{\mathscr{U}}(f^{-1}U)\wideparen{\otimes}_{\wideparen{\mathscr{U}}(X)}\mathrm{H}^j(X, \mathcal{M})\right)\to \left(\mathrm{R}^jf_*\mathcal{M}\right)|_{U}
\end{equation*}
of sheaves of $\wideparen{\mathscr{U}(f_*\mathscr{L})}|_U$-modules, and by Proposition \ref{stepblocalize}, this becomes an isomorphism after taking sections over any $W_i$ or any finite intersection of $W_i$s.\\
Considering the corresponding \v{C}ech complex therefore forces the map between the global sections also to be an isomorphism, i.e.
\begin{equation*}
\wideparen{\mathscr{U}}(f^{-1}U)\wideparen{\otimes}_{\wideparen{\mathscr{U}}(X)} \mathrm{H}^j(X, \mathcal{M})\cong \mathrm{H}^j(f^{-1}U, \mathcal{M}).
\end{equation*} 
\end{proof}

This concludes the proof of Theorem \ref{KiehlDXfree}.

\section{Generalizations, examples, applications}
\subsection{Generalizations}
We can now state various generalizations of Theorem \ref{KiehlDXfree}, considering glueing and passing to coadmissible enlargements.
\begin{lemma}
\label{KiehlDXfreeaff}
Let $f:X\to Y$ be a proper morphism of rigid analytic $K$-varieties, and assume $Y$ is affinoid. Let $\mathscr{L}$ be a Lie algebroid on $X$ which is a free $\mathcal{O}_X$-module.\\
Then $\wideparen{\mathscr{U}(\mathscr{L})}$ is a global Fr\'echet--Stein sheaf on $X$, and $f_*\wideparen{\mathscr{U}(\mathscr{L})}$ is a global Fr\'echet--Stein sheaf on $Y$.\\
If $\mathcal{M}$ is a coadmissible $\wideparen{\mathscr{U}(\mathscr{L})}$-module, then $\mathrm{R}^jf_*\mathcal{M}$ is a coadmissible $f_*\wideparen{\mathscr{U}(\mathscr{L})}$-module for each $j\geq 0$.
\end{lemma}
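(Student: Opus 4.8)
The plan is to deduce this from Theorem \ref{KiehlDXfree} by working locally on the affinoid base and glueing.

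First I would fix the covering. Since $f$ is proper and properness is local on the base, Definition \ref{proper} supplies an admissible affinoid covering $(Y_i)_{i\in I}$ of $Y$ for which each restriction $f_i:X_i:=f^{-1}Y_i\to Y_i$ satisfies the defining condition of an elementary proper morphism; as $Y$ is affinoid, hence quasi-compact for its strong Grothendieck topology, I may take $I$ finite. For each $i$ the sheaf $\mathscr{L}|_{X_i}$ is again free over $\mathcal{O}_{X_i}$, being the restriction of a free module, so Theorem \ref{KiehlDXfree} applies to $f_i$ and $\mathscr{L}|_{X_i}$. It yields that $\wideparen{\mathscr{U}(\mathscr{L})}|_{X_i}$ is a global Fr\'echet--Stein sheaf on $X_i$, that $f_{i*}(\wideparen{\mathscr{U}(\mathscr{L})}|_{X_i})$ --- which is canonically $(f_*\wideparen{\mathscr{U}(\mathscr{L})})|_{Y_i}$, since sheaf pushforward commutes with restriction to admissible opens --- is a global Fr\'echet--Stein sheaf on $Y_i$, and, because $\mathcal{M}|_{X_i}$ is coadmissible over $\wideparen{\mathscr{U}(\mathscr{L})}|_{X_i}$ and higher direct images likewise commute with restriction to admissible opens of the base, that $(\mathrm{R}^jf_*\mathcal{M})|_{Y_i}\cong \mathrm{R}^jf_{i*}(\mathcal{M}|_{X_i})$ is a coadmissible $(f_*\wideparen{\mathscr{U}(\mathscr{L})})|_{Y_i}$-module for each $j\geq 0$.

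Next I would glue the global Fr\'echet--Stein structures. On each $X_i$, resp.\ each $Y_i$, the proof of Theorem \ref{KiehlDXfree} (Proposition \ref{UisglFS} and Lemma \ref{pushffsalg}) produces an increasing chain of sites together with sheaves $\mathscr{U}_n^{(i)}$ whose inverse limit is $\wideparen{\mathscr{U}(\mathscr{L})}|_{X_i}$, resp.\ $(f_*\wideparen{\mathscr{U}(\mathscr{L})})|_{Y_i}$. Shrinking the finite covering if necessary and arranging the auxiliary choices of affine formal models and $(R,\mathcal{A})$-Lie lattices to be compatible on the overlaps $Y_i\cap Y_j$ --- using the independence of $\wideparen{\mathscr{U}(\mathscr{L})}$ and of the sheaves $\mathscr{U}_n$ under such choices, exactly as in the overlap computation in Proposition \ref{UisglFS} --- these sheaves agree on double overlaps and glue over the Grothendieck topology generated by the local ones (\cite[9.1.3]{BGR}) to sheaves $\mathscr{U}_n$ on $X$, resp.\ on $Y$. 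The conditions in Definition \ref{globalFSsheaf} are local on the base and so are inherited from the pieces; for an admissible open affinoid not contained in any single member of the covering one passes to an affinoid refinement and runs the \v{C}ech argument of section 4 together with Proposition \ref{coadproperties}.(ii) to identify the sections and to check flatness and density of the transition maps. This gives that $\wideparen{\mathscr{U}(\mathscr{L})}$ is a global Fr\'echet--Stein sheaf on $X$ (alternatively, invoke Proposition \ref{recallglobal}) and that $f_*\wideparen{\mathscr{U}(\mathscr{L})}$ is a global Fr\'echet--Stein sheaf on $Y$.

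Finally, by the first step $\mathrm{R}^jf_*\mathcal{M}$ is coadmissible over $f_*\wideparen{\mathscr{U}(\mathscr{L})}$ with respect to the admissible affinoid covering $(Y_i)$, hence coadmissible by the analogue of Kiehl's Theorem for global Fr\'echet--Stein sheaves noted after Definition \ref{defFSsheaf}, which makes coadmissibility independent of the chosen affinoid covering. I expect the main obstacle to lie in the glueing of the second step: ensuring the local sites and sheaves $\mathscr{U}_n^{(i)}$ can genuinely be chosen so as to glue --- i.e.\ arranging compatible formal models and Lie lattices on the overlaps --- and then verifying that the glued $\mathscr{U}_n$ still exhibits the sections over an \emph{arbitrary} admissible open affinoid of $Y$, not merely one contained in some $Y_i$, as a Noetherian Banach algebra with flat transition maps of dense image. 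The coadmissibility of the higher direct images, by contrast, is essentially formal once the sheaf-theoretic framework is in place.
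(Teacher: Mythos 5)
Your plan of localizing on $Y$ and then glueing is right in spirit, and the final coadmissibility step is correct and mirrors the paper's: once $f_*\wideparen{\mathscr{U}(\mathscr{L})}$ is known to be a global Fr\'echet--Stein sheaf, coadmissibility of $\mathrm{R}^jf_*\mathcal{M}$ follows because it holds after restriction to each $Y_i$, and coadmissibility can be checked on an affinoid covering. However, the second step --- glueing the local global Fr\'echet--Stein structures --- is where the substance is, and your handling of it has two problems.

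First, the parenthetical ``(alternatively, invoke Proposition \ref{recallglobal})'' to conclude that $\wideparen{\mathscr{U}(\mathscr{L})}$ is a \emph{global} Fr\'echet--Stein sheaf on $X$ is incorrect: that proposition gives a \emph{full} Fr\'echet--Stein sheaf, i.e.\ one that restricts to a global Fr\'echet--Stein sheaf on each admissible open affinoid, which does not yield the global structure on the non-affinoid $X$. Second, and more seriously, your proposal to ``arrange the auxiliary choices of affine formal models and $(R,\mathcal{A})$-Lie lattices to be compatible on the overlaps'' by appealing to independence of choices is not justified: while $\wideparen{\mathscr{U}(\mathscr{L})}$ is independent of the lattice, the individual sheaves $\mathscr{U}_n$ and their sites most emphatically are not, and the overlap computation in Proposition \ref{UisglFS} you cite works only because a \emph{single} lattice was fixed globally at the outset, with all local ones derived from it by base change. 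You do not explain where a compatible system of local lattices comes from, and in general one cannot produce it ad hoc.

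The observation that resolves both issues (and is how the paper proceeds) is that $f$ may be replaced via Stein factorization $f=hg$. Kiehl's Proper Mapping Theorem makes $\mathcal{O}_X(X)$ finite over $\mathcal{O}_Y(Y)$, hence affinoid, so $Z=\Sp\mathcal{O}_X(X)$ is affinoid; $\mathscr{L}(X)$ is free over $\mathcal{O}_X(X)$, so one can choose a \emph{single global} $(R,\mathcal{A})$-Lie lattice $\mathcal{L}\subset\mathscr{L}(X)$ and run the construction of subsection 3.4 verbatim with the covering coming from properness, giving the global Fr\'echet--Stein structure on $X$ directly without any overlap-compatibility argument. For $Y$, the paper identifies $g_*\wideparen{\mathscr{U}(\mathscr{L})}\cong\wideparen{\mathscr{U}(g_*\mathscr{L})}$ (since this holds locally by Lemma \ref{pushffsalg}), observes that the latter is a global Fr\'echet--Stein sheaf on the affinoid $Z$ by Proposition \ref{recallglobal}, and then applies Lemma \ref{pushfaffinoidglobal} to the finite (hence affinoid) morphism $h$. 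This bypasses any glueing of local Fr\'echet--Stein structures on $Y$ entirely.
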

\begin{proof}
Write $f=hg$ for the Stein factorization. Then the same argument as in Lemma \ref{pushforwalgebroid} shows that $g_*\mathscr{L}$ is a Lie algebroid, and by Lemma \ref{pushffsalg}, the morphism $\wideparen{\mathscr{U}(g_*\mathscr{L})}\to g_*\wideparen{\mathscr{U}(\mathscr{L})}$ becomes an isomorphism under restriction to each $h^{-1}Y_i$. Thus $g_*\wideparen{\mathscr{U}(\mathscr{L})}\cong \wideparen{\mathscr{U}(g_*\mathscr{L})}$ is a global Fr\'echet--Stein sheaf by Proposition \ref{recallglobal}, and hence so is $f_*\wideparen{\mathscr{U}(\mathscr{L})}$ by Lemma \ref{pushfaffinoidglobal}.\\
\\
By definition of properness, there exists an affinoid covering $(Y_i)$ of $Y$ such that for $X_i=f^{-1}Y_i$, the morphism $f|_{X_i}: X_i\to Y_i$ is elementary proper. Since $Y$ is affinoid, we can assume that the covering is finite, and the definition of properness now produces a finite covering $(U_{ij})$ of $X$ by affinoid subspaces (which is admissible by G-topology axioms). Choosing a Lie lattice in $g_*\mathscr{L}(Y)=\mathscr{L}(X)$, we can repeat the construction of $\mathscr{U}_n$ as in subsection 3.4 to verify that $\wideparen{\mathscr{U}(\mathscr{L})}$ is a global Fr\'echet--Stein sheaf on $X$.\\
\\
Now by Theorem \ref{KiehlDXfree} and Lemma \ref{pushfaffinoidglobal}, if $\mathcal{M}$ is a coadmissible $\wideparen{\mathscr{U}(\mathscr{L})}$-module, then $\mathrm{R}^jf_*\mathcal{M}|_{Y_i}$ is a coadmissible $f_*\wideparen{\mathscr{U}(\mathscr{L})}|_{Y_i}$-module for each $i$ and every $j\geq 0$, and thus $\mathrm{R}^jf_*\mathcal{M}$ is coadmissible.
\end{proof}
\begin{lemma}
\label{KiehlDXaffinoid}
Let $f: X\to Y$ be a proper morphism of rigid analytic $K$-varieties. Let $\mathscr{L}$ be a Lie algebroid on $X$, and suppose there exists an affinoid covering $(Y_i)$ of $Y$ such that the restriction of $\mathscr{L}$ to $X_i=f^{-1}Y_i$ is free for each $i$.\\
Then $f_*\wideparen{\mathscr{U}(\mathscr{L})}$ is a full Fr\'echet--Stein sheaf on $Y$.\\
If $\mathcal{M}$ is a coadmissible $\wideparen{\mathscr{U}(\mathscr{L})}$-module, then $\mathrm{R}^jf_*\mathcal{M}$ is a coadmissible $f_*\wideparen{\mathscr{U}(\mathscr{L})}$-module for each $j\geq 0$.
\end{lemma}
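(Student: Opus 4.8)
The plan is to reduce everything to Lemma~\ref{KiehlDXfreeaff} by working over the covering $(Y_i)$ and then gluing. Write $X_i=f^{-1}Y_i$; since properness is local on the base, each $f|_{X_i}\colon X_i\to Y_i$ is proper with affinoid target, and $\mathscr{L}|_{X_i}$ is free by hypothesis. Applying Lemma~\ref{KiehlDXfreeaff} to $f|_{X_i}$ gives that $(f_*\wideparen{\mathscr{U}(\mathscr{L})})|_{Y_i}=(f|_{X_i})_*\wideparen{\mathscr{U}(\mathscr{L}|_{X_i})}$ is a global Fr\'echet--Stein sheaf on $Y_i$; and if $\mathcal{M}$ is a coadmissible $\wideparen{\mathscr{U}(\mathscr{L})}$-module then $\mathcal{M}|_{X_i}$ is coadmissible (coadmissibility being inherited by restriction to admissible opens), so that $(\mathrm{R}^jf_*\mathcal{M})|_{Y_i}=\mathrm{R}^j(f|_{X_i})_*(\mathcal{M}|_{X_i})$ is a coadmissible module over it. As $(Y_i)$ is an admissible affinoid covering of $Y$, this will immediately yield the coadmissibility of $\mathrm{R}^jf_*\mathcal{M}$ once we know that $f_*\wideparen{\mathscr{U}(\mathscr{L})}$ is a full Fr\'echet--Stein sheaf, so that the notion of coadmissible module from Definition~\ref{defFSsheaf} is available and, by the Kiehl-type theorem for coadmissible sheaves, independent of the chosen affinoid covering.

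Hence the substantive point is to promote ``global Fr\'echet--Stein sheaf on each $Y_i$'' to ``full Fr\'echet--Stein sheaf on $Y$'': I must show that for every admissible open affinoid subspace $V\subseteq Y$ the restriction $(f_*\wideparen{\mathscr{U}(\mathscr{L})})|_V$ is a global Fr\'echet--Stein sheaf on $V$. Since $V$ is quasi-compact I can choose a finite affinoid covering $(W_l)_{l=1}^m$ of $V$ subordinate to $(Y_i)$, so that each $W_l$ is an affinoid subdomain of some $Y_{i(l)}$, $f^{-1}W_l\to W_l$ is proper, and $\mathscr{L}|_{f^{-1}W_l}$ is free; by Lemma~\ref{KiehlDXfreeaff} each $(f_*\wideparen{\mathscr{U}(\mathscr{L})})|_{W_l}$ is a global Fr\'echet--Stein sheaf on $W_l$. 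What remains is to glue these structures into a single one on $V$.

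For the gluing I would follow the pattern of the proof of Proposition~\ref{UisglFS}. On each $W_l$ the global Fr\'echet--Stein structure is the explicit one built from the Stein factorization $f^{-1}W_l\xrightarrow{g_l}Z_l\xrightarrow{h_l}W_l$ of $f|_{f^{-1}W_l}$: here $g_{l*}\mathscr{L}$ is a free Lie algebroid on $Z_l$, $h_l$ is finite hence affinoid, and one takes the $h_{l*}$-pushforward of the sheaves $\mathscr{U}_n$ attached to a Lie lattice $\mathcal{L}_l$ in $g_{l*}\mathscr{L}(Z_l)=\mathscr{L}(f^{-1}W_l)$ as in Lemma~\ref{pushffsalg}. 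On an overlap $W_l\cap W_{l'}$ the two Stein factorizations are canonically identified (Stein factorization being formed from $f_*\mathcal{O}$, which is intrinsic) and the lattices $\mathcal{L}_l$, $\mathcal{L}_{l'}$ are commensurable, so after replacing all of them by a common power of $\pi$ and interleaving the indices --- using \cite[Proposition~7.6]{Ardakov1} exactly as in the proof of Proposition~\ref{UisglFS} --- the corresponding sites and sheaves agree on all overlaps and glue to sites $V_n$ and sheaves $\mathcal{F}_n$ on $V$ with $(f_*\wideparen{\mathscr{U}(\mathscr{L})})|_V\cong\varprojlim\mathcal{F}_n$. The conditions of Definition~\ref{globalFSsheaf} --- flatness of restriction maps, vanishing of higher \v{C}ech cohomology on affinoids, and the Fr\'echet--Stein property over admissible affinoids --- are all local on $V$, hence follow from their validity over the $W_l$, which is part of Lemma~\ref{KiehlDXfreeaff}. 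This makes $f_*\wideparen{\mathscr{U}(\mathscr{L})}$ a full Fr\'echet--Stein sheaf on $Y$, and the coadmissibility of $\mathrm{R}^jf_*\mathcal{M}$ then follows as in the first paragraph.

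The hard part will be precisely this last gluing step. Everything else is a formal consequence of Lemma~\ref{KiehlDXfreeaff} together with the fact that both ``global Fr\'echet--Stein sheaf'' and ``coadmissible module'' descend along finite affinoid coverings; but making the explicit site-and-sheaf data coming from the different pieces $W_l$ \emph{literally} compatible on overlaps requires spelling out (i) the canonical compatibility of Stein factorizations over intersections and (ii) a single rescaling of the finitely many Lie lattices $\mathcal{L}_l$ that works simultaneously on all overlaps --- both routine but genuinely needing to be checked.
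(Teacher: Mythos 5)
You correctly reduce to $Y$ affinoid and rightly note that the coadmissibility statement follows by restricting to the $Y_i$, where Lemma~\ref{KiehlDXfreeaff} applies. However, your plan for the Fr\'echet--Stein statement has a flaw and also misses the paper's shortcut. The flaw: you assert that the conditions in Definition~\ref{globalFSsheaf} are ``all local on $V$,'' but condition (i) --- that $\mathcal{F}(U)\cong\varprojlim\mathcal{F}_n(U)$ is a Fr\'echet--Stein algebra for \emph{every} admissible open affinoid $U\subseteq V$ --- is precisely the one that is \emph{not} local: for an affinoid $U$ not contained in any single $W_l$, you cannot deduce it by covering $U$ with smaller affinoids. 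So even after carrying out the proposed gluing of sites and rescaling of lattices you would still have to prove the Fr\'echet--Stein property for such $U$ by hand.

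The paper avoids both the gluing and this issue. With $Y$ affinoid, take the \emph{global} Stein factorization $X\xrightarrow{g}Z\xrightarrow{h}Y$ of $f$ rather than the local ones $f^{-1}W_l\to Z_l\to W_l$. The hypothesis makes $g_*\mathscr{L}$ locally free on the affinoid $Z$ (free on each $h^{-1}Y_i$), hence a genuine Lie algebroid on $Z$, so Proposition~\ref{recallglobal} applies directly: $\wideparen{\mathscr{U}(g_*\mathscr{L})}$ is a global Fr\'echet--Stein sheaf on $Z$, and the lattice-rescaling and site-gluing you were going to redo has already been done once and for all in \cite{Bodecompl}. By Lemma~\ref{pushffsalg} on each $h^{-1}Y_i$ one has $g_*\wideparen{\mathscr{U}(\mathscr{L})}\cong\wideparen{\mathscr{U}(g_*\mathscr{L})}$, and since $h$ is finite (hence affinoid), Lemma~\ref{pushfaffinoidglobal} gives that $f_*\wideparen{\mathscr{U}(\mathscr{L})}=h_*\wideparen{\mathscr{U}(g_*\mathscr{L})}$ is a global Fr\'echet--Stein sheaf on $Y$; in particular the Fr\'echet--Stein property of $\mathcal{F}(U)$ holds for every affinoid $U\subseteq Y$, because $h^{-1}U$ is an affinoid subdomain of $Z$. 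You came close to this when you invoked the local Stein factorizations; the only missing insight is that one should form the Stein factorization globally before, not after, trying to identify the $\mathscr{U}_n$-structure.
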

\begin{proof}
Without loss of generality, we can assume $Y$ to be affinoid, and it remains to show that $f_*\wideparen{\mathscr{U}(\mathscr{L})}$ is a global Fr\'echet--Stein sheaf in that case. Write $g: X\to Z$ for the first map in the Stein factorization of $f=hg$. As before, $g_*\mathscr{L}$ is a locally free $\mathcal{O}_Z$-module, and the induced anchor map from $\mathscr{L}$ makes it a Lie algebroid on $Z$. The morphism $\wideparen{\mathscr{U}(g_*\mathscr{L})}\to g_*\wideparen{\mathscr{U}(\mathscr{L})}$ becomes an isomorphism under restriction to each $h^{-1}Y_i$ by the above, so again $g_*\wideparen{\mathscr{U}(\mathscr{L})}\cong \wideparen{\mathscr{U}(g_*\mathscr{L})}$ is a global Fr\'echet--Stein sheaf on $Z$. Thus $f_*\wideparen{\mathscr{U}(\mathscr{L})}$ is a global Fr\'echet--Stein sheaf on $Y$ by Proposition \ref{recallglobal}.\\
For the coadmissibility result, it is again sufficient to restrict to each $Y_i$, reducing the claim to Lemma \ref{KiehlDXfreeaff}.
\end{proof}

This proves Theorem \ref{intromainthm}.(i) from the introduction.\\
\\
We can now state a more general Proper Mapping Theorem. For this, let $f: X\to Y$ be a proper morphism of rigid analytic $K$-varieties. Let $\mathscr{L}$ be a Lie algebroid on $X$, and suppose there exists an affinoid covering $(Y_i)$ of $Y$ such that the restriction of $\mathscr{L}$ to $X_i=f^{-1}Y_i$ is free for each $i$. Note in particular that $\wideparen{\mathscr{U}(\mathscr{L})}|_{X_i}$ is a global Fr\'echet--Stein sheaf for each $i$ by Lemma \ref{KiehlDXfreeaff}, and $f_*\wideparen{\mathscr{U}(\mathscr{L})}$ is a full Fr\'echet--Stein sheaf on $Y$ by Lemma \ref{KiehlDXaffinoid}.
\begin{proposition}
\label{KiehlDXgeneralcoad}
Let $\mathscr{U}$ be a sheaf of $K$-algebras on $X$ with a morphism of sheaves of algebras $\theta: \wideparen{\mathscr{U}(\mathscr{L})}\to \mathscr{U}$ such that $\mathscr{U}|_{X_i}$ is a coadmissible enlargement of the global Fr\'echet--Stein sheaf $\wideparen{\mathscr{U}(\mathscr{L})}|_{X_i}$ for each $i$ (in particular, this makes $\mathscr{U}$ a Fr\'echet--Stein sheaf on $X$).\\
Then $f_*\mathscr{U}|_{Y_i}$ is a coadmissible enlargement of $f_*\wideparen{\mathscr{U}(\mathscr{L})}|_{Y_i}$, and in particular a global Fr\'echet--Stein sheaf on $Y_i$ for each $i$. Thus $f_*\mathscr{U}$ is a Fr\'echet--Stein sheaf on $Y$.\\
If $\mathcal{M}$ is a coadmissible $\mathscr{U}$-module then $\mathrm{R}^jf_*\mathcal{M}$ is a coadmissible $f_*\mathscr{U}$-module for every $j\geq 0$.  
\end{proposition}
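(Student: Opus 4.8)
The plan is to reduce to the already-established case of $\wideparen{\mathscr{U}(\mathscr{L})}$ via Proposition \ref{FSbasechange} (coadmissible enlargement) and the sheaf-theoretic base change along $f$. First I would observe that the statements are all local on the base $Y$, so we may replace $Y$ by one of the affinoids $Y_i$ and hence assume $\mathscr{L}$ is free on $X=f^{-1}Y$; by Lemma \ref{KiehlDXfreeaff} we then know that $f_*\wideparen{\mathscr{U}(\mathscr{L})}$ is a global Fr\'echet--Stein sheaf on $Y$ and that $\mathrm{R}^jf_*$ preserves coadmissibility for $\wideparen{\mathscr{U}(\mathscr{L})}$-modules. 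By hypothesis $\mathscr{U}$ is a coadmissible enlargement of $\wideparen{\mathscr{U}(\mathscr{L})}$ via $\theta$, and by definition this means $\mathscr{U}$ is a sheaf of algebras which is a coadmissible $\wideparen{\mathscr{U}(\mathscr{L})}$-module with continuous multiplication on each admissible open affinoid.

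The first key step is to show $f_*\mathscr{U}$ is a coadmissible enlargement of $f_*\wideparen{\mathscr{U}(\mathscr{L})}$ on $Y$. Here I would invoke Lemma \ref{pushfaffinoidglobal}: since $f$ factors through its Stein factorization $f=hg$ with $h$ finite (hence affinoid) and $g$ proper with $g_*\mathcal{O}_X\cong\mathcal{O}_Z$, and since the finite morphism $h$ preserves both the global Fr\'echet--Stein property and coadmissible enlargements, it suffices to treat the case where $f=g$ is elementary proper with trivial Stein factorization, i.e. $Y=\Sp A$ with $A=\mathcal{O}_X(X)$. In this case $f_*\mathscr{U}$ is again a sheaf of algebras, and by the Proper Mapping Theorem for $\wideparen{\mathscr{U}(\mathscr{L})}$-modules (Theorem \ref{KiehlDXfree}) applied to the coadmissible module $\mathscr{U}$ itself, $f_*\mathscr{U}=\mathrm{R}^0f_*\mathscr{U}$ is a coadmissible $f_*\wideparen{\mathscr{U}(\mathscr{L})}$-module. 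I would then check that the multiplication on $f_*\mathscr{U}(U)=\mathscr{U}(f^{-1}U)$ is still continuous with respect to the presentation $(f_*\wideparen{\mathscr{U}(\mathscr{L})})_n(U)=\mathscr{U}_n(f^{-1}U)$: this is essentially inherited from the continuity of multiplication on $\mathscr{U}(f^{-1}U)$ over $\wideparen{\mathscr{U}(\mathscr{L})}(f^{-1}U)$, since the Banach algebra norms on the finite pieces $\mathscr{U}_n(f^{-1}U)$ agree (up to equivalence) with those coming from the enlargement structure. Proposition \ref{FSbasechange} then yields that $f_*\mathscr{U}$ is a global Fr\'echet--Stein sheaf on $Y$ and that a $f_*\mathscr{U}$-module is coadmissible iff it is coadmissible as an $f_*\wideparen{\mathscr{U}(\mathscr{L})}$-module.

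The second key step handles the higher direct images of a coadmissible $\mathscr{U}$-module $\mathcal{M}$. By Proposition \ref{FSbasechange}, $\mathcal{M}$ is automatically coadmissible as a $\wideparen{\mathscr{U}(\mathscr{L})}$-module (via restriction of scalars along $\theta$), so by Theorem \ref{KiehlDXfree} (or Lemma \ref{KiehlDXfreeaff}) the sheaf $\mathrm{R}^jf_*\mathcal{M}$ is a coadmissible $f_*\wideparen{\mathscr{U}(\mathscr{L})}$-module for every $j\geq 0$. The sheaf-theoretic pushforward $\mathrm{R}^jf_*$ does not see the algebra structure, so $\mathrm{R}^jf_*\mathcal{M}$ is in a natural way an $f_*\mathscr{U}$-module, and the underlying $f_*\wideparen{\mathscr{U}(\mathscr{L})}$-module is the coadmissible one just produced. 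Applying the second half of Proposition \ref{FSbasechange} (coadmissibility over $f_*\mathscr{U}$ $\Leftrightarrow$ coadmissibility over $f_*\wideparen{\mathscr{U}(\mathscr{L})}$) concludes that $\mathrm{R}^jf_*\mathcal{M}$ is a coadmissible $f_*\mathscr{U}$-module. Finally I would unwind the localisation: since coadmissibility is checked on an admissible affinoid covering, the global statement over $Y$ follows by glueing the $Y_i$-statements, exactly as in the last paragraph of Lemma \ref{KiehlDXaffinoid}.

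I expect the main obstacle to be verifying the \emph{continuity of multiplication} hypothesis of Proposition \ref{FSbasechange} for $f_*\mathscr{U}$ with respect to the correct Fr\'echet--Stein presentation $(\mathscr{U}_n(f^{-1}U))_n$ — one must produce compatible submultiplicative norms on the finite pieces $\mathscr{U}_n(f^{-1}U)=\mathscr{U}_n(X)\widehat{\otimes}_{\wideparen{\mathscr{U}(\mathscr{L})}(X)}\mathscr{U}(X)$, matching them up with the canonical Banach algebra structures coming from the coadmissible-enlargement datum on $X$ pushed forward. This is where the interplay between the completed tensor product computations of section 2 (especially Lemma \ref{removehat} and Corollary \ref{noethapplication}) and the explicit description of $\mathscr{U}_n$ as $B\widehat{\otimes}_A U_A(L)$ will do the work; everything else is a formal consequence of the machinery already set up.
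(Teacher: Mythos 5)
Your proposal follows essentially the same route as the paper: localize to $Y_i$ so that $\mathscr{L}|_{X_i}$ is free, apply the already-proved Proper Mapping Theorem to $\mathscr{U}$ itself (viewed as a coadmissible $\wideparen{\mathscr{U}(\mathscr{L})}$-module) to see that $f_*\mathscr{U}|_{Y_i}$ is a coadmissible $f_*\wideparen{\mathscr{U}(\mathscr{L})}|_{Y_i}$-module, check continuous multiplication so Proposition~\ref{FSbasechange} upgrades this to a coadmissible enlargement, and then use Proposition~\ref{FSbasechange} twice more to transfer coadmissibility of $\mathrm{R}^jf_*\mathcal{M}$ between the two Fr\'echet--Stein structures. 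One small calibration: after restricting to $Y_i$ the morphism $f|_{X_i}$ is proper with affinoid base but not necessarily elementary proper, so the statement you want to cite at that point is Lemma~\ref{KiehlDXaffinoid} (or Lemma~\ref{KiehlDXfreeaff}) rather than Theorem~\ref{KiehlDXfree} directly; your Stein-factorization reduction accomplishes the same thing, so this is a matter of bookkeeping rather than a gap. On the point you flag as the main obstacle -- continuous multiplication on $f_*\mathscr{U}|_{Y_i}$ -- the paper dispatches this in one line by passing to an affinoid covering $(V_\alpha)$ of $X_i$: each $\mathscr{U}(V_\alpha)$ has continuous multiplication by hypothesis, and $\mathscr{U}(f^{-1}U)$ sits inside $\prod_\alpha\mathscr{U}(V_\alpha\cap f^{-1}U)$ compatibly with the Banach norms on the $\mathscr{U}_n$-pieces coming from the $\check{\mathrm C}$ech description of $f_*\mathscr{U}$ established in sections 3--5; no appeal to Corollary~\ref{noethapplication} is needed. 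Aside from this, your argument matches the paper's.
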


\begin{proof}
Note that $f_*\wideparen{\mathscr{U}(\mathscr{L})}|_{Y_i}$ is a global Fr\'echet--Stein sheaf by Lemma \ref{KiehlDXaffinoid}, and $f_*\mathscr{U}|_{Y_i}$ is a coadmissible $f_*\wideparen{\mathscr{U}(\mathscr{L})}|_{Y_i}$-module. As $f_*\mathscr{U}|_{Y_i}$ can easily be checked to have continuous multiplication by passing to affinoid coverings of $X_i$, this shows that $f_*\mathscr{U}|_{Y_i}$ is a coadmissible enlargement of $f_*\wideparen{\mathscr{U}(\mathscr{L})}|_{Y_i}$ by Proposition \ref{FSbasechange}. Thus $f_*\mathscr{U}|_{Y_i}$ is a global Fr\'echet--Stein sheaf.\\
If $\mathcal{M}$ is a coadmissible $\mathscr{U}$-module, it is in particular a coadmissible $\wideparen{\mathscr{U}(\mathscr{L})}$-module, by Proposition \ref{FSbasechange} applied to each restriction $\mathcal{M}|_{X_i}$. Applying Lemma \ref{KiehlDXaffinoid} shows that $\mathrm{R}^jf_*\mathcal{M}$ is a coadmissible $f_*\wideparen{\mathscr{U}(\mathscr{L})}$-module. So $\mathrm{R}^jf_*\mathcal{M}|_{Y_i}$ is coadmissible over the global Fr\'echet--Stein sheaf $f_*\wideparen{\mathscr{U}(\mathscr{L})}|_{Y_i}$, and thus coadmissible over $f_*\mathscr{U}|_{Y_i}$ by Proposition \ref{FSbasechange}. Therefore $\mathrm{R}^jf_*\mathcal{M}$ is a coadmissible $f_*\mathscr{U}$-module.
\end{proof}

\begin{lemma}
\label{KiehlDXtwoproper}
Let 
\begin{equation*}
\begin{xy}
\xymatrix{
X_1\ar[r]^{f_1}& X_2\ar[r]^{f_2}& Y
}
\end{xy}
\end{equation*}
be two proper morphisms, and assume that $Y$ is affinoid. Let $\mathscr{L}$ be a Lie algebroid on $X_1$ which is free as an $\mathcal{O}_{X_1}$-module. Note that by applying Lemma \ref{KiehlDXfreeaff} to $f_2f_1$, we know that $\wideparen{\mathscr{U}(\mathscr{L})}$ is a global Fr\'echet--Stein sheaf on $X_1$. Let $\mathscr{U}$ be a coadmissible enlargement of $\wideparen{\mathscr{U}(\mathscr{L})}$.\\
Then $f_{1*}\wideparen{\mathscr{U}(\mathscr{L})}$ is a global Fr\'echet--Stein sheaf on $X_2$, and $f_{1*}\mathscr{U}$ is a coadmissible enlargement of $f_{1*}\wideparen{\mathscr{U}(\mathscr{L})}$.\\
If $\mathcal{M}$ is a coadmissible $\mathscr{U}$-module, then $\mathrm{R}^jf_{1*}\mathcal{M}$ is a coadmissible $f_{1*}\mathscr{U}$-module for each $j\geq 0$.
\end{lemma}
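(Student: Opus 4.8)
The plan is to bootstrap from Lemmas \ref{KiehlDXfreeaff} and \ref{KiehlDXaffinoid}, exploiting that the \emph{composite} $f_2f_1\colon X_1\to Y$ is proper with affinoid target, which is precisely what equips $\wideparen{\mathscr{U}(\mathscr{L})}$ with a genuine \emph{global} Fr\'echet--Stein presentation on $X_1$ rather than merely a full one. As already noted in the statement, Lemma \ref{KiehlDXfreeaff} applied to $f_2f_1$ produces, via the construction of subsection 3.4, an exhausting family of sites $(X_{1,n})$ on $X_1$ together with sheaves $\mathscr{U}_n$ with $\varprojlim\mathscr{U}_n\cong\wideparen{\mathscr{U}(\mathscr{L})}$ satisfying the axioms of Definition \ref{globalFSsheaf}. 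I would then let $X_{2,n}$ be the Grothendieck topology on $X_2$ induced via $f_1$ (an affinoid $U$ being open in $X_{2,n}$, and a covering admissible in $X_{2,n}$, exactly when its $f_1$-preimage is so in $X_{1,n}$), and set $\mathscr{U}_n^{X_2}:=f_{1*}\mathscr{U}_n$, a sheaf of $K$-algebras on $X_{2,n}$ with $\varprojlim\mathscr{U}_n^{X_2}\cong f_{1*}\wideparen{\mathscr{U}(\mathscr{L})}$. The three conditions of Definition \ref{globalFSsheaf} for this data each concern sections, subdomains and finite coverings relative to a single admissible open affinoid $U\subseteq X_2$; restricting $f_1$ to the proper morphism $f_1|_{f_1^{-1}U}\colon f_1^{-1}U\to U$ with affinoid target, and using that $\mathscr{L}|_{f_1^{-1}U}$ is again free, Lemma \ref{KiehlDXfreeaff} applies and delivers exactly the Fr\'echet--Stein, flatness and \v{C}ech-vanishing statements required; one only has to check that the presentation of $f_{1*}\wideparen{\mathscr{U}(\mathscr{L})}|_U$ obtained this way coincides with $\mathscr{U}_n^{X_2}|_U$, which follows by choosing Lie lattices and affine formal models compatibly with those fixed on $X_1$, as in the proof of Proposition \ref{UisglFS}. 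This gives that $f_{1*}\wideparen{\mathscr{U}(\mathscr{L})}$ is a global Fr\'echet--Stein sheaf on $X_2$.

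From here the argument runs parallel to the proof of Proposition \ref{KiehlDXgeneralcoad}. Since $\mathscr{U}$ is a coadmissible enlargement of $\wideparen{\mathscr{U}(\mathscr{L})}$ on $X_1$, it is in particular a coadmissible $\wideparen{\mathscr{U}(\mathscr{L})}$-module, so Lemma \ref{KiehlDXaffinoid} applied to $f_1\colon X_1\to X_2$ (legitimate since $\mathscr{L}$ is free on all of $X_1$, hence on $f_1^{-1}$ of any affinoid of $X_2$) shows $f_{1*}\mathscr{U}=\mathrm{R}^0f_{1*}\mathscr{U}$ is a coadmissible $f_{1*}\wideparen{\mathscr{U}(\mathscr{L})}$-module; continuity of the multiplication on $f_{1*}\mathscr{U}$ with respect to $(\mathscr{U}_n^{X_2})$ is checked, as in Proposition \ref{KiehlDXgeneralcoad}, by passing to an affinoid covering of $X_1$ lifting one of $X_2$. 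Proposition \ref{FSbasechange} then identifies $f_{1*}\mathscr{U}$ as a coadmissible enlargement of the global Fr\'echet--Stein sheaf $f_{1*}\wideparen{\mathscr{U}(\mathscr{L})}$ on $X_2$. Finally, given a coadmissible $\mathscr{U}$-module $\mathcal{M}$, Proposition \ref{FSbasechange} makes it coadmissible over $\wideparen{\mathscr{U}(\mathscr{L})}$, Lemma \ref{KiehlDXaffinoid} makes each $\mathrm{R}^jf_{1*}\mathcal{M}$ coadmissible over $f_{1*}\wideparen{\mathscr{U}(\mathscr{L})}$, and a second application of Proposition \ref{FSbasechange}, now on $X_2$ with the enlargement $f_{1*}\mathscr{U}$, upgrades this to coadmissibility over $f_{1*}\mathscr{U}$.

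The delicate step is the first one. In Lemma \ref{KiehlDXaffinoid} the target is reduced to an affinoid and one directly quotes the construction of subsection 3.4; here $X_2$ is genuinely non-affinoid, so its global Fr\'echet--Stein presentation has to be assembled by transporting the one on $X_1$ along $f_1$ and matching it, over every affinoid open of $X_2$, with the presentation coming from Lemma \ref{KiehlDXfreeaff}. This is essentially bookkeeping, but it cannot be skipped: Proposition \ref{FSbasechange}, and with it the notion of coadmissible enlargement, requires an honest global presentation of the base sheaf as input, so the weaker \emph{full} Fr\'echet--Stein conclusion that Lemma \ref{KiehlDXaffinoid} gives for $f_1$ on its own does not suffice to continue.
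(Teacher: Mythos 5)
Your proof is correct, and the second half (the coadmissibility bookkeeping using Lemma~\ref{KiehlDXaffinoid} and Proposition~\ref{FSbasechange}) is exactly the paper's argument. The first half, however, takes a genuinely different route to show that $f_{1*}\wideparen{\mathscr{U}(\mathscr{L})}$ is a \emph{global} Fr\'echet--Stein sheaf on $X_2$. You transport the global presentation $(\mathscr{U}_n)$ on $X_1$ (supplied by Lemma~\ref{KiehlDXfreeaff} applied to $f_2f_1$) forward along $f_1$ to data $(\mathscr{U}^{X_2}_n)$ on $X_2$, and then verify the axioms of Definition~\ref{globalFSsheaf} one affinoid $U\subseteq X_2$ at a time by applying Lemma~\ref{KiehlDXfreeaff} to $f_1|_{f_1^{-1}U}$; this leaves you with the compatibility check you flag, that the presentation so obtained agrees with the one inherited from $X_1$. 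The paper instead routes through the Stein factorization of $f_1$ itself: with $f_1=h\circ g$, $g\colon X_1\to Z$, $h\colon Z\to X_2$ finite, it observes that $g_*\mathscr{L}$ is a free Lie algebroid on $Z$ and that $f_2h\colon Z\to Y$ is proper with affinoid target, so Lemma~\ref{KiehlDXfreeaff} applies directly on $Z$ to make $\wideparen{\mathscr{U}(g_*\mathscr{L})}\cong g_*\wideparen{\mathscr{U}(\mathscr{L})}$ a global Fr\'echet--Stein sheaf on $Z$; since $h$ is affinoid, Lemma~\ref{pushfaffinoidglobal} then pushes this down to $X_2$ without any gluing to be re-examined. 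The paper's detour via $Z$ thus absorbs the compatibility check you rightly identify as the delicate step, while your construction makes the global presentation on $X_2$ more explicit at the cost of the extra bookkeeping.
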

\begin{proof}
Let $g: X_1\to Z$ be the first map in the Stein factorization of $f_1$. Then $g_*\mathscr{L}$ is a Lie algebroid on $Z$ which is free as an $\mathcal{O}_Z$-module, so by Lemma \ref{KiehlDXfreeaff} applied to the proper morphism $f_2h$, $\wideparen{\mathscr{U}(g_*\mathscr{L})}$ is a global Fr\'echet--Stein sheaf on $Z$. As before $g_*\wideparen{\mathscr{U}(\mathscr{L})}\cong \wideparen{\mathscr{U}(g_*\mathscr{L})}$, so Proposition \ref{recallglobal} proves that $f_{1*}\wideparen{\mathscr{U}(\mathscr{L})}\cong h_*\wideparen{\mathscr{U}(g_*\mathscr{L})}$ is a global Fr\'echet--Stein sheaf on $X_2$.\\
By Lemma \ref{KiehlDXaffinoid}, $f_{1*}\mathscr{U}$ is a coadmissible $f_{1*}\wideparen{\mathscr{U}(\mathscr{L})}$-module. As before, this makes $f_{1*}\mathscr{U}$ a coadmissible enlargement of $f_{1*}\wideparen{\mathscr{U}(\mathscr{L})}$.\\
The coadmissibility argument is now the same as in Proposition \ref{KiehlDXgeneralcoad}.
\end{proof}

As a corollary to Proposition \ref{KiehlDXgeneralcoad}, we can consider Lie algebroids $\mathscr{L}$ which are not themselves free, but admit an epimorphism $\mathscr{L}'\to \mathscr{L}$ for some free Lie algebroid $\mathscr{L}'$. The reason why we spell this out explicitly is given by the geometric interpretation later.

\begin{corollary}
\label{KiehlDXalgversion}
Let $f:X\to Y$ be a proper morphism of rigid analytic $K$-varieties, and let $\mathscr{L}$ be a Lie algebroid on $X$ such that there is an epimorphism $\mathscr{L}'\to \mathscr{L}$ for some free Lie algebroid $\mathscr{L}'$.\\
Then $f_*\wideparen{\mathscr{U}(\mathscr{L})}$ is a Fr\'echet--Stein sheaf, and if $\mathcal{M}$ is a coadmissible $\wideparen{\mathscr{U}(\mathscr{L})}$-module, then $\mathrm{R}^jf_*\mathcal{M}$ is a coadmissible $f_*\wideparen{\mathscr{U}(\mathscr{L})}$-module for each $j\geq 0$.
\end{corollary}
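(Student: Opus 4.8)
The plan is to deduce this directly from Proposition~\ref{KiehlDXgeneralcoad}, taking the free Lie algebroid there to be $\mathscr{L}'$ and the coadmissible enlargement $\mathscr{U}$ there to be $\wideparen{\mathscr{U}(\mathscr{L})}$ itself. First I would fix an affinoid covering $(Y_i)$ of $Y$ as in Definition~\ref{proper}, so that each restriction $f|_{X_i}\colon X_i\to Y_i$ (with $X_i=f^{-1}Y_i$) is elementary proper. Since $\mathscr{L}'$ is a free $\mathcal{O}_X$-module, $\mathscr{L}'|_{X_i}$ is free for every $i$, so we are exactly in the situation described before Proposition~\ref{KiehlDXgeneralcoad}: $\wideparen{\mathscr{U}(\mathscr{L}')}|_{X_i}$ is a global Fr\'echet--Stein sheaf on $X_i$ by Lemma~\ref{KiehlDXfreeaff}, and $f_*\wideparen{\mathscr{U}(\mathscr{L}')}$ is a full Fr\'echet--Stein sheaf on $Y$ by Lemma~\ref{KiehlDXaffinoid}.

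Next I would assemble the data required to invoke the proposition. The epimorphism $\mathscr{L}'\to\mathscr{L}$ induces, by functoriality of the Fr\'echet completed enveloping algebra, a morphism of sheaves of $K$-algebras $\theta\colon\wideparen{\mathscr{U}(\mathscr{L}')}\to\wideparen{\mathscr{U}(\mathscr{L})}$, and what remains is to check that $\wideparen{\mathscr{U}(\mathscr{L})}|_{X_i}$ is a coadmissible enlargement of $\wideparen{\mathscr{U}(\mathscr{L}')}|_{X_i}$. This is a local assertion, so I would work over an affinoid subdomain $V=\Sp A\subseteq X_i$: there the surjection $U_A(\mathscr{L}'(V))\to U_A(\mathscr{L}(V))$ exhibits $\wideparen{\mathscr{U}(\mathscr{L})}(V)=\wideparen{U_A(\mathscr{L}(V))}$ as the coadmissible completion of the finitely generated $U_A(\mathscr{L}'(V))$-module $U_A(\mathscr{L}(V))$, exactly as in the discussion of the standard example following Proposition~\ref{FSbasechange}; hence $\wideparen{\mathscr{U}(\mathscr{L})}(V)$ is a coadmissible $\wideparen{\mathscr{U}(\mathscr{L}')}(V)$-module with continuous multiplication, and these identifications are compatible with restriction along affinoid subdomains. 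Since $\wideparen{\mathscr{U}(\mathscr{L})}$ is a full Fr\'echet--Stein sheaf by Proposition~\ref{recallglobal} and the above holds on an affinoid covering of $X_i$, it follows that $\wideparen{\mathscr{U}(\mathscr{L})}|_{X_i}$ is a coadmissible $\wideparen{\mathscr{U}(\mathscr{L}')}|_{X_i}$-module with continuous multiplication on affinoids, i.e.\ a coadmissible enlargement in the sense of Proposition~\ref{FSbasechange}.

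With these verifications in place, Proposition~\ref{KiehlDXgeneralcoad} applies verbatim: it gives that $f_*\wideparen{\mathscr{U}(\mathscr{L})}|_{Y_i}$ is a coadmissible enlargement of $f_*\wideparen{\mathscr{U}(\mathscr{L}')}|_{Y_i}$, hence a global Fr\'echet--Stein sheaf, for each $i$, so that $f_*\wideparen{\mathscr{U}(\mathscr{L})}$ is a Fr\'echet--Stein sheaf on $Y$; and for any coadmissible $\wideparen{\mathscr{U}(\mathscr{L})}$-module $\mathcal{M}$, the module $\mathrm{R}^jf_*\mathcal{M}$ is a coadmissible $f_*\wideparen{\mathscr{U}(\mathscr{L})}$-module for every $j\geq 0$. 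The only point that needs a little care is the passage in the previous paragraph from the affinoid-local description of $\wideparen{\mathscr{U}(\mathscr{L})}$ as a coadmissible completion to the global statement over $X_i$ that it is a coadmissible enlargement; but this is merely a rerun of the standard example combined with the sheaf property, and all the substantive work has already been carried out in Proposition~\ref{KiehlDXgeneralcoad} and the lemmas feeding into it.
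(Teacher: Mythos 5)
Your proof is correct and matches what the paper intends: the paper states Corollary~\ref{KiehlDXalgversion} simply as a corollary to Proposition~\ref{KiehlDXgeneralcoad} without spelling out the argument, and your reduction — taking $\mathscr{L}'$ as the locally-free-relative-to-$Y$ Lie algebroid in the setup of that proposition, taking $\mathscr{U}=\wideparen{\mathscr{U}(\mathscr{L})}$, and verifying via the standard example following Proposition~\ref{FSbasechange} that $\wideparen{\mathscr{U}(\mathscr{L})}|_{X_i}$ is a coadmissible enlargement of $\wideparen{\mathscr{U}(\mathscr{L}')}|_{X_i}$ — is precisely the missing argument.
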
 

This proves Theorem \ref{intromainthm}.(ii) from the introduction.\\
\\
We now consider one particular instance of this corollary.\\
Suppose that $f: X\to Y$ is elementary proper, and write $f=hg$ for the Stein factorization as usual. Let $(\rho, \mathscr{L})$ be a Lie algebroid on $X$ with the property that $g_*\mathscr{L}$ is free, i.e. $L:=\mathscr{L}(X)$ is a free $\mathcal{O}_X(X)$-module.
\begin{lemma}
The $\mathcal{O}_X$-module $g^*g_*\mathscr{L}$ given by $U\mapsto \mathcal{O}_X(U)\otimes_{\mathcal{O}_X(X)} \mathscr{L}(X)$ is a Lie algebroid on $X$ and is free as an $\mathcal{O}_X$-module.
\end{lemma}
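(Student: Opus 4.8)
The plan is to realise the presheaf $U\mapsto \mathcal{O}_X(U)\otimes_{A}L$, where $A:=\mathcal{O}_X(X)$ and $L:=\mathscr{L}(X)=g_*\mathscr{L}(Z)$, as the base change of the Lie--Rinehart algebra $(A,L)$ along the structure morphisms $A\to \mathcal{O}_X(U)$, using the anchor $\rho$ of $\mathscr{L}$ to provide the required extended action of $L$ on the local rings. Recall that since $f$ is elementary proper, $A$ is an affinoid $K$-algebra, $Z=\Sp A$, $g_*\mathcal{O}_X=\mathcal{O}_Z$, and $L$ is a $(K,A)$-Lie algebra: its bracket $[\cdot,\cdot]_L$ is that of the sheaf of Lie algebras $\mathscr{L}(X)$, and its anchor $\rho_Z(Z)\colon L\to\Der_K(A)$ is the action of $L$ on $A=\mathcal{O}_X(X)$ induced by $\rho$, exactly as in Lemma~\ref{pushforwalgebroid}.

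First I would dispatch the sheaf property and freeness. By hypothesis $g_*\mathscr{L}$ is a free $\mathcal{O}_Z$-module, say $g_*\mathscr{L}\cong\mathcal{O}_Z^{\oplus d}$, equivalently $L\cong A^{\oplus d}$. Hence for every admissible open affinoid $U\subseteq X$ the $\mathcal{O}_X(U)$-module $\mathcal{O}_X(U)\otimes_A L$ is free of rank $d$, so the presheaf $U\mapsto \mathcal{O}_X(U)\otimes_A L$ is already a sheaf, canonically isomorphic to $\mathcal{O}_X^{\oplus d}$; this is the $\mathcal{O}_X$-module $g^*g_*\mathscr{L}$ of the statement, and it is free of finite rank. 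That takes care of the first Lie algebroid axiom.

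Next I would build the anchor and the bracket. Let $\epsilon\colon g^*g_*\mathscr{L}\to\mathscr{L}$ be the counit of the adjunction $(g^*,g_*)$, which over $U=\Sp B$ is the $\mathcal{O}_X$-linear map $b\otimes v\mapsto b\cdot(v|_U)$, and set $\rho':=\rho\circ\epsilon\colon g^*g_*\mathscr{L}\to\mathcal{T}_X$, so $\rho'(U)(b\otimes v)=b\cdot\rho(U)(v|_U)$. Writing $v^U:=\rho(U)(v|_U)\in\mathcal{T}_X(U)=\Der_K(B)$, define on $g^*g_*\mathscr{L}(U)=B\otimes_A L$
\begin{equation*}
[\,a\otimes v,\ b\otimes w\,]:=ab\otimes[v,w]_L+a\,v^U(b)\otimes w-b\,w^U(a)\otimes v .
\end{equation*}
The three facts I need to extract from the structure of $\mathscr{L}$ are: (a) $v^U$ restricts on the image of $A$ in $B$ to $\rho_Z(Z)(v)\in\Der_K(A)$, since $\rho$ is a morphism of sheaves; (b) $v\mapsto v^U$ is a $K$-Lie algebra homomorphism $L\to\Der_K(B)$, since $\rho$ and the restriction map $\mathscr{L}(X)\to\mathscr{L}(U)$ are morphisms of sheaves of Lie algebras; and (c) $(av)^U=a|_U\cdot v^U$ for $a\in A$, by $\mathcal{O}_X$-linearity of $\rho$. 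These are mutually compatible as $U$ varies because every map in sight is natural.

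Given (a)--(c), the bracket is $A$-balanced (hence well defined on $B\otimes_A L$), $K$-bilinear and antisymmetric, and it is precisely the base change of $(A,L)$ to $B$ in the sense of \cite{Rinehart}; the Jacobi identity and the Leibniz rule $[\xi,a\eta]=a[\xi,\eta]+\rho'(\xi)(a)\eta$, as well as the fact that $\rho'$ is a homomorphism of sheaves of Lie algebras, then follow by the standard (and routine) Lie--Rinehart base-change computation from the Jacobi identity in $L$, from (b), and from the Leibniz rule for derivations. Naturality makes these brackets compatible with restriction to affinoid subdomains $U'\subseteq U$, so they glue to a $K$-Lie bracket on $g^*g_*\mathscr{L}$, turning it into a Lie algebroid on $X$ with anchor $\rho'$ (and making $\epsilon$ a morphism of Lie algebroids, which is the reason this is the `right' structure). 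The hard part is not any individual axiom but packaging (a)--(c) cleanly: once one sees that the anchor of $\mathscr{L}$ furnishes exactly a compatible family of extensions of the $L$-action from $A$ to the rings $\mathcal{O}_X(U)$, the Lie algebroid structure is forced and the axioms reduce to the classical base-change verification, while the freeness is immediate.
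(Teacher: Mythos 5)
Your proof is correct and follows essentially the same route as the paper: you write down the same bracket (the paper's $\iota$ is your counit $\epsilon$) and verify the Lie algebroid axioms via the Lie--Rinehart base-change computation. The only small omission is that you assert $L\cong A^{\oplus d}$ with $d$ finite without justification; the paper explicitly invokes Kiehl's Proper Mapping Theorem to conclude $L=\mathscr{L}(X)$ is finitely generated over $A$, which is what makes the rank finite and hence $g^*g_*\mathscr{L}$ coherent.
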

\begin{proof}
As $L$ is a free $\mathcal{O}_X(X)$-module, finitely generated by Kiehl's Proper Mapping Theorem, $g^*g_*\mathscr{L}$ is a free coherent $\mathcal{O}_X$-module. Write $\iota: g^*g_*\mathscr{L}\to \mathscr{L}$ for the natural morphism.\\
The Lie bracket on $\mathscr{L}$ allows us to define a Lie bracket on $g^*g_*\mathscr{L}$, given by
\begin{equation*}
[a\otimes x, b\otimes y]:=ab\otimes [x, y]_L+\rho\iota(a\otimes x)(b)\otimes y-\rho\iota(b\otimes y)(a)\otimes x
\end{equation*}
for $a, b\in \mathcal{O}_X(U)$, $x, y\in L$, $U$ an admissible open subspace of $X$. Note that this is well-defined, as $\rho$ satisfies the anchor map property.\\
This turns $g^*g_*\mathscr{L}$ into a sheaf of $K$-Lie algebras such that $\iota$ is a morphism of sheaves of Lie algebras. The composition $\rho \iota$ is then also a morphism of sheaves of Lie algebras, satisfying the axiom of an anchor map by construction.
\end{proof}

Thus we can apply Corollary \ref{KiehlDXalgversion} as soon as the natural morphism $g^*g_*\mathscr{L}\to \mathscr{L}$ is an epimorphism. By definition of $g^*g_*\mathscr{L}$, this is equivalent to requiring $\mathscr{L}$ to be generated by global sections. 

\begin{corollary}
\label{KiehlDXglobalgenversion1}
Let $f:X\to Y$ be an elementary proper morphism of rigid analytic $K$-varieties, and let $\mathscr{L}$ be a Lie algebroid on $X$ such that $\mathscr{L}(X)$ is a free $\mathcal{O}_X(X)$-module and $\mathscr{L}$ is generated by global sections.\\
Then $f_*\wideparen{\mathscr{U}(\mathscr{L})}$ is a global Fr\'echet--Stein sheaf on $Y$. If $\mathcal{M}$ is a coadmissible $\wideparen{\mathscr{U}(\mathscr{L})}$-module, then $\mathrm{R}^jf_*\mathcal{M}$ is a coadmissible $f_*\wideparen{\mathscr{U}(\mathscr{L})}$-module for each $j\geq 0$.
\end{corollary}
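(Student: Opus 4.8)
The strategy is to realise $\wideparen{\mathscr{U}(\mathscr{L})}$ as a coadmissible enlargement of the Fr\'echet completed enveloping algebra of the free Lie algebroid $\mathscr{L}':=g^*g_*\mathscr{L}$ constructed just above, and then to feed this into Theorem \ref{KiehlDXfree} through the base-change formalism of Proposition \ref{FSbasechange}. The point of the extra hypothesis that $f$ be \emph{elementary} proper, rather than merely proper, is exactly that it lets us invoke Theorem \ref{KiehlDXfree} directly and obtain a \emph{global} Fr\'echet--Stein sheaf on $Y$, in contrast to the weaker Corollary \ref{KiehlDXalgversion}. First I would note that the hypotheses are precisely what the preceding discussion requires: since $\mathscr{L}(X)$ is free over $A=\mathcal{O}_X(X)$, the $\mathcal{O}_X$-module $\mathscr{L}'=g^*g_*\mathscr{L}$ is a free Lie algebroid on $X$ by the lemma above; and since $\mathscr{L}$ is generated by global sections, the natural map $\iota:\mathscr{L}'\to\mathscr{L}$ is an epimorphism of Lie algebroids, so by the discussion following Proposition \ref{FSbasechange} the induced morphism $\wideparen{\mathscr{U}(\mathscr{L}')}\to\wideparen{\mathscr{U}(\mathscr{L})}$ exhibits $\wideparen{\mathscr{U}(\mathscr{L})}$ as a coadmissible enlargement of $\wideparen{\mathscr{U}(\mathscr{L}')}$.

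Next I would apply Theorem \ref{KiehlDXfree} to $f$ and the free Lie algebroid $\mathscr{L}'$: this gives that $\wideparen{\mathscr{U}(\mathscr{L}')}$ is a global Fr\'echet--Stein sheaf on $X$, that $f_*\wideparen{\mathscr{U}(\mathscr{L}')}$ is a global Fr\'echet--Stein sheaf on $Y$, and that $\mathrm{R}^jf_*$ sends coadmissible $\wideparen{\mathscr{U}(\mathscr{L}')}$-modules to coadmissible $f_*\wideparen{\mathscr{U}(\mathscr{L}')}$-modules. Taking $j=0$ and applying this to the coadmissible $\wideparen{\mathscr{U}(\mathscr{L}')}$-module $\wideparen{\mathscr{U}(\mathscr{L})}$ shows that $f_*\wideparen{\mathscr{U}(\mathscr{L})}$ is a coadmissible $f_*\wideparen{\mathscr{U}(\mathscr{L}')}$-module; checking that it has continuous multiplication with respect to the natural presentation, by passing to an admissible affinoid covering of $X$ exactly as in the proof of Proposition \ref{KiehlDXgeneralcoad}, Proposition \ref{FSbasechange} then makes $f_*\wideparen{\mathscr{U}(\mathscr{L})}$ a coadmissible enlargement of $f_*\wideparen{\mathscr{U}(\mathscr{L}')}$, hence in particular a global Fr\'echet--Stein sheaf on $Y$. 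Finally, for a coadmissible $\wideparen{\mathscr{U}(\mathscr{L})}$-module $\mathcal{M}$: by Proposition \ref{FSbasechange} it is coadmissible over $\wideparen{\mathscr{U}(\mathscr{L}')}$, so $\mathrm{R}^jf_*\mathcal{M}$ is coadmissible over $f_*\wideparen{\mathscr{U}(\mathscr{L}')}$ by Theorem \ref{KiehlDXfree}, and one further application of Proposition \ref{FSbasechange}, now to the coadmissible enlargement $f_*\wideparen{\mathscr{U}(\mathscr{L})}$ of $f_*\wideparen{\mathscr{U}(\mathscr{L}')}$, upgrades this to coadmissibility over $f_*\wideparen{\mathscr{U}(\mathscr{L})}$, as required. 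Equivalently, the whole of this step is the special case of Lemma \ref{KiehlDXtwoproper} applied to $X\xrightarrow{f}Y\xrightarrow{\mathrm{id}_Y}Y$ (the identity being finite, hence proper), with free Lie algebroid $\mathscr{L}'$ and coadmissible enlargement $\wideparen{\mathscr{U}(\mathscr{L})}$.

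I do not expect a serious obstacle here: all of the genuine analytic content lies in Theorem \ref{KiehlDXfree} and in the enlargement machinery of Proposition \ref{FSbasechange}, and the construction of $\mathscr{L}'$ together with the epimorphism $\iota$ has already been carried out. The only step that requires a moment's care is the verification that $f_*\wideparen{\mathscr{U}(\mathscr{L})}$ has continuous multiplication, which is needed to invoke Proposition \ref{FSbasechange} for the pushforward; but this is routine, since it can be checked section-wise over the members of an affinoid covering of $X$, where it reduces to the continuous-multiplication statement already established for $\wideparen{\mathscr{U}(\mathscr{L})}$ itself.
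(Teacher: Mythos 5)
Your proof is correct and follows essentially the same route the paper intends: construct the free Lie algebroid $\mathscr{L}'=g^*g_*\mathscr{L}$, observe that the global-generation hypothesis gives the epimorphism $\mathscr{L}'\to\mathscr{L}$ needed to realise $\wideparen{\mathscr{U}(\mathscr{L})}$ as a coadmissible enlargement, then feed this through Theorem~\ref{KiehlDXfree} and Proposition~\ref{FSbasechange}. Your closing observation that this is exactly the $f_2=\mathrm{id}_Y$ case of Lemma~\ref{KiehlDXtwoproper} is the most efficient way to package the argument, and correctly explains why the elementary-proper hypothesis buys the stronger \emph{global} Fr\'echet--Stein conclusion over Corollary~\ref{KiehlDXalgversion}.
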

\begin{corollary}
\label{KiehlDXproperglobal}
Let $X$ be a proper rigid analytic $K$-variety, and let $\mathscr{L}$ be a Lie algebroid on $X$ which is generated by global sections. Then $\wideparen{\mathscr{U}(\mathscr{L})}(X)$ is a Fr\'echet--Stein algebra, and if $\mathcal{M}$ is a coadmissible $\wideparen{\mathscr{U}(\mathscr{L})}$-module then $\mathrm{H}^j(X, \mathcal{M})$ is a coadmissible $\wideparen{\mathscr{U}(\mathscr{L})}(X)$-module for each $j\geq 0$. 
\end{corollary}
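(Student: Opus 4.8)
The plan is to deduce this as the degenerate case $Y=\Sp K$ of the relative result already in hand, namely Corollary~\ref{KiehlDXalgversion} (equivalently Proposition~\ref{KiehlDXgeneralcoad}), applied to the structure morphism $f\colon X\to\Sp K$; this morphism is proper precisely because $X$ is a proper rigid analytic $K$-variety. The key observation is that over the one-point space $\Sp K$ there is nothing to sheafify: unwinding Definitions~\ref{defFSsheaf}, \ref{globalFSsheaf} and \ref{defcoadforglFS} over $\Sp K$, a Fréchet--Stein sheaf on $\Sp K$ is the same datum as a Fréchet--Stein $K$-algebra (its ring of global sections), a coadmissible module over it is the same as a coadmissible module over that algebra, and $\mathrm{R}^jf_*\mathcal{M}(\Sp K)=\mathrm{H}^j(X,\mathcal{M})$. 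So once Corollary~\ref{KiehlDXalgversion} is seen to apply to $f$, the statements that $\wideparen{\mathscr{U}(\mathscr{L})}(X)=f_*\wideparen{\mathscr{U}(\mathscr{L})}(\Sp K)$ is Fréchet--Stein and that each $\mathrm{H}^j(X,\mathcal{M})$ is a coadmissible $\wideparen{\mathscr{U}(\mathscr{L})}(X)$-module follow immediately.

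To invoke Corollary~\ref{KiehlDXalgversion} I must produce a \emph{free} Lie algebroid $\mathscr{L}'$ on $X$ together with an epimorphism $\mathscr{L}'\to\mathscr{L}$. First I would apply Kiehl's Proper Mapping Theorem (Theorem~\ref{Kiehlthm}) to the coherent $\mathcal{O}_X$-module $\mathscr{L}$ and the proper morphism $f$: this shows that $\mathscr{L}(X)=\mathrm{H}^0(X,\mathscr{L})$ is a finite-dimensional $K$-vector space. It is moreover a $K$-Lie algebra (as the global sections of a sheaf of $K$-Lie algebras) acting on $\mathcal{O}_X$ by derivations via the anchor $\rho$, compatibly with restriction. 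Hence the transformation (action) Lie algebroid $\mathscr{L}':=\mathcal{O}_X\otimes_K\mathscr{L}(X)$ is a well-defined Lie algebroid on $X$, free of finite rank as an $\mathcal{O}_X$-module, with anchor $a\otimes x\mapsto a\,\rho(x)$ and bracket
\[
[a\otimes x,\;b\otimes y]=ab\otimes[x,y]+a\,\rho(x)(b)\otimes y-b\,\rho(y)(a)\otimes x.
\]
The evident $\mathcal{O}_X$-linear map $\mathscr{L}'\to\mathscr{L}$, $a\otimes x\mapsto a\,x$, respects these brackets (a direct check using the Leibniz rule already valid inside $\mathscr{L}$), and its image is the $\mathcal{O}_X$-submodule of $\mathscr{L}$ generated by $\mathscr{L}(X)$ --- the same submodule as the image of $g^*g_*\mathscr{L}\to\mathscr{L}$ considered before Corollary~\ref{KiehlDXglobalgenversion1}. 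By hypothesis $\mathscr{L}$ is generated by its global sections, so this image is all of $\mathscr{L}$; thus $\mathscr{L}'\to\mathscr{L}$ is an epimorphism of Lie algebroids, and Corollary~\ref{KiehlDXalgversion} applies.

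The only parts needing genuine verification are routine: that $\mathscr{L}'$ as defined satisfies the axioms of Definition~\ref{defLiealgebroid} (Jacobi identity, and $\mathcal{O}$-linearity plus the Leibniz compatibility of the bracket above with the anchor), and that $\mathscr{L}'\to\mathscr{L}$ is a morphism of Lie algebroids. I do not anticipate any real obstacle; all the substantive work has been done in Corollary~\ref{KiehlDXalgversion} and its antecedents, so the proof is essentially an interpretation step. One point worth flagging is that one cannot simply take $\mathscr{L}'=g^*g_*\mathscr{L}$ as in Corollary~\ref{KiehlDXglobalgenversion1}: the module $\mathscr{L}(X)$ need not be free over $\mathcal{O}_X(X)$ once $X$ is disconnected or non-reduced, whereas tensoring over $K$ rather than over $\mathcal{O}_X(X)$ produces a genuinely free $\mathcal{O}_X$-module at the harmless cost of enlarging the rank.
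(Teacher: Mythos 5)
Your proof is correct and follows essentially the same route as the paper's: apply Kiehl's theorem to see that $\mathscr{L}(X)$ is finite-dimensional over $K$, take $\mathscr{L}'=\mathcal{O}_X\otimes_K\mathscr{L}(X)$ as the free covering Lie algebroid, and invoke Corollary~\ref{KiehlDXalgversion} with $Y=\Sp K$. Your remark about why one tensors over $K$ rather than over $\mathcal{O}_X(X)$ is exactly the point the paper is implicitly making by phrasing this as a separate corollary rather than a special case of Corollary~\ref{KiehlDXglobalgenversion1}.
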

\begin{proof}
By Kiehl's Proper Mapping Theorem, $\mathscr{L}(X)$ is a finite-dimensional $K$-vector space, and $\mathscr{L}':=\mathcal{O}_X\otimes_K (\mathscr{L}(X))$ is a free Lie algebroid on $X$ by the same argument as above. Thus the result follows from Corollary \ref{KiehlDXalgversion}.
\end{proof}
Setting $\mathscr{L}=\mathcal{T}_X$ yields Corollary \ref{intropropervar} from the introduction.\\
More generally, Proposition \ref{KiehlDXgeneralcoad} gives directly the following.
\begin{corollary}
\label{KiehlDXglobalgenversion2}
Let $f: X\to Y$ be a proper morphism with Stein factorization $f=hg$, and let $\mathscr{L}$ be a Lie algebroid on $X$ such that the following holds:
\begin{enumerate}[(i)]
\item $g_*\mathscr{L}$ is locally free.
\item The natural morphism $g^*g_*\mathscr{L}\to \mathscr{L}$ is an epimorphism of sheaves on $X_{\rig}$.
\end{enumerate}
Then $f_*\wideparen{\mathscr{U}(\mathscr{L})}|_U$ is a global Fr\'echet--Stein sheaf for any admissible open affinoid subspace $U$ of $Y$ such that $\mathscr{L}(f^{-1}U)$ is a free $\mathcal{O}_X(f^{-1}U)$-module. In particular, $f_*\wideparen{\mathscr{U}(\mathscr{L})}$ is a Fr\'echet--Stein sheaf on $Y$.\\
If $\mathcal{M}$ is a coadmissible $\wideparen{\mathscr{U}(\mathscr{L})}$-module then $\mathrm{R}^jf_*\mathcal{M}$ is a coadmissible $f_*\wideparen{\mathscr{U}(\mathscr{L})}$-module for each $j\geq 0$.
\end{corollary}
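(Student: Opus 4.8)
The plan is to deduce everything from Proposition \ref{KiehlDXgeneralcoad}, applied with the \emph{free} Lie algebroid $\mathscr{L}_0:=g^*g_*\mathscr{L}$ in place of the Lie algebroid ``$\mathscr{L}$'' of that proposition and with ``$\mathscr{U}$'' taken to be $\wideparen{\mathscr{U}(\mathscr{L})}$ itself. First I would prove the first assertion. Fix an admissible open affinoid $U\subseteq Y$ with $\mathscr{L}(f^{-1}U)$ free over $\mathcal{O}_X(f^{-1}U)$ and restrict $f$ to $f^{-1}U\to U$; all hypotheses are inherited, and the Stein factorization of $f^{-1}U\to U$ is obtained from that of $f$ by base change, so $h^{-1}U$ is an affinoid with $\mathcal{O}_Z(h^{-1}U)=\mathcal{O}_X(f^{-1}U)$ and $(g_*\mathscr{L})(h^{-1}U)=\mathscr{L}(f^{-1}U)$. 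Since $g_*\mathscr{L}$ is coherent and its module of sections over $h^{-1}U$ is free, the coherent sheaf $(g_*\mathscr{L})|_{h^{-1}U}$ is free, whence $\mathscr{L}_0|_{f^{-1}U}=g^*\bigl((g_*\mathscr{L})|_{h^{-1}U}\bigr)$ is a free $\mathcal{O}$-module; by the lemma preceding Corollary \ref{KiehlDXglobalgenversion1} (whose proof only uses coherence of $g_*\mathscr{L}$, hence applies to proper, not merely elementary proper, $f$) it is a Lie algebroid, and by hypothesis (ii) the canonical map $\mathscr{L}_0\to\mathscr{L}$ is an epimorphism of Lie algebroids.

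Next I would identify $\wideparen{\mathscr{U}(\mathscr{L})}$ as a coadmissible enlargement of $\wideparen{\mathscr{U}(\mathscr{L}_0)}$ over $f^{-1}U$. The ``standard example of a coadmissible enlargement'' in Section 3 treats precisely an epimorphism of Lie algebroids on an affinoid; since being a coadmissible module and having continuous multiplication are local conditions, that argument globalizes and shows that $\wideparen{\mathscr{U}(\mathscr{L})}|_{f^{-1}U}$ is a coadmissible enlargement of $\wideparen{\mathscr{U}(\mathscr{L}_0)}|_{f^{-1}U}$, the latter being a global Fr\'echet--Stein sheaf by Lemma \ref{KiehlDXfreeaff}. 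Proposition \ref{KiehlDXgeneralcoad}, applied to $f^{-1}U\to U$, the free Lie algebroid $\mathscr{L}_0$, the one-element covering $\{U\}$, and $\mathscr{U}=\wideparen{\mathscr{U}(\mathscr{L})}$, now yields at once that $f_*\wideparen{\mathscr{U}(\mathscr{L})}|_U$ is a coadmissible enlargement of $f_*\wideparen{\mathscr{U}(\mathscr{L}_0)}|_U$, in particular a global Fr\'echet--Stein sheaf, and that $\mathrm{R}^jf_*\mathcal{M}|_U$ is coadmissible over it for any coadmissible $\wideparen{\mathscr{U}(\mathscr{L})}$-module $\mathcal{M}$ (coadmissibility being insensitive to passing between $\wideparen{\mathscr{U}(\mathscr{L})}$ and $\wideparen{\mathscr{U}(\mathscr{L}_0)}$ by Proposition \ref{FSbasechange}).

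It then remains to observe that the affinoids $U$ satisfying the freeness hypothesis form an admissible covering of $Y$: since $g_*\mathscr{L}$ is locally free on $Z$ and $h\colon Z\to Y$ is finite, over a sufficiently small affinoid neighbourhood $U$ of any point $y\in Y$ the finite fibre $h^{-1}(y)$ separates, so $h^{-1}U$ decomposes as a disjoint union of affinoids over each of which $g_*\mathscr{L}$ is free, whence $(g_*\mathscr{L})(h^{-1}U)$ is free over $\mathcal{O}_Z(h^{-1}U)$. Applying the first assertion to each member of such a covering shows that $f_*\wideparen{\mathscr{U}(\mathscr{L})}$ is a Fr\'echet--Stein sheaf on $Y$, and gluing the local coadmissibility statements (coadmissibility being tested on an admissible covering) shows that $\mathrm{R}^jf_*\mathcal{M}$ is coadmissible over $f_*\wideparen{\mathscr{U}(\mathscr{L})}$ for every $j\geq0$.

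The main obstacle I anticipate is this last step: turning the \emph{sheaf}-theoretic local freeness of $g_*\mathscr{L}$ on $Z$ into freeness of the global sections over the preimages $h^{-1}U$ of an admissible affinoid covering of $Y$. This rests on the local structure of finite morphisms --- lifting idempotents in the finite $\mathcal{O}_Y$-algebra $h_*\mathcal{O}_Z$ to split a finite fibre over a small base neighbourhood --- which is standard but needs to be invoked with some care; everything else is formal once Proposition \ref{KiehlDXgeneralcoad} and the formalism of coadmissible enlargements are in place.
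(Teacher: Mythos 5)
Your proposal is correct and takes essentially the same approach as the paper, which proves Corollary~\ref{KiehlDXglobalgenversion2} simply by invoking Proposition~\ref{KiehlDXgeneralcoad} with the free Lie algebroid $g^*g_*\mathscr{L}$ (playing the role of ``$\mathscr{L}$'' there) and $\mathscr{U}=\wideparen{\mathscr{U}(\mathscr{L})}$ as the coadmissible enlargement. Your last paragraph, where you verify that the affinoids $U$ with $\mathscr{L}(f^{-1}U)$ free actually form an admissible covering of $Y$ using finiteness of $h$ and local freeness of $g_*\mathscr{L}$ on $Z$, is a detail the paper leaves tacit; it does implicitly rely on $g_*\mathscr{L}$ having constant rank on the fibres of $h$ lying in a common connected component of $Y$, which holds in all the applications the paper considers but is worth noting.
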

We will discuss a number of examples after giving a more geometric motivation for the conditions we have imposed in our results.
\subsection{Geometric interpretation and examples}
Let $\mathcal{E}$ be a locally free $\mathcal{O}_X$-module of finite rank on a rigid analytic $K$-variety $X$. Note that we can associate to $\mathcal{E}$ a rigid analytic $K$-variety $V(\mathcal{E})$ with a projection morphism $\rho: V(\mathcal{E})\to X$, completely analogous to the construction of vector bundles in algebraic geometry. We sketch the construction below, as it is rarely discussed in this context and the reader might find the relation to our definition of $\wideparen{\mathscr{U}(\mathscr{L})}$ illuminating.\\
\\
Suppose that $X=\Sp A$ is affinoid, and $\mathcal{E}$ is free of rank $m$. Fix an affine formal model $\mathcal{A}$ of $A$ and free generators $e_1, \dots, e_m\in \mathcal{E}(X)$. We can now identify $S=\Sym_A \mathcal{E}(X)$ with the polynomial algebra $A[x_1, \dots, x_m]$. Via this identification, we can equip $S$ with an algebra norm corresponding to the gauge norm on $A[ x]$ with unit ball $\mathcal{A}[ \pi^nx]$, and we denote the corresponding Banach completion by $S_n$, which is naturally isomorphic to $A\langle \pi^nx\rangle$.\\
We denote by $V(\mathcal{E})$ the space obtained by glueing $\Sp S_n$, i.e. $V(\mathcal{E})=\varinjlim \Sp S_n$. By construction, $V(\mathcal{E})$ is isomorphic to $X\times \mathbb{A}_K^{m, \an}$, and the natural morphisms $A\to S_n$ give rise to a projection morphism $\rho: V(\mathcal{E})\to A$, corresponding to the natural projection onto the first factor.\\ 
A standard argument ensures that up to isomorphism, this construction is independent of the choices made. Moreover, we can glue this construction to obtain a vector bundle $V(\mathcal{E})$ for any locally free $\mathcal{O}_X$-module $\mathcal{E}$ of finite rank on a rigid analytic $K$-variety $X$, giving rise to a contravariant functor $V$.\\  
\\
We will now interpret the conditions imposed in our previous results as certain properness conditions on the level of vector bundles.\\
\\
Let $f:X \to Y=\Sp A$ be an elementary proper morphism such that $A=\mathcal{O}_X(X)$, and let $\mathscr{L}$ be a Lie algebroid on $X$ which is a free $\mathcal{O}_X$-module of rank $m$. In this case the rigid analytic vector bundles
\begin{equation*}
V(\mathscr{L})\cong X\times \mathbb{A}^{m, \an}, \ V(f_*\mathscr{L})\cong Y\times \mathbb{A}^{m, \an}
\end{equation*}
are trivial, and there is a natural morphism $V(\mathscr{L})\to V(f_*\mathscr{L})$, which is proper by \cite[Lemma 9.6.2/1]{BGR}.\\
Our Theorem \ref{KiehlDXfree} can thus be viewed as a noncommutative version of Kiehl's Theorem \ref{Kiehlthm} on trivial vector bundles.\\
\\
The next result extends this interpretation to the case of Corollary \ref{KiehlDXglobalgenversion2}.
\begin{proposition}
\label{vbinterpretation}
Let $f:X \to Y$ be a proper morphism of rigid analytic $K$-varieties with Stein factorization
\begin{equation*}
\begin{xy}
\xymatrix{
X\ar[r]^g& Z\ar[r]^h &Y,
}
\end{xy}
\end{equation*} 
and let $\mathscr{L}$ be a Lie algebroid on $X$ such that the following holds:
\begin{enumerate}[(i)]
\item $g_*\mathscr{L}$ is locally free.
\item The natural morphism $g^*g_*\mathscr{L}\to \mathscr{L}$ is an epimorphism of sheaves on $X_{\rig}$.
\end{enumerate}
Then there is a natural morphism
\begin{equation*}
V(\mathscr{L})\to V(g^*g_*\mathscr{L}),
\end{equation*}
which is a closed immersion, and and a proper morphism
\begin{equation*}
V(g^*g_*\mathscr{L})\to V(g_*\mathscr{L})
\end{equation*}
of rigid analytic $K$-varieties.\\
In particular, their composition $V(\mathscr{L})\to V(g_*\mathscr{L})$ is proper.
\end{proposition}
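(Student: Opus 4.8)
The plan is to obtain the two morphisms from the functoriality of the vector bundle construction $V$ and then to appeal to standard stability properties of proper morphisms of rigid analytic varieties. Everything rests on two general facts about $V$ on locally free $\mathcal{O}$-modules of finite rank, which I would establish first. \textbf{(a)} $V$ takes an epimorphism $\psi\colon\mathcal{E}\twoheadrightarrow\mathcal{F}$ of locally free sheaves on a rigid analytic $K$-variety $W$ to a closed immersion $V(\psi)\colon V(\mathcal{F})\to V(\mathcal{E})$. As being a closed immersion is local on the target, I would pass to an admissible affinoid covering of $W$ over which $\mathcal{E}$, $\mathcal{F}$ and a splitting of $\psi$ all trivialise; there $\mathcal{E}\cong\mathcal{F}\oplus\mathcal{G}$ with $\mathcal{G}=\ker\psi$ locally free, so $V(\mathcal{E})\cong V(\mathcal{F})\times_W V(\mathcal{G})$ and $V(\psi)$ becomes the zero section $(\mathrm{id},0)$ in the $V(\mathcal{G})$-direction, which is manifestly a closed immersion. \textbf{(b)} $V$ commutes with pullback: for a morphism $q\colon W'\to W$ and $\mathcal{E}$ locally free of finite rank on $W$ there is a natural isomorphism $V(q^*\mathcal{E})\cong V(\mathcal{E})\times_W W'$ over $W'$, under which the natural morphism $V(q^*\mathcal{E})\to V(\mathcal{E})$ is identified with the base change of $q$ along the bundle projection $V(\mathcal{E})\to W$; locally this is just the isomorphism $\mathcal{O}(W')\langle\pi^n x\rangle\cong\mathcal{O}(W')\widehat{\otimes}_{\mathcal{O}(W)}\mathcal{O}(W)\langle\pi^n x\rangle$, and one glues.

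Granting (a) and (b), the proposition is immediate. By hypothesis (ii) the natural morphism $g^*g_*\mathscr{L}\to\mathscr{L}$ is an epimorphism of $\mathcal{O}_X$-modules; both $g^*g_*\mathscr{L}$ and $\mathscr{L}$ are locally free of finite rank---the former because $g_*\mathscr{L}$ is so by hypothesis (i), the latter by the definition of a Lie algebroid---so (a) produces the closed immersion $V(\mathscr{L})\hookrightarrow V(g^*g_*\mathscr{L})$. Applying (b) with $q=g\colon X\to Z$ and $\mathcal{E}=g_*\mathscr{L}$ identifies $V(g^*g_*\mathscr{L})$ with $V(g_*\mathscr{L})\times_Z X$ and the natural morphism $V(g^*g_*\mathscr{L})\to V(g_*\mathscr{L})$ with the base change of $g$ along $V(g_*\mathscr{L})\to Z$. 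Since $g$ is proper by the Stein factorization (Proposition \ref{steinfac}) and properness of rigid analytic morphisms is preserved under base change (see \cite[9.6.2]{BGR}), this morphism is proper. Finally, a closed immersion is proper and a composite of proper morphisms is proper, so the composition $V(\mathscr{L})\to V(g_*\mathscr{L})$ is proper.

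The main obstacle is not this formal skeleton but making (a) and (b) precise, since the construction of $V$ was only sketched: one must verify that the local descriptions glue independently of the choices of affine formal model and free generators, and that the induced maps between the affinoid pieces of $V(\mathcal{E})$ and $V(\mathcal{F})$ (respectively $V(q^*\mathcal{E})$ and $V(\mathcal{E})$) are compatible with the direct limits defining $V$, keeping track of how rescaling the lattices affects the boundedness of the induced algebra homomorphisms. This is routine bookkeeping of the same nature as the ``standard argument'' invoked when $V$ was constructed; once it is available no further ideas are needed, and the content of the proposition lies entirely in combining hypotheses (i) and (ii) with the properness of $g$.
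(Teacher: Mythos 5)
Your proof is correct, and both arguments reach the conclusion by locally trivializing, but the route to the closed immersion is genuinely different from the paper's. The paper applies the symmetric-algebra functor directly to the surjection $g^*g_*\mathscr{L}(U_i)\to\mathscr{L}(U_i)$ over a trivializing affinoid cover, equips source and target with a compatible residue norm and quotient norm so that the map is \emph{strict}, and uses strictness to carry the surjection over to the Banach completions $S_n$, thereby exhibiting $V(\mu)$ as a closed immersion via an admissible affinoid covering. You instead split the epimorphism locally (legitimate since the target $\mathscr{L}$ is locally free, hence locally projective), decompose $V(g^*g_*\mathscr{L})\cong V(\mathscr{L})\times_X V(\ker)$, and observe that $V(\mu)$ is then the base change of the zero section of $V(\ker)$, which is a closed immersion. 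This sidesteps the Banach-algebraic strictness bookkeeping at the price of needing the unstated compatibility $V(\mathcal{F}\oplus\mathcal{G})\cong V(\mathcal{F})\times_W V(\mathcal{G})$; that holds by cofinality of the diagonal in the rescaling indices defining $V$, but it is a fact of the same flavour as the ones the paper verifies explicitly. For the proper map $V(g^*g_*\mathscr{L})\to V(g_*\mathscr{L})$, your global base-change description $V(g^*g_*\mathscr{L})\cong V(g_*\mathscr{L})\times_Z X$ is the same argument as the paper's local identification with $g^{-1}Z_i\times(\mathbb{A}^m)^{\an}\to Z_i\times(\mathbb{A}^m)^{\an}$, just packaged more conceptually; both reduce to the product/base-change lemma in \cite[9.6.2]{BGR}. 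Your deferral of the gluing and rescaling checks is reasonable, as they are of the same nature as the "standard argument" already invoked in constructing $V$, so I would regard the proposal as a complete, correct alternative proof.
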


\begin{proof}
The natural map $\mu: \mathscr{L}':=g^*g_*\mathscr{L}\to \mathscr{L}$ induces a morphism of rigid analytic $K$-varieties $V(\mu): V(\mathscr{L})\to V(\mathscr{L}')$ by functoriality.\\
\\
We show that this is a closed immersion. Restricting to an admissible affinoid covering $(U_i)$ of $X$ on which both $\mathscr{L}'$ and $\mathscr{L}$ are free, the morphism $\theta_i: \Sym \mathscr{L}'(U_i)\to \Sym \mathscr{L}(U_i)$ is a surjection for each $i$ by assumption. \\
Choosing a residue norm on $\mathcal{O}_X(U_i)$ with unit ball $\mathcal{B}_i$ and a free generating set $e_1, \dots, e_m$ of $\mathscr{L}'(U_i)$, endow $S=\Sym \mathscr{L}'(U_i)$ with the norm with unit ball the $\mathcal{B}_i$-subalgebra generated by the $e_j$, and endow $\Sym \mathscr{L}(U_i)$ with the corresponding quotient norm via $\theta_i$. In particular, $\theta_i$ is strict with respect to these choices of norm by construction.\\
The completion of $\Sym \mathscr{L}'(U_i)$ is the affinoid algebra $S_0$ constructed above, and by strictness this surjects onto the completion of $\Sym \mathscr{L}(U_i)$, which is again affinoid, as it is topologically of finite type over $K$. \\
Replacing $e_j$ by $\pi^ne_j$ for varying $n$, the affinoid spaces $\Sp S_n$ form an admissible covering of $V(\mathscr{L}'|_{U_i})$ by affinoid subspaces, and the surjections between affinoid algebras exhibit $V(\mu)$ as a closed immersion. In particular, $V(\mu)$ is a proper morphism by \cite[Proposition 9.6.2/5]{BGR}.\\
\\
Choosing an admissible covering $(Z_i)$ of $Z$ such that $g_*\mathscr{L}|_{Z_i}$ is free of rank $m$ on each $i$, $g^*g_*\mathscr{L}|_{g^{-1}Z_i}$ is also free of rank $m$, again inducing a proper morphism
\begin{equation*}
\begin{xy}
\xymatrix{
g^{-1}Z_i \times (\mathbb{A}^m)^{\an} \ar[r] \ar[d]^{\cong} & Z_i\times (\mathbb{A}^m)^{\an}\ar[d]^{\cong}\\
V(\mathscr{L}'|_{g^{-1}Z_i})\ar[r] & V(g_*\mathscr{L}|_{Z_i})
}
\end{xy}
\end{equation*} 
These glue to give a proper morphism $V(\mathscr{L}')\to V(g_*\mathscr{L})$, and the result follows from the fact that the composition of proper morphisms is proper (see \cite[Corollary 3.2]{Luet}).
\end{proof}

Thus our assumptions can be interpreted as requiring a vector bundle $V(g_*\mathscr{L})$ on $Z$ together with a proper morphism $V(\mathscr{L})\to V(g_*\mathscr{L})$. \\
\\
Our next goal will be to spell out a number of naturally occuring cases in which Proposition \ref{KiehlDXgeneralcoad} applies. We reserve the main application, our discussion of analytic partial flag varieties, for the next subsection.

\subsubsection*{Example 1: Closed immersions}
Let $Y=\Sp A$ be an affinoid $K$-variety and let $\iota: X\to Y$ be a closed immersion of affinoid varieties, i.e. if $X=\Sp B$, then the corresponding morphism of affinoid algebras $A\to B$ is a surjection. This map is proper by \cite[Proposition 9.6.2/5]{BGR} with trivial Stein factorization in the sense that $g=\text{id}_X$ is the identity on $X$ and $h=\iota$ in our usual notation.\\
In particular, if $\mathscr{L}$ is a Lie algebroid on $X$, then $g_*\mathscr{L}=\mathscr{L}$, $g^*g_*\mathscr{L}=\mathscr{L}$, and all conditions in Corollary \ref{KiehlDXglobalgenversion2} are trivially satisfied. \\
Since all conditions in Corollary \ref{KiehlDXglobalgenversion2} are local, it follows that the same holds true for arbitrary closed immersions $\iota$.\\
\\
This is of course not really surprising, as $\iota$ is an affinoid morphism, so we could deduce everything in this case simply from Lemma \ref{pushfaffinoidglobal} (as is tacitly done in \cite{Ardakov2}).
\subsubsection*{Example 2: Projections}
Let $X=\mathbb{P}^{n, \an}$ be the analytification of projective $n$-space over $K$, and consider the projection to a point
\begin{equation*}
f: \mathbb{P}^{n, \an}\to \Sp K.
\end{equation*}
This is trivially a projective morphism and hence proper, and the Stein factorization in this case is $g=f$, $h=\text{id}_{\Sp K}$. \\
If $\mathscr{L}$ is a Lie algebroid on $X$, then $g_*\mathscr{L}=\mathscr{L}(X)$ is a (finite-dimensional) $K$-vector space, and
\begin{equation*}
g^*g_*\mathscr{L}=\mathcal{O}_X\otimes_K \mathscr{L}(X).
\end{equation*}
Thus our assumptions are satisfied if and only if $\mathcal{O}_X\otimes \mathscr{L}(X)\to \mathscr{L}$ is an epimorphism, i.e. if and only if $\mathscr{L}$ is generated by global sections. This is for example the case when $\mathscr{L}=\mathcal{T}_X$ is the tangent sheaf of $X$.

\subsubsection*{Example 3: Direct products}
Let $Y$ be a smooth rigid analytic $K$-variety and consider the projection $f: \mathbb{P}^{n, \an}\times Y\to Y$, which is again proper. As in the previous example, the Stein factorization is trivial. Now $X=\mathbb{P}^{n, \an}\times Y$ is smooth, so the tangent sheaf $\mathcal{T}_X$ is a Lie algebroid on $X$. By definition of smoothness, $Y$ admits an admissible covering by affinoid subspaces $(Y_i)$ such that $\mathcal{T}_{Y_i}$ is free, and we write $X_i=\mathbb{P}^{n, \an}\times Y_i$. Write $p_1: X\to \mathbb{P}^{n, \an}$ for the projection onto the first factor. Since
\begin{equation*}
\mathcal{T}_X\cong\mathcal{O}_X\otimes_{p_1^{-1}\mathcal{O}_{\mathbb{P}^{n, \an}}} p_1^{-1}\mathcal{T}_{\mathbb{P}^{n, \an}} \oplus \mathcal{O}_X\otimes_{f^{-1}\mathcal{O}_{Y}} f^{-1}\mathcal{T}_{Y}
\end{equation*}
as in the algebraic case, $\mathcal{T}_X(X_i)$ is a free module over $\mathcal{O}_Y(Y_i)=\mathcal{O}_X(X_i)$, and $\mathcal{T}_{X_i}$ is again generated by global sections. \\
Thus $f_*\wideparen{\mathcal{D}}_X|_{Y_i}$ is a global Fr\'echet--Stein sheaf on $Y_i$, $f_*\wideparen{\mathcal{D}}_X$ is a Fr\'echet--Stein sheaf on $Y$, and $\mathrm{R}^jf_*\mathcal{M}$ is a coadmissible $f_*\wideparen{\mathcal{D}}_X$-module for each $j\geq 0$, where $\mathcal{M}$ is any coadmissible $\wideparen{\mathcal{D}}_X$-module. \\
Moreover, note that $f^*\mathcal{T}_Y=\mathcal{O}_X\otimes_{f^{-1}\mathcal{O}_Y} f^{-1}\mathcal{T}_Y$ is a Lie algebroid on $X$ (via the natural embedding into $\mathcal{T}_X$) with the property that $f^*\mathcal{T}_Y|_{X_i}=\mathcal{O}_{X_i}\otimes_{\mathcal{O}_Y(Y_i)} \mathcal{T}_Y(Y_i)$ is a free $\mathcal{O}$-module. As $\mathcal{O}_X(X_i)=\mathcal{O}_Y(Y_i)$, we have $f_*f^*\mathcal{T}_Y\cong \mathcal{T}_Y$, and thus $f_*\wideparen{\mathscr{U}(f^*\mathcal{T}_Y)}\cong \wideparen{\mathcal{D}}_Y$ by Lemma \ref{pushffsalg}. Hence we can formulate the following theorem as a direct consequence of Lemma \ref{KiehlDXaffinoid}.
\begin{theorem}
\label{derhamcoad}
Let $f: X=\mathbb{P}_K^{n, \an}\times Y\to Y$ be the projection for some smooth rigid analytic $K$-variety $Y$. Let $\mathcal{M}$ be a coadmissible $\wideparen{\mathscr{U}(f^*\mathcal{T}_Y)}$-module on $X$. Then $\mathrm{R}^jf_*\mathcal{M}$ is a coadmissible $\wideparen{\mathcal{D}}_Y$-module for each $j\geq 0$.
\end{theorem}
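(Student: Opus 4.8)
The plan is to deduce the statement directly from Lemma \ref{KiehlDXaffinoid}, applied not to the full tangent sheaf $\mathcal{T}_X$ but to the relative tangent algebroid $\mathscr{L}=f^*\mathcal{T}_Y$ on $X$, and then to reinterpret the output via the identification $f_*\wideparen{\mathscr{U}(f^*\mathcal{T}_Y)}\cong \wideparen{\mathcal{D}}_Y$ obtained in the discussion immediately preceding the theorem.

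First I would record that $f$ is proper, being the projection off the projective variety $\mathbb{P}_K^{n, \an}$ (as in the discussion of Example~3), so that the standing hypotheses of Lemma \ref{KiehlDXaffinoid} are in force once a suitable covering is produced. By smoothness of $Y$, choose an admissible affinoid covering $(Y_i)$ of $Y$ such that $\mathcal{T}_{Y_i}$ is a free $\mathcal{O}_{Y_i}$-module, say of rank $m$ with free generators $\partial_1,\dots,\partial_m\in \mathcal{T}_Y(Y_i)$. Writing $X_i=f^{-1}Y_i=\mathbb{P}_K^{n, \an}\times Y_i$, the elements $1\otimes\partial_1,\dots,1\otimes\partial_m$ then freely generate $f^*\mathcal{T}_Y|_{X_i}=\mathcal{O}_{X_i}\otimes_{\mathcal{O}_Y(Y_i)}\mathcal{T}_Y(Y_i)$ as an $\mathcal{O}_{X_i}$-module. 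Hence $\mathscr{L}=f^*\mathcal{T}_Y$ meets the hypothesis of Lemma \ref{KiehlDXaffinoid}, and we conclude that $f_*\wideparen{\mathscr{U}(f^*\mathcal{T}_Y)}$ is a full Fr\'echet--Stein sheaf on $Y$ and that $\mathrm{R}^jf_*\mathcal{M}$ is a coadmissible $f_*\wideparen{\mathscr{U}(f^*\mathcal{T}_Y)}$-module for every $j\geq 0$ and every coadmissible $\wideparen{\mathscr{U}(f^*\mathcal{T}_Y)}$-module $\mathcal{M}$.

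It then remains to translate coadmissibility over $f_*\wideparen{\mathscr{U}(f^*\mathcal{T}_Y)}$ into coadmissibility over $\wideparen{\mathcal{D}}_Y$. This is exactly the content of the paragraph preceding the statement: since $\mathcal{O}_X(X_i)=\mathcal{O}_Y(Y_i)$ (global functions on $\mathbb{P}_K^{n, \an}$ being just $K$), the pushforward $f_*f^*\mathcal{T}_Y$ is canonically isomorphic to $\mathcal{T}_Y$, and Lemma \ref{pushffsalg}, applied locally to $f|_{X_i}$ (which is elementary proper with trivial Stein factorisation), yields $f_*\wideparen{\mathscr{U}(f^*\mathcal{T}_Y)}\cong \wideparen{\mathscr{U}(\mathcal{T}_Y)}=\wideparen{\mathcal{D}}_Y$ as sheaves of $K$-algebras. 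Under this isomorphism the conclusion of Lemma \ref{KiehlDXaffinoid} reads verbatim as the assertion of the theorem.

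As for the main obstacle: there is essentially none beyond the preparatory verifications already carried out above the statement, namely that $f^*\mathcal{T}_Y$ genuinely is a Lie algebroid on $X$ (via the embedding $f^*\mathcal{T}_Y\hookrightarrow\mathcal{T}_X$ coming from the product decomposition of $\mathcal{T}_X$) and that its pushforward recovers $\wideparen{\mathcal{D}}_Y$; granted those, the theorem is a formal consequence of Lemma \ref{KiehlDXaffinoid}. The one point that requires a moment's care is to insist on $f^*\mathcal{T}_Y$ rather than $\mathcal{T}_X$: using $\mathcal{T}_X$ would only give coadmissibility over $f_*\wideparen{\mathcal{D}}_X$, which is strictly larger than $\wideparen{\mathcal{D}}_Y$, whereas restricting to the relative tangent directions is precisely what makes the pushforward collapse onto $\wideparen{\mathcal{D}}_Y$.
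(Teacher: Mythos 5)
Your proposal reproduces exactly the paper's argument: the theorem is deduced from Lemma \ref{KiehlDXaffinoid} applied to the free Lie algebroid $f^*\mathcal{T}_Y$, combined with the identification $f_*\wideparen{\mathscr{U}(f^*\mathcal{T}_Y)}\cong\wideparen{\mathcal{D}}_Y$ established via Lemma \ref{pushffsalg} and the observation $\mathcal{O}_X(X_i)=\mathcal{O}_Y(Y_i)$. This matches the paper's own (one-line) proof and the preparatory discussion in Example 3 of section 6.2.
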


This allows us to briefly discuss $\wideparen{\mathcal{D}}$-module pushforwards, i.e. a functor which sends $\wideparen{\mathcal{D}}_X$-modules to $\wideparen{\mathcal{D}}_Y$-modules rather than $f_*\wideparen{\mathcal{D}}_X$-modules.\\
Let $f: X\to Y$ be an arbitrary projective morphism of smooth rigid analytic $K$-varieties, which can be factored as 
\begin{equation*}
\begin{xy}
\xymatrix{
X\ar[r]^-\iota & \mathbb{P}^{n, \an}\times Y\ar[r]^-g & Y,}
\end{xy}
\end{equation*}
where $\iota$ is a closed immersion and $g$ is the natural projection. Any coadmissible $\wideparen{\mathcal{D}}_X$-module $\mathcal{M}$ gives rise to a coadmissible $\wideparen{\mathcal{D}}_{\mathbb{P}^{n, \an}\times Y}$-module $\iota_+\mathcal{M}$ by \cite{Ardakov2}, and we expect a well-behaved $\wideparen{\mathcal{D}}$-module pushforward to be obtained by applying $\mathrm{R}g_*$ to a suitable relative de Rham complex $DR_g(\iota_+\mathcal{M})$, in strict analogy to \cite[Proposition 1.5.28]{Hotta}. The above results should then make it straightforward to verify that coadmissibility is preserved under this pushforward functor, once the correct (derived) categorical framework has been formulated rigorously.

\subsection{Application: Analytic partial flag varieties}
Let $\mathbf{G}$ be a split reductive affine algebraic group scheme over $K$, and let $G=\mathbf{G}(K)$ with Lie algebra $\mathfrak{g}$. Let $\mathbf{B}\leq \mathbf{G}$ be a Borel subgroup scheme, $\mathbf{P}\leq \mathbf{G}$ a parabolic subgroup scheme and let $\mathbf{X}=\mathbf{G}/\mathbf{P}$ be the partial flag variety. In this section, we will be concerned with coadmissible $\wideparen{\mathcal{D}}$-modules on the analytification $X=\mathbf{X}^{\an}$.\\
By \cite[II.1.8]{Jantzen}, $\mathbf{G}/\mathbf{P}$ is projective, and thus $X$ is proper over $\Sp K$ by \cite[Satz 2.16]{Kopf}. \\
More generally, if $\mathbf{P}_1 \leq \mathbf{P}_2$ are two parabolics, $X_i=(\mathbf{G}/\mathbf{P}_i)^{\an}$, then the natural projection morphism $X_1\to X_2$ is proper by \cite[Proposition 9.6.2/4]{BGR} and \cite[Satz 2.16]{Kopf}.\\  
\\
Let $\mathbf{R}\leq \mathbf{G}$ be the unipotent radical of $\mathbf{P}$ and $\mathbf{L}$ its Levi factor. Write $\mathfrak{l}$ for the Lie algebra of $L=\mathbf{L}(K)$. Following \cite{Backelin-Krem}, the natural morphism $\xi: \mathbf{G}/\mathbf{R}\to \mathbf{G}/\mathbf{P}$ turns $\mathbf{G}/\mathbf{R}$ into an $\mathbf{L}$-torsor in the sense of \cite[4.1]{Backelin-Krem}, where $\mathbf{L}$ acts on $\mathbf{G}/\mathbf{R}$ by right translations.\\
\\
Define the enhanced tangent sheaf $\widetilde{\mathcal{T}}_{\mathbf{G}/\mathbf{P}}:=(\xi_* \mathcal{T}_{\mathbf{G}/\mathbf{R}})^{\mathbf{L}}$, a Lie algebroid on $\mathbf{X}$ (see \cite[Definition 4.2]{Backelin-Krem}, \cite[4.4]{Ardakovannals}).\\
Applying the analytification functor, we obtain the Lie algebroid $\widetilde{\mathcal{T}}_X$. Since the natural morphism $\mathcal{O}_{\mathbf{X}}\otimes_K \mathfrak{g}\to \widetilde{\mathcal{T}}_{\mathbf{G}/\mathbf{P}}$ is an epimorphism by the same argument as in \cite[Proposition 4.8.(a)]{Ardakovannals}, it follows from \cite[Theorems 6.3/12 and 13]{Bosch} that $\widetilde{\mathcal{T}}_X$ is generated by global sections.\\
\\
We now set 
\begin{equation*}
\wideparen{\widetilde{\mathcal{D}}}_X:=\wideparen{\mathscr{U}\left(\widetilde{\mathcal{T}}_X\right)},
\end{equation*}
a coadmissible enlargement of $\wideparen{\mathscr{U}(\mathcal{O}_X\otimes \mathfrak{g})}$. Applying Corollary \ref{KiehlDXproperglobal}, we obtain the following.
\begin{corollary}
\label{enhancedDXKiehl}
The global sections $\wideparen{\widetilde{\mathcal{D}}}_X(X)$ form a Fr\'echet--Stein algebra, and if $\mathcal{M}$ is a coadmissible $\wideparen{\widetilde{\mathcal{D}}}_X$-module on $X$, then $\mathrm{R}^j\Gamma(X, \mathcal{M})$ is coadmissible over both $\wideparen{\widetilde{\mathcal{D}}}_X(X)$ and $\wideparen{U(\mathfrak{g})}$ for each $j\geq 0$.
\end{corollary}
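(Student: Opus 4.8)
The plan is to read off the statements about $\wideparen{\widetilde{\mathcal{D}}}_X(X)$ directly from Corollary \ref{KiehlDXproperglobal}, and then to bootstrap the coadmissibility over $\wideparen{U(\mathfrak{g})}$ from the coadmissible enlargement structure recorded in the setup together with Lemma \ref{passtoFS}.

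First I would note that $X$ is proper over $\Sp K$ and that $\widetilde{\mathcal{T}}_X$ is a Lie algebroid on $X$ which is generated by global sections, as established above. Corollary \ref{KiehlDXproperglobal}, applied to $X$ and $\mathscr{L}=\widetilde{\mathcal{T}}_X$, then immediately yields that $\wideparen{\widetilde{\mathcal{D}}}_X(X)=\wideparen{\mathscr{U}(\widetilde{\mathcal{T}}_X)}(X)$ is a Fr\'echet--Stein algebra and that $\mathrm{R}^j\Gamma(X,\mathcal{M})=\mathrm{H}^j(X,\mathcal{M})$ is a coadmissible $\wideparen{\widetilde{\mathcal{D}}}_X(X)$-module for each $j\geq 0$. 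This settles the first half of the assertion.

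For the statement over $\wideparen{U(\mathfrak{g})}$ I would first identify this algebra correctly. Since $\mathbf{G}$ is split reductive, $\mathbf{G}/\mathbf{P}$ is geometrically connected and projective, so $\mathcal{O}_X(X)=K$; hence the free Lie algebroid $\mathscr{L}':=\mathcal{O}_X\otimes_K\mathfrak{g}$ has $\mathscr{L}'(X)=\mathfrak{g}$ with trivial anchor $\mathfrak{g}\to\Der_K(K)=0$, and Rinehart's enveloping algebra $U_K(\mathscr{L}'(X))$ is the ordinary $U(\mathfrak{g})$. Applying Lemma \ref{pushffsalg} to the proper morphism $f\colon X\to\Sp K$ (which is elementary proper with trivial Stein factorization) and the free Lie algebroid $\mathscr{L}'$ identifies $\Gamma(X,\wideparen{\mathscr{U}(\mathscr{L}')})$ with $\wideparen{U(\mathfrak{g})}$; in particular $\wideparen{U(\mathfrak{g})}$ is Fr\'echet--Stein (this also follows from Corollary \ref{KiehlDXproperglobal} applied to $\mathscr{L}'$). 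Since $\wideparen{\widetilde{\mathcal{D}}}_X$ is a coadmissible enlargement of $\wideparen{\mathscr{U}(\mathscr{L}')}$ via the epimorphism of Lie algebroids $\mathscr{L}'\to\widetilde{\mathcal{T}}_X$, Proposition \ref{KiehlDXgeneralcoad}, applied with $Y=\Sp K$, $\mathscr{L}=\mathscr{L}'$ and $\mathscr{U}=\wideparen{\widetilde{\mathcal{D}}}_X$, shows that $f_*\wideparen{\widetilde{\mathcal{D}}}_X=\wideparen{\widetilde{\mathcal{D}}}_X(X)$ is a coadmissible enlargement of $f_*\wideparen{\mathscr{U}(\mathscr{L}')}=\wideparen{U(\mathfrak{g})}$, i.e.\ a coadmissible $\wideparen{U(\mathfrak{g})}$-module with continuous multiplication. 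Lemma \ref{passtoFS} then says that a $\wideparen{\widetilde{\mathcal{D}}}_X(X)$-module is coadmissible if and only if it is coadmissible over $\wideparen{U(\mathfrak{g})}$; applying this to $\mathrm{R}^j\Gamma(X,\mathcal{M})$, which we already know to be coadmissible over $\wideparen{\widetilde{\mathcal{D}}}_X(X)$, finishes the proof. Equivalently, Proposition \ref{KiehlDXgeneralcoad} together with Proposition \ref{FSbasechange} yields all the assertions at once.

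I expect the only genuine point to check to be the bookkeeping on global sections: one must ensure that proper pushforward along $X\to\Sp K$ carries the coadmissible-enlargement datum for $\wideparen{\widetilde{\mathcal{D}}}_X$ over $\wideparen{\mathscr{U}(\mathcal{O}_X\otimes_K\mathfrak{g})}$ to the corresponding datum for $\wideparen{\widetilde{\mathcal{D}}}_X(X)$ over $\wideparen{U(\mathfrak{g})}$ -- in particular that continuous multiplication and the transition isomorphisms $\mathscr{U}_n(X)\otimes_{\mathscr{U}_{n+1}(X)}(-)\cong(-)$ survive taking global sections. This is exactly what Lemma \ref{pushfaffinoidglobal} and Proposition \ref{KiehlDXgeneralcoad} (for $Y=\Sp K$) already secure, so no new difficulty arises; the remaining ingredients, namely the identification $\wideparen{\mathscr{U}(\mathcal{O}_X\otimes_K\mathfrak{g})}(X)\cong\wideparen{U(\mathfrak{g})}$ and the Fr\'echet--Steinness of $\wideparen{U(\mathfrak{g})}$, are standard.
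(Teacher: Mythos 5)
Your proof is correct and follows the same route the paper intends: Corollary \ref{KiehlDXproperglobal} applied to $\widetilde{\mathcal{T}}_X$ for the Fr\'echet--Steinness and coadmissibility over $\wideparen{\widetilde{\mathcal{D}}}_X(X)$, and the coadmissible-enlargement mechanism (Proposition \ref{KiehlDXgeneralcoad} / Lemma \ref{passtoFS}) to transfer coadmissibility to $\wideparen{U(\mathfrak{g})}$. The paper states this in a single sentence citing \ref{KiehlDXproperglobal}; you correctly observe that \ref{KiehlDXproperglobal} alone only gives coadmissibility over $\wideparen{\widetilde{\mathcal{D}}}_X(X)$, and you supply the missing half via the previously established coadmissible-enlargement datum $\wideparen{\mathscr{U}(\mathcal{O}_X\otimes\mathfrak{g})}\to\wideparen{\widetilde{\mathcal{D}}}_X$, the identification $\wideparen{\mathscr{U}(\mathcal{O}_X\otimes_K\mathfrak{g})}(X)\cong\wideparen{U(\mathfrak{g})}$ (using $\mathcal{O}_X(X)=K$ and that the pushforward anchor to $\Sp K$ is trivial), and Proposition \ref{KiehlDXgeneralcoad} with $Y=\Sp K$ -- all of which is exactly what is implicit in the paper's setup.
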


Write $\mathfrak{h}$ for a Cartan subalgebra of $\mathfrak{g}$,  and let $\lambda \in \mathfrak{h}^*$. The centre of the enveloping algebra $U(\mathfrak{g})$ will be denoted by $Z(\mathfrak{g})$.\\
The triangular decomposition $\mathfrak{g}=\mathfrak{n}^{-}\oplus \mathfrak{h} \oplus \mathfrak{n}$ induces the Harish-Chandra morphism $\theta: Z(\mathfrak{g})\to U(\mathfrak{g})\to U(\mathfrak{h})=\Sym \mathfrak{h}$, which allows us to view $\lambda$ as a character for $Z(\mathfrak{g})$. We let $K_{\lambda}$ denote the corresponding one-dimensional $Z(\mathfrak{g})$-representation, and set
\begin{equation*}
U_{\lambda}:= U(\mathfrak{g})\otimes_{Z(\mathfrak{g})} K_{\lambda}.
\end{equation*}
We denote the kernel of the surjection $U(\mathfrak{g})\to U_{\lambda}$ by $\mathfrak{m}_{\lambda}$.\\
Now choose an $(R, R)$-Lie lattice $\mathfrak{g}_R$ inside $\mathfrak{g}$, and write $U_n=U(\pi^n\mathfrak{g}_R)$. This induces a norm on $U(\mathfrak{g})$, and we let $Z(\mathfrak{g})$ be equipped with the corresponding subspace norm. We define
\begin{equation*}
\wideparen{U}_{\lambda}:= \varprojlim \left(\widehat{U_n}_K \widehat{\otimes}_{Z(\mathfrak{g})} K_{\lambda}\right),
\end{equation*}
where the norm on $K_{\lambda}$ is given by identification with $K$.

\begin{lemma}
The $K$-algebra $\wideparen{U}_{\lambda}$ is naturally isomorphic to the quotient of $\wideparen{U(\mathfrak{g})}$ by the closure of $\mathfrak{m}_{\lambda}$. \\
In particular, $\wideparen{U}_{\lambda}$ is a Fr\'echet--Stein algebra. 
\end{lemma}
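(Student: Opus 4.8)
The plan is to recognise $\wideparen{U}_\lambda$ as the coadmissible completion (in the sense of Lemma~\ref{fgfrcompletion}) of the finitely generated $U_K(\mathfrak g)$-module $U_\lambda$, and then to transport through the completion functor the description of $U_\lambda$ as a quotient of $U(\mathfrak g)$.

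First I would treat a single Banach layer. Set $I_\lambda=\ker(Z(\mathfrak g)\xrightarrow{\lambda}K_\lambda)$; since $Z(\mathfrak g)$ is Noetherian (it is a finitely generated commutative $K$-algebra, indeed a polynomial ring by the Harish--Chandra isomorphism), $I_\lambda$ is generated by finitely many elements $z_1-\lambda(z_1),\dots,z_\ell-\lambda(z_\ell)$, and these are central in $U(\mathfrak g)$. Hence their images generate, as a left ideal, a finitely generated left ideal $\mathfrak a_n$ of $\widehat{U_n}_K$, and since $\mathfrak m_\lambda=\sum_i U(\mathfrak g)(z_i-\lambda(z_i))$ by centrality, one gets
\begin{equation*}
\widehat{U_n}_K/\mathfrak a_n\;\cong\;\widehat{U_n}_K\otimes_{U(\mathfrak g)}U_\lambda\;=\;\widehat{U_n}_K\otimes_{Z(\mathfrak g)}K_\lambda
\end{equation*}
as $\widehat{U_n}_K$-modules. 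Because $\widehat{U_n}_K$ is a strictly NB $K$-algebra which is left Noetherian, $\mathfrak a_n$ is closed by \cite[Proposition~3.7.2/2]{BGR}, so $\widehat{U_n}_K/\mathfrak a_n$ already carries its canonical complete norm; comparing this with the tensor seminorm (which by \cite[Proposition~2.1.8/6]{BGR} is a quotient seminorm of the norm on $\widehat{U_n}_K$) shows that the completed tensor product adds nothing, i.e.\ $\widehat{U_n}_K\widehat{\otimes}_{Z(\mathfrak g)}K_\lambda\cong\widehat{U_n}_K\otimes_{U(\mathfrak g)}U_\lambda$. Taking the inverse limit over $n$ and recalling $\wideparen M\cong\varprojlim(\widehat{U_n}_K\otimes_{U(\mathfrak g)}M)$ for the coadmissible completion, this yields $\wideparen{U}_\lambda\cong\wideparen{U_\lambda}$.

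Next I would apply Lemma~\ref{fgfrcompletion} to the short exact sequence $0\to\mathfrak m_\lambda\to U(\mathfrak g)\to U_\lambda\to 0$ of finitely generated $U_K(\mathfrak g)$-modules (using that $U(\mathfrak g)$ is Noetherian), obtaining $0\to\wideparen{\mathfrak m_\lambda}\to\wideparen{U(\mathfrak g)}\to\wideparen{U_\lambda}\to 0$ and hence $\wideparen{U}_\lambda\cong\wideparen{U(\mathfrak g)}/\wideparen{\mathfrak m_\lambda}$ as $K$-algebras. To finish I would identify $\wideparen{\mathfrak m_\lambda}$ with the closure $\overline{\mathfrak m_\lambda}$ of $\mathfrak m_\lambda$ in $\wideparen{U(\mathfrak g)}$: it is closed, being the kernel of the continuous map $\wideparen{U(\mathfrak g)}\to\wideparen{U_\lambda}$ of Fréchet spaces, and it contains $\mathfrak m_\lambda$ densely, since $U(\mathfrak g)$ is dense in each $\widehat{U_n}_K$ and $\mathfrak m_\lambda$ is finitely generated (so $\mathfrak m_\lambda$ has dense image in each $\widehat{U_n}_K\otimes_{U(\mathfrak g)}\mathfrak m_\lambda$, hence in the limit). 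For the final assertion, $\overline{\mathfrak m_\lambda}$ is a closed two-sided ideal (the closure of the two-sided ideal $\mathfrak m_\lambda$, multiplication being jointly continuous) in $\wideparen{U(\mathfrak g)}=\wideparen{\mathscr U(\mathfrak g)}(\Sp K)$, which is a two-sided Fréchet--Stein algebra by Proposition~\ref{recallglobal}; so Lemma~\ref{quotiscoadenl} gives the Fréchet--Stein property of $\wideparen{U}_\lambda$.

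The one delicate point is the norm bookkeeping in the first step: one must verify that the tensor seminorm defining $\widehat{U_n}_K\widehat{\otimes}_{Z(\mathfrak g)}K_\lambda$ is equivalent to the canonical norm on the finitely generated quotient $\widehat{U_n}_K/\mathfrak a_n$, which is precisely where the closedness of $\mathfrak a_n$, coming from Noetherianity, is used. Everything afterwards is a formal consequence of the exactness of coadmissible completion (Lemma~\ref{fgfrcompletion}) and of Lemma~\ref{quotiscoadenl}.
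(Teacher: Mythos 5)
Your proof is correct and follows the same route the paper takes: recognising $\wideparen{U}_\lambda$ as the coadmissible completion of $U_\lambda$, applying the exactness of coadmissible completion (Lemma~\ref{fgfrcompletion}) to the short exact sequence $0\to\mathfrak m_\lambda\to U(\mathfrak g)\to U_\lambda\to 0$, and invoking \cite[Proposition~3.7]{Schneider03} (Lemma~\ref{quotiscoadenl}) for the Fr\'echet--Stein property. The only difference is that you carefully supply the layer-by-layer identification $\widehat{U_n}_K\widehat{\otimes}_{Z(\mathfrak g)}K_\lambda\cong\widehat{U_n}_K\otimes_{U(\mathfrak g)}U_\lambda$ via closedness of the finitely generated ideal $\mathfrak a_n$, a step the paper's terse proof leaves implicit.
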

\begin{proof}
This is Lemma \ref{fgfrcompletion} applied to the short exact sequence 
\begin{equation*}
0\to \mathfrak{m}_{\lambda}\to U(\mathfrak{g})\to U_{\lambda}\to 0,
\end{equation*}
together with \cite[Proposition 3.7]{Schneider03}.
\end{proof}

As in section 6.1, note that $\pi^n\mathfrak{g}_R$ determines compatible norms on $U(\widetilde{\mathcal{T}}_X)(U)$ for any admissible open affinoid subspace $U\subset X$, giving rise to completions $\widetilde{\mathcal{D}}_n:=\mathscr{U}_n(\widetilde{\mathcal{T}}_X)$ such that $\wideparen{\widetilde{\mathcal{D}}}_X=\varprojlim \widetilde{\mathcal{D}}_n$ is a global Fr\'echet--Stein sheaf on $X$.\\ 
\\
Identifying $Z(\mathfrak{l})$ with $\mathbf{L}$-invariant differential operators on $\mathbf{G}/\mathbf{R}$ (see \cite[4.1]{Backelin-Krem}), we obtain a natural morphism $Z(\mathfrak{l})\to \widetilde{\mathcal{D}}_n$ with central image for each $n$, and we define the sheaf of twisted differential operators on $X$ by 
\begin{equation*}
\wideparen{\mathcal{D}}_X^{\lambda}= \varprojlim (\widetilde{\mathcal{D}}_n\widehat{\otimes}_{Z(\mathfrak{l})} K_{\lambda}).
\end{equation*}
Again, Lemma \ref{fgfrcompletion} shows that the natural morphism $\wideparen{\widetilde{\mathcal{D}}}_X\to \wideparen{\mathcal{D}}_X^{\lambda}$ is an epimorphism which turns $\wideparen{\mathcal{D}}_X^{\lambda}$ into a coadmissible enlargement of $\wideparen{\widetilde{\mathcal{D}}}_X$, and hence a coadmissible enlargement of $\wideparen{\mathscr{U}(\mathcal{O}_X\otimes \mathfrak{g})}$.\\
\\
Now let $\mathbf{P}_1\leq \mathbf{P}_2$ be two parabolic subgroups, and consider the proper morphism $f: X_1\to X_2$, where $X_i=(\mathbf{G}/\mathbf{P}_i)^{\an}$.

\begin{corollary}
The pushforward $f_*\wideparen{\mathscr{U}(\mathcal{O}_{X_1}\otimes \mathfrak{g})}$ is a global Fr\'echet--Stein sheaf on $X_2$, and $f_*\wideparen{\mathcal{D}}_{X_1}^{\lambda}$ is a coadmissible enlargement. In particular, $f_*\wideparen{\mathcal{D}}_{X_1}^{\lambda}$ is a global Fr\'echet--Stein sheaf on $X_2$.\\
If $\mathcal{M}$ is a coadmissible $\wideparen{\mathcal{D}}_{X_1}^{\lambda}$-module, then $\mathrm{R}^jf_*\mathcal{M}$ is coadmissible over $f_*\wideparen{\mathcal{D}}_{X_1}^{\lambda}$, and a fortiori coadmissible over $f_*\wideparen{\mathscr{U}(\mathcal{O}_{X_1}\otimes \mathfrak{g})}$ for each $j\geq 0$.
\end{corollary}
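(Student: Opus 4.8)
\emph{Proof proposal.} The plan is to obtain this as a direct application of Lemma \ref{KiehlDXtwoproper}, applied to the two proper morphisms
\[
X_1 \xrightarrow{\;f\;} X_2 \longrightarrow \Sp K,
\]
where the second arrow is the structure morphism, which is proper because $X_2=(\mathbf{G}/\mathbf{P}_2)^{\an}$ is proper over $\Sp K$, and the composite is proper by \cite[Corollary 3.2]{Luet}. Here $Y=\Sp K$ is affinoid, as required by the lemma. First I would record the input data: $\mathscr{L}:=\mathcal{O}_{X_1}\otimes_K\mathfrak{g}$ is a Lie algebroid on $X_1$ which is \emph{free} as an $\mathcal{O}_{X_1}$-module, being $\mathcal{O}_{X_1}$-linearly spanned by the finite-dimensional space $\mathfrak{g}$; by Lemma \ref{KiehlDXtwoproper} (which applies Lemma \ref{KiehlDXfreeaff} internally) $\wideparen{\mathscr{U}(\mathscr{L})}$ is then a global Fr\'echet--Stein sheaf on $X_1$.

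Next I would identify $\wideparen{\mathcal{D}}_{X_1}^{\lambda}$ as a coadmissible enlargement of $\wideparen{\mathscr{U}(\mathscr{L})}$. This is the transitivity of the enlargement relation, established already in the discussion preceding the corollary: $\wideparen{\widetilde{\mathcal{D}}}_{X_1}$ is a coadmissible enlargement of $\wideparen{\mathscr{U}(\mathcal{O}_{X_1}\otimes\mathfrak{g})}$, and $\wideparen{\mathcal{D}}_{X_1}^{\lambda}$ is a coadmissible enlargement of $\wideparen{\widetilde{\mathcal{D}}}_{X_1}$; composing the two epimorphisms of sheaves of algebras and noting that continuous multiplication (with respect to the relevant Banach presentations) is inherited through the composite, one sees that the natural morphism $\wideparen{\mathscr{U}(\mathscr{L})}\to\wideparen{\mathcal{D}}_{X_1}^{\lambda}$ makes $\wideparen{\mathcal{D}}_{X_1}^{\lambda}$ a coadmissible enlargement of $\wideparen{\mathscr{U}(\mathscr{L})}$. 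Setting $\mathscr{U}:=\wideparen{\mathcal{D}}_{X_1}^{\lambda}$, all hypotheses of Lemma \ref{KiehlDXtwoproper} are in place, and the lemma yields at once that $f_*\wideparen{\mathscr{U}(\mathscr{L})}$ is a global Fr\'echet--Stein sheaf on $X_2$, that $f_*\wideparen{\mathcal{D}}_{X_1}^{\lambda}$ is a coadmissible enlargement of it (hence itself a global Fr\'echet--Stein sheaf on $X_2$), and that $\mathrm{R}^jf_*\mathcal{M}$ is a coadmissible $f_*\wideparen{\mathcal{D}}_{X_1}^{\lambda}$-module for every $j\geq 0$ whenever $\mathcal{M}$ is a coadmissible $\wideparen{\mathcal{D}}_{X_1}^{\lambda}$-module.

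Finally, the ``a fortiori'' clause follows from Proposition \ref{FSbasechange}: since $f_*\wideparen{\mathcal{D}}_{X_1}^{\lambda}$ is a coadmissible enlargement of $f_*\wideparen{\mathscr{U}(\mathcal{O}_{X_1}\otimes\mathfrak{g})}$, an $f_*\wideparen{\mathcal{D}}_{X_1}^{\lambda}$-module is coadmissible if and only if it is coadmissible as an $f_*\wideparen{\mathscr{U}(\mathcal{O}_{X_1}\otimes\mathfrak{g})}$-module; in particular $\mathrm{R}^jf_*\mathcal{M}$ is coadmissible over the latter sheaf as well.

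The argument is essentially an assembly of results already proved, so there is no serious obstacle. The one point that genuinely needs care is the reduction to a \emph{free} Lie algebroid: one must use that $\wideparen{\mathcal{D}}_{X_1}^{\lambda}$ is a coadmissible enlargement of $\wideparen{\mathscr{U}(\mathcal{O}_{X_1}\otimes\mathfrak{g})}$, and not merely of the (in general non-free) enhanced sheaf $\wideparen{\widetilde{\mathcal{D}}}_{X_1}$, since it is precisely this freeness that makes Lemma \ref{KiehlDXtwoproper} available rather than only the weaker Corollary \ref{KiehlDXalgversion}, and thereby upgrades ``Fr\'echet--Stein sheaf'' to ``global Fr\'echet--Stein sheaf'' on $X_2$.
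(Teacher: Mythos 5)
Your proposal is correct and matches the paper's proof, which indeed consists solely of the citation ``This is the content of Lemma \ref{KiehlDXtwoproper}.'' You have simply made explicit the instantiation of that lemma (the composite $X_1\to X_2\to \Sp K$, $\mathscr{L}=\mathcal{O}_{X_1}\otimes_K\mathfrak{g}$ free, $\mathscr{U}=\wideparen{\mathcal{D}}_{X_1}^{\lambda}$) and the fact, already recorded in the text preceding the corollary, that $\wideparen{\mathcal{D}}_{X_1}^{\lambda}$ is a coadmissible enlargement of $\wideparen{\mathscr{U}(\mathcal{O}_{X_1}\otimes\mathfrak{g})}$ and not merely of the enhanced sheaf; your closing remark on why the free algebroid is the one that must be fed into Lemma \ref{KiehlDXtwoproper} is exactly the right observation.
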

\begin{proof}
This is the content of Lemma \ref{KiehlDXtwoproper}.
\end{proof}

Note that in the extreme case $\mathbf{P}_2=\mathbf{G}$, we obtain the following generalization of the first statement in \cite[Theorem 6.4.7]{Ardakovequiv}.
\begin{corollary}
Let $\mathbf{P}\leq \mathbf{G}$ be a parabolic subgroup, and let $X=(\mathbf{G}/\mathbf{P})^{\an}$. Then the global sections $\wideparen{\mathcal{D}}_X^{\lambda}(X)$ form a Fr\'echet--Stein algebra, and if $\mathcal{M}$ is a coadmissible $\wideparen{\mathcal{D}}_X^{\lambda}$-module, then $\mathrm{R}^j\Gamma(X, \mathcal{M})$ is coadmissible over both $\wideparen{\mathcal{D}}_X^{\lambda}(X)$ and over $\wideparen{U(\mathfrak{g})}$ for each $j\geq 0$.
\end{corollary}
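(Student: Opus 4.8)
The plan is to derive this corollary as the special case $\mathbf{P}_2 = \mathbf{G}$ of the previous corollary, reading off the statement from the machinery already assembled. First I would recall that for $\mathbf{P}_2 = \mathbf{G}$ we have $X_2 = (\mathbf{G}/\mathbf{G})^{\an} = \Sp K$, so that a proper morphism $f: X \to \Sp K$ is simply the structure morphism of the proper rigid analytic variety $X = (\mathbf{G}/\mathbf{P})^{\an}$ (which is proper over $\Sp K$ by \cite[II.1.8]{Jantzen} and \cite[Satz 2.16]{Kopf}), and $f_*\mathcal{F}$ for any sheaf $\mathcal{F}$ is just the global sections $\mathcal{F}(X)$, viewed as a sheaf on the one-point space $\Sp K$, i.e.\ as a $K$-algebra. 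Likewise $\mathrm{R}^jf_*\mathcal{M}$ is identified with $\mathrm{R}^j\Gamma(X, \mathcal{M}) = \mathrm{H}^j(X, \mathcal{M})$.

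Next I would invoke the preceding corollary directly: it states that $f_*\wideparen{\mathcal{D}}_X^{\lambda}$ is a global Fr\'echet--Stein sheaf on $X_2 = \Sp K$ — which, since a global Fr\'echet--Stein sheaf on a point is precisely a Fr\'echet--Stein algebra — gives that $\wideparen{\mathcal{D}}_X^{\lambda}(X)$ is a Fr\'echet--Stein algebra. For the second assertion, the same corollary tells us that for a coadmissible $\wideparen{\mathcal{D}}_X^{\lambda}$-module $\mathcal{M}$, the higher direct image $\mathrm{R}^jf_*\mathcal{M} = \mathrm{R}^j\Gamma(X, \mathcal{M})$ is coadmissible over $f_*\wideparen{\mathcal{D}}_X^{\lambda} = \wideparen{\mathcal{D}}_X^{\lambda}(X)$, and a fortiori coadmissible over $f_*\wideparen{\mathscr{U}(\mathcal{O}_X\otimes\mathfrak{g})} = \wideparen{\mathscr{U}(\mathcal{O}_X\otimes\mathfrak{g})}(X)$.

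The one remaining point is to identify $\wideparen{\mathscr{U}(\mathcal{O}_X\otimes\mathfrak{g})}(X)$ with $\wideparen{U(\mathfrak{g})}$. Here I would note that $\mathcal{O}_X\otimes_K\mathfrak{g}$ is the free Lie algebroid on $X$ generated by $\mathfrak{g}$, so by Lemma \ref{pushffsalg} applied to the structure morphism $X \to \Sp K$ (with trivial Stein factorization, as $X$ is proper and connected), $f_*\wideparen{\mathscr{U}(\mathcal{O}_X\otimes\mathfrak{g})} \cong \wideparen{\mathscr{U}(f_*(\mathcal{O}_X\otimes\mathfrak{g}))} = \wideparen{\mathscr{U}(\mathfrak{g})} = \wideparen{U(\mathfrak{g})}$, using that $\Gamma(X, \mathcal{O}_X) = K$ since $X$ is proper and connected, so $f_*(\mathcal{O}_X\otimes_K\mathfrak{g}) = \mathfrak{g}$ as a (trivial) Lie algebroid on $\Sp K$. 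Combining, $\mathrm{R}^j\Gamma(X,\mathcal{M})$ is coadmissible over $\wideparen{U(\mathfrak{g})}$, as claimed.

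There is essentially no hard part here: the work has all been done in establishing Corollary \ref{enhancedDXKiehl}, Lemma \ref{KiehlDXtwoproper}, and the construction of $\wideparen{\mathcal{D}}_X^{\lambda}$ as a coadmissible enlargement. The only mild subtlety to state carefully is the bookkeeping that a (global) Fr\'echet--Stein sheaf on the point $\Sp K$ is the same data as a Fr\'echet--Stein algebra, and that pushforward to the point is the global sections functor; once that dictionary is in place, the corollary is immediate.
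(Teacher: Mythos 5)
Your proof is correct and follows exactly the paper's (implicit) argument: the corollary is the special case $\mathbf{P}_2 = \mathbf{G}$, $X_2 = \Sp K$ of the preceding corollary, and the paper offers no further proof beyond noting this. You helpfully spell out the small identification $f_*\wideparen{\mathscr{U}(\mathcal{O}_X\otimes\mathfrak{g})} \cong \wideparen{U(\mathfrak{g})}$ (via $\Gamma(X, \mathcal{O}_X) = K$ and Lemma~\ref{pushffsalg}), which the paper leaves implicit but is needed to pass from coadmissibility over $f_*\wideparen{\mathscr{U}(\mathcal{O}_X\otimes\mathfrak{g})}$ to coadmissibility over $\wideparen{U(\mathfrak{g})}$.
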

As the $\wideparen{U(\mathfrak{g})}$-action factors through $\wideparen{U}_{\lambda}$, this makes $\mathrm{R}^j\Gamma(X, \mathcal{M})$ a coadmissible $\wideparen{U}_{\lambda}$-module for each $j\geq 0$ by \cite[Lemma 3.8]{Schneider03}.\\
We thus obtain Corollary \ref{intropartialfv}.

\end{document}